\newtheorem{lemma}{Lemma}[section]
\newtheorem{theorem}[lemma]{Theorem}
\newtheorem{proposition}[lemma]{Proposition}
\newtheorem{corollary}[lemma]{Corollary}
\newcommand{\XX}[1]{\ensuremath{\mathbf{X}_{#1}}}
\newcommand{\XXMN}[1]{\ensuremath{\mathbf{X}_{#1}^{M,I}}}
\newcommand{\Y}[1]{\ensuremath{\mathcal{X}_{#1}}}
\newcommand{\Yb}[1]{\ensuremath{\bar{\mathcal{X}}_{#1}}}
\newcommand{\YMN}[1]{\ensuremath{\mathcal{X}_{#1}^{M,I}}}
\newcommand{\YMNp}[1]{\ensuremath{\tilde{\mathcal{X}}_{#1}^{M,I}}}
\newcommand{\YMNN}[1]{\ensuremath{\mathcal{X}_{#1}^{M,N}}}
\renewcommand{\O}[1]{\ensuremath{\mathcal{O}_{#1}}}
\newcommand{\OMN}[1]{\ensuremath{\mathcal{O}_{#1}^{M,I}}}
\newcommand{\OMNN}[1]{\ensuremath{\mathcal{O}_{#1}^{M,N}}}
\newcommand{\OMNNp}[1]{\ensuremath{\tilde{\mathcal{O}}_{#1}^{M,N}}}
\renewcommand{\S}[0]{\ensuremath{\mathcal{S}}}
\newcommand{\D}[0]{\ensuremath{\mathcal{D}}}
\newcommand{\Id}[0]{\ensuremath{\textup{Id}}}
\renewcommand{\P}[0]{\ensuremath{\mathbb{P}}}
\newcommand{\Pow}[0]{\ensuremath{\mathcal{P}}}
\renewcommand{\H}[0]{\ensuremath{\mathbb{H}}}
\newcommand{\R}[0]{\ensuremath{\mathbb{R}}}
\newcommand{\N}[0]{\ensuremath{\mathbb{N}}}
\newcommand{\E}[0]{\ensuremath{\mathbb{E}}}
\newcommand{\B}[0]{\ensuremath{\mathcal{B}}}
\newcommand{\M}[0]{\ensuremath{\mathcal{M}}}
\newcommand{\C}[0]{\ensuremath{\mathcal{C}}}
\newcommand{\F}[0]{\ensuremath{\mathcal{F}}}
\renewcommand{\L}[0]{\ensuremath{\mathcal{L}}}
\newcommand{\fl}[2][T\!/\!M]{\ensuremath{ \lfloor #2 \rfloor_{#1} }}
\newcommand{\cl}[2][T\!/\!M]{\ensuremath{ \lceil #2 \rceil_{#1} }}
\newcommand{\Vm}[0]{\ensuremath{\mathcal{V}}}
\newcommand{\one}[0]{\ensuremath{\mathbbm{1}}}
\title{Strong convergence rates for explicit \\
space-time discrete numerical approximations \\
of stochastic Allen-Cahn equations}
\author{
Sebastian Becker,
Benjamin Gess, 
Arnulf Jentzen,
and Peter E. Kloeden
}
\begin{document}

\maketitle

\begin{abstract}
The scientific literature contains a 
number of numerical approximation results
for stochastic partial differential equations (SPDEs)
with superlinearly growing nonlinearities but,
to the best of our knowledge, none of them prove
strong or weak convergence rates for full-discrete numerical approximations
of space-time white noise driven SPDEs
with superlinearly growing nonlinearities.
In particular, in the scientific
literature there exists neither a result which 
proves strong convergence rates 
nor a result which proves weak convergence rates
for full-discrete numerical approximations of stochastic Allen-Cahn equations.
In this article we bridge this gap and establish 
strong convergence rates for full-discrete numerical approximations
of space-time white noise driven SPDEs
with superlinearly growing nonlinearities
such as stochastic Allen-Cahn equations.
Moreover, we also establish lower bounds for 
strong temporal and spatial approximation errors
which demonstrate that our strong convergence rates
are essentially sharp and can, in general, not be improved.
\end{abstract}

\tableofcontents

\section{Introduction}
\label{sec:intro}
In this article we are interested
in strong convergence rates for full-discrete 
numerical approximations of
space-time white noise driven SPDEs
with superlinearly growing nonlinearities
such as stochastic Allen-Cahn equations.
The literature contains a number of 
numerical approximation results for SPDEs
with superlinearly growing nonlinearities
(cf., e.g., 
Gy{\"o}ngy \& Millet~\cite{gm05},
Gy{\"o}ngy, Sabanis, \& {{\v S}i{\v s}ka}~\cite{GoengySabanisS2014},
Jentzen \& Pu{\v s}nik~\cite{jp2015},
Kov{\'a}cs, Larsson, \& Lindgren~\cite{kll2015},
Becker \& Jentzen~\cite{BeckerJentzen2016},
Hutzenthaler, Jentzen, \& Salimova~\cite{HutzenthalerJentzenSalimova2016},
Jentzen \& Pu{\v s}nik~\cite{JentzenPusnik2016},
Furihata et al.~\cite{FurihataKovacsLarssonLindgreen2016}, and
Bl{\"o}mker \& Kamrani~\cite{BloemkerKamrani2017}). 
The articles~\cite{gm05,GoengySabanisS2014,
FurihataKovacsLarssonLindgreen2016,HutzenthalerJentzenSalimova2016}
establish strong convergence of numerical
approximations for such SPDEs with no information
on the speed of strong convergence and the 
papers~\cite{jp2015,kll2015,BeckerJentzen2016}
prove strong convergence rates for numerical
approximations of such SPDEs. 
To be more specific, the article~\cite{BeckerJentzen2016}
establishes strong convergence rates for semi-discrete
temporal numerical approximations of space-time white noise 
driven SPDEs with superlinearly growing nonlinearities
such as stochastic Allen-Cahn equations.
The papers~\cite{jp2015,kll2015} prove strong
convergence rates for full-discrete 
(temporal and spatial discrete) numerical approximations
for SPDEs with superlinearly growing nonlinearities
in the case of the more regular trace class noise.
To the best of our knowledge,
there exists no result in the scientific literature
which establishes strong or weak convergence rates for 
a full-discrete numerical approximation scheme of a 
space-time white noise driven SPDE with a superlinearly 
growing nonlinearity such as the stochastic Allen-Cahn
equation. A key difficulty in the case of full-discrete
numerical approximations for
space-time white noise driven SPDEs with superlinearly 
growing nonlinearities is to derive appropriate uniform
a priori moment bounds for the numerical approximation
processes. 

In this article we overcome this difficulty
(cf.\ \eqref{eq:intro_lyapunov}--\eqref{eq:intro_lyapunov_2}
below for our approach to this challenge)
and establish
essentially sharp strong convergence rates for 
full-discrete numerical approximations of
space-time white noise driven SPDEs with
superlinearly growing nonlinearities
such as stochastic Allen-Cahn equations;
see Theorem~\ref{thm:main} in Section~\ref{sec:main_result} 
below for the main
convergence rate result in this work.
To illustrate Theorem~\ref{thm:main}, we now present in
Theorem~\ref{thm:intro} below the specialization of
Theorem~\ref{thm:main} to the case of stochastic Allen-Cahn equations.

\begin{theorem}
\label{thm:intro}
Let $ T \in (0,\infty) $,
$
  ( H, \langle \cdot, \cdot \rangle_H, \left\| \cdot \right\|\!_H )
  =
  ( 
    L^2( (0,1); \R ), 
    \langle \cdot, \cdot \rangle_{ L^2( (0,1); \R ) },
    \left\| \cdot \right\|\!_{ L^2( (0,1) ; \R ) }
  )
$,
$ a_0, a_1 $, $ a_2 \in \R $,
$ a_3 \in (-\infty,0] $,
$ (e_n)_{ n \in \N } \subseteq H $,
$ ( P_n )_{ n \in \N } \subseteq L( H ) $,
$ F \colon L^{6}( (0,1) ; \R ) \rightarrow H $
satisfy for all 
$ n \in \N $, $ v \in L^{6}( (0,1) ; \R ) $
that 
$ e_n(\cdot) = \sqrt{2} \sin( n\pi (\cdot) ) $,
$ F(v) = \sum_{ k=0 }^3 a_k v^k $,
$ P_n(v) = \sum_{ k=1 }^n \langle e_k, v \rangle_H \, e_k $,
and 
$ a_2 \one_{[0,\infty)}(a_3) = 0 $,
let $ A \colon D(A) $ $ \subseteq H \rightarrow H $ be the 
Laplacian with Dirichlet boundary conditions on 
$ H $, 
let $ ( \Omega, \F, \P ) $ be a probability space, 
let $ ( W_t )_{ t \in [0,T] } $ be an $ \Id_H $-cylindrical
Wiener process, 
let
$ \xi \in D( (-A)^{\nicefrac{1}{2}} ) $,
$ \gamma \in (\nicefrac{1}{6}, \nicefrac{1}{4}) $,
$ 
  \chi 
  \in 
  (0,  
    \nicefrac{\gamma}{3}
    -
    \nicefrac{1}{18}
  ]
$,
let
$ \OMNN{} \colon [0,T] \times \Omega \rightarrow P_N(H) $,
$ M,N \in \N $,
and $ \YMNN{} \colon [0,T] \times \Omega \rightarrow P_N( H ) $,
$ M,N \in \N $, be stochastic processes which satisfy that for all
$ M,N \in \N $, $ m \in \{ 0, 1, 2, \ldots, M-1 \} $, 
$ t \in (\nicefrac{mT}{M}, \nicefrac{(m+1)T}{M} ] $ 
we have $ \P $-a.s.\ that
\begin{align}
  \OMNN{0} = \YMNN{0} = P_N \, \xi,
  \qquad
  \OMNN{t} 
  =
  e^{(t-\nicefrac{mT}{M})A}
  \left[ 
    \OMNN{\nicefrac{mT}{M}}
    +  
    \int_{\nicefrac{mT}{M}}^t
    P_{ N } \, dW_s
  \right],
\end{align}
and
\begin{align}
\label{eq:intro_scheme}
\begin{split}
  \YMNN{t} 
 &=
  e^{(t-\nicefrac{mT}{M})A} \,
  \YMNN{\nicefrac{mT}{M}}
  +
  \OMNN{t}
  -
  e^{(t-\nicefrac{mT}{M})A} \,
  \OMNN{\nicefrac{mT}{M}}
\\&\quad+
  P_N 
  A^{-1} 
  ( e^{(t-\nicefrac{mT}{M})A} - \Id_H ) \,
  \one_{ 
    \{
      \| (-A)^{\gamma} \YMNN{\nicefrac{mT}{M}} \|_{ H }
      +
      \| (-A)^{\gamma} \OMNN{\nicefrac{mT}{M}} \|_{ H }
      \leq
      (M/T)^{\chi}
    \}
  } \,
  F( \YMNN{\nicefrac{mT}{M}} ) .
\end{split}
\end{align}
Then 
\begin{enumerate}[(i)]
\item\label{it:intro_1} we have that
there exists an up to indistinguishability
unique stochastic process
$
  X \colon [0,T] \times \Omega \rightarrow L^{6}( (0,1); \R )
$
with continuous sample paths
which satisfies for all $ t \in [0,T] $, $ p \in (0,\infty) $ 
that 
$ 
  \sup_{ s \in [0,T] } 
  \E\big[ \| X_s \|_{ L^{6}( (0,1); \R ) }^p \big]
  <
  \infty
$
and
\begin{align}
\label{eq:intro_allen_cahn}
  \P\!\left(
    X_t 
    =
    e^{tA} \xi 
    +
    \smallint_0^t e^{(t-s)A} F(X_s) \, ds
    +
    \smallint_0^t
    e^{(t-s)A} \, dW_s
  \right)
  =
  1,
\end{align}
\item\label{it:intro_2}
we have for all $ p \in (0,\infty) $
that
$
  \sup_{ r \in (-\infty, \gamma] }
  \sup_{ M, N \in \N }
  \sup_{ t \in [0,T] }
  \E\big[ \| (-A)^r \YMNN{t} \|_{ H }^p \big]
  <
  \infty
$, 
and
\item\label{it:intro_3}
we have for all 
$ p, \varepsilon \in (0,\infty) $
that there exists a real number
$ C \in \R $ such that for all
$ M,N \in \N $
it holds that
\begin{align}
  \sup_{ t \in [0,T] }
  \left(
    \E\big[
      \|
        X_t - \YMNN{t}
      \|_{ H }^p
    \big]
  \right)^{\!\nicefrac{1}{p}}
  \leq
  C
  (
    M^{(\varepsilon-\nicefrac{1}{4})}
    +
    N^{(\varepsilon-\nicefrac{1}{2})}
  ) .
\end{align}
\end{enumerate}
\end{theorem}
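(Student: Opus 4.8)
The plan is to deduce the assertions of the theorem from the abstract main result, Theorem~\ref{thm:main}, by specializing it to the one-dimensional stochastic Allen-Cahn setting; accordingly, the first task is to verify that the present data satisfy all hypotheses of Theorem~\ref{thm:main}. Concretely, I would check that: (a) the Dirichlet Laplacian $A$ on $(0,1)$ is self-adjoint and negative definite, generates an analytic semigroup with $\|(-A)^{\rho}e^{tA}\|_{L(H)}\leq c\,t^{-\rho}$, and has orthonormal eigenfunctions $e_n=\sqrt 2\sin(n\pi(\cdot))$ with eigenvalues $-\lambda_n=-\pi^2n^2$, so in particular $\lambda_n\sim n^2$; (b) $F(v)=\sum_{k=0}^{3}a_kv^k$ is a smooth polynomial Nemytskii operator which maps $L^6((0,1);\R)$ boundedly into $H$ (since $\|v^3\|_{L^2}=\|v\|_{L^6}^{3}$) and is locally Lipschitz on bounded subsets of $L^6((0,1);\R)$, while the sign condition $a_3\in(-\infty,0]$ together with $a_2\one_{[0,\infty)}(a_3)=0$ yields the one-sided growth estimate $\langle F(v)-F(w),v-w\rangle_H\leq c\,\|v-w\|_H^2$ and the matching a priori bound required by Theorem~\ref{thm:main} (for $a_3<0$ the cubic term absorbs the quadratic one by completing the square, and for $a_3=0$ the constraint forces $a_2=0$ so that $F$ is affine); (c) $(W_t)$ is $\Id_H$-cylindrical, i.e.\ space-time white noise, the $P_n$ are the spectral Galerkin projections, and $\xi\in D((-A)^{\nicefrac12})$ is a regular initial value; and (d) the exponents lie in the admissible window of Theorem~\ref{thm:main}: $\gamma<\nicefrac14$ is the ceiling imposed by the white-noise spatial regularity (the $(-A)^r$-moments of the stochastic convolution are finite exactly for $r<\nicefrac14$, since $\sum_n\lambda_n^{2r-1}<\infty$ iff $r<\nicefrac14$), $\gamma>\nicefrac16$ is forced by the Sobolev embedding $D((-A)^{\gamma})\hookrightarrow L^6((0,1);\R)$ needed to evaluate $F$ along the scheme, and $\chi\in(0,\nicefrac{\gamma}{3}-\nicefrac{1}{18}]$ is precisely the regime in which the theorem applies (this interval is nonempty exactly because $\gamma>\nicefrac16$).

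Granting these checks, the three assertions are read off from Theorem~\ref{thm:main}. Assertion~\ref{it:intro_1}, existence and uniqueness up to indistinguishability of a mild solution of~\eqref{eq:intro_allen_cahn} in $C([0,T];L^6((0,1);\R))$ with uniformly bounded $p$-th moments, follows from $\xi\in D((-A)^{\nicefrac12})$ and the embedding $D((-A)^{\gamma})\hookrightarrow L^6((0,1);\R)$ together with the one-sided estimate on $F$, by the standard fixed-point and a priori-estimate theory for stochastic Allen-Cahn equations that Theorem~\ref{thm:main} subsumes. Assertion~\ref{it:intro_2}, the uniform-in-$(M,N)$ moment bound $\sup_{r\leq\gamma}\sup_{M,N}\sup_{t}\E[\|(-A)^r\YMNN{t}\|_H^p]<\infty$, is exactly what the cutoff in~\eqref{eq:intro_scheme} is designed to deliver: on the event $\{\|(-A)^{\gamma}\YMNN{\nicefrac{mT}{M}}\|_H+\|(-A)^{\gamma}\OMNN{\nicefrac{mT}{M}}\|_H\leq(M/T)^{\chi}\}$ one has $\|F(\YMNN{\nicefrac{mT}{M}})\|_H\lesssim(M/T)^{3\chi}$, so the drift increment $P_NA^{-1}(e^{(t-\nicefrac{mT}{M})A}-\Id_H)F(\YMNN{\nicefrac{mT}{M}})$ is, in the $(-A)^r$-norm, a perturbation small enough (by $\chi\leq\nicefrac{\gamma}{3}-\nicefrac{1}{18}$) that a discrete variation-of-constants representation, analytic-semigroup smoothing, and a discrete Gronwall argument propagate the moment bound, the stochastic-convolution part $\OMNN{}$ contributing only its uniformly bounded Gaussian moments. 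Assertion~\ref{it:intro_3}, the strong error bound, then follows by splitting $X_t-\YMNN{t}$ into the temporal discretization error of the exponential-Euler drift step, the spatial Galerkin error, the error caused by the cutoff being active, and the difference of the nonlinearities, and estimating each via the bounds of~\ref{it:intro_1}--\ref{it:intro_2}, the local Lipschitz continuity of $F$ on bounded sets, and the fact that $\P(\text{cutoff active})$ is of negligible order because the solution and $\OMNN{}$ have finite $(-A)^{\gamma}$-moments while $(M/T)^{\chi}\to\infty$; the temporal rate $M^{\varepsilon-\nicefrac14}$ reflects the parabolic H\"older regularity $\nicefrac14-$ of the solution and the spatial rate $N^{\varepsilon-\nicefrac12}$ comes from $\|(-A)^{-r}(\Id_H-P_N)\|_{L(H)}\leq\lambda_{N+1}^{-r}\sim N^{-2r}$ with $r$ slightly below $\nicefrac14$.

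The genuinely substantial ingredient---and the reason the abstract Theorem~\ref{thm:main} is invoked rather than a direct argument---is Assertion~\ref{it:intro_2}, the uniform a priori moment bound for the \emph{explicit} scheme: an explicit exponential-Euler step applied to a superlinearly growing $F$ is not unconditionally moment-stable, and the whole delicacy is that the cutoff threshold $(M/T)^{\chi}$ must simultaneously grow to infinity (so that in the limit the cutoff is inactive with overwhelming probability and does not spoil the rate in~\ref{it:intro_3}) yet grow slowly enough that the cutoff-active drift increment stays controllable in the $(-A)^r$-norms uniformly in $M,N$. This balance is exactly what is encoded in the coupled constraints $\nicefrac16<\gamma<\nicefrac14$ and $0<\chi\leq\nicefrac{\gamma}{3}-\nicefrac{1}{18}$, so within the reduction the point that requires genuine care is the verification that the Allen-Cahn data---in particular the coercivity-type inequality stemming from $a_3\leq0$ and $a_2\one_{[0,\infty)}(a_3)=0$, and the compatibility of the exponent windows---fall into this regime. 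Everything else is a routine combination of analytic-semigroup smoothing estimates, Sobolev embeddings on $(0,1)$, Burkholder--Davis--Gundy and Gronwall-type inequalities, and the conclusions quoted from Theorem~\ref{thm:main}.
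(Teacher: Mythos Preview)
Your overall strategy---specialize the abstract Theorem~\ref{thm:main} to the one-dimensional Allen--Cahn setting and read off the three assertions---is exactly the route the paper takes (via Corollary~\ref{cor:Ginzburg3} and Lemma~\ref{lem:Ginzburg}). Two points, however, are not quite right and deserve correction.

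First, Theorem~\ref{thm:main} does \emph{not} subsume the existence and uniqueness statement~\ref{it:intro_1}: in the setting of Section~\ref{sec:main_result_setting} the process $X$ is \emph{assumed} to exist and to satisfy the mild equation and the moment bound~\eqref{eq:main_assumption}. The paper obtains~\ref{it:intro_1} separately in the proof of Corollary~\ref{cor:Ginzburg3} by quoting external well-posedness results; your outline should do the same rather than claim it as a consequence of Theorem~\ref{thm:main}.

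Second, and more substantively, to apply Theorem~\ref{thm:main} you must \emph{exhibit} functions $\phi,\Phi\colon\C([0,T],H_1)\to[0,\infty)$ satisfying the coercivity hypothesis~\eqref{eq:main_coercivity}; saying ``the matching a priori bound'' or ``the coercivity-type inequality stemming from $a_3\le 0$'' does not do this. The paper's choice (see Lemma~\ref{lem:coercivity_mix} and the proof of Lemma~\ref{lem:Ginzburg}) is the path-dependent pair
\[
  \phi(w)=C\Big[\sup_{s\in[0,T]}\|w_s\|_{L^\infty}^4+1\Big],
  \qquad
  \Phi(w)=c\Big[\sup_{s\in[0,T]}\|w_s\|_{L^\infty}^8+1\Big],
\]
and the verification of~\eqref{eq:main_coercivity} with this choice is where the sign condition on $a_3$ and the constraint $a_2\one_{[0,\infty)}(a_3)=0$ are actually used. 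This in turn forces you to check the moment hypothesis~\eqref{eq:main_assumption} for $\phi(\OMNN{})$ and $\Phi(\OMNN{})$, i.e.\ uniform-in-$(M,N)$ control of $\sup_{t\in[0,T]}\|\OMNN{t}\|_{L^\infty}$ in all $L^p$, which is a genuine (Kolmogorov/Sobolev-embedding) estimate carried out in Lemma~\ref{lem:O_sobo}. Your sketch of~\ref{it:intro_2} via ``$\|F(\YMNN{\cdot})\|_H\lesssim(M/T)^{3\chi}$ plus discrete Gronwall'' is not the mechanism the paper uses and would not by itself deliver $H_\gamma$-moments uniformly in $M,N$; the actual argument (Lemma~\ref{lem:Y_a_priori}) applies the Lyapunov functional $\|\cdot\|_{H_{1/2}}^2+\phi(\OMNN{})\|\cdot\|_H^2$ to $\YMNN{}-\OMNN{}$ and uses~\eqref{eq:main_coercivity} to close the estimate. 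Once you make this choice of $\phi,\Phi$ explicit and verify Lemma~\ref{lem:coercivity_mix} and Lemma~\ref{lem:O_sobo}, the rest of your outline is correct.
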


Theorem~\ref{thm:intro} follows from Corollary~\ref{cor:Ginzburg3}
which, in turn, follows from our main result, Theorem~\ref{thm:main} below.
Theorem~\ref{thm:main} also proves strong convergence rates for
full-discrete numerical approximations of a more general class
of SPDEs than Theorem~\ref{thm:intro} above.
Next we would like to point out that the numerical
approximation scheme~\eqref{eq:intro_scheme} has been
proposed in Hutzenthaler, Jentzen, \& Salimova~\cite{HutzenthalerJentzenSalimova2016}
and has there been referred to as a
nonlinearity-truncated approximation scheme
(cf.\ \cite[(3) in Section~1]{HutzenthalerJentzenSalimova2016}
and, e.g., \cite{HutzenthalerJentzenKloeden2012,
WangGan2013,
HutzenthalerJentzenKloeden2013,
HutzenthalerJentzen2012,
TretyakovZhang2013,
Sabanis2013,
Sabanis2013E,
GoengySabanisS2014,
jp2015,JentzenPusnik2016} for further research articles on 
explicit approximation schemes for stochastic differential
equations with superlinearly growing nonlinearities).
Moreover, note that Theorem~\ref{thm:intro} 
demonstrates that the full-discrete
numerical approximations in~\eqref{eq:intro_scheme}
converge for every $ \varepsilon \in (0, \infty) $ 
strongly to the solution of the stochastic Allen-Cahn 
equation~\eqref{eq:intro_allen_cahn} with the spatial 
rate of convergence $ \nicefrac{1}{2} - \varepsilon $
and the temporal rate of convergence $ \nicefrac{1}{4} - \varepsilon $.
We also would like to point out that the 
strong convergence rates established in Theorem~\ref{thm:intro}
can, in general,
not essentially be improved. More formally, 
Corollary~\ref{cor:lower_bounds} below proves
in the case where $ \sum_{i=0}^3 |a_i| = 0 $ 
and $ \xi = 0 $ in the framework 
of Theorem~\ref{thm:intro} that
there exist real numbers $ c, C \in (0,\infty) $
such that for all $ M, N \in \N $
we have that
\begin{align}
\label{eq:intro_conv_bounds_1}
\begin{split}
 &c \,
  M^{-\nicefrac{1}{4}}
  \leq 
  \adjustlimits\lim_{ n \rightarrow \infty }
  \sup_{ t \in [0,T] }
  \left(
    \E\big[ 
      \|
        X_t - \mathcal{X}_t^{M,n}
      \|_{ H }^p 
    \big] 
  \right)^{\!\nicefrac{1}{p}}
  \leq
  C
  M^{-\nicefrac{1}{4}}
\end{split}
\end{align}
and 
\begin{align}
\label{eq:intro_conv_bounds_2}
\begin{split}
 &c \,
  N^{-\nicefrac{1}{2}}
  \leq 
  \adjustlimits\lim_{ m \rightarrow \infty }
  \sup_{ t \in [0,T] }
  \left(
    \E\big[
      \|
        X_t - \mathcal{X}_t^{m,N}
      \|_{ H }^p
    \big] 
  \right)^{\!\nicefrac{1}{p}}
  \leq
  C 
  N^{-\nicefrac{1}{2}} .
\end{split}
\end{align}
Inequalities~\eqref{eq:intro_conv_bounds_1} 
and~\eqref{eq:intro_conv_bounds_2} thus
show that the spatial rate $ \nicefrac{1}{2} - \varepsilon $
and the temporal rate $ \nicefrac{1}{4} - \varepsilon $ 
established in Theorem~\ref{thm:intro} can essentially
not be improved. Further related lower bounds 
for strong approximation errors in the linear case 
$ \sum_{i=0}^3 |a_i| = 0 $ can, e.g., be found in
M{\"u}ller-Gronbach, Ritter, \& Wagner~\cite[Theorem~1]{mgritterwager2006},
M{\"u}ller-Gronbach \& Ritter~\cite[Theorem~1]{mgritter2007},
M{\"u}ller-Gronbach, Ritter, \& Wagner~\cite[Theorem~4.2]{mgritterwagner2008},
Conus, Jentzen, \& Kurniawan~\cite[Lemma~6.2]{ConusJentzenKurniawan2014},
and~Jentzen \& Kurniawan~\cite[Corollary~9.4]{JentzenKurniawan2015}.

Finally, we would like to add some comments
on the proof of Theorem~\ref{thm:intro} above 
and Theorem~\ref{thm:main} below, respectively.
The main difficulty to prove Theorem~\ref{thm:intro} 
is to obtain uniform
a priori moment bounds for the space-time discrete 
numerical approximations~\eqref{eq:intro_scheme}
(see Section~\ref{sec:a_priori_pw} and 
Section~\ref{sec:a_priori_moment} below).
Once the uniform a priori moment bounds have been established, we 
exploit the fact that the nonlinearity of the 
stochastic Allen-Cahn equation satisfies a
global monotonicity property to prevent
that the local discretization errors 
accumulate too quickly.
It thus remains to sketch our procedure 
to establish
uniform a priori moment bounds for 
the numerical approximations.
We first subtract 
the noise process from~\eqref{eq:intro_scheme}
as it is often done in the literature.
The key idea that we use to derive uniform a priori bounds for the subtracted 
equation is then to employ a 
suitable path-dependent Lyapunov-type function
which on the one hand incorporates the dissipative dynamics 
of the stochastic Allen-Cahn equation~\eqref{eq:intro_allen_cahn} 
and which on the other hand respects the spatial spectral
Galerkin approximations used for the spatial discretization 
of~\eqref{eq:intro_allen_cahn}.
More formally, a key contribution of this work is to reveal that there exists
a suitable $ \B( \C([0,T], L^{\infty}( (0,1); \R ) ) ) / \B( [0,\infty) ) $-measurable mapping
$ \phi \colon \C([0,T], L^{\infty}( (0,1); \R ) ) \rightarrow [0,\infty) $ 
such that for every $ N \in \N $ we have that the mapping
\begin{align}
\label{eq:intro_lyapunov}
\begin{split}
 &P_N(H) \times \C( [0,T], L^{\infty}( (0,1); \R ) )
  \ni 
  (v,w) \mapsto 
  \| (-A)^{\nicefrac{1}{2}} v \|_H^2 
  +
  \phi(w) \| v \|_H^2
  \in \R
\end{split}
\end{align}
is an appropriate path dependent Lyapunov-type function for the 
system of the $ N $-dimensional spatial spectral Galerkin approximation
of the subtracted equation associated to the stochastic
Allen-Cahn equation~\eqref{eq:intro_allen_cahn} (variable 
$ v \in P_N(H) $) and the $ N $-dimensional spatial spectral Galerkin approximation
of the Ornstein-Uhlenbeck process (variable $ w \in \C([0,T], L^{\infty}( (0,1); \R ) ) $).
It is crucial that the Lyapunov-type 
function~\eqref{eq:intro_lyapunov} does 
not only depend on $ w_T $ but on
the whole path $ w_t $, $ t \in [0,T] $, of $ w $. Applying the fundamental theorem
of calculus to~\eqref{eq:intro_lyapunov} results,
roughly speaking, in the 
coercivity type condition that there exist real numbers
$ \epsilon \in [0,1) $, $ c \in (0,\infty) $ and 
$ \B( \C([0,T], L^{\infty}( (0,1); \R ) ) ) / \B( [0,\infty) ) $-measurable mappings
$ \phi, \Phi \colon \C([0,T], L^{\infty}( (0,1); \R ) ) \rightarrow [0,\infty) $
such that for every
$ N \in \N $, $ v \in P_N(H) $, $ w \in \C( [0,T], L^{\infty}( (0,1); \R ) ) $
we have that
\begin{align}
\label{eq:intro_lyapunov_2}
\begin{split}
 &\sup_{ t \in [0,T] }
  \left(
  \langle
    (-A)^{\nicefrac{1}{2}} v, (-A)^{\nicefrac{1}{2}} P_N F(v+w_t)
  \rangle_{ H }
  +
  \phi(w)
  \langle
    v, F(v+w_t)
  \rangle_H
  \right)
\\&
  \leq
  \epsilon
  \| A v \|_{ H }^2
  +
  (c + \phi(w))
  \| (-A)^{\nicefrac{1}{2}} v \|_{ H }^2
  +
  c \phi(w)
  \| v \|_H^2
  +
  \Phi(w) .
\end{split}
\end{align}
Essentially, the coercivity type condition~\eqref{eq:intro_lyapunov_2}
appears as one of our assumptions of
Theorem~\ref{thm:main} below
(see~\eqref{eq:main_coercivity} in
Section~\ref{sec:main_result_setting} below for details).
Our proposal for this specific Lyapunov-type function is partially
inspired by the arguments in Section~4 in Bianchi, Bl{\"o}mker, \& 
Schneider~\cite{BianchiBloemkerSchenider2017}
(cf. \cite[Theroem~4.1 and Lemma~4.4]{BianchiBloemkerSchenider2017}).

The remainder of this article is structured as follows. 
Section~\ref{sec:a_priori_pw} establishes suitable
a priori bounds for the numerical approximations.
In Section~\ref{sec:pw_section} the error analysis for
the considered nonlinearity-truncated approximation schemes is
carried out in the pathwise sense and in Section~\ref{sec:lp_section}
we perform the error analysis for these numerical schemes
in the strong $ L^p $-sense.
In Section~\ref{sec:main_result} we combine the results from
Section~\ref{sec:lp_section} with appropriate uniform a priori moment
bounds for the numerical approximation processes 
(see Section~\ref{sec:a_priori_pw}) to establish
Theorem~\ref{thm:main} which is the main result of this article.
Section~\ref{sec:examples} makes sure that the assumptions of
Theorem~\ref{thm:main} are satisfied for stochastic Allen-Cahn
equations and finally, in Section~\ref{sec:lower_bounds_section}
we prove lower and upper bounds for strong approximation errors
of numerical approximations of linear stochastic heat equations.

\subsection{Notation}
\label{sec:notation}
Throughout this article the following notation is used.
For every measurable space $ (A, \mathcal{A}) $
and every measurable space $ (B, \mathcal{B}) $ we denote by
$ \M( \mathcal{A}, \mathcal{B} ) $ the
set of all $ \mathcal{A} / \mathcal{B} $-measurable functions.
For every set $ A $ we denote by 
$ \#_A  \in \{ 0, 1, 2, \ldots \} \cup \{ \infty \} $
the number of elements of $ A $,
we denote by $ \Pow(A) $
the power set of $ A $, 
and we denote by 
$ \Pow_0(A) $ the set given by
$ \Pow_0(A) = \{ B \in \Pow(A) \colon \#_B < \infty \} $.
For every set $ A $ and every set $ \mathcal{A} $
with $ \mathcal{A} \subseteq \mathcal{P}(A) $
we denote by $ \sigma_A( \mathcal{A} ) $ the smallest 
sigma-algebra on $ A $ which contains $ \mathcal{A} $.
For every topological space $ (X, \tau) $ we denote by 
$ \B(X) $ the set given by 
$ \B(X) = \sigma_X( \tau ) $.
For every natural number $ d \in \N $ and every
set $ A \in \B(\R^d) $
we denote by $ \lambda_A \colon \B(A) \rightarrow [0, \infty] $
the Lebesgue-Borel measure on $ A $.
We denote by $ \fl[h]{\cdot} \colon \R \rightarrow \R $, 
$ h \in (0,\infty) $,
the functions which satisfy
for all $ h \in (0,\infty) $, $ t \in \R $ that 
$
  \fl[h]{t}
  =
  \max\!\left(
    \{ 0, h, -h, 2h, -2h, \ldots \} \cap (-\infty, t]
  \right)
$. 
For every measure space
$ (\Omega, \F, \nu) $,
every measurable space 
$ (S, \S) $,
every set $ R $,
and every
function $ f \colon \Omega \rightarrow R $
we denote by $ [f]_{\nu,\S} $ the set given by
$ 
  [f]_{\nu,\S} 
  =
  \{
    g \in \M(\F,\S)
    \colon
    ( 
      \exists \, A \in \F
      \colon
      \nu(A) = 0
      \text{ and }
      \{
        \omega \in \Omega
        \colon
        f(\omega)
        \neq
        g(\omega)
      \}
      \subseteq A
    )
  \}
$.
For every set $ \Omega $ and every set $ A $ we 
denote by $ \one_A^{\Omega} \colon \Omega \rightarrow \R $
the function which satisfies for all 
$ x \in \Omega $ that
\begin{align}
 &\one_A^{\Omega}(x)
  =
  \begin{cases}
    1 & \colon x \in A \\
    0 & \colon x \notin A .
  \end{cases}
\end{align}

\subsection{Acknowledgments}
We thank Dirk Bl{\"o}mker 
for fruitful discussions and for pointing out
his instructive paper
Bianchi, Bl{\"o}mker, \& Schneider~\cite{BianchiBloemkerSchenider2017}
to us.
This work has been partially supported
through the SNSF-Research project
200021\_156603
``Numerical approximations of nonlinear
stochastic ordinary and partial differential
equations''.

\section{A priori bounds for the numerical approximation}
\label{sec:a_priori_pw}

\begin{lemma}
\label{lem:Y_a_priori}
Consider the notation in Section~\ref{sec:notation}, 
let $ ( H, \left< \cdot, \cdot \right>_H, \left\| \cdot \right\|_H ) $
be a separable $ \R $-Hilbert space,
let $ \H \subseteq H $ be a non-empty orthonormal basis of $ H $,
let $ T, \varphi, c \in (0,\infty) $, $ C \in [0,\infty) $,
$ \epsilon, \kappa, \rho \in [0,1) $, $ \gamma \in (\rho,1) $,
$ 
  \chi 
  \in 
  (0,\nicefrac{(\gamma-\rho)}{(1+\nicefrac{\varphi}{2})}] 
  \cap 
  (0,\nicefrac{(1-\rho)}{(1+\varphi)}]
$, 
$ M \in \N $,
$ \mu \colon \H \rightarrow \R $ satisfy $ \sup_{ h \in \H } \mu_h < 0 $,
let $ A \colon D(A) \subseteq H \rightarrow H $ be the linear operator
which satisfies
$ 
  D(A) 
  = 
  \{ 
    v \in H 
    \colon 
    \sum_{ h \in \H } 
    | 
      \mu_h 
      \langle h, v \rangle_H 
    |^2
    <
    \infty
  \}
$
and 
$
  \forall \, v \in D(A)
  \colon
  Av
  =
  \sum_{ h \in \H }
  \mu_h
  \langle h,v \rangle_H h
$,
let $ (H_r, \langle \cdot, \cdot \rangle_{H_r}, \left\| \cdot \right\|\!_{H_r} ) $,
$ r \in \R $, be a family of interpolation spaces associated to $ -A $
(cf., e.g., \cite[Section~3.7]{sy02}),
let $ I \in \Pow_0(\H) $, $ P \in L(H) $
satisfy for all $ v \in H $ that
$ 
  P(v) 
  = 
  \sum_{ h \in I } 
  \langle h,v \rangle_H h 
$,
and let 
$ \Y{} \colon [0,T] \rightarrow P(H) $,
$ \O{} \in \C( [0,T], P(H) ) $,
$ F \in \C( P(H),H ) $,
$ \phi, \Phi \colon \C([0,T], P(H)) \rightarrow [0,\infty) $
satisfy for all 
$ u,v \in P(H) $, 
$ w \in \C([0,T], P(H)) $, 
$ t \in [0,T] $ that
\begin{equation}
  \| F(u) \|_H^2
  \leq 
  C 
  \max\{ 1, \| u \|_{H_{\gamma}}^{(2+\varphi)} \},
\end{equation}
\begin{equation}
  \|
    F(u) - F(v)
  \|_H^2
  \leq
  C
  \max\{ 1, \| u \|_{ H_{\gamma} }^{\varphi} \}
  \| u-v \|_{ H_{\rho} }^2
  +
  C
  \| u-v \|_{ H_{\rho} }^{(2+\varphi)},
\end{equation}
\begin{align}
\begin{split}
 &\langle v, P F(v+w_t) \rangle_{ H_{\nicefrac{1}{2}} }
  +
  \phi(w)
  \langle v, F(v+w_t) \rangle_H
\\&\leq
  \epsilon \| v \|_{H_1}^2
  +
  (c + \phi(w))
  \| v \|_{ H_{\nicefrac{1}{2}} }^2
  +
  \kappa c
  \phi(w)
  \| v \|_H^2
  +
  \Phi(w),
\end{split}
\end{align}
\begin{equation}
  \text{and}
  \qquad
  \Y{t}
  =
  \int_0^t
  P
  e^{(t-s)A}
  \one_{ 
    [ 0, (\nicefrac{M}{T})^{\chi} ]
  }^{\R}
  ( \| \Y{\fl{s}} \|_{ H_{\gamma} } + \| \O{\fl{s}} \|_{ H_{\gamma} } ) \,
  F( \Y{\fl{s}} ) \, ds
  +
  \O{t}.
\end{equation}
Then
\begin{enumerate}[(i)]
\item\label{it:Y_a_priori_1}
  we have that the function 
  $
    [0,T] \ni t
    \mapsto
    \Y{t} - \O{t}
    \in P(H)
  $
  is continuous and
\item\label{it:Y_a_priori_2}
  we have that 
\begin{align}
\begin{split}
 &\sup_{ t \in [0,T] }
  \big(
    \| \Y{t} - \O{t} \|_{H_{\nicefrac{1}{2}}}^2
    +
    \phi(\O{})
    \| \Y{t} - \O{t} \|_{H}^2
  \big)
\\&\leq
  \frac{ 
    e^{2cT}  
  }{ c }
  \left(
    \Phi(\O{})
    +    
    \frac{
      \max\{1, \phi(\O{})\}
      C(c+1)
    }{2(1-\epsilon)(1-\kappa)c}
    \left[ 
      \tfrac{\max\{1,T\} (1+\sqrt{C})}{(1-\rho)}
    \right]^{\!(2+\varphi)}
  \right) .
\end{split}
\end{align}
\end{enumerate}
\end{lemma}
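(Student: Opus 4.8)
The plan is to introduce $X_t := \Y{t} - \O{t}$ for $t \in [0,T]$ and to study the path-dependent energy $V_t := \|X_t\|_{H_{1/2}}^2 + \phi(\O{})\,\|X_t\|_H^2$; I abbreviate $\phi := \phi(\O{})$ and $\Phi := \Phi(\O{})$. Since $\fl{\cdot}$ takes only finitely many values, $s \mapsto G(s) := \one_{[0,(M/T)^\chi]}^{\R}(\|\Y{\fl{s}}\|_{H_\gamma} + \|\O{\fl{s}}\|_{H_\gamma})\,F(\Y{\fl{s}})$ is a bounded simple function, so $X_t = \int_0^t P e^{(t-s)A} G(s)\,ds$ is well defined, and $\|e^{rA}\|_{L(H)} \le 1$ together with dominated convergence gives $X \in \C([0,T],P(H))$; this is assertion~\eqref{it:Y_a_priori_1}. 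As $P(H)$ is finite dimensional, $A$ restricts there to a bounded operator, so $X$ is piecewise $C^1$ with $X_0 = 0$ and, on the interior of each interval $((m-1)T/M, mT/M)$, $\dot X_t = A X_t + P G(t)$. Using $\langle v, A v \rangle_{H_r} = -\|v\|_{H_{r+1/2}}^2$ for $v \in P(H)$, this yields, for a.e.\ $t \in [0,T]$,
\[
  \dot V_t = -2\|X_t\|_{H_1}^2 - 2\phi\|X_t\|_{H_{1/2}}^2 + 2\langle X_t, P G(t) \rangle_{H_{1/2}} + 2\phi\langle X_t, P G(t) \rangle_H .
\]
I claim this can be upgraded to the differential inequality $\dot V_t \le 2cV_t + 2\Phi + C''$ with $C'' := \frac{\max\{1,\phi\}\,C(c+1)}{(1-\epsilon)(1-\kappa)c}\big[\frac{\max\{1,T\}(1+\sqrt{C})}{1-\rho}\big]^{2+\varphi}$. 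Granting this, the continuity of $V$, $V_0 = 0$ and Gronwall's lemma give $\sup_{t\in[0,T]} V_t \le \frac{e^{2cT}}{2c}(2\Phi + C'') = \frac{e^{2cT}}{c}\big(\Phi + \tfrac{C''}{2}\big)$, which is exactly assertion~\eqref{it:Y_a_priori_2}.

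To obtain the differential inequality I distinguish two cases, using that $G$ is constant on each interval $((m-1)T/M, mT/M)$. If $G$ vanishes there, then $\dot V_t = -2\|X_t\|_{H_1}^2 - 2\phi\|X_t\|_{H_{1/2}}^2 \le 0$, which suffices. Otherwise $G \equiv F(\Y{\fl{t}})$ on the interval, the truncation forces $\|\Y{\fl{t}}\|_{H_\gamma} + \|\O{\fl{t}}\|_{H_\gamma} \le (M/T)^\chi$, hence $\|X_{\fl{t}}\|_{H_\gamma} \le (M/T)^\chi$, and the first growth hypothesis gives $\|F(\Y{\fl{t}})\|_H \le \sqrt{C}\max\{1,(M/T)^{\chi(1+\varphi/2)}\}$. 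I then write $X_t = X_{\fl{t}} + (X_t - X_{\fl{t}})$ in the two inner products in $\dot V_t$ and apply the coercivity hypothesis with $v = X_{\fl{t}}$, $w = \O{}$ and time argument $\fl{t}$ (so that $v + w_{\fl{t}} = \Y{\fl{t}}$) to control $\langle X_{\fl{t}}, P F(\Y{\fl{t}}) \rangle_{H_{1/2}} + \phi\langle X_{\fl{t}}, F(\Y{\fl{t}}) \rangle_H$. Restoring $X_{\fl{t}} \rightsquigarrow X_t$ in the resulting norms and using $-2\phi\|X_t\|_{H_{1/2}}^2 + 2(c+\phi)\|X_t\|_{H_{1/2}}^2 = 2c\|X_t\|_{H_{1/2}}^2$ together with $2c\|X_t\|_{H_{1/2}}^2 + 2\kappa c\phi\|X_t\|_H^2 = 2cV_t - 2(1-\kappa)c\phi\|X_t\|_H^2$ produces
\[
  \dot V_t \le -2(1-\epsilon)\|X_t\|_{H_1}^2 - 2(1-\kappa)c\phi\|X_t\|_H^2 + 2cV_t + 2\Phi + \mathcal{E}_t ,
\]
where $\mathcal{E}_t$ collects the terms generated by the substitution $X_{\fl{t}} \rightsquigarrow X_t$, namely $\langle X_t - X_{\fl{t}}, P F(\Y{\fl{t}}) \rangle_{H_{1/2}}$, $\phi\langle X_t - X_{\fl{t}}, F(\Y{\fl{t}}) \rangle_H$, and the mismatches between the $\|\cdot\|_{H_1}^2$, $\|\cdot\|_{H_{1/2}}^2$, $\|\cdot\|_H^2$ values of $X_{\fl{t}}$ and of $X_t$.

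The heart of the proof is then the estimate $\mathcal{E}_t \le 2(1-\epsilon)\|X_t\|_{H_1}^2 + 2(1-\kappa)c\phi\|X_t\|_H^2 + C''$, which, combined with the display above, closes the argument. Here I would use the representation $X_t - X_{\fl{t}} = (e^{(t-\fl{t})A} - \Id)X_{\fl{t}} + (-A)^{-1}(\Id - e^{(t-\fl{t})A})P F(\Y{\fl{t}})$ with $t - \fl{t} \le T/M$; the smoothing bounds $\|(e^{rA}-\Id)(-A)^{-\theta}\|_{L(H)} \le r^\theta$ and $\|(-A)^\theta e^{rA}\|_{L(H)} \le r^{-\theta}$ for $\theta \in [0,1]$, which in combination with the $\gamma$-bound on $X_{\fl{t}}$ yield $\|X_t - X_{\fl{t}}\|_{H_\rho}$ of the order $\tfrac{\max\{1,T\}(1+\sqrt{C})}{1-\rho}$ and thereby the bracketed factor in $C''$; the elementary identity $\int_0^r \langle e^{vA}P F(\Y{\fl{t}}), P F(\Y{\fl{t}}) \rangle_{H_{1/2}}\,dv = \langle (\Id - e^{rA})P F(\Y{\fl{t}}), P F(\Y{\fl{t}}) \rangle_H \le \|F(\Y{\fl{t}})\|_H^2$, which is crucial because it avoids any norm of $P F(\Y{\fl{t}})$ stronger than $\|\cdot\|_H$ (those being not bounded uniformly in $\#_I$); the identity $\|e^{rA}v\|_{H_{1/2}}^2 = \|v\|_{H_{1/2}}^2 - 2\int_0^r\|e^{sA}v\|_{H_1}^2\,ds$ and the monotonicity of $s \mapsto \|e^{sA}v\|_{H_\sigma}$, so that a portion of the parabolic dissipation reappears on the favourable side of Young's inequality instead of being spent; the second (local Lipschitz / growth) hypothesis to handle nonlinearity differences; and, decisively, the two restrictions $\chi \le (\gamma-\rho)/(1+\varphi/2)$ and $\chi \le (1-\rho)/(1+\varphi)$, which ensure that every positive power of $M$ produced by $\|F(\Y{\fl{t}})\|_H$ is exactly cancelled by a negative power $(T/M)^{(\cdot)}$ coming from the factor $t - \fl{t}$ and the smoothing exponents. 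Organising $\mathcal{E}_t$ so that it is genuinely absorbed into the available dissipation — which is unavoidable, since several of its individual terms are not bounded uniformly in $\#_I$ — while simultaneously keeping track of all powers of $M$, $c$, $\epsilon$, $\kappa$, $\rho$, $\varphi$ and $C$ so as to land precisely on the constant $C''$, is the main obstacle; everything else is routine bookkeeping.
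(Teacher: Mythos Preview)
Your overall plan and the Gronwall endgame are fine, but the decomposition you choose creates a term that cannot be controlled uniformly in $\#_I$. The culprit is the choice to apply the coercivity hypothesis with $v = X_{\lfloor t\rfloor}$: this produces $\epsilon\,\|X_{\lfloor t\rfloor}\|_{H_1}^2$ on the right, whereas the parabolic dissipation is $-2\,\|X_t\|_{H_1}^2$. The resulting mismatch $2\epsilon\big(\|X_{\lfloor t\rfloor}\|_{H_1}^2-\|X_t\|_{H_1}^2\big)$, which you place in $\mathcal{E}_t$, is \emph{not} absorbable by $2(1-\epsilon)\|X_t\|_{H_1}^2$ uniformly in $\#_I$. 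Concretely, take $X_{\lfloor t\rfloor}=a\,h$ for a single mode $h\in I$ with $|\mu_h|=\Lambda$ and $a\Lambda^{\gamma}=(M/T)^{\chi}$ (so the truncation bound is saturated), and let $r=t-\lfloor t\rfloor$. Then $\|X_{\lfloor t\rfloor}\|_{H_1}^2-\|X_t\|_{H_1}^2 \ge (1-e^{-2\Lambda r})a^2\Lambda^2 - O(\|F\|_H^2)$, while $\|X_t\|_{H_1}^2\le 2e^{-2\Lambda r}a^2\Lambda^2+O(\|F\|_H^2)$; for $\Lambda r\gg 1$ the first quantity is $\approx a^2\Lambda^2=\Lambda^{2-2\gamma}(M/T)^{2\chi}$, which diverges as $\Lambda\to\infty$ since $\gamma<1$, whereas the second is exponentially small. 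The same obstruction reappears in the $(e^{rA}-\Id)X_{\lfloor t\rfloor}$ part of the cross term $\langle X_t-X_{\lfloor t\rfloor},PF\rangle_{H_{1/2}}$: any Cauchy--Schwarz/Young splitting forces either $\|(e^{rA}-\Id)X_{\lfloor t\rfloor}\|_{H_1}$ or $\|PF\|_{H_{1/2}}$, neither of which is uniform in $\#_I$. None of the tools you list (the identity $\|e^{rA}v\|_{H_{1/2}}^2=\|v\|_{H_{1/2}}^2-2\int_0^r\|e^{sA}v\|_{H_1}^2\,ds$, monotonicity of $s\mapsto\|e^{sA}v\|_{H_\sigma}$, the $\chi$-restrictions) produces the missing $H_{1-\gamma}$ gain.

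The fix---and this is exactly what the paper does---is to apply the coercivity hypothesis with $v=X_t$ (current time) rather than $v=X_{\lfloor t\rfloor}$. Then $\epsilon\|X_t\|_{H_1}^2$ is absorbed \emph{exactly} by the dissipation $-\|X_t\|_{H_1}^2$, with no $H_1$ mismatch at all. The price is that $v+w_{\lfloor t\rfloor}=X_t+\O{\lfloor t\rfloor}$ is not $\Y{\lfloor t\rfloor}$, so one must control the residual $F(\Y{\lfloor t\rfloor})-F(X_t+\O{\lfloor t\rfloor})=F(X_{\lfloor t\rfloor}+\O{\lfloor t\rfloor})-F(X_t+\O{\lfloor t\rfloor})$. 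The local Lipschitz hypothesis bounds this in $\|\cdot\|_H$ by quantities involving only $\|X_t-X_{\lfloor t\rfloor}\|_{H_\rho}$, and since $\rho<\gamma$ this \emph{is} uniformly controlled: $\|(e^{rA}-\Id)X_{\lfloor t\rfloor}\|_{H_\rho}\le r^{\gamma-\rho}\|X_{\lfloor t\rfloor}\|_{H_\gamma}$. After a single Young inequality (spending $(1-\epsilon)\|X_t\|_{H_1}^2$ and $(1-\kappa)c\phi\|X_t\|_H^2$), the remaining error is exactly the $H$-norm squared of this $F$-difference, and the two $\chi$-restrictions make that bounded by the stated constant. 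In short: shift the time in the argument of $F$, not in the test vector $v$.
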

\begin{proof}[Proof of Lemma~\ref{lem:Y_a_priori}]
Throughout this proof
assume w.l.o.g.\ that $ I \neq \emptyset $
and let
$ \Yb{} \colon [0,T] \rightarrow P(H) $
and
$ Z \colon [0,T] \rightarrow \{0,1\} $
be the functions which
satisfy for all
$ s \in [0,T] $ that
\begin{equation}
  \Yb{s} = \Y{s} - \O{s}
  \qquad 
  \text{and}
  \qquad
  Z_s
  =
  \one_{ 
    [ 0, (\nicefrac{M}{T})^{\chi} ]
  }^{\R}
  ( \| \Y{s} \|_{ H_{\gamma} } + \| \O{s} \|_{ H_{\gamma} } ).
\end{equation}
Observe that, e.g., Lemma~2.4 in~\cite{JentzenSalimovaWelti2016}
(with 
$ V = P(H) $,
$ 
  \left\| \cdot \right\|\!_V
  =
  P(H) \ni v \mapsto \left\| v \right\|\!_H^2 \in [0,\infty)
$,
$ T = T $, $ \eta = 0 $,
$ A = P(H) \ni v \mapsto Av \in P(H) $,
$ 
  \mathbb{V} 
  = 
  P(H) \ni v \mapsto 
  \| v \|_{H_{\nicefrac{1}{2}}}^2 + \phi(\O{}) \| v \|_H^2 \in \R 
$,
$ 
  Z 
  = 
  [0,T] \times \Omega 
  \ni
  (t, \omega)
  \mapsto 
  Z_t(\omega)
  \in 
  \R
$, 
$ Y = \Y{} $, $ O = \O{} $, $ \mathbb{O} = \O{} $,
$ F = P(H) \ni v \mapsto PF(v) \in P(H) $, 
$ \phi = V \ni v \mapsto 2c \in \R $,
$ f = V \ni v \mapsto 0 \in \R $, $ h = \nicefrac{T}{M} $ in the notation of
Lemma~2.4 in~\cite{JentzenSalimovaWelti2016})
implies that~\eqref{it:Y_a_priori_1} holds
and that for all $ t \in [0,T] $ 
it holds that
\begin{align}
\begin{split}
&
  e^{-2ct}
  \left[
    \| \Yb{t} \|_{H_{\nicefrac{1}{2}}}^2
    +
    \phi(\O{})
    \| \Yb{t} \|_H^2
  \right]
\\ &=
  2
  \int_0^t
  e^{-2cs} \,
  \Big[
    \langle 
      \Yb{s},
      A \Yb{s}
      +
      Z_{\fl{s}}
      P F( \Yb{s} + \O{\fl{s}} )
    \rangle_{H_{\nicefrac{1}{2}}}
\\&
\quad
+
    \phi(\O{})
    \langle 
      \Yb{s},
      A \Yb{s}
      +
      Z_{\fl{s}}
      P F( \Yb{s} + \O{\fl{s}} )
    \rangle_H
  \Big] \, ds
\\&
\quad
+
  2
  \int_0^t
  e^{-2cs} \,
  Z_{\fl{s}}
  \Big[ 
    \langle 
      \Yb{s},
      P [
        F( \Y{\fl{s}} )
        -
        F( \Yb{s} + \O{\fl{s}} )
      ]
    \rangle_{H_{\nicefrac{1}{2}}}
\\&
\quad
+
    \phi(\O{})
    \langle 
      \Yb{s},
      P [
        F( \Y{\fl{s}} )
        -
        F( \Yb{s} + \O{\fl{s}} )
      ]
    \rangle_H
  \Big] \, ds
\\&
\quad
-
  2c 
  \int_0^t
  e^{-2cs}
  \left[
    \| \Yb{s} \|_{H_{\nicefrac{1}{2}}}^2
    +
    \phi(\O{})
    \| \Yb{s} \|_H^2
  \right] ds .
\end{split}
\end{align}
The fact that
$ P \in L(H) $
is symmetric
hence proves for all 
$ t \in [0,T] $ that
\begin{align}
\begin{split}
 &e^{-2ct}
  \left[
    \| \Yb{t} \|_{H_{\nicefrac{1}{2}}}^2
    +
    \phi(\O{})
    \| \Yb{t} \|_H^2
  \right]
\\&=
  -2
  \int_0^t
  e^{-2cs}
  \left[
    \langle
      \Yb{s},
      (-A) \Yb{s}
    \rangle_{H_{\nicefrac{1}{2}}}
    +
    \phi(\O{})
    \langle 
      \Yb{s},
      (-A) \Yb{s}
    \rangle_H
  \right] ds
\\&\quad+
  2
  \int_0^t
  e^{-2cs} \,
  Z_{\fl{s}}
  \left[
    \langle
      \Yb{s},
      P F( \Yb{s} + \O{\fl{s}} )
    \rangle_{H_{\nicefrac{1}{2}}}
    +
    \phi(\O{})
    \langle
      P
      \Yb{s},
      F( \Yb{s} + \O{\fl{s}} )
    \rangle_H
  \right] ds
\\&\quad+
  2
  \int_0^t
  e^{-2cs} \,
  Z_{\fl{s}}
  \Big[ 
    \langle 
      (-A)^{\nicefrac{1}{2}}
      \Yb{s},
      (-A)^{\nicefrac{1}{2}}
      P [
        F( \Y{\fl{s}} )
        -
        F( \Yb{s} + \O{\fl{s}} )
      ]
    \rangle_H
\\&\quad+
    \phi(\O{})
    \langle 
      P \Yb{s},
      F( \Y{\fl{s}} )
      -
      F( \Yb{s} + \O{\fl{s}} )
    \rangle_H
  \Big] \, ds
\\&\quad- 
  2c 
  \int_0^t
  e^{-2cs}
  \left[
    \| \Yb{s} \|_{H_{\nicefrac{1}{2}}}^2
    +
    \phi(\O{})
    \| \Yb{s} \|_H^2
  \right] ds .
\end{split}
\end{align}
The fact that
$
  \forall \,
  s \in [0,T]
  \colon
  \Yb{s} \in P(H)
$
therefore implies for all 
$ t \in [0,T] $ that
\begin{align}
\begin{split}
 &e^{-2ct}
  \left[
    \| \Yb{t} \|_{H_{\nicefrac{1}{2}}}^2
    +
    \phi(\O{})
    \| \Yb{t} \|_H^2
  \right]
\\&=
  -2
  \int_0^t
  e^{-2cs}
  \left[
    \langle
      (-A) \Yb{s},
      (-A) \Yb{s}
    \rangle_H
    +
    \phi(\O{})
    \langle 
      (-A)^{\nicefrac{1}{2}}
      \Yb{s},
      (-A)^{\nicefrac{1}{2}} \Yb{s}
    \rangle_H
  \right] ds
\\&\quad+
  2
  \int_0^t
  e^{-2cs} \,
  Z_{\fl{s}}
  \left[
    \langle
      \Yb{s},
      P F( \Yb{s} + \O{\fl{s}} )
    \rangle_{H_{\nicefrac{1}{2}}}
    +
    \phi(\O{})
    \langle
      \Yb{s},
      F( \Yb{s} + \O{\fl{s}} )
    \rangle_H
  \right] ds
\\&\quad+
  2
  \int_0^t
  e^{-2cs} \,
  Z_{\fl{s}}
  \Big[ 
    \langle 
      (-A)
      \Yb{s},
      P [
        F( \Y{\fl{s}} )
        -
        F( \Yb{s} + \O{\fl{s}} )
      ]
    \rangle_H
\\&\quad+
    \phi(\O{})
    \langle 
      \Yb{s},
      F( \Y{\fl{s}} )
      -
      F( \Yb{s} + \O{\fl{s}} )
    \rangle_H
  \Big] \, ds
\\&\quad- 
  2c 
  \int_0^t
  e^{-2cs}
  \left[
    \| \Yb{s} \|_{H_{\nicefrac{1}{2}}}^2
    +
    \phi(\O{})
    \| \Yb{s} \|_H^2
  \right] ds .
\end{split}
\end{align}
This and the Cauchy-Schwarz inequality
ensure for all $ t \in [0,T] $ that
\begin{align}
\begin{split}
 &e^{-2ct}
  \left[
    \| \Yb{t} \|_{H_{\nicefrac{1}{2}}}^2
    +
    \phi(\O{})
    \| \Yb{t} \|_H^2
  \right]
\\&\leq
  -2
  \int_0^t
  e^{-2cs}
  \left[
    \| \Yb{s} \|_{H_1}^2
    +
    (
      c
      +
      \phi(\O{})
    )
    \| \Yb{s} \|_{H_{\nicefrac{1}{2}}}^2
    +
    c \,
    \phi(\O{})
    \| \Yb{s} \|_H^2
  \right] ds
\\&\quad+
  2
  \int_0^t
  e^{-2cs} \,
  Z_{\fl{s}}
  \left[
    \langle
      \Yb{s},
      P F( \Yb{s} + \O{\fl{s}} )
    \rangle_{H_{\nicefrac{1}{2}}}
    +
    \phi(\O{})
    \langle
      \Yb{s},
      F( \Yb{s} + \O{\fl{s}} )
    \rangle_H
  \right] ds
\\&\quad+
  2
  \int_0^t
  e^{-2cs} \,
  Z_{\fl{s}}
  \Bigg[ 
    \left( 
      \sqrt{2(1-\epsilon)} \,
      \| \Yb{s} \|_{H_1}
    \right) \!
    \left(
      \tfrac{1}{\sqrt{2(1-\epsilon)}} \,
      \|
        P [
          F( \Y{\fl{s}} )
          -
          F( \Yb{s} + \O{\fl{s}} )
        ]
      \|_H
    \right)
\\&\quad+
    \phi(\O{})
    \left(
      \sqrt{2c(1-\kappa)} \,
      \| \Yb{s} \|_H
    \right) \!
    \left(
      \tfrac{1}{\sqrt{2c(1-\kappa)}} \,
      \|
        F( \Y{\fl{s}} )
        -
        F( \Yb{s} + \O{\fl{s}} )
      \|_H
    \right)
  \Bigg] \, ds .
\end{split}
\end{align}
The fact that
\begin{equation}
  \forall \,
  x,y \in \R
  \colon
  2 xy \leq x^2 + y^2
\end{equation}
hence
proves that for all $ t \in [0,T] $
we have that
\begin{align}
\begin{split}
 &e^{-2ct}
  \left[
    \| \Yb{t} \|_{H_{\nicefrac{1}{2}}}^2
    +
    \phi(\O{})
    \| \Yb{t} \|_H^2
  \right]
\\&\leq
  -2
  \int_0^t
  e^{-2cs}
  \left[
    \| \Yb{s} \|_{H_1}^2
    +
    (
      c
      +
      \phi(\O{})
    )
    \| \Yb{s} \|_{H_{\nicefrac{1}{2}}}^2
    +
    c \,
    \phi(\O{})
    \| \Yb{s} \|_H^2
  \right] ds
\\&\quad+
  2
  \int_0^t
  e^{-2cs} \,
  Z_{\fl{s}}
  \left[
    \langle
      \Yb{s},
      P F( \Yb{s} + \O{\fl{s}} )
    \rangle_{H_{\nicefrac{1}{2}}}
    +
    \phi(\O{})
    \langle
      \Yb{s},
      F( \Yb{s} + \O{\fl{s}} )
    \rangle_H
  \right] ds
\\&\quad+
  \int_0^t
  e^{-2cs} \,
  Z_{\fl{s}}
  \bigg[  
    2 (1-\epsilon) \,
    \| \Yb{s} \|_{H_1}^2
    +
    \tfrac{1}{2(1-\epsilon)} \,
    \| P \|_{ L(H) }^2 \,
    \|
      F( \Y{\fl{s}} )
      -
      F( \Yb{s} + \O{\fl{s}} )
    \|_H^2
\\&\quad+
    2c(1-\kappa) \,
    \phi(\O{})
    \| \Yb{s} \|_H^2
    +
    \tfrac{\phi(\O{})}{2c(1-\kappa)} \,
    \|
      F( \Y{\fl{s}} )
      -
      F( \Yb{s} + \O{\fl{s}} )
    \|_H^2
  \bigg] \, ds .
\end{split}
\end{align}
The fact 
$ \| P \|_{ L(H) } \leq 1 $
therefore shows
for all $ t \in [0,T] $ that
\begin{align}
\begin{split}
 &e^{-2ct}
  \left[
    \| \Yb{t} \|_{H_{\nicefrac{1}{2}}}^2
    +
    \phi(\O{})
    \| \Yb{t} \|_H^2
  \right]
\\&\leq
  -2
  \int_0^t
  e^{-2cs}
  \left[
    \epsilon
    Z_{\fl{s}}
    \| \Yb{s} \|_{H_1}^2
    +
    (
      c
      +
      \phi(\O{})
    )
    \| \Yb{s} \|_{H_{\nicefrac{1}{2}}}^2
    +
    \kappa c \,
    \phi(\O{})
    \| \Yb{s} \|_H^2
  \right] ds
\\&\quad+
  2
  \int_0^t
  e^{-2cs} \,
  Z_{\fl{s}}
  \left[
    \langle
      \Yb{s},
      P F( \Yb{s} + \O{\fl{s}} )
    \rangle_{H_{\nicefrac{1}{2}}}
    +
    \phi(\O{})
    \langle
      \Yb{s},
      F( \Yb{s} + \O{\fl{s}} )
    \rangle_H
  \right] ds
\\&\quad+
  \left[ 
    \tfrac{1}{2(1-\epsilon)}
    +
    \tfrac{\phi(\O{})}{2c(1-\kappa)}
  \right]
  \int_0^t
  e^{-2cs} \,
  Z_{\fl{s}}
  \|
    F( \Y{\fl{s}} )
    -
    F( \Yb{s} + \O{\fl{s}} )
  \|_H^2 \, ds .
\end{split}
\end{align}
Hence, we obtain that for all $ t \in [0,T] $ 
we have that
\begin{align}
\label{eq:Y_a_priori_1}
\begin{split}
 &\| \Yb{t} \|_{H_{\nicefrac{1}{2}}}^2
  +
  \phi(\O{})
  \| \Yb{t} \|_H^2
\\&\leq
  -2
  \int_0^t
  e^{2c(t-s)}
  \left[
    \epsilon
    Z_{\fl{s}}
    \| \Yb{s} \|_{H_1}^2
    +
    (
      c
      +
      \phi(\O{})
    )
    \| \Yb{s} \|_{H_{\nicefrac{1}{2}}}^2
    +
    \kappa c \,
    \phi(\O{})
    \| \Yb{s} \|_H^2
  \right] ds
\\&\quad+
  2
  \int_0^t
  e^{2c(t-s)} \,
  Z_{\fl{s}}
  \left[
    \langle
      \Yb{s},
      P F( \Yb{s} + \O{\fl{s}} )
    \rangle_{H_{\nicefrac{1}{2}}}
    +
    \phi(\O{})
    \langle
      \Yb{s},
      F( \Yb{s} + \O{\fl{s}} )
    \rangle_H
  \right] ds
\\&\quad+
  \left[ 
    \frac{1}{2(1-\epsilon)}
    +
    \frac{\phi(\O{})}{2c(1-\kappa)}
  \right]
  \int_0^t
  e^{2c(t-s)} \,
  Z_{\fl{s}}
  \|
    F( \Y{\fl{s}} )
    -
    F( \Yb{s} + \O{\fl{s}} )
  \|_H^2 \, ds .
\end{split}
\end{align}
Moreover, note that the triangle inequality 
implies that for all 
$ s \in [0,T] $ we have that
\begin{align}
\begin{split}
&
  Z_{\fl{s}}
  \| \Yb{s} - \Yb{\fl{s}} \|_{ H_{\rho} }
\\ &=
  Z_{\fl{s}}
  \| (\Y{s} - \O{s}) - (\Y{\fl{s}} - \O{\fl{s}}) \|_{ H_{\rho} }
\\&\leq 
  Z_{\fl{s}}
  \| 
    (e^{(s-\fl{s})A} - \Id_H)
    (\Y{\fl{s}} - \O{\fl{s}})
  \|_{ H_{\rho} }
\\&\quad+
  Z_{\fl{s}}
  \|
    (\Y{s} - \O{s}) 
    - 
    e^{(s-\fl{s})A}(\Y{\fl{s}} - \O{\fl{s}})  
  \|_{ H_{\rho} }
\\&\leq
  Z_{\fl{s}}
  \| 
    (-A)^{-(\gamma-\rho)}(e^{(s-\fl{s})A} - \Id_H)
  \|_{L(H)}
  \|
    \Y{\fl{s}} - \O{\fl{s}}
  \|_{ H_{\gamma} }
\\&\quad+
  Z_{\fl{s}}
  \int_{\fl{s}}^s
  \|
    P e^{(s-u)A}
    Z_{\fl{u}}
    F( \Y{\fl{u}} )
  \|_{ H_{\rho} } \, du .
\end{split}
\end{align}
The fact that
\begin{equation}
  \forall \,
  s \in (0,\infty)
  ,
  r \in [0,1]
  \colon
  \|
    (-sA)^{-r}
    (e^{sA} - \Id_H)
  \|_{ L(H) }
  \leq 
  1
  ,
\end{equation}
the triangle inequality,
the fact that
\begin{equation}
  \forall \,
  s \in [0,\infty)
  ,
  r \in [0,1]
  \colon
  \|
    (-sA)^{r}
    e^{sA}
  \|_{ L(H) }
  \leq 
  1
  ,
\end{equation}
the fact that
$
  \| P \|_{ L(H) }
  \leq 
  1
$,
and
the assumption that
\begin{equation}
  \forall \,
  v \in P(H)
  \colon 
  \| F(v) \|_H^2
  \leq 
  C 
  \max\{ 1, \| v \|_{H_{\gamma}}^{(2+\varphi)} \}
\end{equation}
hence ensure for all
$ s \in [0,T] $ that
\begin{align}
\begin{split}
 &Z_{\fl{s}}
  \| \Yb{s} - \Yb{\fl{s}} \|_{ H_{\rho} }
\\&\leq
  (s-\fl{s})^{(\gamma-\rho)}
  Z_{\fl{s}}
  \big(
    \|
      \Y{\fl{s}}
    \|_{ H_{\gamma} }
    +
    \|
      \O{\fl{s}}
    \|_{ H_{\gamma} }
  \big)
\\&\quad+
  Z_{\fl{s}}
  \int_{\fl{s}}^s
  \|
    P
  \|_{ L(H) }
  \|
    (-A)^{\rho}
    e^{(s-u)A}
  \|_{ L(H) }
  \|
    F( \Y{\fl{s}} )
  \|_{ H } \, du
\\&\leq
  |\nicefrac{T}{M}|^{(\gamma-\rho)}
  |\nicefrac{M}{T}|^{\chi}
  +
  \sqrt{C}
  \int_{\fl{s}}^s
  (s-u)^{-\rho} \,
  Z_{\fl{s}}
  \max\!\left\{
    1,
    \| \Y{\fl{s}} \|_{ H_{\gamma} }^{(1+\nicefrac{\varphi}{2})}
  \right\} du
\\&\leq
  |\nicefrac{T}{M}|^{(\gamma-\rho-\chi)}
  +
  \sqrt{C}
  \max\!\left\{
    1,
    |\nicefrac{M}{T}|^{(1+\nicefrac{\varphi}{2})\chi}
  \right\}
  \int_{\fl{s}}^s
  (s-u)^{-\rho} \, du .
\end{split}
\end{align}
This shows that for all 
$ s \in [0,T] $
we have that
\begin{align}
\label{eq:Y_a_priori_5}
\begin{split}
 &Z_{\fl{s}}
  \| \Yb{s} - \Yb{\fl{s}} \|_{ H_{\rho} }
\\&\leq
  |\nicefrac{T}{M}|^{(\gamma-\rho-\chi)}
  +
  \sqrt{C}
  \max\!\left\{
    1,
    |\nicefrac{M}{T}|^{(1+\nicefrac{\varphi}{2})\chi}
  \right\}
  \frac{
    (s-\fl{s})^{(1-\rho)}
  }{
    (1-\rho)
  }
\\&\leq
  \frac{1}{(1-\rho)}
  \left[ 
    |\nicefrac{T}{M}|^{(\gamma-\rho-\chi)}
    +
    \sqrt{C}
    \max\!\left\{
      |\nicefrac{T}{M}|^{(1-\rho)},
      |\nicefrac{T}{M}|^{(1-\rho -(1+\nicefrac{\varphi}{2})\chi)}
    \right\}
  \right]
\\&\leq
  \frac{(1+\sqrt{C})}{(1-\rho)}
  \max\!\left\{
    \nicefrac{T}{M},
    |\nicefrac{T}{M}|^{(\gamma-\rho-\chi)},
    |\nicefrac{T}{M}|^{(1-\rho -(1+\nicefrac{\varphi}{2})\chi)}
  \right\} .
\end{split}
\end{align}
Next observe that the 
assumption 
\begin{equation}
  \forall \,
  u,v \in P(H)
  \colon
  \|
    F(u) - F(v)
  \|_H^2
  \leq
  C
  \max\{ 1, \| u \|_{ H_{\gamma} }^{\varphi} \}
  \| u-v \|_{ H_{\rho} }^2
  +
  C
  \| u-v \|_{ H_{\rho} }^{(2+\varphi)}
\end{equation}
ensures for all $ s \in [0,T] $ that
\begin{align}
\begin{split}
 &Z_{\fl{s}}
  \|
    F( \Y{\fl{s}} )
    -
    F( \Yb{s} + \O{\fl{s}} )
  \|_H^2
\\&\leq
  C Z_{\fl{s}} \!
  \Big[ 
    \max\{ 1, \| \Y{\fl{s}} \|_{ H_{\gamma} }^{\varphi} \}
    \| \Yb{\fl{s}} - \Yb{s} \|_{ H_{\rho} }^2
    +
    \| \Yb{\fl{s}} - \Yb{s} \|_{ H_{\rho} }^{(2+\varphi)}
  \Big]
\\&\leq
  C Z_{\fl{s}}
  \| \Yb{\fl{s}} - \Yb{s} \|_{ H_{\rho} }^2
  \Big[ 
    \max\{ 1, |\nicefrac{M}{T}|^{\varphi\chi} \}
    +
    Z_{\fl{s}}
    \| \Yb{\fl{s}} - \Yb{s} \|_{ H_{\rho} }^{\varphi}
  \Big]
\\&\leq
  2 C Z_{\fl{s}}
  \| \Yb{\fl{s}} - \Yb{s} \|_{ H_{\rho} }^2
  \Big[ 
    \max\!\big\{ 1, |\nicefrac{M}{T}|^{\varphi\chi},
           Z_{\fl{s}} \| \Yb{\fl{s}} - \Yb{s} \|_{ H_{\rho} }^{\varphi}
         \big\}
  \Big]
\\&=
  2 C Z_{\fl{s}}
  \| \Yb{\fl{s}} - \Yb{s} \|_{ H_{\rho} }^2
  \Big[ 
    \max\!\big\{ 1, |\nicefrac{M}{T}|^{\chi},
           Z_{\fl{s}} \| \Yb{\fl{s}} - \Yb{s} \|_{ H_{\rho} }
         \big\}
  \Big]^{\varphi} .
\end{split}
\end{align}
This together with~\eqref{eq:Y_a_priori_5}
proves for all 
$ s \in [0,T] $ that
\begin{align}
\label{eq:Y_a_priori_6}
\begin{split}
 &Z_{\fl{s}}
  \|
    F( \Y{\fl{s}} )
    -
    F( \Yb{s} + \O{\fl{s}} )
  \|_H^2
\\&\leq
  \frac{2 C (1+\sqrt{C})^{(2+\varphi)}}{(1-\rho)^{(2+\varphi)}}
  \left|
    \max\!\left\{
      \nicefrac{T}{M},
      |\nicefrac{T}{M}|^{(\gamma-\rho-\chi)},
      |\nicefrac{T}{M}|^{(1-\rho -(1+\nicefrac{\varphi}{2})\chi)}
    \right\}
  \right|^2
\\&\quad\cdot
  \left|
    \max\!\left\{
      1,
      |\nicefrac{M}{T}|^{\chi},
      \nicefrac{T}{M},
      |\nicefrac{T}{M}|^{(\gamma-\rho-\chi)},
      |\nicefrac{T}{M}|^{(1-\rho -(1+\nicefrac{\varphi}{2})\chi)}
    \right\}
  \right|^{\varphi}
\\&=
  \frac{2 C (1+\sqrt{C})^{(2+\varphi)}}{(1-\rho)^{(2+\varphi)}}
  \left|
    \max\!\left\{
      \nicefrac{T}{M},
      |\nicefrac{T}{M}|^{(\gamma-\rho-\chi)},
      |\nicefrac{T}{M}|^{(1-\rho -(1+\nicefrac{\varphi}{2})\chi)}
    \right\}
  \right|^2
\\&\quad\cdot
  \left|
    \max\!\left\{
      \nicefrac{T}{M},
      |\nicefrac{M}{T}|^{\chi},
      |\nicefrac{T}{M}|^{(\gamma-\rho-\chi)},
      |\nicefrac{T}{M}|^{(1-\rho -(1+\nicefrac{\varphi}{2})\chi)}
    \right\}
  \right|^{\varphi} .
\end{split}
\end{align}
In addition, note that the assumption that
$
  \chi 
  \in 
  (0,\nicefrac{(\gamma-\rho)}{(1+\nicefrac{\varphi}{2})}] 
  \cap 
  (0,\nicefrac{(1-\rho)}{(1+\varphi)}]  
$
ensures that
\begin{align}
\label{eq:Y_a_priori_7}
\begin{split}
 &\gamma-\rho-\chi 
  \in (0,1),
  \qquad 
  1-\rho -(1+\nicefrac{\varphi}{2})\chi 
  \in (0,1) ,
\end{split}
\end{align}
and
\begin{align}
\label{eq:Y_a_priori_8}
\begin{split}
 &\min\{ 
    \gamma-\rho -(1+\nicefrac{\varphi}{2})\chi,
    1-\rho -(1+\varphi)\chi
  \}
  \in [0,1) .
\end{split}
\end{align}
This implies that for all $ h \in (0,1] $ 
we have that
\begin{align}
\label{eq:Y_a_priori_9}
\begin{split}
 &\left|
    \max\!\left\{
      h,
      h^{(\gamma-\rho-\chi)},
      h^{(1-\rho -(1+\nicefrac{\varphi}{2})\chi)}
    \right\}
  \right|^2
  \left|
    \max\!\left\{
      h,
      h^{-\chi},
      h^{(\gamma-\rho-\chi)},
      h^{(1-\rho -(1+\nicefrac{\varphi}{2})\chi)}
    \right\}
  \right|^{\varphi}
\\&=
  h^{2\min\{\gamma-\rho-\chi,1-\rho -(1+\nicefrac{\varphi}{2})\chi\}}
  h^{-\varphi\chi}
  =
  h^{2\min\{\gamma-\rho -(1+\nicefrac{\varphi}{2})\chi,1-\rho -(1+\varphi)\chi\}}
  \leq 
  1 .
\end{split}
\end{align}
Moreover, observe that~\eqref{eq:Y_a_priori_7}
shows for all $ h \in (1,\infty) $ that
\begin{align}
\label{eq:Y_a_priori_10}
\begin{split}
 &\left|
    \max\!\left\{
      h,
      h^{(\gamma-\rho-\chi)},
      h^{(1-\rho -(1+\nicefrac{\varphi}{2})\chi)}
    \right\}
  \right|^2
  \left|
    \max\!\left\{
      h,
      h^{-\chi},
      h^{(\gamma-\rho-\chi)},
      h^{(1-\rho -(1+\nicefrac{\varphi}{2})\chi)}
    \right\}
  \right|^{\varphi}
  =
  h^{(2+\varphi)} .
\end{split}
\end{align}
Combining~\eqref{eq:Y_a_priori_6}
with \eqref{eq:Y_a_priori_9} and~\eqref{eq:Y_a_priori_10}
yields that for all $ s \in [0,T] $ 
we have that
\begin{align}
\label{eq:Y_a_priori_11}
\begin{split}
 &Z_{\fl{s}}
  \|
    F( \Y{\fl{s}} )
    -
    F( \Yb{s} + \O{\fl{s}} )
  \|_H^2
  \leq
  2 C
  \left[ 
    \frac{\max\{1,T\} (1+\sqrt{C})}{(1-\rho)}
  \right]^{\!(2+\varphi)} .
\end{split}
\end{align}
Furthermore, note that 
the assumption that
$
  \forall \,
  v \in P(H)
$,
$
  w \in \C([0,T],P(H))
$,
$ 
  s \in [0,T]
  \colon
  \langle v, P F(v+w_s) \rangle_{ H_{\nicefrac{1}{2}} }
  +
  \phi(w)
  \langle v, F(v+w_s) \rangle_H
  \leq
  \epsilon \| v \|_{H_1}^2
  +
  (c + \phi(w))
  \| v \|_{ H_{\nicefrac{1}{2}} }^2
  +
  \kappa c
  \phi(w)
  \| v \|_H^2
  +
  \Phi(w)
$
ensures that for all 
$ v \in P(H) $,
$ w \in \C([0,T],P(H)) $,
$ s \in [0,T] $
we have that
\begin{multline}
  \langle v, P F(v+w_{\fl{s}}) \rangle_{ H_{\nicefrac{1}{2}} }
  +
  \phi(w)
  \langle v, F(v+w_{\fl{s}}) \rangle_H
\\
  \leq
  \epsilon \| v \|_{H_1}^2
  +
  (c + \phi(w)) 
  \| v \|_{ H_{\nicefrac{1}{2}} }^2
  +
  \kappa c 
  \phi(w)
  \| v \|_H^2
  +
  \Phi(w) .
\end{multline}
This implies that for all 
$ w \in \C([0,T],P(H)) $,
$ s \in [0,T] $
we have that
\begin{multline}
  \langle \Yb{s}, P F(\Yb{s}+w_{\fl{s}}) \rangle_{ H_{\nicefrac{1}{2}} }
  +
  \phi(w)
  \langle \Yb{s}, F(\Yb{s}+w_{\fl{s}}) \rangle_H
\\
  \leq
  \epsilon \| \Yb{s} \|_{H_1}^2
  +
  (c + \phi(w)) 
  \| \Yb{s} \|_{ H_{\nicefrac{1}{2}} }^2
  +
  \kappa c 
  \phi(w)
  \| \Yb{s} \|_H^2
  +
  \Phi(w) .
\end{multline}
The assumption that
$ 
  \O{} \in \C( [0,T], P(H) )
$
hence guarantees for all 
$ s \in [0,T] $ that
\begin{multline}
\label{eq:Y_a_priori_12}
  \langle \Yb{s}, P F(\Yb{s}+\O{\fl{s}}) \rangle_{ H_{\nicefrac{1}{2}} }
  +
  \phi(\O{})
  \langle \Yb{s}, F(\Yb{s}+\O{\fl{s}}) \rangle_H
\\
  \leq
  \epsilon \| \Yb{s} \|_{H_1}^2
  +
  (c + \phi(\O{})) 
  \| \Yb{s} \|_{ H_{\nicefrac{1}{2}} }^2
  +
  \kappa c 
  \phi(\O{})
  \| \Yb{s} \|_H^2
  +
  \Phi(\O{}) .
\end{multline}
Combining~\eqref{eq:Y_a_priori_1}
with~\eqref{eq:Y_a_priori_11} 
and~\eqref{eq:Y_a_priori_12}
demonstrates that for all 
$ t \in [0,T] $ we have that
\begin{align}
\begin{split}
 &\| \Yb{t} \|_{H_{\nicefrac{1}{2}}}^2
  +
  \phi(\O{})
  \| \Yb{t} \|_H^2
\\&\leq
  -
  2
  \int_0^t
  e^{2c(t-s)}
  \left[
    \epsilon
    Z_{\fl{s}}
    \| \Yb{s} \|_{H_1}^2
    +
    (
      c
      +
      \phi(\O{})
    )
    \| \Yb{s} \|_{ H_{\nicefrac{1}{2}} }^2
    +
    \kappa c
    \phi(\O{})
    \| \Yb{s} \|_H^2
  \right] ds
\\&\quad+
  2
  \int_0^t
  e^{2c(t-s)} \,
  Z_{\fl{s}}
  \left[
    \epsilon
    \| \Yb{s} \|_{H_1}^2
    +
    (c + \phi(\O{})) 
    \| \Yb{s} \|_{ H_{\nicefrac{1}{2}} }^2
    +
    \kappa c
    \phi(\O{})
    \| \Yb{s} \|_H^2
    +
    \Phi(\O{})
  \right] ds
\\&\quad+
  2 C
  \left[
    \frac{1}{2(1-\epsilon)}
    +
    \frac{\phi(\O{})}{2c(1-\kappa)}
  \right] \!
  \left[ 
    \frac{\max\{1,T\} (1+\sqrt{C})}{(1-\rho)}
  \right]^{\!(2+\varphi)}
  \left[ 
    \int_0^t
    e^{2c(t-s)} \, ds
  \right] .
\end{split}
\end{align}
This shows that for all 
$ t \in [0,T] $ we have that
\begin{align}
\begin{split}
 &\| \Yb{t} \|_{H_{\nicefrac{1}{2}}}^2
  +
  \phi(\O{})
  \| \Yb{t} \|_H^2
\\&\leq
  \frac{ 
    \Phi(\O{}) \left[ e^{2ct} - 1 \right]  
  }{ c }
  +
  \frac{ 
    C \left[ e^{2ct} - 1 \right]  
  }{ c }
  \left[
    \frac{1}{2(1-\epsilon)}
    +
    \frac{\phi(\O{})}{2c(1-\kappa)}
  \right] \!
  \left[ 
    \frac{\max\{1,T\} (1+\sqrt{C})}{(1-\rho)}
  \right]^{\!(2+\varphi)} .
\end{split}
\end{align}
Hence, we obtain that
\begin{align}
\begin{split}
 &\sup_{ t \in [0,T] }
  \Big[
    \| \Yb{t} \|_{H_{\nicefrac{1}{2}}}^2
    +
    \phi(\O{})
    \| \Yb{t} \|_H^2
  \Big]
\\&\leq
  \frac{ 
    \left( e^{2cT} - 1 \right)  
  }{ c }
  \left( 
    \Phi(\O{}) 
    +
    \frac{ 
      C  
    }{ 2 }
    \left[
      \frac{1}{(1-\epsilon)}
      +
      \frac{\phi(\O{})}{c(1-\kappa)}
    \right] \!
    \left[ 
      \tfrac{\max\{1,T\} (1+\sqrt{C})}{(1-\rho)}
    \right]^{\!(2+\varphi)}
  \right) 
\\&\leq
  \frac{ 
    \left( e^{2cT} - 1 \right)  
  }{ c }
  \left( 
    \Phi(\O{}) 
    +
    \frac{ 
      C
      \max\{ 1, \phi(\O{}) \} 
    }{ 2(1-\epsilon) (1-\kappa) }
    \left[
      1
      +
      \frac{1}{c}
    \right] \!
    \left[ 
      \tfrac{\max\{1,T\} (1+\sqrt{C})}{(1-\rho)}
    \right]^{\!(2+\varphi)}
  \right) 
\\&=
  \frac{ 
    \left( e^{2cT} - 1 \right)  
  }{ c }
  \left( 
    \Phi(\O{}) 
    +
    \frac{ 
      \max\{ 1, \phi(\O{}) \} 
      C ( 1 + c )
    }{ 2(1-\epsilon) (1-\kappa) c }
    \left[ 
      \tfrac{\max\{1,T\} (1+\sqrt{C})}{(1-\rho)}
    \right]^{\!(2+\varphi)}
  \right) .
\end{split}
\end{align}
The proof of Lemma~\ref{lem:Y_a_priori} is thus completed.
\end{proof}

\section{Pathwise error estimates}
\label{sec:pw_section}

\subsection{Setting}
\label{sec:pw_setting}
Consider the notation in Section~\ref{sec:notation},
let $ ( H, \left< \cdot, \cdot \right>_H, \left\| \cdot \right\|_H ) $
be a separable $ \R $-Hilbert space,
let $ \H \subseteq H $ be a non-empty orthonormal basis of $ H $,
let $ T, c, \varphi \in (0,\infty) $, $ C \in [0,\infty) $, $ M \in \N $,
$ \mu \colon \H \rightarrow \R $ satisfy $ \sup_{ h \in \H } \mu_h < 0 $,
let $ A \colon D(A) \subseteq H \rightarrow H $ be the linear operator
which satisfies
$ 
  D(A) 
  = 
  \{ 
    v \in H 
    \colon 
    \sum_{ h \in \H } 
    | 
      \mu_h 
      \langle h, v \rangle_H 
    |^2
    <
    \infty
  \}
$
and 
$
  \forall \, v \in D(A)
  \colon
  Av
  =
  \sum_{ h \in \H }
  \mu_h
  \langle h,v \rangle_H h
$,
let $ ( V, \left\| \cdot \right\|_V ) $ be an $ \R $-Banach space
with $ D(A) \subseteq V \subseteq H $ continuously and densely,
and let 
$ O, \O{}, \XX{}, \Y{} \colon [0,T] \rightarrow V $
and 
$ \Vm \colon V \times V \rightarrow [0,\infty) $
be functions,
and let
$ X \in \C( [0,T], V ) $,
$ F \in \C( V,H ) $, $ I \in \Pow_0(\H) $, 
$ P \in L(H) $
satisfy for all $ v,w \in D(A) $, $ t \in [0,T] $ that
\begin{equation}
  P(v) 
  = 
  \textstyle\sum_{ h \in I } 
  \langle h,v \rangle_H h ,
  \qquad
  \langle
    v-w,
    Av
    +
    F(v)
    -
    Aw
    -
    F(w)
  \rangle_H
  \leq
  c
  \|
    v-w
  \|_H^2 ,
\end{equation}
\begin{equation}
  \| F(v) - F(w) \|_H^2 
  \leq 
  C 
  \| 
    v-w 
  \|_V^2
  (
    1
    +
    \| v \|_V^{\varphi}
    +
    \| w \|_V^{\varphi}
  ) ,
\end{equation}
\begin{equation}
  X_t
  =
  \int_0^t
  e^{(t-s)A}
  F( X_s ) \, ds
  +
  O_t ,
  \qquad 
  \XX{t}
  =
  \int_0^t
  P
  e^{(t-s)A}
  F( \Y{\fl{s}} ) \, ds
  +
  P O_t ,
\end{equation}
\begin{equation}
  \text{and}
  \qquad
  \Y{t}
  =
  \int_0^t
  P
  e^{(t-s)A}
  \one_{ 
    [ 0, \nicefrac{M}{T} ]
  }^{\R}
  ( \Vm(\Y{\fl{s}},\O{\fl{s}}) ) \,
  F( \Y{\fl{s}} ) \, ds
  +
  \O{t} .
\end{equation}

\subsection{On the separability of a certain Banach space}

The next elementary lemma, 
Lemma~\ref{lem:separable},
ensures that the $\R$-Banach space
$ (V, \left\|\cdot\right\|\!_V) $
in Section~\ref{sec:pw_setting}
is separable. Lemma~\ref{lem:separable}
is well-known in the literature.
The proof of Lemma~\ref{lem:separable}
is given for completeness.

\begin{lemma}
\label{lem:separable}
Let $ ( V, \left\| \cdot \right\|_V ) $ be a separable
$\R$-Banach space and let $ ( W, \left\| \cdot \right\|_W ) $
be an $\R$-Banach space with $ V \subseteq W $
continuously and densely. Then
$ ( W, \left\| \cdot \right\|_W ) $ is a separable
$\R$-Banach space.
\end{lemma}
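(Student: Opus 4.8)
The plan is to exhibit an explicit countable dense subset of $W$ built out of a countable dense subset of $V$, using that the inclusion $V \hookrightarrow W$ is continuous with dense range. First I would fix a countable set $D \subseteq V$ which is dense in $(V, \left\|\cdot\right\|_V)$; such a $D$ exists by the assumption that $(V, \left\|\cdot\right\|_V)$ is separable. The natural candidate for a countable dense subset of $W$ is $D$ itself, now regarded as a subset of $W$ via the inclusion map.

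Next I would verify density of $D$ in $(W, \left\|\cdot\right\|_W)$ by an $\varepsilon/2$ argument. Let $w \in W$ and $\varepsilon \in (0,\infty)$. Since $V$ is dense in $W$, there is a $v \in V$ with $\left\|w - v\right\|_W < \nicefrac{\varepsilon}{2}$. Since the inclusion $V \hookrightarrow W$ is continuous and linear, there is a constant $\kappa \in (0,\infty)$ with $\left\|u\right\|_W \leq \kappa \left\|u\right\|_V$ for all $u \in V$. Since $D$ is dense in $V$, there is a $d \in D$ with $\left\|v - d\right\|_V < \nicefrac{\varepsilon}{(2\kappa)}$, hence $\left\|v - d\right\|_W \leq \kappa \left\|v - d\right\|_V < \nicefrac{\varepsilon}{2}$. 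The triangle inequality then gives $\left\|w - d\right\|_W \leq \left\|w - v\right\|_W + \left\|v - d\right\|_W < \varepsilon$. As $w \in W$ and $\varepsilon > 0$ were arbitrary, $D$ is dense in $(W, \left\|\cdot\right\|_W)$, so $(W, \left\|\cdot\right\|_W)$ is separable.

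This proof is entirely routine, so I do not expect a genuine obstacle; the only point requiring a small amount of care is making the continuity of the inclusion explicit as a bound $\left\|\cdot\right\|_W \leq \kappa \left\|\cdot\right\|_V$ on $V$ (this is just the definition of continuity of a linear map between Banach spaces, equivalently boundedness), and keeping track of where $D$ is viewed as a subset of $V$ versus of $W$. One could streamline the write-up slightly by noting that the image of $D$ under the inclusion is contained in $W$, is countable, and its closure in $W$ contains the closure of $V$ in $W$, which is all of $W$ by hypothesis; but the direct $\varepsilon/2$ estimate above is the cleanest self-contained presentation.
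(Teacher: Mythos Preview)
Your proof is correct and follows essentially the same approach as the paper: both fix a countable dense subset of $V$, use density of $V$ in $W$ to get within $\varepsilon/2$ of an arbitrary $w\in W$, then use the continuity bound $\left\|\cdot\right\|_W \le \kappa \left\|\cdot\right\|_V$ together with density in $V$ to finish the $\varepsilon/2$ argument.
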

\begin{proof}[Proof of Lemma~\ref{lem:separable}]
Throughout this proof let
$ w \in W $ be a vector
and let $ \varepsilon \in (0,\infty) $
be a strictly positive real number.
Note that the assumption that
$ V \subseteq W $ continuously
and densely ensures that there exists
a $ c \in \R $ and a $ v \in V $
such that
\begin{align}
\label{eq:separable_1}
\begin{split}
 &c
  =
  \sup\!\left( 
    \left\{ 
      \frac{ \| u \|_W }{ \| u \|_V }
      \colon 
      u \in V \setminus \{0\}
    \right\} 
    \cup 
    \{1\}
  \right)
  \qquad \text{and} \qquad 
  \| v - w \|_W
  <
  \frac{\varepsilon}{2} .
\end{split}
\end{align}
In addition,
observe that the assumption that
$ ( V, \left\| \cdot \right\|_V ) $
is a separable $ \R $-Banach space shows
that there exists an at most countable
set $ A \subseteq V $ such that
\begin{align}
 &\overline{A}^V
  =
  V .
\end{align}
Hence, we obtain that there exists an 
$ a \in A $ such that
$
  \| a-v \|_V
  <
  \frac{ \varepsilon }{ 2 c }
$.
Combining this and~\eqref{eq:separable_1}
with the triangle inequality
establishes that
\begin{equation}
\begin{split}
  \| a-w \|_W
& \leq 
  \| a - v \|_W
  +
  \| v - w \|_W
  <
  \| a - v \|_W
  +
  \frac{\varepsilon}{2}
\\ &
  \leq
  c \, \| a - v \|_V
  +
  \frac{\varepsilon}{2}
  <
  c \cdot \frac{ \varepsilon }{ 2c }
  +
  \frac{\varepsilon}{2}
  =
  \varepsilon .
\end{split}
\end{equation}
The proof of Lemma~\ref{lem:separable} is thus completed.
\end{proof}

\subsection{Analysis of the error between the Galerkin
projection of the exact solution and the Galerkin projection 
of the semilinear integrated 
version of the numerical approximation}
\label{sec:pw_analysis}

In our error analysis in Lemma~\ref{lem:pw_X_XX_diff}
below we employ the following elementary result,
Lemma~\ref{lem:Y_regularity} below, on mild solutions
of certain semilinear evolution equations.

\begin{lemma}
\label{lem:Y_regularity}
Consider the notation in Section~\ref{sec:notation},
let $ (V, \left\|\cdot\right\|\!_V) $ be a separable $ \R $-Banach space,
and let $ A \in L(V) $, $ T \in (0,\infty) $,
$ Y \colon [0,T] \rightarrow V $, $ Z \in \C( [0,T], V ) $
satisfy for all $ t \in [0,T] $ that
$ Y_t = \int_0^t e^{(t-s)A} Z_s \, ds $. 
Then
\begin{enumerate}[(i)]
 \item\label{it:Y_regularity_1}
 we have that $ Y $ is continuously differentiable,
 \item\label{it:Y_regularity_2}
 we have for all $ t \in [0,T] $ that
 $ Y_t = \int_0^t A Y_s + Z_s \, ds $, and 
 \item\label{it:Y_regularity_3}
 we have for all $ t \in [0,T] $ that
 $ \frac{d}{dt} Y_t = A Y_t + Z_t $.
\end{enumerate}
\end{lemma}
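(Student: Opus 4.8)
The plan is to exploit the fact that $A \in L(V)$ is a bounded operator, so that the semigroup $(e^{tA})_{t\in[0,T]}$ is given by the norm-convergent exponential series and satisfies $\frac{d}{dt}e^{tA} = Ae^{tA} = e^{tA}A$ with $\|e^{tA}\|_{L(V)} \leq e^{T\|A\|_{L(V)}}$ uniformly in $t\in[0,T]$. First I would observe that the integrand $[0,t]\ni s\mapsto e^{(t-s)A}Z_s\in V$ is continuous (being a composition of the continuous maps $s\mapsto Z_s$, $s\mapsto e^{(t-s)A}$, and the jointly continuous evaluation $L(V)\times V\ni(B,v)\mapsto Bv\in V$), hence the Riemann integral defining $Y_t$ exists in $V$ for every $t\in[0,T]$.

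The key step is a change of variables. Substituting $u = t - s$ gives $Y_t = \int_0^t e^{uA}Z_{t-u}\,du$, but this form is slightly awkward because the integrand still depends on $t$ through $Z_{t-u}$; instead I would work directly with the identity $Y_t = e^{tA}\int_0^t e^{-sA}Z_s\,ds$, which follows from $e^{(t-s)A} = e^{tA}e^{-sA}$ (valid since $A$ is bounded, so the semigroup extends to a group). Writing $W_t := \int_0^t e^{-sA}Z_s\,ds$, the map $t\mapsto W_t$ is continuously differentiable with $\frac{d}{dt}W_t = e^{-tA}Z_t$ by the fundamental theorem of calculus (the integrand $s\mapsto e^{-sA}Z_s$ being continuous). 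Since $t\mapsto e^{tA}$ is continuously differentiable as a map into $L(V)$ with derivative $Ae^{tA}$, the product rule yields that $Y_t = e^{tA}W_t$ is continuously differentiable with
\begin{equation}
\frac{d}{dt}Y_t = Ae^{tA}W_t + e^{tA}e^{-tA}Z_t = A Y_t + Z_t,
\end{equation}
which is claim~\eqref{it:Y_regularity_3}. This proves~\eqref{it:Y_regularity_1} as well. Claim~\eqref{it:Y_regularity_2} then follows by integrating~\eqref{it:Y_regularity_3} from $0$ to $t$ and using $Y_0 = 0$ together with continuity of $s\mapsto AY_s + Z_s$.

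I do not expect any genuine obstacle here: the only care required is in justifying that $t\mapsto e^{tA}$ is norm-differentiable in $L(V)$ with derivative $Ae^{tA}$ (this is standard for bounded generators, via the exponential series), that the product rule applies to the bilinear evaluation map $L(V)\times V\to V$ (which is continuous and bilinear, hence differentiable with the expected Leibniz rule), and that differentiation under the integral sign / the fundamental theorem of calculus is legitimate for continuous Banach-space-valued integrands. If one prefers to avoid the group structure, an equally clean alternative is to verify $\frac{d}{dt}Y_t = AY_t + Z_t$ directly from the difference quotient $\frac{1}{r}(Y_{t+r}-Y_t) = \frac{1}{r}\int_t^{t+r}e^{(t+r-s)A}Z_s\,ds + \frac{1}{r}(e^{rA}-\Id_V)Y_t$ and passing to the limit $r\to 0$ using continuity of the integrand and norm-differentiability of the semigroup at $0$; this is the route I would actually write out, since it sidesteps invertibility of $e^{tA}$ and is the most elementary.
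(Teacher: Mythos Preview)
Your proposal is correct, and the factorization $Y_t = e^{tA}\int_0^t e^{-sA}Z_s\,ds$ followed by the product rule is a clean way to obtain \eqref{it:Y_regularity_3} directly, after which \eqref{it:Y_regularity_1} and \eqref{it:Y_regularity_2} fall out immediately.

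The paper takes a different route. It first invokes an external lemma (Lemma~2.2 in \cite{JentzenSalimovaWelti2016}) to obtain continuity of $Y$ and the integral identity \eqref{it:Y_regularity_2}, and only then proves differentiability by a direct estimate of
\[
Y_{t_2} - Y_{t_1} - (AY_t + Z_t)(t_2-t_1)
\]
for $t_1 \le t \le t_2$, rewriting it as a sum of three integral terms and bounding each by $o(t_2-t_1)$ using continuity of $Y$ and $Z$. So the logical order is reversed relative to yours: the paper derives \eqref{it:Y_regularity_2} first and \eqref{it:Y_regularity_3} second, whereas you get \eqref{it:Y_regularity_3} first and integrate. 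Your argument is shorter and exploits the group structure available because $A$ is bounded; the paper's difference-quotient computation never uses invertibility of $e^{tA}$ and is in that sense closer to what one would do for an unbounded generator, though here that generality is not needed. Your alternative sketch at the end (splitting $Y_{t+r}-Y_t$ into $\int_t^{t+r}e^{(t+r-s)A}Z_s\,ds + (e^{rA}-\Id_V)Y_t$) is essentially the one-sided version of what the paper does, though the paper carries out a more elaborate two-sided rearrangement.
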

\begin{proof}[Proof of Lemma~\ref{lem:Y_regularity}]
Note that Lemma~2.2 in~\cite{JentzenSalimovaWelti2016} (with $ V = V $,
$ A = A $,
$ T = T $, $ Y = Y $, 
$ Z = Z $
in the notation
of Lemma~2.2 in \cite{JentzenSalimovaWelti2016})
implies that the function
$ [0,T] \ni t \mapsto Y_t \in V $ is continuous
and that~\eqref{it:Y_regularity_2} holds.
Next observe that for all 
$ t_1, t_2 \in [0,T] $ with $ t_1 \leq t \leq t_2 $
we have that
\begin{align}
\begin{split}
 &Y_{t_2} 
  - 
  Y_{t_1}
  -
  A Y_t 
  \left(t_2 - t_1\right)
  -
  Z_t 
  \left(t_2 - t_1\right)
\\&=
  \int_{t_1}^{t_2}
  e^{(t_2-s)A}
  Z_s \, ds
  +
  e^{(t_2-t_1)A}
  \int_0^{t_1}
  e^{(t_1-s)A}
  Z_s \, ds
  -
  Y_{t_1}
  -
  A Y_t 
  \left(t_2 - t_1\right)
  -
  Z_t 
  \left(t_2 - t_1\right)
\\&=
  \left( 
    e^{(t_2-t_1)A}
    -
    \Id_V
  \right)
  Y_{t_1}
  -
  A Y_t 
  \left(t_2 - t_1\right)
  +
  \int_{t_1}^{t_2}
  \left(
    e^{(t_2-s)A}
    Z_s
    -
    Z_t
  \right) ds
\\&=
  \left( 
    e^{(t_2-t_1)A}
    -
    \Id_V
    -
    A
    (t_2 - t_1)
  \right)
  Y_t
  +
  \left( 
    e^{(t_2-t_1)A}
    -
    \Id_V
  \right)
  \left(
    Y_{ t_1 }
    -
    Y_t
  \right)
\\&\quad+
  \int_{t_1}^{t_2}
  e^{(t_2-s)A}
  \left(
    Z_s
    -
    Z_t
  \right) ds
  +
  \int_{t_1}^{t_2}
  \left(
    e^{(t_2-s)A}
    -
    \Id_V
  \right) Z_t \, ds.
\end{split}
\end{align}
This proves
for all 
$ t_1, t_2 \in [0,T] $ with $ t_1 \leq t \leq t_2 $
that
\begin{align}
\begin{split}
 &Y_{t_2} 
  - 
  Y_{t_1}
  -
  A Y_t 
  \left(t_2 - t_1\right)
  -
  Z_t 
  \left(t_2 - t_1\right)
\\&=
  \int_0^{(t_2-t_1)}
  \left( 
    e^{sA}
    -
    \Id_V
  \right)
  A Y_t \, ds
  +
  \int_0^{(t_2-t_1)}
  e^{sA}
  A
  (
    Y_{ t_1 }
    -
    Y_t
  ) \, ds
\\&\quad+
  \int_{t_1}^{t_2}
  e^{(t_2-s)A}
  \left(
    Z_s
    -
    Z_t
  \right) ds
  +
  \int_0^{(t_2-t_1)}
  \left(
    e^{sA}
    -
    \Id_V
  \right) Z_t \, ds
\\&= 
  \int_0^{(t_2-t_1)}
  \int_0^s
  e^{rA}
  A
  ( A Y_t + Z_t ) \, dr \, ds
  +
  \int_0^{(t_2-t_1)}
  e^{sA}
  A
  (
    Y_{ t_1 }
    -
    Y_t
  ) \, ds
\\&\quad+ 
  \int_{t_1}^{t_2}
  e^{(t_2-s)A}
  \left(
    Z_s
    -
    Z_t
  \right) ds .
\end{split}
\end{align}
Hence, we obtain that
for all $ t \in [0,T] $,
$ (t_1,t_2) \in ( [0,t] \times [t,T] ) \setminus \{ (t,t) \} $
we have that
\begin{align}
\begin{split}
 &\frac{ 
   \|
     Y_{t_2} 
    - 
    Y_{t_1}
    -
    A Y_t 
    \left(t_2 - t_1\right)
    -
    Z_t 
    \left(t_2 - t_1\right)
    \|_V
  }{ (t_2 - t_1) }
\\&\leq 
  (t_2 - t_1)
  \left[ 
    \sup_{ s \in (0,T) }
    \| e^{sA} \|_{ L(V) }
  \right] 
  \| A \|_{ L(V) } \,
  \| A Y_t + Z_t \|_V
\\ &
\quad
  +
  \left[ 
    \sup_{ s \in (0,T) }
    \| e^{sA} \|_{ L(V) }
  \right] 
  \| A \|_{ L(V) } \,
  \| Y_{ t_1} - Y_t \|_V
\\&\quad+ 
  \left[ 
    \sup_{ s \in (0,T) }
    \| e^{sA} \|_{ L(V) }
  \right] 
  \left[ 
    \sup_{ s \in (t_1,t_2) }
    \| Z_s - Z_t \|_V
  \right] .
\end{split}
\end{align}
The fact that 
$ Y \in \C([0,T], V) $
and the assumption that
$ Z \in \C([0,T], V) $
therefore ensure for all 
$ t \in [0,T] $ that
\begin{align}
\begin{split}
 &\limsup_{
    \substack{
      (t_1,t_2) \rightarrow (t,t) , \\
      (t_1,t_2) \in ( [0,t] \times [t,T] ) \setminus \{ (t,t) \}
    }
  }
  \left(
    \frac{ 
     \|
       Y_{t_2} 
      - 
      Y_{t_1}
      -
      A Y_t 
      \left(t_2 - t_1\right)
      -
      Z_t 
      \left(t_2 - t_1\right)
      \|_V
    }{ (t_2 - t_1) }
  \right)
  =
  0 .
\end{split}
\end{align}
This completes the proof of Lemma~\ref{lem:Y_regularity}.
\end{proof}

\begin{lemma}
\label{lem:pw_X_XX_diff}
Assume the setting in Section~\ref{sec:pw_setting}
and let $ \kappa \in (2,\infty) $, $ t \in [0,T]$.
Then
\begin{align}
\begin{split}
  \|
    P X_t - \XX{t}
  \|_H^2
 &\leq
  \frac{C e^{\kappa cT} }{
    \left( \kappa - 2 \right) c
  }
  \int_0^t
  \left[ 
    \|
      X_s - P X_s
    \|_V
    +
    \|
      \XX{s} - \Y{\fl{s}}
    \|_V 
  \right]^2
\\&\quad\cdot
  \left(
    1
    +
    \|
      X_s
    \|_V^{\varphi}
    +
    \|
      P X_s
    \|_V^{\varphi}
    +
    \|
      \XX{s}
    \|_V^{\varphi}
    +
    \|
      \Y{\fl{s}}
    \|_V^{\varphi}
  \right) ds .
\end{split}
\end{align}

\end{lemma}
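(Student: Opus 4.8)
The plan is to reduce the estimate to a Gronwall-type argument for the squared $H$-norm of the error process $D_t := P X_t - \XX{t}$. First I would record that, since $I \in \Pow_0(\H)$, the space $P(H)$ is finite dimensional, is contained in $D(A)$, is spanned by eigenvectors of $A$, and carries $A|_{P(H)} \in L(P(H))$, and that $P$ commutes with $e^{tA}$. Subtracting the integral equations for $X$ and $\XX{}$ from Section~\ref{sec:pw_setting} and using this commutation gives, for all $t \in [0,T]$,
\[
  D_t = \int_0^t e^{(t-s)A}\, P\big( F(X_s) - F(\Y{\fl{s}}) \big) \, ds ,
\]
so that $D_t \in P(H)$ and $D_0 = P O_0 - P O_0 = 0$; note also that $\XX{t}$ lies in $P(H)$, being a $P(H)$-valued integral plus $P O_t$. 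Since $X$ is continuous and $s \mapsto \Y{\fl{s}}$ is constant on each half-open grid interval, the integrand agrees away from grid points with a function that is continuous on each closed grid subinterval; applying Lemma~\ref{lem:Y_regularity} subinterval by subinterval to $A|_{P(H)} \in L(P(H))$ (with a time shift) then shows that $t \mapsto D_t$ is continuous on $[0,T]$ and $C^1$ on the interior of each grid subinterval with $\tfrac{d}{dt} D_t = A D_t + P( F(X_t) - F(\Y{\fl{t}}) )$ there; hence $t \mapsto \| D_t \|_H^2$ is absolutely continuous on $[0,T]$.

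The main idea is to manufacture a monotone pairing by adding and subtracting $F(P X_t) - F(\XX{t})$. Using that $P$ is self-adjoint, for a.e.\ $t \in [0,T]$,
\[
  \tfrac{d}{dt}\| D_t \|_H^2
  = 2 \big\langle D_t,\, A D_t + F(P X_t) - F(\XX{t}) \big\rangle_H
  + 2 \big\langle D_t,\, \big( F(X_t) - F(P X_t) \big) - \big( F(\Y{\fl{t}}) - F(\XX{t}) \big) \big\rangle_H .
\]
Since $P X_t, \XX{t} \in P(H) \subseteq D(A)$, the monotonicity hypothesis bounds the first summand by $2 c \| D_t \|_H^2$. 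For the second summand I would invoke Cauchy--Schwarz together with the local Lipschitz hypothesis --- which extends from $D(A)$ to $V$ by density of $D(A)$ in $V$ and continuity of $F$ --- and estimate it by $2 \| D_t \|_H \sqrt{C}\, R_t^{1/2} \big( \| X_t - P X_t \|_V + \| \Y{\fl{t}} - \XX{t} \|_V \big)$, where $R_t := 1 + \| X_t \|_V^{\varphi} + \| P X_t \|_V^{\varphi} + \| \XX{t} \|_V^{\varphi} + \| \Y{\fl{t}} \|_V^{\varphi}$ majorizes each of the two local Lipschitz weights $1 + \| X_t \|_V^\varphi + \| P X_t \|_V^\varphi$ and $1 + \| \Y{\fl{t}} \|_V^\varphi + \| \XX{t} \|_V^\varphi$. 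Young's inequality $2 x y \le (\kappa-2) c\, x^2 + \tfrac{1}{(\kappa-2)c}\, y^2$ (here $\kappa > 2$ is used precisely so that the extra $(\kappa-2)c$ combines with the $2c$ from monotonicity into the coefficient $\kappa c$) then yields
\[
  \tfrac{d}{dt}\| D_t \|_H^2
  \le \kappa c \, \| D_t \|_H^2
  + \frac{C}{(\kappa-2) c}\, \big( \| X_t - P X_t \|_V + \| \XX{t} - \Y{\fl{t}} \|_V \big)^2 R_t
\]
for a.e.\ $t \in [0,T]$.

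Finally, because $t \mapsto \| D_t \|_H^2$ is absolutely continuous with $\| D_0 \|_H^2 = 0$, integrating this differential inequality and applying Gronwall's lemma, followed by the bound $e^{\kappa c (t-s)} \le e^{\kappa c T}$ for $0 \le s \le t \le T$, gives exactly the claimed estimate. I expect the only genuinely nontrivial points to be (i) justifying the differentiation of $\| D_t \|_H^2$ across the grid points --- handled by working throughout in the finite-dimensional space $P(H)$ and applying Lemma~\ref{lem:Y_regularity} subinterval by subinterval --- and (ii) the add-and-subtract decomposition that produces the monotone pairing; the remaining manipulations (Cauchy--Schwarz, the density extension of the Lipschitz bound, Young, and Gronwall) are routine.
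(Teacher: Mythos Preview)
Your proposal is correct and follows essentially the same approach as the paper: both use the add-and-subtract trick $F(PX_t)-F(\XX{t})$ to invoke the monotonicity hypothesis, then Cauchy--Schwarz, the local Lipschitz bound, and Young's inequality with parameter $(\kappa-2)c$. The only cosmetic differences are that the paper builds in the integrating factor $e^{-\kappa c t}$ from the start (rather than applying Gronwall at the end) and establishes the differentiability by treating $P(X_t-O_t)$ via Lemma~\ref{lem:Y_regularity} and $\XX{t}-PO_t$ via \cite[Lemma~2.1]{HutzenthalerJentzenSalimova2016} separately, whereas you handle $D_t$ directly on grid subintervals.
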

\begin{proof}[Proof of Lemma~\ref{lem:pw_X_XX_diff}]
Throughout this proof assume
w.l.o.g.\ that $ t \in (0,T] $
(otherwise the proof is clear).
Observe that Lemma~\ref{lem:Y_regularity}
(with $ V = P(H) $,
$ A = ( P(H) \ni v \mapsto Av \in P(H) ) $,
$ T = T $, $ Y = ( [0,T] \ni s \mapsto P(X_s -  O_s) \in P(H) ) $, 
$ Z = ( [0,T] \ni s \mapsto P F( X_s ) \in P(H) ) $
in the notation
of Lemma~\ref{lem:Y_regularity})
shows that the function $ [0,T] \ni s \mapsto P(X_s -  O_s) \in P(H) $
is continuously differentiable and that for all 
$ s \in [0,T] $ we have that
\begin{align}
\label{eq:X_diffable}
  P(X_s -  O_s)
  =
  \int_0^s
  A P (X_u-O_u)
  +
  P F( X_u ) \, du . 
\end{align}
Next note that Lemma~2.1 in 
Hutzenthaler et al.~\cite{HutzenthalerJentzenSalimova2016} 
(with $ V = H $, $ A = A $,
$ T = T $, $ h = \nicefrac{T}{M} $, 
$ Y = ( [0,T] \ni s \mapsto \XX{s} - P O_s \in H ) $,
$ Z = ( [0,T] \ni s \mapsto P F(\Y{s}) \in H ) $ 
in the notation
of Lemma~2.1 in Hutzenthaler et al.~\cite{HutzenthalerJentzenSalimova2016})
implies that for all $ s \in [0,T] $ we have that
$ \XX{s} - P O_s \in D(A) $, that
the function $ [0,T] \ni s \mapsto \XX{s} - P O_s \in D(A) $
is continuous,
that the function 
$ [0,T]\setminus\{0, \frac{T}{M}, \frac{2T}{M}, \ldots \} \ni s \mapsto \XX{s} - P O_s \in H $
is continuously differentiable,
and that for all 
$ s \in [0,T] $ we have that
\begin{align}
  \XX{s} - P O_s
  =
  \int_0^s
  A (\XX{u}- P O_u)
  +
  P F( \Y{\fl{u}} ) \, du . 
\end{align}
This, \eqref{eq:X_diffable}, the fundamental
theorem of calculus, 
and the fact that
\begin{equation}
  \forall \, 
  s \in [0,T]
  \colon
  A P X_s, A \XX{s}
  \in 
  P(H)
\end{equation}
prove that
\begin{align}
\begin{split}
 &e^{-\kappa c t}
  \left\|
    P X_t - \XX{t}
  \right\|_H^2
  =
  e^{-\kappa c t}
  \left\|
    P (X_t - O_t) - (\XX{t} - P O_t)
  \right\|_H^2
\\&=
  2
  \int_0^t
  e^{-\kappa c s} \,
  \big<
    P X_s - \XX{s},
    A P (X_s-O_s) + P F( X_s )
    - A (\XX{s} - P O_s) - P F( \Y{\fl{s}} )
  \big>_H \, ds
\\&\quad-
  \kappa c
  \int_0^t
  e^{-\kappa c s}
  \left\|
    P X_s - \XX{s}
  \right\|_H^2 ds
\\&=
  2
  \int_0^t
  e^{-\kappa c s} \,
  \big<
    P X_s - \XX{s},
    A P X_s + P F( X_s )
    - A \XX{s} - P F( \Y{\fl{s}} )
  \big>_H \, ds
\\&\quad-
  \kappa c
  \int_0^t
  e^{-\kappa c s}
  \left\|
    P X_s - \XX{s}
  \right\|_H^2 ds
\\&=
  2
  \int_0^t
  e^{-\kappa c s} \,
  \big<
    P X_s - \XX{s},
    P \big[ A P X_s + F( X_s )
    - A \XX{s} - F( \Y{\fl{s}} ) \big]
  \big>_H \, ds
\\&\quad-
  \kappa c
  \int_0^t
  e^{-\kappa c s}
  \left\|
    P X_s - \XX{s}
  \right\|_H^2 ds .
\end{split}
\end{align}
The fact that $ P \in L(H) $
is symmetric together with the fact
that 
\begin{equation}
  \forall \, 
  s \in [0,T]
  \colon
  P X_s, \XX{s}
  \in 
  P(H)
\end{equation}
therefore ensures that
\begin{align}
\begin{split}
 &e^{-\kappa c t}
  \left\|
    P X_t - \XX{t}
  \right\|_H^2
\\&=
  2
  \int_0^t
  e^{-\kappa c s} \,
  \big<
    P X_s - \XX{s},
    A P X_s + F( X_s )
    - A \XX{s} - F( \Y{\fl{s}} )
  \big>_H \, ds
\\&\quad-
  \kappa c
  \int_0^t
  e^{-\kappa c s}
  \left\|
    P X_s - \XX{s}
  \right\|_H^2 ds
\\&=
  2
  \int_0^t
  e^{-\kappa c s} \,
  \big<
    P X_s - \XX{s},
    A P X_s + F( P X_s )
    - A \XX{s} - F( \XX{s} )
  \big>_H \, ds
\\&\quad+
  2
  \int_0^t
  e^{-\kappa c s} \,
  \big<
    P X_s - \XX{s},
    F( X_s ) - F( P X_s)
    + F( \XX{s} ) - F( \Y{\fl{s}} )
  \big>_H \, ds
\\&\quad-
  \kappa c
  \int_0^t
  e^{-\kappa c s}
  \left\|
    P X_s - \XX{s}
  \right\|_H^2 ds .
\end{split}
\end{align}
The assumption that
\begin{equation}
  \forall \,
  v, w \in D(A)
  \colon
  \left<
    v-w,
    Av + F(v) - Aw - F(w)
  \right>_H
  \leq
  c
  \| v-w \|_H^2
  ,
\end{equation}
the Cauchy-Schwarz inequality,
and the fact that 
\begin{equation}
  \forall \, x,y \in \R
  \colon 
  xy \leq \nicefrac{x^2}{2} + \nicefrac{y^2}{2}
\end{equation}
hence demonstrate that
\begin{align}
\begin{split}
 &e^{-\kappa c t}
  \left\|
    P X_t - \XX{t}
  \right\|_H^2
\\&\leq
  2 c
  \int_0^t
  e^{-\kappa c s} 
  \left\|
    P X_s - \XX{s}
  \right\|_H^2 ds
  -
  \kappa c
  \int_0^t
  e^{-\kappa c s}
  \left\|
    P X_s - \XX{s}
  \right\|_H^2 ds
\\&\quad+
  2
  \int_0^t
  e^{-\kappa c s} \,
  \|
    P X_s - \XX{s}
  \|_H \,
  \|
    F( X_s ) - F( P X_s) + F( \XX{s} ) - F( \Y{\fl{s}} )
  \|_H \, ds
\\&=
  \left(2-\kappa\right) c
  \int_0^t
  e^{-\kappa c s} 
  \left\|
    P X_s - \XX{s}
  \right\|_H^2 ds
  +
  2
  \int_0^t
  e^{-\kappa c s} 
  \left[
    \sqrt{ \left( \kappa - 2 \right) c }
    \left\|
      P X_s - \XX{s}
    \right\|_H
  \right]
\\&\quad\cdot 
  \left[ 
    \tfrac{1}{
      \sqrt{ \left( \kappa - 2 \right) c }
    } \,
    \|
      F( X_s ) - F( P X_s) + F( \XX{s} ) - F( \Y{\fl{s}} )
    \|_H 
  \right] ds
\\&\leq
  \left(2-\kappa\right) c
  \int_0^t
  e^{-\kappa c s} 
  \left\|
    P X_s - \XX{s}
  \right\|_H^2 ds
  +
  \int_0^t
  e^{-\kappa c s} \,
  \bigg[
    \left( \kappa - 2 \right) c
    \left\|
      P X_s - \XX{s}
    \right\|_H^2
\\&\quad+  
    \tfrac{1}{
      \left( \kappa - 2 \right) c
    } \,
    \|
      F( X_s ) - F( P X_s) + F( \XX{s} ) - F( \Y{\fl{s}} )
    \|_H^2 
  \bigg] \, ds .
\end{split}
\end{align}
The triangle inequality
and the fact that
\begin{equation}
  \forall \,
  v,w \in V
  \colon
  \| F(v) - F(w) \|_H^2
  \leq
  C
  \| v-w \|_V^2
  (
    1
    +
    \| v \|_V^{\varphi}
    +
    \| w \|_V^{\varphi}
  )
\end{equation}
therefore yield that
\begin{align}
\begin{split}
 &e^{-\kappa c t}
  \left\|
    P X_t - \XX{t}
  \right\|_H^2
\\&\leq
  \frac{1}{
    \left( \kappa - 2 \right) c
  }
  \int_0^t
  e^{-\kappa c s}
  \left[ 
    \big\|
      F( X_s ) - F( P X_s)
    \big\|_H 
    +  
    \big\|
      F( \XX{s} ) - F( \Y{\fl{s}} )
    \big\|_H
  \right]^2 ds
\\&\leq
  \frac{C}{
    \left( \kappa - 2 \right) c
  }
  \int_0^t
  e^{-\kappa c s} \,
  \bigg[
    \|
      X_s - P X_s
    \|_V \,
    \sqrt{
      1
      +
      \|
	X_s
      \|_V^{\varphi}
      +
      \|
	P X_s
      \|_V^{\varphi}
    }
\\&\quad+
    \|
      \XX{s} - \Y{\fl{s}}
    \|_V \,
    \sqrt{
      1
      +
      \|
	\XX{s}
      \|_V^{\varphi}
      +
      \|
	\Y{\fl{s}}
      \|_V^{\varphi}
    } \,
  \bigg]^2 \, ds .
\end{split}
\end{align}
This completes the proof of Lemma~\ref{lem:pw_X_XX_diff}.
\end{proof}

\subsection{Analysis of the error between the numerical 
approximation and the Galerkin projection
of the semilinear integrated version of the numerical
approximation}

\begin{lemma}
\label{lem:XX_YO_diff_pw}
Assume the setting in Section~\ref{sec:pw_setting}
and let $ \alpha \in (0,\infty) $, $ \rho \in [0,1) $,
$ t \in [0,T] $
satisfy 
$
  \sup_{ s \in (0,T] }
  s^{\rho}
  \| e^{sA} \|_{ L(H, V) }
  <
  \infty
$. Then
\begin{align}
\begin{split}
&
  \|
    \XX{t}
    -
    \Y{t}
  \|_V
\\
&\leq
  \frac{ T^{\alpha} }{ M^{\alpha} }
  \left[
    \sup_{ s \in (0,T) }
    s^{\rho} \,
    \big\|
      e^{sA}
    \big\|_{ L(H,V) }
  \right]
  \int_0^t
  \left(
    t-s
  \right)^{-\rho}
  \big|
    \Vm(\Y{\fl{s}},\O{\fl{s}})
  \big|^{\alpha} \, 
  \|
    F( \Y{\fl{s}} ) 
  \|_H \, ds
\\&\quad+
  \|
    P O_t
    -
    \O{t}
  \|_V .
\end{split}
\end{align}

\end{lemma}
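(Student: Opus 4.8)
The plan is to estimate $\XX{t}-\Y{t}$ directly from the two integral representations in Section~\ref{sec:pw_setting}. Subtracting the defining equation for $\Y{}$ from that for $\XX{}$ gives, for every $t\in[0,T]$,
\begin{align*}
  \XX{t}-\Y{t}
  =
  \int_0^t
  P e^{(t-s)A}
  \big(
    1-\one_{[0,\nicefrac{M}{T}]}^{\R}(\Vm(\Y{\fl{s}},\O{\fl{s}}))
  \big)
  F(\Y{\fl{s}})\,ds
  +
  (PO_t-\O{t}) ,
\end{align*}
so that, by the triangle inequality for the Bochner integral, it suffices to bound $\| P e^{(t-s)A}(1-\one_{[0,\nicefrac{M}{T}]}^{\R}(\Vm(\Y{\fl{s}},\O{\fl{s}})))F(\Y{\fl{s}}) \|_V$ pointwise in $s\in(0,t)$ by the integrand appearing on the right-hand side of the claimed inequality.

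For the indicator factor I would exploit that $\Vm$ takes values in $[0,\infty)$: on $\{\Vm(\Y{\fl{s}},\O{\fl{s}})>\nicefrac{M}{T}\}$ we have $\tfrac{T}{M}\Vm(\Y{\fl{s}},\O{\fl{s}})>1$ and hence $\big|\tfrac{T}{M}\Vm(\Y{\fl{s}},\O{\fl{s}})\big|^{\alpha}>1$ for every $\alpha\in(0,\infty)$, whereas on the complement the indicator factor is $0$; consequently
\begin{align*}
  1-\one_{[0,\nicefrac{M}{T}]}^{\R}(\Vm(\Y{\fl{s}},\O{\fl{s}}))
  \leq
  \tfrac{T^{\alpha}}{M^{\alpha}}\,
  |\Vm(\Y{\fl{s}},\O{\fl{s}})|^{\alpha} .
\end{align*}
For the remaining factors I would use that $P$ is the orthogonal projection onto the finite-dimensional span of the eigenvectors of $A$ indexed by $I$, so that $P$ commutes with $e^{(t-s)A}$ and obeys $\|P\|_{L(H)}\leq 1$; this gives $\|P e^{(t-s)A}F(\Y{\fl{s}})\|_V=\|e^{(t-s)A}PF(\Y{\fl{s}})\|_V\leq\|e^{(t-s)A}\|_{L(H,V)}\|F(\Y{\fl{s}})\|_H$. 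Finally, for $s\in(0,t)$ we have $t-s\in(0,T)$ (the single point $s=0$, $t=T$ is negligible in the integral), whence $\|e^{(t-s)A}\|_{L(H,V)}\leq(t-s)^{-\rho}\big[\sup_{u\in(0,T)}u^{\rho}\|e^{uA}\|_{L(H,V)}\big]$, where the latter supremum is bounded by the hypothesized finite supremum over $(0,T]$.

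Chaining these three estimates inside the integral and adding $\|PO_t-\O{t}\|_V$ produces precisely the assertion. I do not expect a genuine obstacle here: the argument is essentially the triangle inequality combined with the elementary bound on the truncation indicator. The only points requiring a line of justification are the commutation $Pe^{(t-s)A}=e^{(t-s)A}P$ together with $\|P\|_{L(H)}\leq1$ (which rests on $I$ being a finite set of eigenvectors of $A$), the measurability in $s$ of the integrand so that the Bochner triangle inequality applies, and the harmless boundary value $t-s=T$.
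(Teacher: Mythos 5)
Your proposal is correct and follows essentially the same route as the paper: subtract the two mild formulations, bound the truncation indicator by $|\tfrac{T}{M}\Vm(\Y{\fl{s}},\O{\fl{s}})|^{\alpha}$ on the set where it is active, and use the smoothing bound $\|e^{(t-s)A}\|_{L(H,V)}\leq (t-s)^{-\rho}\sup_{u\in(0,T)}u^{\rho}\|e^{uA}\|_{L(H,V)}$ together with $\|P\|_{L(H)}\leq 1$. The minor justifications you flag (commutation of $P$ with the semigroup, negligibility of the endpoint) are exactly the points the paper also handles implicitly.
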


\begin{proof}[Proof of Lemma~\ref{lem:XX_YO_diff_pw}]
Throughout this proof we assume w.l.o.g.\ that $ t \in (0,T] $
(otherwise the proof is clear).
Observe that 
\begin{align}
\begin{split}
 &\|
    \XX{t}
    -
    \Y{t}
  \|_V
\\&\leq
  \int_0^t
  \big\|
    P
    e^{(t-s)A}
    \big[
      1
      -
      \one_{ 
	[ 0, \nicefrac{M}{T} ]
      }^{\R}
      ( \Vm(\Y{\fl{s}},\O{\fl{s}}) ) 
    \big]
    F( \Y{\fl{s}} )
  \big\|_V \, ds
  + 
  \|
    P O_t
    -
    \O{t}
  \|_V
\\&\leq
  \left[
    \sup_{ s \in (0,T) }
    s^{\rho} \,
    \big\|
      e^{sA}
    \big\|_{ L(H,V) }
  \right]
  \int_0^t
  \left(
    t-s
  \right)^{-\rho}
  \big\|
    \one_{ 
      ( \nicefrac{M}{T}, \infty )
    }^{\R}
    ( \Vm(\Y{\fl{s}},\O{\fl{s}}) ) \,
    P
    F( \Y{\fl{s}} ) 
  \big\|_H \, ds
\\&\quad+ 
  \|
    P O_t
    -
    \O{t}
  \|_V
\\&\leq
  \left[
    \sup_{ s \in (0,T) }
    s^{\rho} \,
    \big\|
      e^{sA}
    \big\|_{ L(H,V) }
  \right]
  \int_0^t
  \left(
    t-s
  \right)^{-\rho}
  \one_{ 
    ( \nicefrac{M}{T}, \infty )
  }^{\R}
  ( \Vm(\Y{\fl{s}},\O{\fl{s}}) )
  \left[
    \tfrac{ T }{ M }
    \Vm(\Y{\fl{s}},\O{\fl{s}})
  \right]^{\alpha}
\\&\quad\cdot
  \| P \|_{ L(H) } \,
  \|
    F( \Y{\fl{s}} ) 
  \|_H \, ds 
  + 
  \|
    P O_t
    -
    \O{t}
  \|_V .
\end{split}
\end{align}
This and the fact that $ \| P \|_{ L(H) } \leq 1 $
complete the proof of Lemma~\ref{lem:XX_YO_diff_pw}.
\end{proof}

\subsection{Temporal regularity for the 
Galerkin projection of the
semilinear integrated version of the numerical approximation}

\begin{lemma}
\label{lem:XX_XX_diff_pw}
Assume the setting in Section~\ref{sec:pw_setting}
and let $ \rho \in [0,1) $, $ \varrho \in [0, 1-\rho) $,
$ t_1 \in [0,T) $, $ t_2 \in (t_1, T] $
satisfy 
$
  \sup_{ s \in (0,T] }
  s^{\rho}
  \| e^{sA} \|_{ L(H, V) }
  <
  \infty
$. Then
\begin{align}
\begin{split}
 &\|
    \XX{t_1}
    -
    \XX{t_2}
  \|_V
  \leq
  \left[
    \sup_{ s \in (0,T) }
    s^{\rho} \,
    \big\|
      e^{sA}
    \big\|_{ L(H,V) }
  \right]
  \Bigg[
    \int_{t_1}^{t_2}
    \left(
      t_2 - s
    \right)^{-\rho}
    \|
      F( \Y{\fl{s}} )
    \|_H \, ds
\\&\quad+
    2^{(\rho+\varrho)}
    \left(
      t_2 - t_1
    \right)^{\varrho}
    \int_0^{t_1}
    \left(
      t_1-s
    \right)^{-(\rho+\varrho)}
    \|
      F( \Y{\fl{s}} )
    \|_H \, ds
  \Bigg]
  +
  \|
    P ( O_{t_2} - O_{t_1} ) 
  \|_V .
\end{split}
\end{align}
\end{lemma}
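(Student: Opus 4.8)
The plan is to decompose $ \XX{t_2} - \XX{t_1} $ into three summands and to estimate each of them with the help of the standard smoothing properties of the analytic semigroup generated by $ A $. Using the defining formula $ \XX{t} = \int_0^t P e^{(t-s)A} F(\Y{\fl{s}}) \, ds + P O_t $ from Section~\ref{sec:pw_setting}, splitting $ \int_0^{t_2} = \int_0^{t_1} + \int_{t_1}^{t_2} $, using the identity $ e^{(t_2-s)A} - e^{(t_1-s)A} = ( e^{(t_2-t_1)A} - \Id_H ) e^{(t_1-s)A} $, and using that $ P $ commutes with $ e^{rA} $, I would first obtain
\begin{align}
\label{eq:XX_XX_diff_pw_decomp}
\begin{split}
 &\XX{t_2} - \XX{t_1}
\\&=
  \int_{t_1}^{t_2} P e^{(t_2-s)A} F(\Y{\fl{s}}) \, ds
  +
  \int_0^{t_1} \big( e^{(t_2-t_1)A} - \Id_H \big) e^{(t_1-s)A} P F(\Y{\fl{s}}) \, ds
  +
  P \big( O_{t_2} - O_{t_1} \big) .
\end{split}
\end{align}

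For the first integral in~\eqref{eq:XX_XX_diff_pw_decomp} the triangle inequality for Bochner integrals, the fact that $ \| P \|_{ L(H) } \leq 1 $, and the hypothesis $ \sup_{ s \in (0,T] } s^{\rho} \| e^{sA} \|_{ L(H,V) } < \infty $ yield $ \| P e^{(t_2-s)A} F(\Y{\fl{s}}) \|_V \leq [ \sup_{ r \in (0,T) } r^{\rho} \| e^{rA} \|_{ L(H,V) } ] \, ( t_2 - s )^{-\rho} \| F(\Y{\fl{s}}) \|_H $, and integrating this over $ s \in [t_1,t_2] $ gives the first term on the right-hand side of the claimed estimate. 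For the second integral in~\eqref{eq:XX_XX_diff_pw_decomp} I would factorise, for $ s \in [0,t_1) $, $ ( e^{(t_2-t_1)A} - \Id_H ) e^{(t_1-s)A} = [ e^{(t_1-s)A/2} ] \, [ (-A)^{\varrho} e^{(t_1-s)A/2} ] \, [ ( e^{(t_2-t_1)A} - \Id_H ) (-A)^{-\varrho} ] $ (all three factors commute, since $ P $, $ (-A)^{\pm\varrho} $ and $ e^{rA} $ are functions of $ A $ relative to the orthonormal eigenbasis $ \H $, and $ (-A)^{\varrho} (-A)^{-\varrho} = \Id_H $ on all of $ H $), absorb $ P $ into $ \| P \|_{ L(H) } \leq 1 $, and bound the $ V $-norm of the resulting vector by $ \| e^{(t_1-s)A/2} \|_{ L(H,V) } \, \| (-A)^{\varrho} e^{(t_1-s)A/2} \|_{ L(H) } \, \| ( e^{(t_2-t_1)A} - \Id_H ) (-A)^{-\varrho} \|_{ L(H) } \, \| F(\Y{\fl{s}}) \|_H $. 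Invoking the two elementary semigroup estimates already used in the proof of Lemma~\ref{lem:Y_a_priori}, namely $ \| (-uA)^{-r}( e^{uA} - \Id_H ) \|_{ L(H) } \leq 1 $ and $ \| (-uA)^{r} e^{uA} \|_{ L(H) } \leq 1 $ for all $ u \in (0,\infty) $, $ r \in [0,1] $ (applicable because $ \varrho \leq \rho + \varrho < 1 $), I would obtain $ \| ( e^{(t_2-t_1)A} - \Id_H ) (-A)^{-\varrho} \|_{ L(H) } \leq ( t_2 - t_1 )^{\varrho} $, $ \| (-A)^{\varrho} e^{(t_1-s)A/2} \|_{ L(H) } \leq 2^{\varrho} ( t_1 - s )^{-\varrho} $, and $ \| e^{(t_1-s)A/2} \|_{ L(H,V) } \leq 2^{\rho} ( t_1 - s )^{-\rho} [ \sup_{ r \in (0,T) } r^{\rho} \| e^{rA} \|_{ L(H,V) } ] $; multiplying these bounds, noting $ 2^{\rho} 2^{\varrho} = 2^{(\rho+\varrho)} $, and integrating over $ s \in [0,t_1] $ produces the second term on the right-hand side of the claimed estimate, while the last summand in~\eqref{eq:XX_XX_diff_pw_decomp} is exactly $ \| P ( O_{t_2} - O_{t_1} ) \|_V $.

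I do not expect any genuine difficulty here; the only steps deserving attention are the bookkeeping ones. One should check that the operator manipulations are applied legitimately — which is the case because, for $ s < t_1 $, the semigroup $ e^{(t_1-s)A/2} $ provides the required smoothing and $ (-A)^{\varrho}(-A)^{-\varrho} = \Id_H $ holds on $ H $ — and that the Bochner integrals appearing in~\eqref{eq:XX_XX_diff_pw_decomp} are well defined, which holds since $ [0,T] \ni s \mapsto \Y{\fl{s}} $ attains only finitely many values (hence $ s \mapsto \| F(\Y{\fl{s}}) \|_H $ is bounded) and since $ s \mapsto ( t_2 - s )^{-\rho} $ and $ s \mapsto ( t_1 - s )^{-(\rho+\varrho)} $ are integrable thanks to $ \rho < 1 $ and $ \rho + \varrho < 1 $.
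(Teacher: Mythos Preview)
Your proposal is correct and follows essentially the same approach as the paper: the same three-term decomposition, the same splitting $e^{(t_1-s)A}=e^{(t_1-s)A/2}e^{(t_1-s)A/2}$, and the same two semigroup estimates $\|(-uA)^r e^{uA}\|_{L(H)}\le 1$ and $\|(-uA)^{-r}(e^{uA}-\Id_H)\|_{L(H)}\le 1$. Your explicit remark on Bochner integrability (via the finitely many values of $\Y{\fl{\cdot}}$) is a nice addition that the paper leaves implicit.
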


\begin{proof}[Proof of Lemma~\ref{lem:XX_XX_diff_pw}]
Observe that
\begin{align}
\begin{split}
 &\|
    \XX{t_2}
    -
    \XX{t_1}
  \|_V
\\&\leq
  \int_{t_1}^{t_2}
  \big\|
    P
    e^{(t_2-s)A}
    F( \Y{\fl{s}} )
  \big\|_V \, ds
  +
  \int_0^{t_1}
  \big\|
    P
    \big(
      e^{(t_2-s)A}
      -
      e^{(t_1-s)A}
    \big)
    F( \Y{\fl{s}} )
  \big\|_V \, ds
\\ &
\quad
  +
  \big\|
    P ( O_{t_2} - O_{t_1} ) 
  \big\|_V
\\&\leq
  \left[
    \sup_{ s \in (0,T) }
    s^{\rho} \,
    \big\|
      e^{sA}
    \big\|_{ L(H,V) }
  \right]
  \int_{t_1}^{t_2}
  \left(
    t_2 - s
  \right)^{-\rho}
  \big\|
    P
    F( \Y{\fl{s}} )
  \big\|_H \, ds
  +
  \big\|
    P ( O_{t_2} - O_{t_1} ) 
  \big\|_V
\\&\quad+
  \left[
    \sup_{ s \in (0,T) }
    s^{\rho} \,
    \big\|
      e^{sA}
    \big\|_{ L(H,V) }
  \right]
  \int_0^{t_1}
  \left[
    \frac{2}{t_1-s}
  \right]^{\rho}
  \big\|
    P
    e^{\frac{1}{2}(t_1-s)A}
    \left(
      e^{(t_2-t_1)A}
      -
      \Id_H
    \right)
    F( \Y{\fl{s}} )
  \big\|_H \, ds .
\end{split}
\end{align}
This, the fact that 
\begin{equation}
  \forall \,
  s \in [0,\infty) 
  ,
  r \in [0,1]
  \colon 
  \| (-sA)^r e^{sA} \|_{ L(H) } 
  \leq 1
  ,
\end{equation}
the fact that 
\begin{equation}
  \forall \,
  s \in (0,\infty) 
  ,
  r \in [0,1]
  \colon 
  \| (-sA)^{-r} (e^{sA} - \Id_H ) \|_{ L(H) } 
  \leq 1
  ,
\end{equation}
and the fact that $ \| P \|_{ L(H) } \leq 1 $
prove that
\begin{align}
\begin{split}
 &\|
    \XX{t_2}
    -
    \XX{t_1}
  \|_V
\\&\leq
  \left[
    \sup_{ s \in (0,T) }
    s^{\rho} \,
    \big\|
      e^{sA}
    \big\|_{ L(H,V) }
  \right]
  \int_{t_1}^{t_2}
  \left(
    t_2 - s
  \right)^{-\rho}
  \big\|
    P
  \big\|_{ L(H) }
  \big\|
    F( \Y{\fl{s}} )
  \big\|_H \, ds
\\ &
\quad
  +
  \big\|
    P ( O_{t_2} - O_{t_1} ) 
  \big\|_V
\\&\quad+
  2^{\rho}
  \left[
    \sup_{ s \in (0,T) }
    s^{\rho} \,
    \big\|
      e^{sA}
    \big\|_{ L(H,V) }
  \right]
  \int_0^{t_1}
  \left(
    t_1-s
  \right)^{-\rho}
  \big\|
    (-A)^{\varrho} \,
    e^{\frac{1}{2}(t_1-s)A}
  \big\|_{ L(H) }
\\&\quad\cdot
  \big\|
    (-A)^{-\varrho}
    \left(
      e^{(t_2-t_1)A}
      -
      \Id_H
    \right) \!
  \big\|_{ L(H) }
  \big\|
    P
  \big\|_{ L(H) }
  \big\|
    F( \Y{\fl{s}} )
  \big\|_H \, ds 
\\&\leq
  \left[
    \sup_{ s \in (0,T) }
    s^{\rho} \,
    \big\|
      e^{sA}
    \big\|_{ L(H,V) }
  \right]
  \int_{t_1}^{t_2}
  \left(
    t_2 - s
  \right)^{-\rho}
  \big\|
    F( \Y{\fl{s}} )
  \big\|_H \, ds
  +
  \big\|
    P ( O_{t_2} - O_{t_1} ) 
  \big\|_V
\\&\quad+
  2^{(\rho+\varrho)}
  \left( t_2 - t_1 \right)^{\varrho}
  \left[
    \sup_{ s \in (0,T) }
    s^{\rho} \,
    \big\|
      e^{sA}
    \big\|_{ L(H,V) }
  \right]
  \int_0^{t_1}
  \left(
    t_1-s
  \right)^{-(\rho+\varrho)}
  \big\|
    F( \Y{\fl{s}} )
  \big\|_H \, ds .
\end{split}
\end{align}
The proof of Lemma~\ref{lem:XX_XX_diff_pw}
is thus completed.
\end{proof}

\section{Strong error estimates}
\label{sec:lp_section}

\subsection{Setting}
\label{sec:lp_setting}
Consider the notation in Section~\ref{sec:notation},
let $ ( H, \left< \cdot, \cdot \right>_H, \left\| \cdot \right\|_H ) $
be a separable $ \R $-Hilbert space,
let $ \H \subseteq H $ be a non-empty orthonormal basis of $ H $,
let $ T, c, C, \varphi \in (0,\infty) $,
$ \D \subseteq \Pow_0(\H) $,
$ \mu \colon \H \rightarrow \R $ satisfy
$ \sup_{ h \in \H } \mu_h < 0 $,
let $ A \colon D(A) \subseteq H \rightarrow H $ be the linear operator
which satisfies
$ 
  D(A) 
  = 
  \{ 
    v \in H 
    \colon 
    \sum_{ h \in \H } 
    | 
      \mu_h 
      \langle h, v \rangle_H 
    |^2
    <
    \infty
  \}
$
and 
$
  \forall \, v \in D(A)
  \colon
  Av
  =
  \sum_{ h \in \H }
  \mu_h
  \langle h,v \rangle_H h
$,
let $ ( V, \left\| \cdot \right\|_V ) $ be an $ \R $-Banach space
with $ D(A) \subseteq V \subseteq H $ continuously and densely,
and let 
$ \Vm \in \M\big( \B( V \times V ), \B( [0,\infty) ) \big) $,
$ F \in \C( V,H ) $, $ ( P_I )_{ I \in \Pow(\H) } \subseteq L(H) $
satisfy for all $ v,w \in D(A) $,
$ I \in \Pow(\H) $ that
\begin{equation}
  \langle
    v-w,
    Av
    +
    F(v)
    -
    Aw
    -
    F(w)
  \rangle_H
  \leq
  c
  \|
    v-w
  \|_H^2,
\end{equation}
\begin{equation}
  \| F(v) - F(w) \|_H^2 
  \leq 
  C 
  \| 
    v-w 
  \|_V^2
  (
    1
    +
    \| v \|_V^{\varphi}
    +
    \| w \|_V^{\varphi}
  ),
\end{equation}
and
$ 
  P_I(v) 
  = 
  \sum_{ h \in I } 
  \langle h,v \rangle_H h 
$,
let $ ( \Omega, \F, \P ) $
be a probability space,
let 
$ 
  X
  \colon
  [0,T] \times \Omega
  \rightarrow V
$
be a stochastic process
with continuous sample paths,
and let
$ 
  O
  \colon
  [0,T] \times \Omega
  \rightarrow V
$
and
$
  \XXMN{},
  \YMN{},
  \OMN{}
  \colon
  [0,T] \times \Omega
  \rightarrow V
$,
$ M \in \N $,
$ I \in \D $,
be stochastic processes
which satisfy
for all $ t \in [0,T] $,
$ M \in \N $, $ I \in \D $
that
\begin{equation}
  X_t
  =
  \int_0^t
  e^{(t-s)A}
  F( X_s ) \, ds
  +
  O_t ,
  \qquad
  \XXMN{t}
  =
  \int_0^t
  P_I
  e^{(t-s)A}
  F( \YMN{\fl{s}} ) \, ds
  +
  P_I O_t ,
\end{equation}
\begin{equation}
  \text{and}
  \qquad
  \YMN{t}
  =
  \int_0^t
  P_I
  e^{(t-s)A}
  \one_{ 
    \{
      \Vm(\YMN{\fl{s}},\OMN{\fl{s}})
      \leq
      M/T
    \}
  }^{\Omega} \,
  F( \YMN{\fl{s}} ) \, ds
  +
  \OMN{t} .
\end{equation}

\subsection{Analysis of the error between the Galerkin
projection of the exact solution and the Galerkin 
projection of the semilinear
integrated version of the numerical approximation}

\begin{lemma}
\label{lem:lp_X_XX_diff}
Assume the setting in Section~\ref{sec:lp_setting}
and let $ \kappa \in (2,\infty) $, $ t \in [0,T]$,
$ p \in [2,\infty) $, $ M \in \N $, $ I \in \D $
satisfy
$
  \sup_{ s \in (0,T) }
  \E\big[  
    \| X_s \|_V^{p\varphi}
    +
    \| P_I X_s \|_V^{p\varphi}
    +
    \| \XXMN{s} \|_V^{p\varphi}
    +
    \| \YMN{\fl{s}} \|_V^{p\varphi}
  \big] 
  <
  \infty
$.
Then
\begin{multline}
  \|
    P_I X_t - \XXMN{t}
  \|_{ \L^p(\P; H) }
  \leq
  \frac{\sqrt{ C T e^{\kappa c T} } }{
    \sqrt{ \left( \kappa - 2 \right) c }
  }
  \left[ 
    \sup_{ s \in (0,T) }
    \left(
      \|
        P_{ \H \setminus I } X_s 
      \|_{ \L^{2p}(\P;V) }
      +
      \|
        \XXMN{s} - \YMN{\fl{s}}
      \|_{ \L^{2p}(\P;V) }
    \right)
  \right]
\\
  \cdot
  \left[
    1
    +
    \sup_{ s \in (0,T) }
    \left(
      \|
        X_s
      \|_{ \L^{p\varphi}(\P;V) }^{\nicefrac{\varphi}{2}}
      +
      \|
        P_I X_s
      \|_{ \L^{p\varphi}(\P;V) }^{\nicefrac{\varphi}{2}}
      +
      \|
        \XXMN{s}
      \|_{ \L^{p\varphi}(\P;V) }^{\nicefrac{\varphi}{2}}
      +
      \|
        \YMN{s}
      \|_{ \L^{p\varphi}(\P;V) }^{\nicefrac{\varphi}{2}}
    \right)
  \right] .
\end{multline}
\end{lemma}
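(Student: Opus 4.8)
The plan is to deduce this $\L^p$-estimate from the pathwise bound of Lemma~\ref{lem:pw_X_XX_diff} by taking $\L^p(\P)$-norms. First I would observe that for every $\omega\in\Omega$ the sample paths $X(\cdot,\omega)$, $O(\cdot,\omega)$, $\OMN{\cdot}(\omega)$, $\XXMN{\cdot}(\omega)$, $\YMN{\cdot}(\omega)$ together with $F$, $P_I$ and $\Vm$ satisfy the assumptions of Section~\ref{sec:pw_setting} with $P=P_I$; the only thing to note here is that, for a fixed $\omega$, the function $\one_{\{\Vm(\YMN{\fl{s}},\OMN{\fl{s}})\le M/T\}}^{\Omega}$ evaluated at $\omega$ coincides with $\one_{[0,M/T]}^{\R}(\Vm(\YMN{\fl{s}}(\omega),\OMN{\fl{s}}(\omega)))$, so that the defining equation for $\YMN{}$ is exactly the one appearing in Section~\ref{sec:pw_setting}. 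Applying Lemma~\ref{lem:pw_X_XX_diff} for this $M$, $I$ and $\kappa$, and using that $X_s-P_IX_s=P_{\H\setminus I}X_s$ (since $\H$ is an orthonormal basis and $\mathrm{span}(I)\subseteq D(A)\subseteq V$), then gives, pathwise and for all $t\in[0,T]$, the pointwise bound
\[
  \|P_IX_t-\XXMN{t}\|_H^2
  \le
  \frac{Ce^{\kappa cT}}{(\kappa-2)c}
  \int_0^t
  \big[\|P_{\H\setminus I}X_s\|_V+\|\XXMN{s}-\YMN{\fl{s}}\|_V\big]^2
  \big(1+\|X_s\|_V^{\varphi}+\|P_IX_s\|_V^{\varphi}+\|\XXMN{s}\|_V^{\varphi}+\|\YMN{\fl{s}}\|_V^{\varphi}\big)\,ds.
\]

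Next I would take $\L^{p/2}(\P;\R)$-norms on both sides (legitimate since $p/2\ge1$), so that the left-hand side becomes $\|P_IX_t-\XXMN{t}\|_{\L^p(\P;H)}^2$, and apply Minkowski's integral inequality to move the $\L^{p/2}(\P;\R)$-norm inside the $ds$-integral. For the resulting integrand I would then use H\"older's inequality on $(\Omega,\F,\P)$ with the exponents $p$ and $p$ conjugate to $\L^{p/2}$ to separate the square of the $V$-differences from the polynomial factor, the identity $\|g^2\|_{\L^p(\P;\R)}=\|g\|_{\L^{2p}(\P;\R)}^2$, and the triangle inequality in $\L^{2p}(\P;V)$ and in $\L^p(\P;\R)$ (together with $\|\,\|Y\|_V^{\varphi}\,\|_{\L^p(\P;\R)}=\|Y\|_{\L^{p\varphi}(\P;V)}^{\varphi}$), to bound the $\L^{p/2}(\P;\R)$-norm of the integrand by
\[
  \big[\|P_{\H\setminus I}X_s\|_{\L^{2p}(\P;V)}+\|\XXMN{s}-\YMN{\fl{s}}\|_{\L^{2p}(\P;V)}\big]^2
  \big(1+\|X_s\|_{\L^{p\varphi}(\P;V)}^{\varphi}+\|P_IX_s\|_{\L^{p\varphi}(\P;V)}^{\varphi}+\|\XXMN{s}\|_{\L^{p\varphi}(\P;V)}^{\varphi}+\|\YMN{\fl{s}}\|_{\L^{p\varphi}(\P;V)}^{\varphi}\big);
\]
all these norms are finite by the assumed uniform moment bound.

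Finally I would invoke the elementary inequality $1+\sum_{i=1}^{4}b_i^{\varphi}\le\big(1+\sum_{i=1}^{4}b_i^{\varphi/2}\big)^2$ for $b_1,\dots,b_4\in[0,\infty)$ (expanding the square produces exactly these five terms plus nonnegative cross terms) to rewrite the polynomial factor as a perfect square, bound the whole integrand by its supremum over $s\in(0,T)$, use $\int_0^t1\,ds=t\le T$, and take square roots; this reproduces the claimed inequality with prefactor $\sqrt{CTe^{\kappa cT}}/\sqrt{(\kappa-2)c}$. I do not expect a genuine obstacle here: all the analytic substance already sits in Lemma~\ref{lem:pw_X_XX_diff}, and what remains is routine $\L^p$-bookkeeping. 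The only points that need a word of care are the (a.s.\ or even everywhere) verification that the pathwise setting of Section~\ref{sec:pw_setting} holds, the matching of the two indicator notations, and the harmless replacement of $\YMN{\fl{s}}$ by $\YMN{s}$ inside the suprema, which is legitimate since $\fl{s}\in[0,T)$ for every $s\in(0,T)$.
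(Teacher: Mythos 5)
Your proposal is correct and takes essentially the same route as the paper's own proof: both reduce to the pathwise estimate of Lemma~\ref{lem:pw_X_XX_diff}, take $\L^{\nicefrac{p}{2}}(\P;\R)$-norms, move them inside the time integral, split the integrand by H\"older's inequality into an $\L^{2p}$-factor and an $\L^{p}$-factor, bound by suprema and $t\leq T$, and finish with the subadditivity of the square root (your squared form of that inequality is equivalent). The additional checks you make (verifying the pathwise setting, matching the two indicator notations, and identifying $X_s-P_IX_s$ with $P_{\H\setminus I}X_s$) are exactly what the paper does implicitly.
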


\begin{proof}[Proof of Lemma~\ref{lem:lp_X_XX_diff}]
Note that Lemma~\ref{lem:pw_X_XX_diff}
and H{\"o}lder's inequality ensure that
\begin{align}
\begin{split}
&
  \|
    P_I X_t - \XXMN{t}
  \|_{ \L^p(\P; H) }^2
\\ &=
  \big\|
    \|
      P_I X_t - \XXMN{t}
    \|_H^2
  \big\|_{ \L^{\nicefrac{p}{2}}(\P; \R) }
\\&\leq
  \frac{C e^{\kappa cT} }{
    \left( \kappa - 2 \right) c
  }
  \int_0^t
  \bigg\| \!
    \left[ 
      \|
	X_s - P_I X_s
      \|_V
      +
      \|
	\XXMN{s} - \YMN{\fl{s}}
      \|_V
    \right]^2
\\&\quad\cdot
    \left(
      1
      +
      \|
	X_s
      \|_V^{\varphi}
      +
      \|
	P_I X_s
      \|_V^{\varphi}
      +
      \|
	\XXMN{s}
      \|_V^{\varphi}
      +
      \|
	\YMN{\fl{s}}
      \|_V^{\varphi}
    \right) \!
  \bigg\|_{ \L^{\nicefrac{p}{2}}(\P;\R) } \, ds
\\&\leq
  \frac{ C e^{\kappa cT} }{
    \left( \kappa - 2 \right) c
  }
  \int_0^t
  \big\| 
    \|
      X_s - P_I X_s
    \|_V
    +
    \|
      \XXMN{s} - \YMN{\fl{s}}
    \|_V 
  \big\|_{ \L^{2p}(\P;\R) }^2
\\&\quad\cdot
  \big\|
    1
    +
    \|
      X_s
    \|_V^{\varphi}
    +
    \|
      P_I X_s
    \|_V^{\varphi}
    +
    \|
      \XXMN{s}
    \|_V^{\varphi}
    +
    \|
      \YMN{\fl{s}}
    \|_V^{\varphi}
  \big\|_{ \L^p(\P;\R) } \, ds .
\end{split}
\end{align}
This shows that
\begin{align}
\begin{split}
 &\|
    P_I X_t - \XXMN{t}
  \|_{ \L^p(\P; H) }^2
\\&\leq
  \frac{C e^{\kappa cT} }{
    \left( \kappa - 2 \right) c
  }
  \int_0^t
  \left[ 
    \|
      X_s - P_I X_s
    \|_{ \L^{2p}(\P;V) }
    +
    \|
      \XXMN{s} - \YMN{\fl{s}}
    \|_{ \L^{2p}(\P;V) }
  \right]^2
\\&\quad\cdot
  \left[
    1
    +
    \|
      X_s
    \|_{ \L^{p\varphi}(\P;V) }^{\varphi}
    +
    \|
      P_I X_s
    \|_{ \L^{p\varphi}(\P;V) }^{\varphi}
    +
    \|
      \XXMN{s}
    \|_{ \L^{p\varphi}(\P;V) }^{\varphi}
    +
    \|
      \YMN{\fl{s}}
    \|_{ \L^{p\varphi}(\P;V) }^{\varphi}
  \right] ds
\\&\leq
  \frac{ C T e^{\kappa cT} }{
    \left( \kappa - 2 \right) c
  }
  \left[ 
    \sup_{ s \in (0,T) }
    \left(
      \|
        P_{ \H \setminus I } X_s 
      \|_{ \L^{2p}(\P;V) }
      +
      \|
        \XXMN{s} - \YMN{\fl{s}}
      \|_{ \L^{2p}(\P;V) }
    \right)
  \right]^{\!2}
\\&\quad\cdot
  \left[
    1
    +
    \sup_{ s \in (0,T) }
    \left(
      \|
        X_s
      \|_{ \L^{p\varphi}(\P;V) }^{\varphi}
      \|
        P_I X_s
      \|_{ \L^{p\varphi}(\P;V) }^{\varphi}
      +
      \|
        \XXMN{s}
      \|_{ \L^{p\varphi}(\P;V) }^{\varphi}
      +
      \|
        \YMN{\fl{s}}
      \|_{ \L^{p\varphi}(\P;V) }^{\varphi}
    \right)
  \right] .
\end{split}
\end{align}
Combining this with
the fact that 
\begin{equation}
  \forall \, n \in \N
  ,
  x_1, \ldots, x_n \in [0,\infty)
  \colon
  \sqrt{ x_1 + \ldots + x_n }
  \leq
  \sqrt{x_1} + \ldots + \sqrt{x_n}
\end{equation}
completes
the proof of Lemma~\ref{lem:lp_X_XX_diff}.
\end{proof}

\subsection{Analysis of the error between the numerical 
approximation and the Galerkin projection of the 
semilinear integrated version of the numerical 
approximation}

\begin{lemma}
\label{lem:XX_YO_diff_lp}
Assume the setting in Section~\ref{sec:lp_setting}
and let $ \alpha \in (0,\infty) $, $ \rho \in [0,1) $,
$ t \in [0,T] $, $ p \in [1,\infty) $, $ M \in \N $,
$ I \in \D $ satisfy
$
  \sup_{ s \in (0,T] }
  s^{\rho}
  \| e^{sA} \|_{ L(H,V) } 
  +
  \sup_{ s \in [0,T) }
  \E\big[ 
    | \Vm(\YMN{s},\OMN{s}) |^{2p\alpha}
    +
    \| F( \YMN{s} ) \|_H^{2p}
  \big]
  <
  \infty
$. Then
\begin{align}
\begin{split}
 &\|
    \XXMN{t}
    -
    \YMN{t}
  \|_{ \L^p(\P; V) } 
\\&\leq
  \frac{ T^{(1+\alpha-\rho)} }{ \left(1-\rho\right) M^{\alpha} }
  \left[
    \sup_{ s \in (0,T) }
    s^{\rho} \,
    \|
      e^{sA}
    \|_{ L(H,V) }
  \right] \!
  \left[ 
    \sup_{ s \in [0,T) }
    \left(
      \|
        \Vm(\YMN{s},\OMN{s})
      \|_{ \L^{2p\alpha}( \P; \R) }^{\alpha}
      \|
        F( \YMN{s} )  
      \|_{ \L^{2p}( \P; H) }
    \right)
  \right]
\\&\quad+
  \|
    P_I O_t
    -
    \OMN{t}
  \|_{ \L^p(\P; V) } .
\end{split}
\end{align}
\end{lemma}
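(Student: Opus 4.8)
The plan is to establish the claim by combining the pathwise bound from Lemma~\ref{lem:XX_YO_diff_pw} with standard $ \L^p(\P;\cdot) $-estimates. First I would fix $ M \in \N $ and $ I \in \D $ and observe that, for $ \P $-almost every $ \omega \in \Omega $, applying Lemma~\ref{lem:XX_YO_diff_pw} (or, more precisely, its proof, which only uses the defining formulas for $ \XXMN{} $ and $ \YMN{} $ together with operator-norm bounds on the semigroup) with the data $ \O{\cdot} = \OMN{\cdot}(\omega) $, $ \XX{\cdot} = \XXMN{\cdot}(\omega) $, $ \Y{\cdot} = \YMN{\cdot}(\omega) $, $ O_{\cdot} = O_{\cdot}(\omega) $, $ P = P_I $ and the given $ \alpha, \rho, t $ (which is permissible since $ \sup_{ s \in (0,T] } s^{\rho} \| e^{sA} \|_{ L(H,V) } < \infty $ and since, pointwise in $ \omega $, the $ \Omega $-indicator $ \one_{ \{ \Vm(\YMN{\fl{s}},\OMN{\fl{s}}) \leq M/T \} }^{\Omega} $ agrees with the $ \R $-indicator $ \one_{ [0,M/T] }^{\R}( \Vm(\YMN{\fl{s}}(\omega),\OMN{\fl{s}}(\omega)) ) $) yields that
\begin{align*}
  \| \XXMN{t} - \YMN{t} \|_V
  &\leq
  \tfrac{ T^{\alpha} }{ M^{\alpha} }
  \Big[ \sup_{ s \in (0,T) } s^{\rho} \| e^{sA} \|_{ L(H,V) } \Big]
  \int_0^t
  (t-s)^{-\rho}
  | \Vm(\YMN{\fl{s}},\OMN{\fl{s}}) |^{\alpha}
  \| F( \YMN{\fl{s}} ) \|_H \, ds
\\&\quad+
  \| P_I O_t - \OMN{t} \|_V .
\end{align*}

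Next I would apply $ \| \cdot \|_{ \L^p(\P;V) } $ to both sides, use the triangle inequality in $ \L^p(\P;V) $ to split off the summand $ \| P_I O_t - \OMN{t} \|_{ \L^p(\P;V) } $, and apply Minkowski's integral inequality to interchange the $ \L^p(\P;\R) $-norm with the $ ds $-integral. On the resulting integrand I would use H\"older's inequality in the form $ \| fg \|_{ \L^p(\P;\R) } \leq \| f \|_{ \L^{2p}(\P;\R) } \| g \|_{ \L^{2p}(\P;\R) } $ (valid as $ \tfrac{1}{2p} + \tfrac{1}{2p} = \tfrac{1}{p} $) together with the identity $ \big\| | \Vm(\YMN{\fl{s}},\OMN{\fl{s}}) |^{\alpha} \big\|_{ \L^{2p}(\P;\R) } = \| \Vm(\YMN{\fl{s}},\OMN{\fl{s}}) \|_{ \L^{2p\alpha}(\P;\R) }^{\alpha} $, and then bound each of the two factors by its supremum over $ s \in [0,T) $ (which is legitimate since $ \fl{s} \in [0,T) $ for all $ s \in [0,t) $ and since these suprema are finite by hypothesis). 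Finally, estimating $ \int_0^t (t-s)^{-\rho} \, ds \leq \int_0^T s^{-\rho} \, ds = \tfrac{ T^{1-\rho} }{ 1-\rho } $ and collecting the constants via $ \tfrac{ T^{\alpha} }{ M^{\alpha} } \cdot \tfrac{ T^{1-\rho} }{ 1-\rho } = \tfrac{ T^{(1+\alpha-\rho)} }{ (1-\rho) M^{\alpha} } $ produces exactly the asserted inequality.

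I do not expect a genuine obstacle here, since the argument is essentially a routine $ \L^p $-lift of Lemma~\ref{lem:XX_YO_diff_pw}. The only point that requires a little care is measurability: one has to verify that $ \omega \mapsto \| \XXMN{t}(\omega) - \YMN{t}(\omega) \|_V $ and $ (s,\omega) \mapsto | \Vm(\YMN{\fl{s}}(\omega),\OMN{\fl{s}}(\omega)) |^{\alpha} \| F( \YMN{\fl{s}}(\omega) ) \|_H $ are suitably measurable, so that the $ \L^p(\P;\cdot) $-norms and Minkowski's integral inequality are meaningful; this follows from the continuity of $ F $ and the measurability hypotheses on $ \Vm $ and on the processes $ O, \YMN{}, \OMN{} $ imposed in Section~\ref{sec:lp_setting}. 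This completes the plan.
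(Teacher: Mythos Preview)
Your proposal is correct and follows essentially the same route as the paper's proof: apply the pathwise bound of Lemma~\ref{lem:XX_YO_diff_pw}, take $\L^p(\P;\cdot)$-norms, push the norm inside the time integral, split the product via H\"older with exponents $2p$ and $2p$, bound by suprema over $s \in [0,T)$, and evaluate $\int_0^t (t-s)^{-\rho}\,ds$. The only difference is that you name Minkowski's integral inequality explicitly where the paper simply invokes H\"older, and you add a remark on measurability; neither changes the argument.
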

\begin{proof}[Proof of Lemma~\ref{lem:XX_YO_diff_lp}]
Note that Lemma~\ref{lem:XX_YO_diff_pw}
and H{\"o}lder's inequality imply that
\begin{align}
\begin{split}
 &\|
    \XXMN{t}
    -
    \YMN{t}
  \|_{ \L^p(\P; V) }
\\&\leq
  \frac{ T^{\alpha} }{ M^{\alpha} }
  \left[
    \sup_{ s \in (0,T) }
    s^{\rho} \,
    \|
      e^{sA}
    \|_{ L(H,V) }
  \right]
\\ &
\quad
\cdot 
  \left[  
  \int_0^t
  \left(
    t-s
  \right)^{-\rho}
  \big\|
    |
      \Vm(\YMN{\fl{s}},\OMN{\fl{s}})
    |^{\alpha} \, 
    \|
      F( \YMN{\fl{s}} ) 
    \|_H 
  \big\|_{ \L^p( \P; \R) } \, ds
  \right]
\\&\quad+
  \|
    P_I O_t
    -
    \OMN{t}
  \|_{ \L^p(\P; V) } 
\\&\leq
  \frac{ T^{\alpha} }{ M^{\alpha} }
  \left[
    \sup_{ s \in (0,T) }
    s^{\rho} \,
    \|
      e^{sA}
    \|_{ L(H,V) }
  \right]
\\&
\quad 
\cdot
  \left[
  \int_0^t
  \left(
    t-s
  \right)^{-\rho}
  \|
    \Vm(\YMN{\fl{s}},\OMN{\fl{s}})
  \|_{ \L^{2p\alpha}( \P; \R) }^{\alpha} 
  \,
  \|
    F( \YMN{\fl{s}} )  
  \|_{ \L^{2p}( \P; H) } \, ds
  \right]
\\&\quad+
  \|
    P_I O_t
    -
    \OMN{t}
  \|_{ \L^p(\P; V) } .
\end{split}
\end{align}
This and the fact that 
$
  \int_0^t
  (t-s)^{-\rho} \, ds
  =
  \frac{ t^{(1-\rho)} }{ (1-\rho) }
$
complete the proof of Lemma~\ref{lem:XX_YO_diff_lp}.
\end{proof}

\subsection{Temporal regularity for the 
Galerkin projection of the
semilinear integrated version of the numerical approximation}

\begin{lemma}
\label{lem:XX_XX_diff_lp}
Assume the setting in Section~\ref{sec:lp_setting}
and let $ \rho \in [0,1) $, $ \varrho \in [0, 1-\rho) $,
$ t_1 \in [0,T) $, $ t_2 \in (t_1, T] $, $ p \in [1,\infty) $, 
$ M \in \N $, $ I \in \D $ satisfy 
$
  \sup_{ s \in (0,T] }
  s^{\rho}
  \| e^{sA} \|_{ L(H,V) }
  <
  \infty
$. Then
\begin{multline}
  \|
    \XXMN{t_1}
    -
    \XXMN{t_2}
  \|_{ \L^p( \P; V ) }
  \leq
  \|
    P_I ( O_{t_1} - O_{t_2} ) 
  \|_{ \L^p( \P; V ) }
\\+
  \frac{ 
    3 \, T^{ (1 - \rho - \varrho) }
    \left( t_2 - t_1 \right)^{\varrho}
  }{
    \left( 1 - \rho - \varrho \right)
  }
  \left[
    \sup_{ s \in (0,T) }
    s^{\rho} \,
    \|
      e^{sA}
    \|_{ L(H,V) }
  \right]
  \left[ 
    \sup_{ s \in [0,T) }
    \|
      F( \YMN{s} ) 
    \|_{ \L^p( \P; H ) }
  \right]  .
\end{multline}
\end{lemma}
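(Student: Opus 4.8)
The plan is to obtain the claimed $\L^p$-estimate by lifting its pathwise counterpart, Lemma~\ref{lem:XX_XX_diff_pw}, to the $\L^p(\P;V)$-level. Since the standing hypothesis $\sup_{s\in(0,T]}s^{\rho}\|e^{sA}\|_{L(H,V)}<\infty$ holds and $\varrho\in[0,1-\rho)$, Lemma~\ref{lem:XX_XX_diff_pw} applies $\P$-a.s.\ (with the present $\rho,\varrho,t_1,t_2$, and with $P=P_I$ and the obvious pathwise choices of $O$, $\OMN{}$, $\YMN{}$, $\XXMN{}$), and, abbreviating $\beta:=\sup_{s\in(0,T)}s^{\rho}\|e^{sA}\|_{L(H,V)}$, yields $\P$-a.s.\ that
\begin{align*}
  \|\XXMN{t_1}-\XXMN{t_2}\|_V
  &\le \beta\int_{t_1}^{t_2}(t_2-s)^{-\rho}\|F(\YMN{\fl{s}})\|_H\,ds
\\&\quad+2^{(\rho+\varrho)}\beta\,(t_2-t_1)^{\varrho}\int_0^{t_1}(t_1-s)^{-(\rho+\varrho)}\|F(\YMN{\fl{s}})\|_H\,ds
\\&\quad+\|P_I(O_{t_2}-O_{t_1})\|_V.
\end{align*}

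Next I would take $\|\cdot\|_{\L^p(\P;V)}$ on both sides, use the triangle inequality to split off $\|P_I(O_{t_1}-O_{t_2})\|_{\L^p(\P;V)}$, and apply the generalized Minkowski (integral) inequality to interchange $\|\cdot\|_{\L^p(\P;\R)}$ with each of the two time integrals; this is justified by $\beta<\infty$ together with the measurability and integrability built into the setting of Section~\ref{sec:lp_setting}. The integrands then become $(t_2-s)^{-\rho}\|F(\YMN{\fl{s}})\|_{\L^p(\P;H)}$ and $(t_1-s)^{-(\rho+\varrho)}\|F(\YMN{\fl{s}})\|_{\L^p(\P;H)}$. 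Since $\fl{s}\in[0,T)$ for Lebesgue-a.e.\ $s\in[0,T]$, I bound $\|F(\YMN{\fl{s}})\|_{\L^p(\P;H)}\le\sup_{s\in[0,T)}\|F(\YMN{s})\|_{\L^p(\P;H)}=:K$, pull $K$ out of both integrals, and evaluate the remaining deterministic integrals via $\int_{t_1}^{t_2}(t_2-s)^{-\rho}\,ds=\tfrac{(t_2-t_1)^{1-\rho}}{1-\rho}$ and $\int_0^{t_1}(t_1-s)^{-(\rho+\varrho)}\,ds=\tfrac{t_1^{1-\rho-\varrho}}{1-\rho-\varrho}$.

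Finally, to match the stated constant I would estimate $t_1^{1-\rho-\varrho}\le T^{1-\rho-\varrho}$, $(t_2-t_1)^{1-\rho}=(t_2-t_1)^{1-\rho-\varrho}(t_2-t_1)^{\varrho}\le T^{1-\rho-\varrho}(t_2-t_1)^{\varrho}$, $\tfrac{1}{1-\rho}\le\tfrac{1}{1-\rho-\varrho}$, and $2^{\rho+\varrho}\le 2$ (using $\rho+\varrho<1$), so that the two prefactors combine to $\tfrac{1}{1-\rho-\varrho}+\tfrac{2}{1-\rho-\varrho}=\tfrac{3}{1-\rho-\varrho}$, giving the factor $\tfrac{3\,T^{1-\rho-\varrho}(t_2-t_1)^{\varrho}}{1-\rho-\varrho}$ in front of $\beta K$, which is exactly the asserted bound. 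I do not expect any genuine obstacle: the argument is essentially a routine $\L^p$-version of Lemma~\ref{lem:XX_XX_diff_pw}, and the only points requiring mild care are the correct invocation of Minkowski's integral inequality and the constant bookkeeping just described.
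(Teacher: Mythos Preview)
Your proposal is correct and follows essentially the same route as the paper's proof: both apply the pathwise estimate of Lemma~\ref{lem:XX_XX_diff_pw}, take $\L^p$-norms (the paper does this in one step without naming Minkowski's integral inequality), pull out $\sup_{s\in[0,T)}\|F(\YMN{s})\|_{\L^p(\P;H)}$, evaluate the two deterministic integrals, and then absorb the resulting prefactors into the single constant $\tfrac{3\,T^{1-\rho-\varrho}(t_2-t_1)^{\varrho}}{1-\rho-\varrho}$ via exactly the elementary bounds you list.
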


\begin{proof}[Proof of Lemma~\ref{lem:XX_XX_diff_lp}]
Observe that Lemma~\ref{lem:XX_XX_diff_pw} proves that
\begin{align}
\begin{split}
 &\|
    \XXMN{t_1}
    -
    \XXMN{t_2}
  \|_{ \L^p( \P; V ) }
\\&\leq
  \left[
    \sup_{ s \in (0,T) }
    s^{\rho} \,
    \|
      e^{sA}
    \|_{ L(H,V) }
  \right]
  \left[ 
    \sup_{ s \in [0,T) }
    \|
      F( \YMN{s} ) 
    \|_{ \L^p( \P; H ) }
  \right] 
  \Bigg[
    \int_{t_1}^{t_2}
    \left(
      t_2 - s
    \right)^{-\rho} ds
\\&\quad+
    2^{(\rho+\varrho)}
    \left(
      t_2 - t_1
    \right)^{\varrho}
    \int_0^{t_1}
    \left(
      t_1-s
    \right)^{-(\rho+\varrho)} ds
  \Bigg]
  +
  \|
    P_I ( O_{t_2} - O_{t_1} ) 
  \|_{ \L^p( \P; V ) } 
\\&=
  \left[
    \sup_{ s \in (0,T) }
    s^{\rho} \,
    \|
      e^{sA}
    \|_{ L(H,V) }
  \right]
  \left[ 
    \sup_{ s \in [0,T) }
    \|
      F( \YMN{s} ) 
    \|_{ \L^p( \P; H ) }
  \right] 
\\&\quad\cdot
  \Bigg[
    \frac{ 
      \left( t_2 - t_1 \right)^{(1-\rho)}
    }{
      \left( 1 - \rho \right)
    }
    +
    \frac{
      2^{(\rho+\varrho)}
      \left( t_2 - t_1 \right)^{\varrho}
      | t_1 |^{ (1-\rho-\varrho) }
    }{
      \left( 1 - \rho - \varrho \right)
    }
  \Bigg]
  +
  \|
    P_I ( O_{t_2} - O_{t_1} ) 
  \|_{ \L^p( \P; V ) } .
\end{split}
\end{align}
This completes the proof of Lemma~\ref{lem:XX_XX_diff_lp}.
\end{proof}

\subsection{Analysis of the error 
between the exact solution and the numerical
approximation}

\begin{proposition}
\label{prop:X_Y_diff_lp}
Assume the setting in Section~\ref{sec:lp_setting} and let
$ \alpha \in (0,\infty) $, 
$ \rho \in [0,1) $, $ \varrho \in [0,1-\rho) $,
$ \kappa \in (2,\infty) $,
$ p \in [\max\{2,\nicefrac{1}{\varphi}\}, \infty) $,
$ M \in \N $, $ I \in \D $. Then
\begin{align}
\begin{split}
 &\sup_{ t \in [0,T] }
  \|
    X_t - \YMN{t}
  \|_{ \L^p( \P; H ) }
\\&\leq
  \frac{
    4^{(2+\varphi)}
    \max\{ 1, T^{(\nicefrac{3}{2} + 
                 \alpha - \rho + \nicefrac{\varphi}{2} - \nicefrac{\rho\varphi}{2} )}
        \}
    \max\{ 1, C^{(1+\nicefrac{\varphi}{4})} \}
    \sqrt{  e^{\kappa c T } }
  }{
    \min\{ 1, \sqrt{ c \left( \kappa - 2 \right) } \}
    \left( 1 - \rho - \varrho \right)^{(1+\nicefrac{\varphi}{2})}
  }
  \Bigg[
    \sup_{ t \in [0,T] }
    \|
      P_{ \H \setminus I } X_t 
    \|_{ \L^{2p}(\P;V) }
    +
    M^{-\min\{\alpha,\varrho\} }
\\&+
    \sup_{ t \in (0,T) }
    \|
      P_I ( O_{t} - O_{\fl{t}} ) 
    \|_{ \L^{2p}( \P; V ) }
    +
    \sup_{ t \in [0,T] }
    \|
      P_I O_t
      -
      \OMN{t}
    \|_{ \L^{2p}(\P; V) }
  \Bigg] 
  \max\!\left\{
    1,
    \sup_{ v \in V \setminus \{0\} }
    \frac{ \| v \|_H }{ \| v \|_V }
  \right\}
\\&\cdot
  \Bigg[
    1
    +
    \sup_{ s \in (0,T) }
    \|
      X_s
    \|_{ \L^{p\varphi}(\P;V) }^{\nicefrac{\varphi}{2}}
    +
    \sup_{ s \in (0,T) }
    \|
      P_I X_s
    \|_{ \L^{p\varphi}(\P;V) }^{\nicefrac{\varphi}{2}}
    +
    \sup_{ s \in [0,T) }
    \|
      \YMN{s}
    \|_{ \L^{p\varphi}(\P;V) }^{\nicefrac{\varphi}{2}}
    +
    \sup_{ s \in (0,T) }
    \|
      P_I O_s
    \|_{ \L^{p\varphi}( \P; V ) }^{\nicefrac{\varphi}{2}}
  \Bigg]
\\&\cdot
  \left[ 
    1
    +
    \sup_{ s \in [0,T) }
    \|
      \Vm(\YMN{s},\OMN{s})
    \|_{ \L^{4p\alpha}( \P; \R) }^{\alpha}
  \right]
  \max\!\left\{
    1,
    \sup_{ s \in (0,T) }
    \left[
      s^{\rho} \,
      \|
	e^{sA}
      \|_{ L(H,V) }
    \right]^{(1+\nicefrac{\varphi}{2})}
  \right\} 
\\&\cdot
  \left[ 
    \max\!\left\{ 
      1,
      \sup_{ s \in [0,T) }
      \|
	\YMN{s}
      \|_{ \L^{p(1+\nicefrac{\varphi}{2})\max\{4,\varphi\}}( \P; V ) }^{[(1+\nicefrac{\varphi}{2})^2]}
    \right\} 
    +
    \| F(0) \|_H^{ (1+\nicefrac{\varphi}{2}) } 
  \right] .
\end{split}
\end{align}
\end{proposition}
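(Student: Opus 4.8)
The plan is to estimate $\|X_t-\YMN{t}\|_{\L^p(\P;H)}$ by inserting the Galerkin projection $P_I X_t$ and the auxiliary ``semilinear integrated'' process $\XXMN{t}$ and to bound each of the three resulting differences by the lemmas of Section~\ref{sec:lp_section}. Concretely, for every $t\in[0,T]$ the triangle inequality gives
\[
  \|X_t-\YMN{t}\|_{\L^p(\P;H)}
  \le
  \|P_{\H\setminus I}X_t\|_{\L^p(\P;H)}
  +\|P_I X_t-\XXMN{t}\|_{\L^p(\P;H)}
  +\|\XXMN{t}-\YMN{t}\|_{\L^p(\P;H)},
\]
and each $H$-norm will be replaced by the embedding constant $\max\{1,\sup_{v\in V\setminus\{0\}}\nicefrac{\|v\|_H}{\|v\|_V}\}$ times the corresponding $V$-norm, while the integrability exponent is inflated from $p$ to $2p$ using that $\P$ is a probability measure. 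Since the right-hand side of the assertion is $+\infty$ as soon as any of the moments required by Lemmas~\ref{lem:lp_X_XX_diff}, \ref{lem:XX_YO_diff_lp}, and~\ref{lem:XX_XX_diff_lp} is infinite, I would first reduce, without loss of generality, to the case in which all relevant $\L^q(\P;V)$- and $\L^q(\P;H)$-moments of $X$, $P_I X$, $\XXMN{}$, $\YMN{}$, $\Vm(\YMN{},\OMN{})$, and $F(\YMN{})$ are finite.

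Next I would bound the three terms. For $\|P_I X_t-\XXMN{t}\|_{\L^p(\P;H)}$ I would apply Lemma~\ref{lem:lp_X_XX_diff}, which leaves $\sup_s\|P_{\H\setminus I}X_s\|_{\L^{2p}(\P;V)}$, a $\nicefrac{\varphi}{2}$-growth factor in the $\L^{p\varphi}(\P;V)$-moments of $X$, $P_I X$, $\XXMN{}$, $\YMN{}$, and $\sup_s\|\XXMN{s}-\YMN{\fl{s}}\|_{\L^{2p}(\P;V)}$. This last quantity I would split once more via the triangle inequality into $\|\XXMN{s}-\XXMN{\fl{s}}\|_{\L^{2p}(\P;V)}+\|\XXMN{\fl{s}}-\YMN{\fl{s}}\|_{\L^{2p}(\P;V)}$, treating the first summand with Lemma~\ref{lem:XX_XX_diff_lp} applied with $t_1=\fl{s}$, $t_2=s$ (so $(t_2-t_1)^{\varrho}\le(\nicefrac{T}{M})^{\varrho}$ and $M^{-\varrho}\le M^{-\min\{\alpha,\varrho\}}$) and the second with Lemma~\ref{lem:XX_YO_diff_lp} at exponent $2p$ (producing the factor $M^{-\alpha}$ and the term $\|P_I O_{\fl{s}}-\OMN{\fl{s}}\|_{\L^{2p}(\P;V)}$). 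The term $\|\XXMN{t}-\YMN{t}\|_{\L^p(\P;H)}$ is handled directly by Lemma~\ref{lem:XX_YO_diff_lp} together with the embedding constant. At this stage the estimate is expressed through $M^{-\min\{\alpha,\varrho\}}$, $\sup_t\|P_{\H\setminus I}X_t\|_{\L^{2p}(\P;V)}$, $\sup_t\|P_I(O_t-O_{\fl{t}})\|_{\L^{2p}(\P;V)}$, $\sup_t\|P_I O_t-\OMN{t}\|_{\L^{2p}(\P;V)}$, the moments of $\Vm(\YMN{},\OMN{})$, the $\sup_s s^{\rho}\|e^{sA}\|_{L(H,V)}$-type constants, and the remaining quantities $\|F(\YMN{s})\|_{\L^q(\P;H)}$ and the $V$-moments of $\XXMN{s}$.

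Finally I would eliminate these last two families of quantities. For $\|F(\YMN{s})\|_{\L^q(\P;H)}$ I would use the growth bound following from $\|F(v)-F(w)\|_H^2\le C\|v-w\|_V^2(1+\|v\|_V^{\varphi}+\|w\|_V^{\varphi})$, namely $\|F(v)\|_H\le\|F(0)\|_H+\sqrt{C}\,\|v\|_V(1+\|v\|_V^{\varphi})^{\nicefrac{1}{2}}\le\|F(0)\|_H+\sqrt{2C}\,\max\{1,\|v\|_V^{(1+\nicefrac{\varphi}{2})}\}$, followed by H\"older's inequality to pass to the $\L^{q(1+\nicefrac{\varphi}{2})}(\P;V)$-moments of $\YMN{}$; multiplying this $(1+\nicefrac{\varphi}{2})$-power against the $\nicefrac{\varphi}{2}$-growth factor of Lemma~\ref{lem:lp_X_XX_diff} and re-applying H\"older is what produces the squared exponent $(1+\nicefrac{\varphi}{2})^2$ and the integrability level $p(1+\nicefrac{\varphi}{2})\max\{4,\varphi\}$ in the last bracket of the assertion. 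The $V$-moments of $\XXMN{s}$ I would bound either from its mild representation $\XXMN{s}=\int_0^s P_I e^{(s-u)A}F(\YMN{\fl{u}})\,du+P_I O_s$ together with $\sup_{r\in(0,T]}r^{\rho}\|e^{rA}\|_{L(H,V)}<\infty$ and $\int_0^s(s-u)^{-\rho}\,du\le\nicefrac{T^{(1-\rho)}}{(1-\rho)}$, or by writing $\|\XXMN{s}\|_V\le\|\XXMN{s}-\YMN{\fl{s}}\|_V+\|\YMN{\fl{s}}\|_V$ and reusing the already-obtained bound, so that only $\YMN{}$- and $O$-moments survive. Collecting all constants — with $\sqrt{x_1+\dots+x_n}\le\sqrt{x_1}+\dots+\sqrt{x_n}$, crude $\max\{1,\cdot\}$ estimates for the powers of $T$, $C$, $c$, $(\kappa-2)$, and $(1-\rho-\varrho)$, and a generous $4^{(2+\varphi)}$ to absorb the numerical factors — then yields the stated inequality. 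The main obstacle is purely organisational bookkeeping: tracking how the $\L^q(\P;\cdot)$-exponents and powers of $\varphi$ compound through the nested triangle inequalities and H\"older applications, and checking that the worst integrability requirements are exactly the ones displayed on the right-hand side; there is no analytic input beyond Lemmas~\ref{lem:lp_X_XX_diff}--\ref{lem:XX_XX_diff_lp} and the structural assumptions of Section~\ref{sec:lp_setting}.
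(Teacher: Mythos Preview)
Your proposal is correct and follows essentially the same route as the paper's proof: the same three-term triangle decomposition via $P_I X_t$ and $\XXMN{t}$, the same splitting of $\|\XXMN{s}-\YMN{\fl{s}}\|_{\L^{2p}(\P;V)}$ handled by Lemmas~\ref{lem:XX_XX_diff_lp} and~\ref{lem:XX_YO_diff_lp}, the same growth estimate for $\|F(\YMN{s})\|_H$ from the local Lipschitz condition, and the same elimination of $\|\XXMN{s}\|_{\L^{p\varphi}(\P;V)}$ through its mild representation. The paper organises the computation in a slightly different order (first establishing the $F$-growth bound and the $\XXMN{}$-moment bound, then feeding them into Lemma~\ref{lem:lp_X_XX_diff}, then doing the outer triangle inequality) and closes with the explicit numerical check $[3\cdot 2^{\max\{1,\nicefrac{\varphi}{2}\}}+1]\cdot 2^{(1+\varphi)}\le 4^{(2+\varphi)}$, but there is no substantive difference.
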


\begin{proof}[Proof of Proposition~\ref{prop:X_Y_diff_lp}]
Throughout this proof assume w.l.o.g.\ that
$
  \sup_{ s \in (0,T) }
  \big(
    s^{\rho}
    \| e^{sA} \|_{ L(H,V) }
    +
    \E\big[ 
      \| P_I O_s \|_V^{p\varphi}
      +
      \| P_I X_s \|_V^{p\varphi}
      +
      \| X_s \|_V^{p\varphi}
    \big]
  \big)
  +
  \sup_{ s \in [0,T) }
  \E\big[
    | \Vm(\YMN{s},\OMN{s}) |^{4p\alpha}
    +
    \| \YMN{s} \|_V^{ p(1+\nicefrac{\varphi}{2})\max\{4,\varphi\} }
  \big]
  <
  \infty
$.
Note that the fact that
\begin{equation}
  \forall \, v,w \in V
  \colon
  \| F(v) - F(w) \|_H^2
  \leq
  C
  \| v-w \|_V^2
  (
    1
    +
    \| v \|_V^{\varphi}
    +
    \| w \|_V^{\varphi}
  )  
\end{equation}
and the fact that
\begin{equation}
  \forall \, x,y \in [0, \infty)
  \colon
  \sqrt{ x + y}
  \leq
  \sqrt{x} + \sqrt{y} 
\end{equation}
imply for all $ s \in [0,T] $ that
\begin{align}
\begin{split}
\|
    F( \YMN{s} )
  \|_H
&\leq
  \|
    F( \YMN{s} )
    -
    F(0)
  \|_H
  +
  \| F(0) \|_H
\\ &
  \leq
  \sqrt{ 
    C \,
    \|
      \YMN{s}
    \|_V^2
    \big(
      1
      +
      \|
        \YMN{s}
      \|_V^{\varphi}
    \big)
  } 
  +
  \| F(0) \|_H
\\&\leq
  \sqrt{ 
    C
  } \,
  \|
    \YMN{s}
  \|_V
  \big(
    1
    +
    \|
      \YMN{s}
    \|_V^{\nicefrac{\varphi}{2}}
  \big) 
  +
  \| F(0) \|_H
\\ &
=
  \sqrt{C}
  \left(
    \|
      \YMN{s}
    \|_V
    +
    \|
      \YMN{s}
    \|_V^{(1+\nicefrac{\varphi}{2})}
  \right) 
  +
  \| F(0) \|_H .
\end{split}
\end{align}
Hence, we obtain
for all $ q \in [1,\infty) $ that
\begin{align}
\label{eq:prop_1}
\begin{split}
  \sup_{ s \in [0,T) }
  \|
    F( \YMN{s} )
  \|_{ \L^q( \P; H ) }
 &\leq
  2 \, \sqrt{C}
  \max\!\left\{ 
    1,
    \sup_{ s \in [0,T) }
    \|
      \YMN{s}
    \|_{ \L^{q(1+\nicefrac{\varphi}{2})}( \P; V ) }^{(1+\nicefrac{\varphi}{2})}
  \right\}
  +
  \|
    F(0)
  \|_H .
\end{split}
\end{align}
Next observe that the triangle inequality,
Lemma~\ref{lem:XX_YO_diff_lp},
Lemma~\ref{lem:XX_XX_diff_lp},
and H{\"o}lder's inequality
imply that
\begin{align}
\label{eq:prop_2}
\begin{split}
 &\sup_{ s \in (0,T) }
  \|
    \XXMN{s} - \YMN{\fl{s}}
  \|_{ \L^{2p}(\P; V) }
\\&\leq
  \sup_{ s \in (0,T) }
  \|
    \XXMN{s} - \XXMN{\fl{s}}
  \|_{ \L^{2p}(\P; V) }
  +
  \sup_{ s \in [0,T) }
  \|
    \XXMN{s} - \YMN{s}
  \|_{ \L^{2p}(\P; V) }
\\&\leq
  \frac{ 
    3 \, T^{ (1 - \rho) }
  }{
    \left( 1 - \rho - \varrho \right)
    M^{\varrho}
  }
  \left[
    \sup_{ s \in (0,T) }
    s^{\rho} \,
    \|
      e^{sA}
    \|_{ L(H,V) }
  \right]
  \left[ 
    \sup_{ s \in [0,T) }
    \|
      F( \YMN{s} ) 
    \|_{ \L^{2p}( \P; H ) }
  \right]
\\&+
  \frac{ T^{(1+\alpha-\rho)} }{ \left(1-\rho\right) M^{\alpha} }
  \left[
    \sup_{ s \in (0,T) }
    s^{\rho} \,
    \|
      e^{sA}
    \|_{ L(H,V) }
  \right] 
\\ &
  \cdot 
  \left[ 
    \sup_{ s \in [0,T) }
    \|
      \Vm(\YMN{s},\OMN{s})
    \|_{ \L^{4p\alpha}( \P; \R) }^{\alpha}
  \right] \!
  \left[ 
    \sup_{ s \in [0,T) }
    \|
      F( \YMN{s} )  
    \|_{ \L^{4p}( \P; H) }
  \right]
\\&+
  \sup_{ s \in (0,T) }
  \|
    P_I ( O_{s} - O_{\fl{s}} ) 
  \|_{ \L^{2p}( \P; V ) }
  +
  \sup_{ s \in [0,T) }
  \|
    P_I O_s
    -
    \OMN{s}
  \|_{ \L^{2p}(\P; V) }
\\&\leq
  \frac{  
    3 \max\{ 1, T^{(1+\alpha-\rho)} \} 
  }{ 
    \left(1-\rho- \varrho\right) 
    M^{\min\{\alpha,\varrho\}} 
  }
  \left[
    \sup_{ s \in (0,T) }
    s^{\rho} \,
    \|
      e^{sA}
    \|_{ L(H,V) }
  \right] \!
  \left[ 
    1
    +
    \sup_{ s \in [0,T) }
    \|
      \Vm(\YMN{s},\OMN{s})
    \|_{ \L^{4p\alpha}( \P; \R) }^{\alpha}
  \right]
\\&\cdot
  \left[ 
    \sup_{ s \in [0,T) }
    \|
      F( \YMN{s} )  
    \|_{ \L^{4p}( \P; H) }
  \right]
  +
  \sup_{ s \in (0,T) }
  \|
    P_I ( O_{s} - O_{\fl{s}} ) 
  \|_{ \L^{2p}( \P; V ) }
\\ &
  +
  \sup_{ s \in [0,T) }
  \|
    P_I O_s
    -
    \OMN{s}
  \|_{ \L^{2p}(\P; V) }  .
\end{split}
\end{align}
Moreover,
note that~\eqref{eq:prop_1}
and the fact that 
$ 
  \| P_I \|_{ L(H) } 
  \leq 1
$ 
yields that
\begin{align}
\begin{split}
 &\sup_{ s \in (0,T) }
  \|
    \XXMN{s}
  \|_{ \L^{p\varphi}( \P; V ) }
\\&\leq
  \sup_{ s \in (0,T) }
  \left\|
    \int_0^s 
    P_I
    e^{(s-u)A}
    F( \YMN{\fl{u}} ) \, du
  \right\|_{ \L^{p\varphi}( \P; V ) }
  +
  \sup_{ s \in (0,T) }
  \|
    P_I O_s
  \|_{ \L^{p\varphi}( \P; V ) }
\\&\leq
  \sup_{ s \in (0,T) }
  \int_0^s
  \|
    e^{(s-u)A}
  \|_{ L(H,V) }
  \| P_I \|_{ L(H) }
  \|
    F( \YMN{\fl{u}} )
  \|_{ \L^{p\varphi}( \P; H ) } \, du
  +
  \sup_{ s \in (0,T) }
  \|
    P_I O_s
  \|_{ \L^{p\varphi}( \P; V ) }
\\&\leq
  \left[ 
    \sup_{ s \in (0,T) }
    s^{\rho} \,
    \|
      e^{sA}
    \|_{ L(H,V) }
  \right] 
  \left[ 
    \sup_{ s \in [0,T) }
    \|
      F( \YMN{s} )
    \|_{ \L^{p\varphi}( \P; H ) }
  \right] 
  \left[ 
    \sup_{ s \in (0,T) }
    \int_0^s
    \left( s-u \right)^{-\rho} du
  \right]
\\&\quad+
  \sup_{ s \in (0,T) }
  \|
    P_I O_s
  \|_{ \L^{p\varphi}( \P; V ) } 
\\&\leq
  \frac{ 
    T^{(1-\rho)}
  }{
    \left( 1-\rho\right)
  }
  \left[ 
    \sup_{ s \in (0,T) }
    s^{\rho} \,
    \|
      e^{sA}
    \|_{ L(H,V) }
  \right] 
  \left[ 
    \sup_{ s \in [0,T) }
    \|
      F( \YMN{s} )
    \|_{ \L^{p\varphi}( \P; H ) }
  \right] 
\\ &
\quad
  +
  \sup_{ s \in (0,T) }
  \|
    P_I O_s
  \|_{ \L^{p\varphi}( \P; V ) }
  <
  \infty.  
\end{split}
\end{align}
Combining this, \eqref{eq:prop_2},
and the fact that
\begin{equation}
  \forall \, x,y \in [0,\infty)
  \colon
  (x+y)^{\nicefrac{\varphi}{2}}
  \leq
  2^{\max\{0,\nicefrac{\varphi}{2}-1\}}
  (
    x^{\nicefrac{\varphi}{2}}
    +
    y^{\nicefrac{\varphi}{2}}
  )
\end{equation}
with Lemma~\ref{lem:lp_X_XX_diff}
proves that
\begin{align}
\label{eq:prop_3}
\begin{split}
 &\sup_{ t \in [0,T] }
  \|
    P_I X_t - \XXMN{t}
  \|_{ \L^p(\P; H) }
\\&\leq
  \frac{
    2^{\max\{0,\nicefrac{\varphi}{2}-1\}}
    \sqrt{ 
      C T 
      e^{\kappa c T}
    }
  }{
    \sqrt{ \left( \kappa - 2 \right) c }
  }
  \Bigg( 
    \sup_{ s \in (0,T) }
    \|
      P_{ \H \setminus I } X_s 
    \|_{ \L^{2p}(\P;V) }
    +
    \sup_{ s \in (0,T) }
    \|
      P_I ( O_{s} - O_{\fl{s}} ) 
    \|_{ \L^{2p}( \P; V ) }
\\&+
    \sup_{ s \in [0,T) }
    \|
      P_I O_s
      -
      \OMN{s}
    \|_{ \L^{2p}(\P; V) }
    +
    \frac{  
      3 \max\{ 1, T^{(1+\alpha-\rho)} \} 
    }{ 
      \left(1-\rho- \varrho\right) 
      M^{\min\{\alpha,\varrho\}} 
    }
    \left[
      \sup_{ s \in (0,T) }
      s^{\rho} \,
      \|
	e^{sA}
      \|_{ L(H,V) }
    \right]
\\&\cdot
    \left[ 
      1
      +
      \sup_{ s \in [0,T) }
      \|
	\Vm(\YMN{s},\OMN{s})
      \|_{ \L^{4p\alpha}( \P; \R) }^{\alpha}
    \right] \!
    \left[ 
      \sup_{ s \in [0,T) }
      \|
	F( \YMN{s} )  
      \|_{ \L^{4p}( \P; H) }
    \right]
  \Bigg)
\\&\cdot
  \Bigg(
    1
    +
    \sup_{ s \in (0,T) }
    \|
      X_s
    \|_{ \L^{p\varphi}(\P;V) }^{\nicefrac{\varphi}{2}}
    +
    \sup_{ s \in (0,T) }
    \|
      P_I X_s
    \|_{ \L^{p\varphi}(\P;V) }^{\nicefrac{\varphi}{2}}
    +
    \sup_{ s \in [0,T) }
    \|
      \YMN{s}
    \|_{ \L^{p\varphi}(\P;V) }^{\nicefrac{\varphi}{2}}
\\&+
    \frac{ 
      T^{(1-\rho)\nicefrac{\varphi}{2}}
    }{
      \left( 1-\rho\right)^{\nicefrac{\varphi}{2}}
    }
    \left[ 
      \sup_{ s \in (0,T) }
      s^{\rho} \,
      \|
	e^{sA}
      \|_{ L(H,V) }
    \right]^{\nicefrac{\varphi}{2}}
    \left[ 
      \sup_{ s \in [0,T) }
      \|
	F( \YMN{s} )
      \|_{ \L^{p\varphi}( \P; H ) }
    \right]^{\nicefrac{\varphi}{2}}
\\ &
    +
    \sup_{ s \in (0,T) }
    \|
      P_I O_s
    \|_{ \L^{p\varphi}( \P; V ) }^{\nicefrac{\varphi}{2}}
  \Bigg) .
\end{split}
\end{align}
Hence, we obtain
that
\begin{align}
\label{eq:prop_4}
\begin{split}
 &\sup_{ t \in [0,T] }
  \|
    P_I X_t - \XXMN{t}
  \|_{ \L^p(\P; H) }
\\&\leq
  \frac{
    3 \cdot
    2^{\max\{0,\nicefrac{\varphi}{2}-1\}}
    \max\{ 1, T^{(\nicefrac{3}{2} + 
                 \alpha - \rho + \nicefrac{\varphi}{2} - \nicefrac{\rho\varphi}{2} )}
        \}
    \sqrt{ 
      C
      e^{\kappa c T}
    }
  }{
    \left( 
      1 - \rho - \varrho 
    \right)^{(1+\nicefrac{\varphi}{2})}
    \sqrt{ \left( \kappa - 2 \right) c }
  }
  \Bigg[
    \sup_{ s \in (0,T) }
    \|
      P_{ \H \setminus I } X_s 
    \|_{ \L^{2p}(\P;V) }
\\&\quad+
    \sup_{ s \in (0,T) }
    \|
      P_I ( O_{s} - O_{\fl{s}} ) 
    \|_{ \L^{2p}( \P; V ) }
    +
    \sup_{ s \in [0,T) }
    \|
      P_I O_s
      -
      \OMN{s}
    \|_{ \L^{2p}(\P; V) }
    +
    M^{-\min\{\alpha,\varrho\}} 
  \Bigg] 
\\&\cdot
  \Bigg[
    2
    +
    \sup_{ s \in (0,T) }
    \|
      X_s
    \|_{ \L^{p\varphi}(\P;V) }^{\nicefrac{\varphi}{2}}
    +
    \sup_{ s \in (0,T) }
    \|
      P_I X_s
    \|_{ \L^{p\varphi}(\P;V) }^{\nicefrac{\varphi}{2}}
    +
    \sup_{ s \in [0,T) }
    \|
      \YMN{s}
    \|_{ \L^{p\varphi}(\P;V) }^{\nicefrac{\varphi}{2}}
\\&\quad+
    \sup_{ s \in (0,T) }
    \|
      P_I O_s
    \|_{ \L^{p\varphi}( \P; V ) }^{\nicefrac{\varphi}{2}}
  \Bigg]
  \left[ 
    1
    +
    \sup_{ s \in [0,T) }
    \|
      \Vm(\YMN{s},\OMN{s})
    \|_{ \L^{4p\alpha}( \P; \R) }^{\alpha}
  \right]
\\&\quad\cdot
  \max\!\left\{
    1,
    \sup_{ s \in (0,T) }
    \left[
      s^{\rho} \,
      \|
	e^{sA}
      \|_{ L(H,V) }
    \right]^{(1+\nicefrac{\varphi}{2})}
  \right\}
  \max\!\left\{ 
    1,
    \sup_{ s \in [0,T) }
    \|
      F( \YMN{s} )  
    \|_{ \L^{p\max\{4,\varphi\}}( \P; H) }^{(1+\nicefrac{\varphi}{2})}
  \right\} .
\end{split}
\end{align}
In the next step observe that the triangle inequality
implies that
\begin{align}
\begin{split}
 &\sup_{ t \in [0,T] }
  \|
    X_t - \YMN{t}
  \|_{ \L^p( \P; H ) }
\\&\leq
  \sup_{ t \in [0,T] }
  \left[ 
    \|
      X_t - P_I X_t
    \|_{ \L^p( \P; H ) }
    +
    \|
      P_I X_t - \XXMN{t}
    \|_{ \L^p( \P; H ) }
    +
    \|
      \XXMN{t} - \YMN{t}
    \|_{ \L^p( \P; H ) }
  \right]
\\&\leq
  \sup_{ t \in [0,T] } 
  \|
    P_I X_t - \XXMN{t}
  \|_{ \L^p( \P; H ) }
\\&\quad+
  \left[ 
    \sup_{ v \in V \setminus \{0\} }
    \frac{ \| v \|_H }{ \| v \|_V }
  \right] 
  \sup_{ t \in [0,T] }
  \left[ 
    \|
      P_{ \H \setminus I } X_t
    \|_{ \L^p( \P; V ) }
    +
    \|
      \XXMN{t} - \YMN{t}
    \|_{ \L^p( \P; V ) }
  \right] 
\\&\leq
  \left[
    \sup_{ t \in [0,T] }
    \|
      P_I X_t - \XXMN{t}
    \|_{ \L^p( \P; H ) }
    +
    \sup_{ t \in [0,T] }
    \left[ 
      \|
        P_{ \H \setminus I } X_t
      \|_{ \L^{2p}( \P; V ) }
      +
      \|
        \XXMN{t} - \YMN{t}
      \|_{ \L^p( \P; V ) }  
    \right]
  \right]
\\&\quad\cdot
  \max\!\left\{ 
    1,
    \sup_{ v \in V \setminus \{0\} }
    \frac{ \| v \|_H }{ \| v \|_V }
  \right\}
.
\end{split}
\end{align}
This, \eqref{eq:prop_4}, and Lemma~\ref{lem:XX_YO_diff_lp}
prove that
\begin{align}
\label{eq:prop_5}
\begin{split}
 &\sup_{ t \in [0,T] }
  \|
    X_t - \YMN{t}
  \|_{ \L^p( \P; H ) }
\\&\leq
  \frac{
    \left[ 
      3 \cdot
      2^{\max\{1,\nicefrac{\varphi}{2}\}}
      +
      1
    \right]
    \max\{ 1, T^{(\nicefrac{3}{2} + 
                 \alpha - \rho + \nicefrac{\varphi}{2} - \nicefrac{\rho\varphi}{2} )}
        \}
    \max\{ 1, \sqrt{ C e^{\kappa c T } } \}
  }{
    \min\{ 1, \sqrt{ \left( \kappa - 2 \right) c } \}
    \left( 1 - \rho - \varrho \right)^{(1+\nicefrac{\varphi}{2})}
  }
  \Bigg[
    \sup_{ t \in [0,T] }
    \|
      P_{ \H \setminus I } X_t 
    \|_{ \L^{2p}(\P;V) }
\\&+
    M^{-\min\{\alpha,\varrho\}}
    +
    \sup_{ t \in (0,T) }
    \|
      P_I ( O_{t} - O_{\fl{t}} ) 
    \|_{ \L^{2p}( \P; V ) }
    +
    \sup_{ t \in [0,T] }
    \|
      P_I O_t
      -
      \OMN{t}
    \|_{ \L^{2p}(\P; V) }
  \Bigg] 
\\&\cdot
  \Bigg[
    1
    +
    \sup_{ s \in (0,T) }
    \|
      X_s
    \|_{ \L^{p\varphi}(\P;V) }^{\nicefrac{\varphi}{2}}
    +
    \sup_{ s \in (0,T) }
    \|
      P_I X_s
    \|_{ \L^{p\varphi}(\P;V) }^{\nicefrac{\varphi}{2}}
\\ &
    +
    \sup_{ s \in [0,T) }
    \|
      \YMN{s}
    \|_{ \L^{p\varphi}(\P;V) }^{\nicefrac{\varphi}{2}}
    +
    \sup_{ s \in (0,T) }
    \|
      P_I O_s
    \|_{ \L^{p\varphi}( \P; V ) }^{\nicefrac{\varphi}{2}}
  \Bigg]
\\&\cdot
  \left[ 
    1
    +
    \sup_{ s \in [0,T) }
    \|
      \Vm(\YMN{s},\OMN{s})
    \|_{ \L^{4p\alpha}( \P; \R) }^{\alpha}
  \right]
  \max\!\left\{
    1,
    \sup_{ s \in (0,T) }
    \left[
      s^{\rho} \,
      \|
	e^{sA}
      \|_{ L(H,V) }
    \right]^{(1+\nicefrac{\varphi}{2})}
  \right\} 
\\&\cdot
  \max\!\left\{
    1,
    \sup_{ s \in [0,T) }
    \|
      F( \YMN{s} )
    \|_{ \L^{p\max\{4,\varphi\}}( \P; H ) }^{(1+\nicefrac{\varphi}{2})}
  \right\} 
  \max\!\left\{
    1,
    \sup_{ v \in V \setminus \{0\} }
    \frac{ \| v \|_H }{ \| v \|_V }
  \right\} .
\end{split}
\end{align}
Next note that~\eqref{eq:prop_1}
and the fact that 
\begin{equation}
  \forall \, x,y \in [0,\infty)
  \colon
  (x+y)^{\nicefrac{(1+\varphi}{2})}
  \leq
  2^{\nicefrac{\varphi}{2}}
  (
    x^{\nicefrac{(1+\varphi}{2})}
    +
    y^{\nicefrac{(1+\varphi}{2})}
  )
\end{equation}
ensure that
\begin{align}
\begin{split}
 &\max\!\left\{ 
    1,
    \sup_{ s \in [0,T) }
    \|
      F( \YMN{s} )
    \|_{ \L^{p\max\{4,\varphi\}}( \P; H ) }^{(1+\nicefrac{\varphi}{2})}
  \right\} 
\\&\leq
  \left[ 
    2
    \max\{ 1, \sqrt{C} \}
    \max\!\left\{ 
      1,
      \sup_{ s \in [0,T) }
      \|
	\YMN{s}
      \|_{ \L^{p(1+\nicefrac{\varphi}{2})\max\{4,\varphi\}}( \P; V ) }^{(1+\nicefrac{\varphi}{2})}
    \right\}
    +
    \| F(0) \|_H
  \right]^{ (1+\nicefrac{\varphi}{2}) }
\\&\leq
  2^{(1+\varphi)}
  \max\{ 1, C^{(\nicefrac{1}{2}+\nicefrac{\varphi}{4})} \}
  \left[ 
    \max\!\left\{ 
      1,
      \sup_{ s \in [0,T) }
      \|
	\YMN{s}
      \|_{ \L^{p(1+\nicefrac{\varphi}{2})\max\{4,\varphi\}}( \P; V ) }^{[(1+\nicefrac{\varphi}{2})^2]}
    \right\} 
    +
    \| F(0) \|_H^{ (1+\nicefrac{\varphi}{2}) } 
  \right] 
\\&<
  \infty .
\end{split}
\end{align}
Combining this with~\eqref{eq:prop_5}
and the fact that
\begin{align}
\begin{split}
  \left[ 
    3 \cdot 
    2^{\max\{ 1, \nicefrac{\varphi}{2} \}}
    +
    1
  \right]
  \cdot 
  2^{(1+\varphi)}
& \leq
  3 \cdot 
  2^{\max\{ 1, \nicefrac{\varphi}{2} \}}
  \left[ 
    1 + \nicefrac{1}{6}
  \right] 
  \cdot 
  2^{(1+\varphi)}
\\ & =
  7
  \cdot 
  2^{ \max\{ 1+\varphi, \nicefrac{3\varphi}{2} \} }
  \leq
  4^{ (2+\varphi) }
\end{split}
\end{align}
completes the proof of Proposition~\ref{prop:X_Y_diff_lp}.
\end{proof}

\section{Main result}
\label{sec:main_result}
\subsection{Setting}
\label{sec:main_result_setting}
Consider the notation in Section~\ref{sec:notation},
let $ ( H , \left< \cdot, \cdot \right>_H, \left\| \cdot \right\|_H ) $
be a separable $ \R $-Hilbert space,
let $ \H \subseteq H $ be a non-empty orthonormal basis of $ H $,
let $ T, c ,\varphi \in (0, \infty) $,
$ \epsilon \in [0,1) $,
$ \rho \in [0,\nicefrac{1}{2}) $, 
$ \gamma \in (\rho, \nicefrac{1}{2}] $,
$ 
  \chi 
  \in 
  (0, \nicefrac{(\gamma-\rho)}{(1+\nicefrac{\varphi}{2})}] 
  \cap 
  (0, \nicefrac{(1-\rho)}{(1+\varphi)}] 
$,
$ \D \subseteq \Pow_0(\H) \setminus \{ \emptyset \} $,
$ \mu \colon \H \rightarrow \R $
satisfy 
$
  \sup_{ h \in \H }
  \mu_h
  <
  0
$,
let $ A \colon D(A) \subseteq H \rightarrow H $ be the linear operator
which satisfies 
that
$ 
  D(A) 
  = 
  \{ 
    v \in H 
    \colon 
    \sum_{ h \in \H } 
    | 
      \mu_h 
      \langle h, v\rangle_H 
    |^2
    <
    \infty
  \}
$
and
$
  \forall \,
  v \in D(A)
  \colon
  Av
  =
  \sum_{ h \in \H }
  \mu_h
  \langle h,v \rangle_H \, h
$, 
let $ ( H_r, \langle \cdot, \cdot \rangle_{ H_r }, \left\| \cdot \right\|\!_{ H_r } ) $,
$ r \in \R $, be a family of interpolation spaces associated to $ -A $
(cf., e.g., \cite[Section~3.7]{sy02}),
let $ ( V, \left\| \cdot \right\|\!_V ) $ be an $ \R $-Banach space
with $ H_{ \rho } \subseteq V \subseteq H $ continuously and densely,
let $ \phi, \Phi \colon \C([0,T], H_{1}) \rightarrow [0,\infty) $
be $ \B(\C([0,T], H_{1})) / \B( [0,\infty) ) $-measurable functions,
let $ F \in \C( V, H ) $,
$ ( P_I )_{ I \in \Pow_0(\H) } \subseteq L(H) $
satisfy for all 
$ I \in \Pow_0(\H) $,
$ u \in H $, $ v,w \in P_I(H) $,
$ x \in \C([0,T], H_1) $
that
\begin{equation}
  P_I(u) 
  = 
  \textstyle\sum_{ h \in I } 
  \langle h,u \rangle_H \, h,
  \qquad
  \langle
    v-w,
    Av
    +
    F(v)
    -
    Aw
    -
    F(w)
  \rangle_H
  \leq
  c
  \|
    v-w
  \|_H^2,
\end{equation}
\begin{align}
\label{eq:main_coercivity}
\begin{split}
 &\sup_{ t \in [0,T] }
  \big(
    \langle
      v, P_I F(v+x_t)
    \rangle_{ H_{ \nicefrac{1}{2} } }
    +
    \phi(x)
    \langle
      v, F(v+x_t)
    \rangle_H
  \big)
\\&\leq
  \epsilon
  \| v \|_{ H_1 }^2
  +
  (c + \phi(x))
  \| v \|_{ H_{ \nicefrac{1}{2} } }^2
  +
  c
  \phi(x)
  \| v \|_H^2
  +
  \Phi(x),
\end{split}
\end{align}
\begin{align}
  \text{and}
  \qquad
  \|
    F(v)
    -
    F(w)
  \|_H^2
  \leq 
  c
  \|
    v-w
  \|_{ V }^2
  (
    1
    +
    \|
      v
    \|_{ V }^{\varphi}
    +
    \|
      w
    \|_{ V }^{\varphi}
  ),
\end{align}
let $ ( \Omega, \F, \P ) $ be a probability space,
let $ X,O \colon [0,T] \times \Omega \rightarrow V $
and $ \OMN{} \colon [0,T] \times \Omega \rightarrow H_1 $,
$ M \in \N $, $ I \in \D $,
be stochastic processes with continuous sample paths,
let $ \YMN{} \colon [0,T] \times \Omega \rightarrow H_{ \gamma } $,
$ M \in \N $, $ I \in \D $,
be stochastic processes,
and assume for all
$ t \in [0,T] $, $ M \in \N $, $ I \in \D $ that
$
  \OMN{}([0,T] \times \Omega)
  \subseteq 
  P_I(H)
$
and
\begin{align}
\begin{split}
 &\P\!\left( 
    X_t
    =
    \smallint_0^t
    e^{(t-s)A}
    F( X_s ) \, ds
    +
    O_t
  \right)
\\&=
  \P\!\left(
    \YMN{t}
    =
    \smallint_0^t
    P_I \,
    e^{(t-s)A} \,
    \one_{ 
      \{
	\| \YMN{\fl{s}} \|_{ H_{\gamma} }
	+
	\| \OMN{\fl{s}} \|_{ H_{\gamma} }
	\leq
	(M/T)^{\chi}
      \}
    }^{\Omega} \,
    F( \YMN{\fl{s}} ) \, ds
    +
    \OMN{t}
  \right)
  =
  1 .
\end{split}
\end{align}

\subsection{Comments on the setting}

In the next two results,
Lemma~\ref{lem:span_dense} and Lemma~\ref{lem:P_I_dense} below,
we establish a few elementary consequences of the framework
in Section~\ref{sec:main_result_setting}.

\begin{lemma}
\label{lem:span_dense}
Assume the setting in Section~\ref{sec:main_result_setting}
and let $ r \in [0,\infty) $.
Then 
$
  \overline{\operatorname{span}(\H)}^{H_r}
  =
  H_r
$. 
\end{lemma}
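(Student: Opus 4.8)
The plan is to combine the spectral representation of the interpolation spaces $(H_r)_{r\in\R}$ with the obvious finite-rank truncations. First I would recall from the construction of the interpolation spaces associated to $-A$ (cf., e.g., \cite[Section~3.7]{sy02}) that, since $\H$ is an orthonormal eigenbasis of $-A$ with eigenvalues $|\mu_h|$, $h\in\H$, bounded away from $0$, for every $r\in[0,\infty)$ one has $H_r=\{v\in H\colon\sum_{h\in\H}|\mu_h|^{2r}\,|\langle h,v\rangle_H|^2<\infty\}$ and $\|v\|_{H_r}^2=\sum_{h\in\H}|\mu_h|^{2r}\,|\langle h,v\rangle_H|^2$ for all $v\in H_r$. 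In particular, every $h\in\H$ lies in $H_r$ (it is an eigenvector of $(-A)^r$ with eigenvalue $|\mu_h|^r$), hence $\operatorname{span}(\H)\subseteq H_r$ and the inclusion $\overline{\operatorname{span}(\H)}^{H_r}\subseteq H_r$ is immediate since $(H_r,\|\cdot\|_{H_r})$ is a Hilbert space.

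For the reverse inclusion I would fix $v\in H_r$ and $\varepsilon\in(0,\infty)$. Only countably many of the numbers $\langle h,v\rangle_H$, $h\in\H$, are nonzero and, by the displayed characterization, the (countable) series $\sum_{h\in\H}|\mu_h|^{2r}\,|\langle h,v\rangle_H|^2$ converges; therefore there exists $I\in\Pow_0(\H)$ with $\sum_{h\in\H\setminus I}|\mu_h|^{2r}\,|\langle h,v\rangle_H|^2<\varepsilon^2$. The vector $w=P_I v=\sum_{h\in I}\langle h,v\rangle_H\,h$ then belongs to $\operatorname{span}(\H)$, and since $v-w=\sum_{h\in\H\setminus I}\langle h,v\rangle_H\,h$ we obtain $\|v-w\|_{H_r}^2=\sum_{h\in\H\setminus I}|\mu_h|^{2r}\,|\langle h,v\rangle_H|^2<\varepsilon^2$. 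As $v\in H_r$ and $\varepsilon\in(0,\infty)$ were arbitrary, this proves $H_r\subseteq\overline{\operatorname{span}(\H)}^{H_r}$, and combining the two inclusions yields $\overline{\operatorname{span}(\H)}^{H_r}=H_r$.

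I do not expect a genuine obstacle here; the only point that needs a little care is to state the spectral description of $\|\cdot\|_{H_r}$ and of membership in $H_r$ in exactly the form provided by the chosen construction of interpolation spaces, so that the truncations $P_I v$ are legitimately seen to converge to $v$ in the $H_r$-norm.
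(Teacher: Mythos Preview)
Your proof is correct and takes essentially the same approach as the paper's: both approximate an arbitrary $v\in H_r$ by finite truncations $P_I v\in\operatorname{span}(\H)$ and use the spectral representation of $\|\cdot\|_{H_r}$ to show convergence. The only difference is cosmetic---the paper first shows the truncations form a Cauchy sequence in $H_r$ and then identifies the limit with $v$ via convergence in $H$, whereas you compute $\|v-P_I v\|_{H_r}^2=\sum_{h\in\H\setminus I}|\mu_h|^{2r}|\langle h,v\rangle_H|^2$ directly.
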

\begin{proof}[Lemma~\ref{lem:span_dense}]
Throughout this proof let $ u \in H_r $,
let $ \H_n \subseteq \H $, $ n \in \N $, 
be a non-decreasing sequence of finite
subsets of $ \H $ which satisfies $ \cup_{ n \in \N } \, \H_n = \H $,
and let $ (u_n)_{n\in\N} \subseteq \operatorname{span}(\H) $
satisfy for all $ n \in \N $ that 
\begin{equation}
  u_n
  =
  \sum_{ h \in \H_n }
  \langle h, u \rangle_H \, h .
\end{equation}
Note that the fact that 
$ \H \subseteq H $ is orthogonal in $ H_r $
and the fact that
\begin{equation}
  \limsup_{ n \rightarrow \infty }
  \sum_{ h \in \H_n }
  | \langle h, u \rangle_H |^2 \,
  | \mu_h |^{2r}
  =
  \sum_{ h \in \H }
  | \langle h, u \rangle_H |^2 \,
  | \mu_h |^{2r}
  <
  \infty
\end{equation}
show that
\begin{align}
\begin{split}
 &\inf_{ N \in \N }
  \sup_{ \substack{ m, n \in \N, \\ m \geq n \geq N } }
  \| u_m - u_n \|_{ H_r }^2
\\&=
  \inf_{ N \in \N }
  \sup_{ \substack{ m, n \in \N, \\ m \geq n \geq N } }
  \Bigg\|
    \sum_{ h \in \H_m }
    \langle h, u \rangle_H \, h
    -
    \sum_{ h \in \H_n }
    \langle h, u \rangle_H \, h
  \Bigg\|_{H_r}^2   
\\ &
  =
  \inf_{ N \in \N }
  \sup_{ \substack{ m, n \in \N, \\ m \geq n \geq N } }
  \Bigg\|
    \sum_{ h \in \H_m \setminus \H_n }
    \langle h, u \rangle_H \, h
  \Bigg\|_{H_r}^2
\\&=
  \inf_{ N \in \N }
  \sup_{ \substack{ m, n \in \N, \\ m \geq n \geq N } }
  \left[
    \sum_{ h \in \H_m \setminus \H_n }
    \|
      \langle h, u \rangle_H \, h
    \|_{ H_r }^2
  \right]
  =
  \inf_{ N \in \N }
  \sup_{ \substack{ m, n \in \N, \\ m \geq n \geq N } }
  \left[
    \sum_{ h \in \H_m \setminus \H_n }
    | \langle h, u \rangle_H |^2 \,
    \| (-A)^r h \|_H^2
  \right]
\\&=
  \inf_{ N \in \N }
  \sup_{ \substack{ m, n \in \N, \\ m \geq n \geq N } }
  \left[
    \sum_{ h \in \H_m \setminus \H_n }
    | \langle h, u \rangle_H |^2 \,
    | \mu_h |^{2r}
  \right]
  \leq
  \inf_{ N \in \N }
  \sup_{ \substack{ m, n \in \N, \\ m \geq n \geq N } }
  \left[
    \sum_{ h \in \H \setminus \H_n }
    | \langle h, u \rangle_H |^2 \,
    | \mu_h |^{2r}
  \right]
\\&=
  \inf_{ N \in \N }
  \sup_{ \substack{ n \in \N, \\ n \geq N } }
  \left[
    \sum_{ h \in \H \setminus \H_n }
    | \langle h, u \rangle_H |^2 \,
    | \mu_h |^{2r}
  \right]
  =
  \limsup_{ n \rightarrow \infty }
  \left[ 
    \sum_{ h \in \H \setminus \H_n }
    | \langle h, u \rangle_H |^2 \,
    | \mu_h |^{2r}
  \right]
  =
  0 .
\end{split}
\end{align}
Hence, we obtain that
$ (u_n)_{ n \in \N } $
is a Cauchy-sequence in $ H_r $.
This together with the fact that 
$ H_r $ is complete implies that
there exists a vector $ \tilde{u} \in H_r $
such that
\begin{align} 
\label{eq:P_I_dense_x}
  \limsup_{ n \rightarrow \infty }
  \| \tilde{u} - u_n \|_{ H_r }
  =
  0 .
\end{align}
Moreover, observe that
the fact that 
$ \H \subseteq H $ is an orthonormal basis of $ H $
shows that
\begin{align}
\label{eq:P_I_dense_xx}
 &\limsup_{ n \rightarrow \infty }
  \left\|
    u
    -
    u_n
  \right\|_H
  =
  \limsup_{ n \rightarrow \infty }
  \left\|
    u
    -
    \sum_{ h \in \H_n }
    \langle h, u \rangle_H \, h
  \right\|_H
  =
  0 .
\end{align}
Combining~\eqref{eq:P_I_dense_x} with~\eqref{eq:P_I_dense_xx}
and the fact that $ H_r \subseteq H $
continuously
proves that $ u = \tilde{u} $.
This completes the proof of Lemma~\ref{lem:span_dense}.
\end{proof}

\begin{lemma}
\label{lem:P_I_dense}
Assume the setting in Section~\ref{sec:main_result_setting}. 
Then 
\begin{enumerate}[(i)]
 \item\label{it:P_I_dense_1} 
 we have that 
 $
   \operatorname{span}(\H) 
   = 
   \cup_{ I \in \Pow_0(\H) } P_I(H)
 $,
 \vspace{-0.15cm}
 \item\label{it:P_I_dense_2} 
 we have for all $ r \in [0,\infty) $ that
 $
   \overline{\cup_{ I \in \Pow_0(\H) }
   P_I(H)}^{H_r}
   =
   H_r
 $,
 \item\label{it:P_I_dense_3} 
 we have for all $ v,w \in H_1 $ that
 $
  \langle
    v-w,
    Av
    +
    F(v)
    -
    Aw
    -
    F(w)
  \rangle_H
  \leq
  c
  \|
    v-w
  \|_H^2
 $,
 and 
 \item\label{it:P_I_dense_4}
 we have for all $ v,w \in V $ that
 $
  \|
    F(v)
    -
    F(w)
  \|_H^2
  \leq 
  c
  \|
    v-w
  \|_{ V }^2
  (
    1
    +
    \|
      v
    \|_{ V }^{\varphi}
    +
    \|
      w
    \|_{ V }^{\varphi}
  )
 $.
\end{enumerate}
\end{lemma}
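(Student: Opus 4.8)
The plan is to verify the four assertions in the order stated, using that, for each $ I \in \Pow_0(\H) $, the operator $ P_I $ is the orthogonal projection of $ H $ onto $ \operatorname{span}(I) $, together with Lemma~\ref{lem:span_dense}. For~\eqref{it:P_I_dense_1} I would argue by two inclusions: if $ v \in P_I(H) $ for some $ I \in \Pow_0(\H) $, then $ v = P_I(v) = \sum_{ h \in I } \langle h, v \rangle_H \, h \in \operatorname{span}(\H) $; conversely, if $ v \in \operatorname{span}(\H) $, say $ v = \sum_{ h \in J } \lambda_h h $ for a finite $ J \subseteq \H $ and real numbers $ ( \lambda_h )_{ h \in J } $, then orthonormality of $ \H $ gives $ \langle h, v \rangle_H = \lambda_h $ for $ h \in J $, hence $ v = P_J(v) \in P_J(H) \subseteq \cup_{ I \in \Pow_0(\H) } P_I(H) $. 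Assertion~\eqref{it:P_I_dense_2} would then be immediate from~\eqref{it:P_I_dense_1} and Lemma~\ref{lem:span_dense}, which together yield $ \overline{ \cup_{ I \in \Pow_0(\H) } P_I(H) }^{ H_r } = \overline{ \operatorname{span}(\H) }^{ H_r } = H_r $ for every $ r \in [0,\infty) $.

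For~\eqref{it:P_I_dense_3} I would fix $ v, w \in H_1 $ and a non-decreasing sequence $ ( \H_n )_{ n \in \N } $ of finite subsets of $ \H $ with $ \cup_{ n \in \N } \H_n = \H $, and put $ v_n = P_{ \H_n }(v) $ and $ w_n = P_{ \H_n }(w) $. Since $ v_n, w_n \in P_{ \H_n }(H) $, the monotonicity hypothesis in Section~\ref{sec:main_result_setting} gives, for every $ n \in \N $, that $ \langle v_n - w_n, A v_n + F(v_n) - A w_n - F(w_n) \rangle_H \leq c \| v_n - w_n \|_H^2 $. Arguing as in the proof of Lemma~\ref{lem:span_dense} (with $ r = 1 $) yields $ v_n \rightarrow v $ and $ w_n \rightarrow w $ in $ H_1 $; since convergence in $ H_1 $ is convergence of $ (-A) \cdot $ in $ H $, this gives $ A v_n \rightarrow A v $ and $ A w_n \rightarrow A w $ in $ H $, and, because $ H_1 \hookrightarrow H_{ \rho } \hookrightarrow V \hookrightarrow H $ continuously, also $ v_n \rightarrow v $, $ w_n \rightarrow w $ in $ V $ and in $ H $, whence $ F(v_n) \rightarrow F(v) $ and $ F(w_n) \rightarrow F(w) $ in $ H $ by continuity of $ F \colon V \rightarrow H $. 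Letting $ n \rightarrow \infty $ in the displayed inequality then proves~\eqref{it:P_I_dense_3}.

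For~\eqref{it:P_I_dense_4} I would first observe that $ \operatorname{span}(\H) $ is dense in $ V $, since it is dense in $ H_{ \rho } $ by Lemma~\ref{lem:span_dense}, $ H_{ \rho } $ is dense in $ V $ by assumption, and $ H_{ \rho } \hookrightarrow V $ continuously. Hence, given $ v, w \in V $, I would pick sequences $ ( v_n )_{ n \in \N }, ( w_n )_{ n \in \N } \subseteq \operatorname{span}(\H) = \cup_{ I \in \Pow_0(\H) } P_I(H) $ with $ v_n \rightarrow v $, $ w_n \rightarrow w $ in $ V $, and for each $ n $ choose $ I_n \in \Pow_0(\H) $ with $ v_n, w_n \in P_{ I_n }(H) $ (taking a union of the two index sets); the growth hypothesis in Section~\ref{sec:main_result_setting} then gives $ \| F(v_n) - F(w_n) \|_H^2 \leq c \| v_n - w_n \|_V^2 ( 1 + \| v_n \|_V^{ \varphi } + \| w_n \|_V^{ \varphi } ) $, and passing to the limit $ n \rightarrow \infty $, using continuity of $ F $ and of $ \| \cdot \|_V $, yields~\eqref{it:P_I_dense_4}. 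The only point requiring a little care throughout is to check that the approximating sequences converge in the correct topology --- in $ H_1 $ for~\eqref{it:P_I_dense_3}, so that the $ A $-terms converge in $ H $, and in $ V $ for~\eqref{it:P_I_dense_4} --- but this presents no genuine obstacle and follows at once from the spectral representation of $ A $ and the continuity of the embeddings $ H_1 \hookrightarrow H_{ \rho } \hookrightarrow V \hookrightarrow H $.
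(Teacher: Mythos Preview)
Your proposal is correct and follows essentially the same approach as the paper: both inclusions for~\eqref{it:P_I_dense_1}, Lemma~\ref{lem:span_dense} for~\eqref{it:P_I_dense_2}, and density plus continuity of $A$ and $F$ to pass to the limit in~\eqref{it:P_I_dense_3} and~\eqref{it:P_I_dense_4}. The only minor differences are that for~\eqref{it:P_I_dense_1} the paper instead verifies that $\cup_{I\in\Pow_0(\H)}P_I(H)$ is a subspace containing $\H$, and for~\eqref{it:P_I_dense_3} the paper invokes abstract density from~\eqref{it:P_I_dense_2} rather than your explicit projections $P_{\H_n}(v)$; neither difference is substantive.
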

\begin{proof}[Proof of Lemma~\ref{lem:P_I_dense}]
Note for all 
$ I,J \in \Pow_0(\H) $, 
$ v \in P_I(H) $, $ w \in P_J(H) $,
$ \alpha, \beta \in \R $
that
\begin{equation}
  \alpha v + \beta w 
  \in 
  P_{ I \cup J }(H) 
  \subseteq 
  (\cup_{ K \in \Pow_0(\H) } P_K(H) ).
\end{equation}
This implies that 
$
 \cup_{ I \in \Pow_0(\H) } P_I(H)
$
is an $ \R $-vector space.
Moreover, observe that for all $ h \in \H $
we have that
\begin{equation}
  h 
  \in 
  P_{ \{h\} }(H) 
  \subseteq 
  (\cup_{ I \in \Pow_0(\H) } P_I(H) ).
\end{equation}
Hence, we obtain that 
$ \H \subseteq ( \cup_{ I \in \Pow_0(\H) } P_I(H) ) $.
This together with the fact that 
$
 \cup_{ I \in \Pow_0(\H) } P_I(H)
$
is an $ \R $-vector space
proves that
\begin{align}
\label{eq:P_I_dense_1}
  \operatorname{span}(\H) 
  \subseteq 
  ( \cup_{ I \in \Pow_0(\H) } P_I(H) ).
\end{align}
In addition, note that the fact that
$ 
  \forall \, I \in \Pow_0(\H) 
$,
$
  v \in P_I(H)
  \colon
  v \in \operatorname{span}(\H)
$
implies that
\begin{align}
  ( \cup_{ I \in \Pow_0(\H) } P_I(H) ) 
  \subseteq 
  \operatorname{span}(\H).
\end{align}
Combining this with \eqref{eq:P_I_dense_1}
establishes~\eqref{it:P_I_dense_1}.
Furthermore, observe that~\eqref{it:P_I_dense_2} is
an immediate consequence of~\eqref{it:P_I_dense_1}
and Lemma~\ref{lem:span_dense}.
In the next step note
that the assumption that
\begin{equation}
  \forall \, I \in \Pow_0(\H)
  ,
  v,w \in P_I(H)
  \colon
  \langle v-w, Av + F(v) - Aw - F(w) \rangle_H
  \leq
  c
  \| v-w \|_H^2
\end{equation}
ensures that for all 
$ (v_k)_{k\in\N} \subseteq (\cup_{ I \in \Pow_0(\H) } P_I(H) ) $, 
$ (w_k)_{k\in\N} \subseteq (\cup_{ I \in \Pow_0(\H) } P_I(H) ) $,
$ n \in \N $
we have that
\begin{align}
\label{eq:P_I_dense_2}
  \langle v_n-w_n, Av_n + F(v_n) - Aw_n - F(w_n) \rangle_H
  \leq
  c
  \| v_n-w_n \|_H^2 . 
\end{align}
Combining this and the fact that
$
  H_1 \ni v \mapsto Av \in H
$
is continuous with the fact that
$
  F_{| H_1 } \in \C( H_1, H ) 
$
proves that for all 
$ v_0,w_0 \in H_1 $,
$ (v_n)_{n\in\N} \subseteq (\cup_{ I \in \Pow_0(\H) } P_I(H) ) $,
$ (w_n)_{n\in\N} \subseteq (\cup_{ I \in \Pow_0(\H) } P_I(H) ) $ 
with 
\begin{equation}
  \limsup_{ n\rightarrow\infty } 
  \| v_n - v_0 \|_{H_1} 
  =
  \limsup_{ n\rightarrow\infty } 
  \| w_n - w_0 \|_{H_1}
  =
  0
\end{equation}
we have that 
\begin{align}
\label{eq:P_I_dense_2b}
\begin{split}
 &\langle v_0-w_0, Av_0 + F(v_0) - Aw_0 - F(w_0) \rangle_H 
\\&=
  \limsup_{ n \rightarrow \infty }
  \langle v_n-w_n, Av_n + F(v_n) - Aw_n - F(w_n) \rangle_H
\\ &
  \leq
  c
  \limsup_{ n \rightarrow \infty }
  \| v_n - w_n \|_H^2
  =
  c
  \| v_0 - w_0 \|_H^2 .
\end{split}
\end{align}
Moreover, observe
that~\eqref{it:P_I_dense_2} ensures that 
for every $ v_0,w_0 \in H_1 $ there
exist sequences
$ (v_n)_{n\in\N} \subseteq (\cup_{ I \in \Pow_0(\H) } P_I(H) ) $ 
and 
$ (w_n)_{n\in\N} \subseteq (\cup_{ I \in \Pow_0(\H) } P_I(H) ) $ 
which satisfy
that 
\begin{align}
  \limsup_{ n\rightarrow\infty } 
  \| v_n - v_0 \|_{ H_1 }
  = 
  \limsup_{ n\rightarrow\infty } 
  \| w_n - w_0 \|_{ H_1 }
  =
  0 .
\end{align}
This and~\eqref{eq:P_I_dense_2b} 
establish~\eqref{it:P_I_dense_3}.
Next note 
that the assumption that
\begin{equation}
  \forall \, I \in \Pow_0(\H)
  ,
  v,w \in P_I(H)
  \colon
  \| F(v) 
  - F(w) \|_H^2
  \leq
  c
  \| v-w \|_V^2
  (
    1
    +
    \| v \|_V^{\varphi}
    +
    \| w \|_V^{\varphi}
  )  
\end{equation}
ensures that for all 
$ (v_k)_{k\in\N} \subseteq (\cup_{ I \in \Pow_0(\H) } P_I(H) ) $, 
$ (w_k)_{k\in\N} \subseteq (\cup_{ I \in \Pow_0(\H) } P_I(H) ) $,
$ n \in \N $
we have that
\begin{align}
\label{eq:P_I_dense_3}
  \| F(v_n) - F(w_n) \|_H^2
  \leq
  c
  \| v_n-w_n \|_V^2
  (
    1
    +
    \| v_n \|_V^{\varphi}
    +
    \| w_n \|_V^{\varphi}
  ) . 
\end{align}
This and the assumption that
$ F \in \C(V,H) $
imply for all 
$ v_0,w_0 \in V $,
$ (v_n)_{n\in\N} \subseteq (\cup_{ I \in \Pow_0(\H) } P_I(H) ) $,
$ (w_n)_{n\in\N} \subseteq (\cup_{ I \in \Pow_0(\H) } P_I(H) ) $ 
with 
\begin{equation}
  \limsup_{ n\rightarrow\infty } 
  \| v_n - v_0 \|_V
  =
  \limsup_{ n\rightarrow\infty } 
  \| w_n - w_0 \|_V
  =
  0
\end{equation}
that
\begin{align}
\label{eq:P_I_dense_3b}
\begin{split}
 &\|
    F(v_0) - F(w_0)
  \|_H^2
\\&=
  \limsup_{ n \rightarrow \infty }
  \|
    F(v_n) - F(w_n)
  \|_H^2
  \leq
  c
  \limsup_{ n \rightarrow \infty }
  \Big[ 
    \| v_n-w_n \|_V^2
    (
      1
      +
      \| v_n \|_V^{\varphi}
      +
      \| w_n \|_V^{\varphi}
    )
  \Big]
\\&=
  c
  \| v_0-w_0 \|_V^2
  (
    1
    +
    \| v_0 \|_V^{\varphi}
    +
    \| w_0 \|_V^{\varphi}
  ) .
\end{split}
\end{align}
In addition, observe
that~\eqref{it:P_I_dense_2}
together with the assumption that
$
  H_{ \rho }
  \subseteq 
  V
$
continuously and densely
guarantees that for every $ v_0,w_0 \in V $ there
exist sequences
$ (v_n)_{n\in\N} \subseteq (\cup_{ I \in \Pow_0(\H) } P_I(H) ) $ 
and 
$ (w_n)_{n\in\N} \subseteq (\cup_{ I \in \Pow_0(\H) } P_I(H) ) $ 
which satisfy
that 
\begin{equation} 
  \limsup_{ n\rightarrow\infty } 
  \| v_n - v_0 \|_V
  =
  \limsup_{ n\rightarrow\infty } 
  \| w_n - w_0 \|_V
  =
  0 .
\end{equation}
Combining this with~\eqref{eq:P_I_dense_3b}
establishes~\eqref{it:P_I_dense_4}.
The proof of Lemma~\ref{lem:P_I_dense} is thus completed.
\end{proof}

\subsection{On the measurability of a certain function}

In our proof of Theorem~\ref{thm:main} (the main result
of this article) we employ the following well-known result.

\begin{lemma}
\label{lem:measurable}
Consider the notation in Section~\ref{sec:notation},
let $ (V, \left\| \cdot \right\|_V ) $ be a
separable $ \R $-Banach space, let
$ (W, \left\| \cdot \right\|_W ) $ be an
$ \R $-Banach space with $ V \subseteq W $
continuously and densely, let $ (S, \mathcal{S}) $ be a
measurable space, let $ s \in S $,
let $ \psi \colon V \rightarrow \mathcal{S} $
be a $ \B(V) / \mathcal{S} $-measurable function,
and let $ \Psi \colon W \rightarrow S $
be the function which satisfies for all 
$ v \in W $ that
\begin{align}
 &\Psi(v)
  =
  \begin{cases}
    \psi(v)  & \colon v \in V \\
    s       & \colon v \in W \setminus V .
  \end{cases}
\end{align}
Then we have that $ \Psi \colon W \rightarrow \mathcal{S} $
is a $ \B(W) / \mathcal{S} $-measurable function.
\end{lemma}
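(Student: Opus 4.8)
The plan is to reduce the $\B(W)/\mathcal{S}$-measurability of $\Psi$ to two facts about the continuous embedding $V\subseteq W$: namely that $W\setminus V\in\B(W)$ and that $\B(V)\subseteq\B(W)$, where a Borel subset of $V$ is regarded as a subset of $W$ via the inclusion. Granting these, one argues as follows. Since $\Psi$ restricts to $\psi$ on $V$ and is constantly equal to $s$ on $W\setminus V$, for every $A\in\mathcal{S}$ one has $\Psi^{-1}(A)=\psi^{-1}(A)$ in the case $s\notin A$ and $\Psi^{-1}(A)=\psi^{-1}(A)\cup(W\setminus V)$ in the case $s\in A$. The $\B(V)/\mathcal{S}$-measurability of $\psi$ yields $\psi^{-1}(A)\in\B(V)\subseteq\B(W)$, so in both cases $\Psi^{-1}(A)\in\B(W)$, which establishes the claim.

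To verify the two facts, I would first invoke Lemma~\ref{lem:separable} to conclude that $(W,\|\cdot\|_W)$ is a separable $\R$-Banach space. Hence both $(V,\|\cdot\|_V)$ and $(W,\|\cdot\|_W)$ are separable complete metric spaces, that is, Polish spaces, and the continuity of the embedding $V\subseteq W$ means that the canonical inclusion $\iota\colon V\to W$, $\iota(v)=v$, is continuous; it is of course injective.

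Next I would apply the Lusin--Souslin theorem, the classical result of descriptive set theory that an injective Borel measurable map between Polish spaces maps Borel sets to Borel sets. Applied to $\iota$ it gives, on the one hand, $V=\iota(V)\in\B(W)$ and therefore $W\setminus V\in\B(W)$, and, on the other hand, $B=\iota(B)\in\B(W)$ for every $B\in\B(V)$, i.e.\ $\B(V)\subseteq\B(W)$. Combined with the description of $\Psi^{-1}(A)$ from the first paragraph, this completes the proof.

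The step I expect to be the main obstacle is the inclusion $\B(V)\subseteq\B(W)$. The continuity of the embedding only delivers the opposite-type inclusion $\{\,B\cap V:B\in\B(W)\,\}\subseteq\B(V)$, and in the situations relevant for this article the norm of $V$ is strictly stronger than the restriction to $V$ of the norm of $W$, so the topology of $V$ is strictly finer than the one it inherits from $W$ and a norm-Borel subset of $V$ is not \emph{a priori} Borel in $W$. The Lusin--Souslin theorem is precisely what closes this gap, and its applicability hinges on $W$ being Polish, which is exactly what Lemma~\ref{lem:separable} secures. (A more self-contained but longer alternative would be to refine the topology of $W$ to a Polish one in which $V$ is clopen without changing the Borel $\sigma$-algebra, and then argue directly.)
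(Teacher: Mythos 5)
Your proof is correct and follows essentially the same route as the paper: after securing separability of $(W,\left\|\cdot\right\|_W)$ via Lemma~\ref{lem:separable}, both arguments reduce the claim to the facts $V\in\B(W)$ and $\B(V)\subseteq\B(W)$ and then perform the identical case split on whether $s\in A$. The only difference is that the paper obtains these measurability facts by citing Lemma~2.2 of Andersson et al.~\cite{AnderssonJentzenKurniawan2015}, whereas you derive them directly from the Lusin--Souslin (Kuratowski) theorem applied to the continuous injective inclusion of the Polish space $V$ into the Polish space $W$ --- the same underlying descriptive-set-theoretic fact.
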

\begin{proof}[Proof of Lemma~\ref{lem:measurable}]
First, observe that Lemma~\ref{lem:separable} ensures that
$ (W, \left\| \cdot \right\|_W ) $ is a separable
$ \R $-Banach space. This and, e.g., Lemma~2.2 in
Andersson et al.~\cite{AnderssonJentzenKurniawan2015}
(with $ V_0 = W $, $ V_1 = V $ in the notation of
Lemma~2.2 in
Andersson et al.~\cite{AnderssonJentzenKurniawan2015})
ensure that
\begin{align}
\label{eq:measurable_1}
 &V \in \B(V) \subseteq \B(W) . 
\end{align}
The assumption that $ \psi \colon V \rightarrow \mathcal{S} $
is a $ \B(V) / \mathcal{S} $-measurable function
hence ensures that for all $ A \in \mathcal{S} $ with $ s \notin A $
we have that
\begin{align}
\label{eq:measurable_2}
\begin{split}
  \Psi^{-1}(A)
 &=
  \{ v \in W \colon \Psi(v) \in A \}
  =
  \{ v \in V \colon \Psi(v) \in A \}
  \cup
  \{ v \in (W \setminus V ) \colon \Psi(v) \in A \}
\\&=
  \{ v \in V \colon \Psi(v) \in A \}
  =
  \{ v \in V \colon \psi(v) \in A \}
  =
  \psi^{-1}(A)
  \in 
  \B(V) \subseteq \B(W) .
\end{split}
\end{align}
Next note that~\eqref{eq:measurable_1} and the
assumption that $ \psi \colon V \rightarrow \mathcal{S} $
is a $ \B(V) / \mathcal{S} $-measurable function
prove that for all $ A \in \mathcal{S} $ with $ s \in A $
we have that
\begin{align}
\label{eq:measurable_3}
\begin{split}
  \Psi^{-1}(A)
 &=
  \{ v \in W \colon \Psi(v) \in A \}
  =
  \{ v \in V \colon \Psi(v) \in A \}
  \cup
  ( W \setminus V )
\\&=
  \{ v \in V \colon \psi(v) \in A \}
  \cup
  (W \setminus V)
  = \!\!\!\!\!
  \underbrace{\psi^{-1}(A)}_{ \in \, \B(V) \, \subseteq \, \B(W) } \!\!\!\!
  \cup \,
  \underbrace{ ( W \setminus V ) }_{ \in \, \B(W) }
  \in \B(W) .
\end{split}
\end{align}
Combining~\eqref{eq:measurable_2}
and~\eqref{eq:measurable_3} demonstrates
that for all $ A \in \mathcal{S} $ we have
that $ \Psi^{-1}(A) \in \B(W) $. This completes
the proof of Lemma~\ref{lem:measurable}.
\end{proof}

\subsection{A priori moment bounds for the numerical approximation}
\label{sec:a_priori_moment}
\begin{lemma}
\label{lem:Y_a_priori_mom}
Assume the setting in Section~\ref{sec:main_result_setting},
let $ p \in [1, \infty) $, $ \sigma \in [0,\gamma] $, and assume that
\begin{align}
  \sup_{ M \in \N }
  \sup_{ I \in \D }
  \sup_{ t \in [0,T] }
  \E\big[
    \|
      \OMN{t}
    \|_{ H_{\sigma} }^{2p}
    +
    |
      \Phi( \OMN{} )
    |^p
    +
    |
      \phi( \OMN{} )
    |^p
  \big]
  <
  \infty .
\end{align}
Then
\begin{equation}
  \sup_{ M \in \N }
  \sup_{ I \in \D }
  \sup_{ t \in [0,T] }
  \E\big[
    \|
      \YMN{t}
    \|_{H_{ \sigma }}^{2p}
  \big]
  <
  \infty .
\end{equation}
\end{lemma}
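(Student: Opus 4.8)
The plan is to deduce the moment bound from the pathwise a priori estimate of Lemma~\ref{lem:Y_a_priori}, applied separately to $\P$-almost every sample path. Fix $M\in\N$ and $I\in\D$. By the usual step-by-step construction along the time grid $\{0,\nicefrac{T}{M},\nicefrac{2T}{M},\dots\}$ one obtains a process which is pathwise continuous, takes values in $P_I(H)$, satisfies the defining integral equation of $\YMN{}$ for every $t\in[0,T]$ and every $\omega\in\Omega$, and agrees with $\YMN{t}$ $\P$-a.s.\ for each fixed $t$; since this process has the same one-dimensional distributions as $\YMN{}$, we may and do assume without loss of generality that $\YMN{}$ itself has these properties. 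For $\P$-a.e.\ $\omega$ we then apply Lemma~\ref{lem:Y_a_priori} with the generic process there taken to be $\YMN{}(\omega)$, with $\O{}=\OMN{}(\omega)$, with $P=P_I$, with the nonlinearity being the restriction of $F$ to $P_I(H)$ (which lies in $\C(P_I(H),H)$ since $P_I(H)\subseteq H_\rho\subseteq V$ is finite-dimensional), and with $\phi$, $\Phi$ restricted to $\C([0,T],P_I(H))\subseteq\C([0,T],H_1)$.

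The key point is to verify the hypotheses of Lemma~\ref{lem:Y_a_priori} with constants independent of $M$ and $I$. The growth bound $\|F(u)\|_H^2\leq\tilde C\max\{1,\|u\|_{H_\gamma}^{(2+\varphi)}\}$ and the local Lipschitz bound $\|F(u)-F(v)\|_H^2\leq\tilde C\max\{1,\|u\|_{H_\gamma}^{\varphi}\}\|u-v\|_{H_\rho}^2+\tilde C\|u-v\|_{H_\rho}^{(2+\varphi)}$ needed there follow from the estimate $\|F(v)-F(w)\|_H^2\leq c\|v-w\|_V^2(1+\|v\|_V^{\varphi}+\|w\|_V^{\varphi})$ of Section~\ref{sec:main_result_setting}: for the first take $w=0$, for the second use $\|v\|_V^{\varphi}\leq2^{\max\{0,\varphi-1\}}(\|v-w\|_V^{\varphi}+\|w\|_V^{\varphi})$, and in both cases estimate the $V$-norms by $H_\gamma$- resp.\ $H_\rho$-norms via the continuous embeddings $H_\gamma\hookrightarrow H_\rho\hookrightarrow V$, absorbing $\|F(0)\|_H$ and all embedding constants into a single constant $\tilde C\in[0,\infty)$ depending only on $c$, $\varphi$, $F$ and these embeddings, not on $M$ or $I$. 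The coercivity hypothesis of Lemma~\ref{lem:Y_a_priori} is obtained from \eqref{eq:main_coercivity} by choosing, in the notation of Lemma~\ref{lem:Y_a_priori}, the constant ``$c$'' to equal $2c$ and the constant ``$\kappa$'' to equal $\nicefrac{1}{2}$, using $c\,\phi(x)\|v\|_H^2=\nicefrac{1}{2}\cdot(2c)\cdot\phi(x)\|v\|_H^2$ and $(c+\phi(x))\|v\|_{H_{\nicefrac{1}{2}}}^2\leq(2c+\phi(x))\|v\|_{H_{\nicefrac{1}{2}}}^2$ and noting that the parameter restrictions of Lemma~\ref{lem:Y_a_priori} (namely $\gamma\in(\rho,1)$, $\rho,\epsilon,\nicefrac{1}{2}\in[0,1)$, and $\chi\in(0,\nicefrac{(\gamma-\rho)}{(1+\nicefrac{\varphi}{2})}]\cap(0,\nicefrac{(1-\rho)}{(1+\varphi)}]$) hold because $\gamma\leq\nicefrac{1}{2}$ and $\rho<\nicefrac{1}{2}$. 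Lemma~\ref{lem:Y_a_priori}(ii) thus yields a constant $\kappa_0\in(0,\infty)$ depending only on $T$, $c$, $\epsilon$, $\rho$, $\varphi$, $\tilde C$ such that for $\P$-a.e.\ $\omega$
\[
  \sup_{t\in[0,T]}\|\YMN{t}(\omega)-\OMN{t}(\omega)\|_{H_{\nicefrac{1}{2}}}^2
  \leq
  \frac{e^{4cT}}{2c}\Big(\Phi(\OMN{}(\omega))+\kappa_0\max\{1,\phi(\OMN{}(\omega))\}\Big).
\]

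To finish, recall that since $0\leq\sigma\leq\gamma\leq\nicefrac{1}{2}$ and $\sup_{h\in\H}\mu_h<0$ there is a constant $c_\sigma\in(0,\infty)$ with $\|v\|_{H_\sigma}\leq c_\sigma\|v\|_{H_{\nicefrac{1}{2}}}$ for all $v\in H_{\nicefrac{1}{2}}$ (this is the continuous embedding $H_{\nicefrac{1}{2}}\hookrightarrow H_\sigma$). Applying this to $\YMN{t}-\OMN{t}\in P_I(H)$, raising the resulting inequality to the power $p$, and using $(a+b)^p\leq2^{p-1}(a^p+b^p)$ and $\max\{1,x\}^p\leq1+x^p$, we obtain after taking expectations — and writing $\Lambda:=\sup_{M\in\N}\sup_{I\in\D}\sup_{t\in[0,T]}\E[\|\OMN{t}\|_{H_\sigma}^{2p}+|\Phi(\OMN{})|^p+|\phi(\OMN{})|^p]<\infty$ for the quantity in the hypothesis — a finite constant $C_1\in(0,\infty)$, independent of $t$, $M$, $I$, with $\E[\|\YMN{t}-\OMN{t}\|_{H_\sigma}^{2p}]\leq C_1(1+\Lambda)$ for all $t\in[0,T]$, $M\in\N$, $I\in\D$. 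The triangle inequality together with $(a+b)^{2p}\leq2^{2p-1}(a^{2p}+b^{2p})$ then gives $\E[\|\YMN{t}\|_{H_\sigma}^{2p}]\leq2^{2p-1}(C_1(1+\Lambda)+\Lambda)$ for all $t$, $M$, $I$, which is the assertion. The main obstacle is not any single estimate but the bookkeeping in the preceding paragraph: one must check that the translation of the hypotheses of Section~\ref{sec:main_result_setting} into those of Lemma~\ref{lem:Y_a_priori} — especially the passage from the $V$-based nonlinearity estimate to the $H_\gamma$/$H_\rho$-based estimates and the rescaling of the coercivity constant via $c\mapsto2c$, $\kappa=\nicefrac{1}{2}$ — leaves all constants, and hence the right-hand side of Lemma~\ref{lem:Y_a_priori}(ii), free of any dependence on the discretization parameters $M$ and $I$.
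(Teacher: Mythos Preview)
Your proof is correct and follows essentially the same route as the paper's: construct a version of $\YMN{}$ satisfying the integral equation pathwise, verify that the $V$-based Lipschitz hypothesis of Section~\ref{sec:main_result_setting} yields the $H_\gamma$/$H_\rho$-based growth and Lipschitz bounds required by Lemma~\ref{lem:Y_a_priori}, translate the coercivity condition~\eqref{eq:main_coercivity} into that of Lemma~\ref{lem:Y_a_priori} by rescaling the constant, apply Lemma~\ref{lem:Y_a_priori}(ii) $\omega$-by-$\omega$, and then take $L^p$-norms using the embedding $H_{\nicefrac{1}{2}}\hookrightarrow H_\sigma$ and the triangle inequality. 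The only cosmetic difference is that the paper keeps a free parameter $\kappa\in(0,1)$ and uses $c\mapsto c/\kappa$ in Lemma~\ref{lem:Y_a_priori}, whereas you fix $\kappa=\nicefrac{1}{2}$ and $c\mapsto 2c$; the paper also invokes \cite[Lemma~2.4]{HutzenthalerJentzenSalimova2016} for the growth/Lipschitz translation rather than doing it by hand, and spells out the measurability and indistinguishability bookkeeping in somewhat more detail.
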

\begin{proof}[Proof of Lemma~\ref{lem:Y_a_priori_mom}]
Throughout this proof let 
$ \kappa \in (0,1) $ be a real number, 
let
$ \YMNp{} \colon [0,T] \times \Omega \rightarrow P_I(H) $,
$ M \in \N $, $ I \in \D $,
be the functions which satisfy
for all  
$ M \in \N $, $ I \in \D $,
$ t \in [0,T] $
that
\begin{align}
\label{eq:Y_a_priori_mom_def}
 &\YMNp{t}
  =
  \int_0^t
  P_I \,
  e^{(t-s)A} \,
  \one_{ 
    \{
      \| \YMNp{\fl{s}} \|_{ H_{\gamma} }
      +
      \| \OMN{\fl{s}} \|_{ H_{\gamma} }
      \leq
      (M/T)^{\chi}
    \}
  }^{\Omega} \,
  F( \YMNp{\fl{s}} ) \, ds
  +
  \OMN{t} ,
\end{align}
and let $ C,K \in [0, \infty] $  
be the extended real numbers given by
\begin{align}
  K
 &=
  \sqrt{c}
  \max\!\left\{
    1,
    \sup_{ v \in (V \cap H_{\rho}) \setminus \{0\} }
    \frac{ 
      \| v \|_V^{(1+\nicefrac{\varphi}{2})} 
    }{ 
      \| v \|_{H_{\rho}}^{(1+\nicefrac{\varphi}{2})} 
    }
  \right\} 
\end{align}
and 
\begin{align}
\begin{split}
  C
  =
  \max\!\Bigg\{
   &3 K^2
    ( 1+ 2^{\max\{0,\varphi-1\}} )
    \left[ 
      1
      +
      \sup_{ v \in H_{\gamma}\setminus\{0\} }
      \frac{ \| v \|_{H_{\rho}}^{\varphi} }{ \| v \|_{H_{\gamma}}^{\varphi} }
    \right] ,
\\
   &( 8 K^2 + 2 \| F(0) \|_H^2 )
    \max\!\left\{ 
      1,
      \sup_{ v \in H_{\gamma}\setminus\{0\} }
      \frac{ \| v \|_{H_{\rho}}^{(2+\varphi)} }{ \| v \|_{H_{\gamma}}^{(2+\varphi)} }
    \right\}
  \Bigg\} .
\end{split}
\end{align}
Note that the fact that 
$ 
  H_{\gamma}
  \subseteq
  H_{\rho} 
  \subseteq 
  V
$ 
continuously
ensures that $ C,K \in [0,\infty) $.
In the next step observe that
Lemma~\ref{lem:P_I_dense}~\eqref{it:P_I_dense_4}
and the fact that
\begin{equation}
  \forall \, x,y,z \in [0,\infty)
  \colon 
  \sqrt{x+y+z}
  \leq 
  \sqrt{x} + \sqrt{y} + \sqrt{z}
\end{equation}
imply for all $ v,w \in H_{\gamma} $
that
\begin{align} 
\begin{split}
 &\|
    F(v)
    -
    F(w)
  \|_H
  \leq 
  \sqrt{ 
    c
    \|
      v-w
    \|_V^2
    \left(
      1
      +
      \left\| v \right\|_V^{\varphi}
      +
      \left\| w \right\|_V^{\varphi}
    \right)
  }
\\&\leq
  \sqrt{c}
  \left[ 
    \sup_{ u \in H_{\rho} \setminus \{0\} }
    \frac{ 
      \| u \|_V 
    }{ 
      \| u \|_{H_{\rho}} 
    }
  \right]
  \|
    v-w
  \|_{ H_{\rho} } \!
  \left( 
    1
    +
    \left[ 
      \sup_{ u \in H_{\rho} \setminus \{0\} }
      \frac{ 
        \| u \|_V^{\varphi} 
      }{ 
        \| u \|_{H_{\rho}}^{\varphi} 
      }
    \right]
    \Big[ 
      \|
      v
      \|_{ H_{\rho} }^{\varphi}
      +
      \|
	w
      \|_{ H_{\rho} }^{\varphi}
    \Big]
  \right)^{\!\!\nicefrac{1}{2}}
\\&\leq
  K
  \|
    v-w
  \|_{ H_{\rho} }
  \left(
    1
    +
    \|
      v
    \|_{ H_{\rho} }^{\nicefrac{\varphi}{2}}
    +
    \|
      w
    \|_{ H_{\rho} }^{\nicefrac{\varphi}{2}}
  \right) .
\end{split}
\end{align}
Combining this with Lemma~2.4 in Hutzenthaler et
al.~\cite{HutzenthalerJentzenSalimova2016}
(with $ (V, \left\|\cdot\right\|\!_{V}) = (H_{\gamma}, \left\|\cdot\right\|\!_{H_{\gamma}}) $, 
$ (\mathcal{V}, \left\|\cdot\right\|\!_{\mathcal{V}}) = (H_{\rho}, \left\|\cdot\right\|\!_{H_{\rho}}) $, 
$ (W, \left\|\cdot\right\|\!_{W}) = (H, \left\|\cdot\right\|\!_{H}) $,
$ (\mathcal{W}, \left\|\cdot\right\|\!_{\mathcal{W}}) = (H, \left\|\cdot\right\|\!_{H}) $, 
$ \epsilon = K $, $ \theta = C $,
$ \varepsilon = \nicefrac{\varphi}{2} $, $ \vartheta = \varphi $,
$ F = H_{\gamma} \ni v \mapsto F(v) \in H $
in the notation of Lemma~2.4 in Hutzenthaler et
al.~\cite{HutzenthalerJentzenSalimova2016})
implies that for all $ u,v \in H_{\gamma} $ 
we have that
\begin{align}
\label{eq:Y_a_priori_mom_growth}
\begin{split}
 &\| F(u) \|_H^2
  \leq 
  C
  \max\{ 1, \| u \|_{H_{\gamma}}^{(2+\varphi)} \}
\end{split}
\end{align}
and
\begin{align}
\label{eq:Y_a_priori_mom_lip}
\begin{split}
 &\|
    F(u) - F(v)
  \|_H^2
  \leq
  C
  \max\{ 1, \| u \|_{ H_{\gamma} }^{\varphi} \}
  \| u-v \|_{ H_{\rho} }^2
  +
  C
  \| u-v \|_{ H_{\rho} }^{(2+\varphi)} .
\end{split}
\end{align}
Moreover, observe that
the assumption that
for all
$
  I \in \Pow_0(\H)
$,
$
  v \in P_I(H)
$,
$
  w \in \C([0,T], H_1)
$
we have that
\begin{align}
\begin{split}
 &\sup_{ t \in [0,T] }
  \big(
    \langle
      v, P_I F(v+w_t)
    \rangle_{ H_{ \nicefrac{1}{2} } }
    +
    \phi(w)
    \langle
      v, F(v+w_t)
    \rangle_H
  \big)
\\&\leq
    \epsilon
    \| v \|_{ H_1 }^2
    +
    (c + \phi(w))
    \| v \|_{ H_{ \nicefrac{1}{2} } }^2
    +
    c
    \phi(w)
    \| v \|_H^2
    +
    \Phi(w)
\end{split}
\end{align}
ensures that for all 
$ I \in \D $,
$ v \in P_I(H) $,
$ w \in \C([0,T], P_I(H)) $,
$ t \in [0,T] $
we have that
\begin{align}
\begin{split}
 &\langle
    v, P_I F(v+w_t)
  \rangle_{ H_{ \nicefrac{1}{2} } }
  +
  \phi([0,T] \ni s \mapsto w_s \in H_1)
  \langle
    v, F(v+w_t)
  \rangle_H
\\&\leq
  \epsilon
  \| v \|_{ H_1 }^2
  +
  (\tfrac{c}{\kappa} + \phi([0,T] \ni s \mapsto w_s \in H_1))
  \| v \|_{ H_{ \nicefrac{1}{2} } }^2
  +
  c
  \phi([0,T] \ni s \mapsto w_s \in H_1)
  \| v \|_H^2
\\&\quad+
  \Phi([0,T] \ni s \mapsto w_s \in H_1) .
\end{split}
\end{align}
This together with
\eqref{eq:Y_a_priori_mom_def},
\eqref{eq:Y_a_priori_mom_growth},
and~\eqref{eq:Y_a_priori_mom_lip}
allows us to apply Lemma~\ref{lem:Y_a_priori}
(with $ H = H $,
$ \H = \H $, $ T = T $,
$ \varphi = \varphi $,
$ c = \nicefrac{c}{\kappa} $, $ C = C $,
$ \epsilon = \epsilon $,
$ \kappa = \kappa $,
$ \rho = \rho $,
$ \gamma = \gamma $,
$ \chi = \chi $,
$ M = M $, $ \mu = \mu $,
$ A = A $,
$ I = I $, $ P = P_I $,
$ \Y{} = [0,T] \ni t \mapsto \YMNp{t}(\omega) \in P_I(H) $,
$ \O{} = [0,T] \ni t \mapsto \OMN{t}(\omega) \in P_I(H) $,
$ F = P_I(H) \ni v \mapsto F(v) \in H $,
$ \phi = \C([0,T], P_I(H)) \ni v \mapsto \phi( [0,T] \ni t \mapsto v(t) \in H_1) \in [0,\infty) $, 
$ \Phi = \C([0,T], P_I(H)) \ni v \mapsto \Phi( [0,T] \ni t \mapsto v(t) \in H_1) \in [0,\infty) $
for $ I \in \D $, $ M \in \N $,
$ \omega \in \Omega $
in the notation of Lemma~\ref{lem:Y_a_priori})
to obtain that for every $ M \in \N $,
$ I \in \D $, $ \omega \in \Omega $
we have that the function
$
  [0,T] \ni t \mapsto 
  \YMNp{t}(\omega)
  -
  \OMN{t}(\omega)
  \in 
  P_I(H)
$
is continuous and that
\begin{align}
\label{eq:Y_a_priori_mom_2}
\begin{split}
 &\sup_{ t \in [0,T] }
  \big(
    \|
      \YMNp{t}(\omega)
      -
      \OMN{t}(\omega)
    \|_{ H_{\nicefrac{1}{2}} }^2
    +
    \|
      \YMNp{t}(\omega)
      -
      \OMN{t}(\omega)
    \|_{ H }^2
  \big)
\\&\leq
  \frac{ \kappa }{ c }
  \exp\!\left(\frac{2cT}{\kappa}\right)  
  \left(
    \Phi(\OMN{}(\omega))
    +  
    \frac{\max\{1, \phi(\OMN{}(\omega))\}  
          C(c+\kappa)}{2(1-\epsilon)(1-\kappa)c}
    \left[ 
      \tfrac{\max\{1,T\} (1+\sqrt{C})}{(1-\rho)}
    \right]^{\!(2+\varphi)}
  \right) .
\end{split}
\end{align}
This, in particular, implies for all
$ M \in \N $,
$ I \in \D $
that 
\begin{align}
  \left(
    \Omega \ni \omega 
    \mapsto
    \sup\nolimits_{ t \in [0,T] }
    \| \YMNp{t}(\omega) - \OMN{t}(\omega) \|_{ H_{\nicefrac{1}{2}} }
    \in
    \R
  \right)
  \in 
  \M( \F, \B(\R) ) .
\end{align}
The assumption that $ p \geq 1 $, 
the fact that 
\begin{equation}
  \forall \,
  x,y \in [0,\infty)
  \colon 
  \sqrt{ x + y }
  \leq
  \sqrt{x} + \sqrt{y}
  ,
\end{equation}
and~\eqref{eq:Y_a_priori_mom_2}
hence ensure for 
all $ M \in \N $, $ I \in \D $ that
\begin{align}
\label{eq:Y_a_priori_mom_7}
\begin{split}
 &\big\| 
    \sup\nolimits_{ t \in [0,T] }
    \|
      \YMNp{t}
      -
      \OMN{t}
    \|_{ H_{\nicefrac{1}{2}} }
  \big\|_{ \L^{2p}( \P; \R) }
\\&\leq
  \left\|
    \frac{ \kappa }{ c }
    \exp\!\left(\frac{2cT}{\kappa}\right) 
    \left(
      \Phi(\OMN{})
      +    
      \frac{\max\{1, \phi(\OMN{})\}
            C(c+\kappa)}{2(1-\epsilon)(1-\kappa)c}
      \left[ 
	\tfrac{\max\{1,T\} (1+\sqrt{C})}{(1-\rho)}
      \right]^{\!(2+\varphi)}
    \right)
  \right\|_{ \L^{p}(\P; \R) }^{\nicefrac{1}{2}}
\\&\leq
  \frac{ 
    \sqrt{\kappa} 
  }{ \sqrt{c} }
  \exp\!\left(\frac{cT}{\kappa}\right) 
\\ &
\quad
\cdot 
  \left(
    \|
      \Phi(\OMN{})
    \|_{ \L^{p}(\P; \R) }
    +
    \frac{\left(
      \|
	\phi(\OMN{})
      \|_{ \L^{p}(\P; \R) }
      +
      1
    \right) C(c+\kappa)}{2(1-\epsilon)(1-\kappa)c}
    \left[ 
      \tfrac{\max\{1,T\} (1+\sqrt{C})}{(1-\rho)}
    \right]^{\!(2+\varphi)}
  \right)^{\!\nicefrac{1}{2}}
\\&\leq
  \frac{ 
    \sqrt{\kappa}   
  }{ \sqrt{c} }
  \exp\!\left(\frac{cT}{\kappa}\right)
  \|
    \Phi(\OMN{})
  \|_{ \L^{p}(\P; \R) }^{\nicefrac{1}{2}}
\\&\quad+
  \left(
    \|
      \phi(\OMN{})
    \|_{ \L^{p}(\P; \R) }^{\nicefrac{1}{2}}
    +
    1
  \right)
  \frac{
    \sqrt{C(c+\kappa) }
    \exp\!\left(\frac{cT}{\kappa}\right)
  }{c\sqrt{2(1-\epsilon)(\nicefrac{1}{\kappa} - 1)}}
  \left[ 
    \tfrac{\max\{1,T\} (1+\sqrt{C})}{(1-\rho)}
  \right]^{\!(1+\nicefrac{\varphi}{2})} .  
\end{split}
\end{align}
In addition, observe
that the fact that for every 
$ r \in \R $, $ I \in \D $ we have that
$ P_I(H) \subseteq H_{r} $ continuously 
and, e.g., Andersson et al.~\cite[Lemma~2.2]{AnderssonJentzenKurniawan2015}
(with $ V_0 = H_{r} $, 
$ \left\|\cdot\right\|\!_{V_0} = \left\|\cdot\right\|\!_{H_{r}} $,
$ V_1 = P_I(H) $,
$ \left\|\cdot\right\|\!_{V_1} = P_I(H) \ni v \mapsto \|v\|_H \in [0,\infty) $ 
for $ I \in \D $, $ r \in \R $
in the notation of Andersson et al.~\cite[Lemma~2.2]{AnderssonJentzenKurniawan2015})
prove that for all $ I \in \D $ we have that
\begin{align}
\label{eq:Y_a_priori_mom_ii}
  \B( P_I(H) ) 
  =
  \big\{ 
    S \in \Pow(P_I(H))
    \colon
    \big(
      \exists \,
      B \in \B( H_{r} )
      \colon
      S = B \cap P_I(H)
    \big)
  \big\}
  \subseteq \B( H_{r} ) . 
\end{align}
The hypothesis that
$ \OMN{} \colon [0,T] \times \Omega \rightarrow H_1 $,
$ M \in \N $, $ I \in \D $, are stochastic processes therefore
demonstrates that for every $ M \in \N $, $ I \in \D $
we have that $ \YMNp \colon [0,T] \times \Omega \rightarrow P_I(H) $
is a stochastic process.
Combining this with~\eqref{eq:Y_a_priori_mom_ii}
shows that for all $ B \in \B( H_{\gamma} ) $,
$ t \in [0,T] $, $ M \in \N $, $ I \in \D $ we have that
\begin{align}
\begin{split}
 &\left( 
    \Omega \ni \omega \mapsto \YMNp{t}(\omega) \in H_{\gamma} 
  \right)^{-1}
  \!\!(B)
  =
  \left( 
    \Omega \ni \omega \mapsto \YMNp{t}(\omega) \in H_{\gamma}   
  \right)^{-1}
  \!\!\big(
    B \cap P_I(H) 
  \big)
\\&=
  \left( 
    \Omega \ni \omega \mapsto \YMNp{t}(\omega) \in P_I(H)  
  \right)^{-1}
  \!\!\big(
    B \cap P_I(H) 
  \big)
  =
  \left( 
    \YMNp{t}  
  \right)^{-1}
  \!\!\big(
    \underbrace{B \cap P_I(H)}_{ \in \B( P_I(H) ) } 
  \big)
  \in 
  \F .
\end{split}
\end{align}
Hence, we obtain for all $ t \in [0,T] $, $ M \in \N $, $ I \in \D $
that
\begin{align}
 \big\{ 
   \omega \in \Omega
   \colon 
   \YMN{t}(\omega)
   =
   \YMNp{t}(\omega)
 \big\}
 \in \F .
\end{align}
The assumption that for 
every $ t \in [0,T] $,
$ M \in \N $, $ I \in \D $
we have that
\begin{align}
  \P\!\left(
    \YMN{t}
    =
    \int_0^t
    P_I \,
    e^{(t-s)A} \,
    \one_{ 
      \{
	\| \YMN{\fl{s}} \|_{ H_{\gamma} }
	+
	\| \OMN{\fl{s}} \|_{ H_{\gamma} }
	\leq
	(M/T)^{\chi}
      \}
    }^{\Omega} \,
    F( \YMN{\fl{s}} ) \, ds
    +
    \OMN{t}
  \right)
  =
  1
\end{align}
therefore implies that for all 
$ t \in [0,T] $,
$ M \in \N $,
$ I \in \D $
we have that
\begin{equation}
  \P\big( 
    \YMN{t} = \YMNp{t} 
  \big)
  =
  1 .
\end{equation}
This and the triangle inequality
assure that
\begin{align}
\begin{split}
 &\sup_{ M \in \N }
  \sup_{ I \in \D }
  \sup_{ t \in [0,T] }
  \|
    \YMN{t}
  \|_{ \L^{2p}( \P; H_{ \sigma } ) }
  =
  \sup_{ M \in \N }
  \sup_{ I \in \D }
  \sup_{ t \in [0,T] }
  \|
    \YMNp{t}
  \|_{ \L^{2p}( \P; H_{ \sigma } ) }
\\&\leq
  \sup_{ M \in \N }
  \sup_{ I \in \D }
  \sup_{ t \in [0,T] }
  \left[ 
    \|
      \YMNp{t} - \OMN{t}
    \|_{ \L^{2p}( \P; H_{ \sigma } ) }
    +
    \|
      \OMN{t}
    \|_{ \L^{2p}( \P;  H_{ \sigma } ) }
  \right]
\\&\leq
  \left[
    \sup_{ v \in H_{\nicefrac{1}{2}} \setminus \{ 0 \} }
    \frac{ 
      \| v \|_{  H_{ \sigma } } 
    }{
      \| v \|_{ H_{\nicefrac{1}{2}} }
    }
  \right] 
  \left[
    \sup_{ M \in \N }
    \sup_{ I \in \D }
    \big\|
      \sup\nolimits_{ t \in [0,T] }
      \|
        \YMNp{t} - \OMN{t}
      \|_{ H_{ \nicefrac{1}{2} } }
    \big\|_{ \L^{2p}( \P ; \R) }
  \right] 
\\&\quad+
  \sup_{ M \in \N }
  \sup_{ I \in \D }
  \sup_{ t \in [0,T] }
  \|
    \OMN{t}
  \|_{ \L^{2p}( \P; H_{\sigma} ) } .
\end{split}
\end{align}
The assumption that
\begin{equation}
  \sup_{ M \in \N }
  \sup_{ I \in \D }
  \sup_{ t \in [0,T] }
  \E\big[
    \|
      \OMN{t}
    \|_{ H_{\sigma} }^{2p}
    +
    | \Phi( \OMN{} ) |^p
    +
    | \phi( \OMN{} ) |^p
  \big]
  <
  \infty
  ,
\end{equation}
the fact that
$ H_{\nicefrac{1}{2}} \subseteq H_{ \sigma } $ continuously,
and \eqref{eq:Y_a_priori_mom_7} hence prove that
\begin{align}
\begin{split}
 &\sup_{ M \in \N }
  \sup_{ I \in \D }
  \sup_{ t \in [0,T] }
  \|
    \YMN{t}
  \|_{ \L^{2p}( \P; H_{ \sigma } ) }
\\&\leq
  \left[
    \sup_{ v \in H_{\nicefrac{1}{2}} \setminus \{ 0 \} }
    \frac{ 
      \| v \|_{  H_{ \sigma } } 
    }{
      \| v \|_{ H_{\nicefrac{1}{2}} }
    }
  \right]
  \Bigg[
    \frac{ 
      \sqrt{\kappa}  
    }{ \sqrt{c} }
    \exp\!\left(\frac{cT}{\kappa}\right)
    \sup_{ M \in \N }
    \sup_{ I \in \D }
    \|
      \Phi( \OMN{} )
    \|_{ \L^{p}(\P; \R) }^{\nicefrac{1}{2}}
\\&\quad+
    \left(
      \sup_{ M \in \N }
      \sup_{ I \in \D }
      \|
	\phi(\OMN{})
      \|_{ \L^{p}(\P; \R) }^{\nicefrac{1}{2}}
      +
      1
    \right)
    \frac{
      \sqrt{C(c+\kappa)}
      \exp\!\left(\frac{cT}{\kappa}\right)
    }{c\sqrt{2(1-\epsilon)(\nicefrac{1}{\kappa}-1)}}
    \left[ 
      \tfrac{\max\{1,T\} (1+\sqrt{C})}{(1-\rho)}
    \right]^{\!(1+\nicefrac{\varphi}{2})}
  \Bigg]
\\&\quad+
  \sup_{ M \in \N }
  \sup_{ I \in \D }
  \sup_{ t \in [0,T] }
  \|
    \OMN{t}
  \|_{ \L^{2p}( \P; H_{\sigma} ) }
  <
  \infty .
\end{split}
\end{align}
This completes the proof
of Lemma~\ref{lem:Y_a_priori_mom}.
\end{proof}

\subsection{Main result}

\begin{theorem}
\label{thm:main}
Assume the setting in Section~\ref{sec:main_result_setting},
let $ \vartheta \in (0,\infty) $,
$ p \in [ \max\{ 2, \nicefrac{1}{\varphi} \}, \infty ) $,
$ \varrho \in [0,1-\rho) $,
and assume that
\begin{align}
\label{eq:main_assumption}
\begin{split}
 &\sup_{ t \in [0,T] }
  \sup_{ I \in \D }
  \E\big[
    \|
      X_t
    \|_{ V }^{p(1+\nicefrac{\varphi}{2})\max\{2,\varphi\}}
    +
    \| P_I O_t \|_{ V }^{p\varphi}
  \big]
\\&+ 
  \sup_{ M \in \N }
  \sup_{ I \in \D }
  \sup_{ t \in [0,T] }
  \E\!\left[
    \big|
      \|
        \OMN{t}
      \|_{ H_{\gamma} }^2
      +
      \Phi( \OMN{} )
      +
      \phi( \OMN{} )
    \big|^{p\max\{2\vartheta, 2+\varphi, (1+\nicefrac{\varphi}{2})\nicefrac{\varphi}{2} \}}
  \right]
  <
  \infty .
\end{split}
\end{align}
Then we have
\begin{enumerate}[(i)]
 \item\label{it:thm_main_1} that
 $
  \sup_{ M \in \N }
  \sup_{ I \in \D }
  \sup_{ t \in [0,T] }
  \E\big[
    \|
      \YMN{t}
    \|_{ H_{\gamma} }^{p\max\{4\vartheta, 4+2\varphi, \varphi(1+\nicefrac{\varphi}{2}) \}}
  \big]
  <
  \infty
$
and
\item\label{it:thm_main_2}
that there exists
a real number $ C \in (0,\infty) $ such that
for all $ M \in \N $, $ I \in \D $
it holds that
\begin{align}
\label{eq:main_assert_2}
\begin{split}
 &\sup_{ t \in [0,T] }
  \left(
    \E\big[
      \|
        X_t
        -
        \YMN{t}
      \|_{ H }^p
    \big] 
  \right)^{\!\nicefrac{1}{p}}
  \leq
  C
  \bigg[
    M^{-\min\{\vartheta\chi,\varrho\}}
    +
    \|
      (-A)^{-\varrho}
      ( \Id_H - P_I )
    \|_{ L(H) }
\\&\quad+
    \sup_{ t \in [0,T] }
    \left(
      \E\big[
        \|
          ( \Id_H - P_I )
          O_t
        \|_{ V }^{2p}
        +
        \|
          P_I O_{t} 
          -
          \OMN{t}
        \|_{ V }^{2p}
        +
        \|
          P_I( 
            O_{t}
            - 
            O_{\fl{t}} 
          )
        \|_{ V }^{2p}
      \big]
    \right)^{\!\nicefrac{1}{2p}}
  \bigg] .
\end{split}
\end{align}
\end{enumerate}

\end{theorem}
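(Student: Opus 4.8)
The plan is to deduce item~\eqref{it:thm_main_1} from the a priori moment bound in Lemma~\ref{lem:Y_a_priori_mom} and item~\eqref{it:thm_main_2} from the strong error estimate in Proposition~\ref{prop:X_Y_diff_lp}, after matching up the two settings. For item~\eqref{it:thm_main_1}, put $ q = p\max\{2\vartheta, 2+\varphi, (1+\nicefrac{\varphi}{2})\nicefrac{\varphi}{2}\} $; since $ \|\OMN{t}\|_{H_\gamma}^2 $, $ \Phi(\OMN{}) $, and $ \phi(\OMN{}) $ are nonnegative, the second summand in~\eqref{eq:main_assumption} dominates each of $ \E[\|\OMN{t}\|_{H_\gamma}^{2q}] $, $ \E[|\Phi(\OMN{})|^q] $, and $ \E[|\phi(\OMN{})|^q] $ uniformly in $ M\in\N $, $ I\in\D $, $ t\in[0,T] $. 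Lemma~\ref{lem:Y_a_priori_mom} (with $ \sigma = \gamma $ and with its parameter $ p $ replaced by $ q $) then yields $ \sup_{M\in\N}\sup_{I\in\D}\sup_{t\in[0,T]}\E[\|\YMN{t}\|_{H_\gamma}^{2q}]<\infty $, and since $ 2q = p\max\{4\vartheta, 4+2\varphi, \varphi(1+\nicefrac{\varphi}{2})\} $ this is precisely~\eqref{it:thm_main_1}.

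For item~\eqref{it:thm_main_2} I would first cast the framework of Section~\ref{sec:main_result_setting} into that of Section~\ref{sec:lp_setting}. Using that $ H_\gamma\subseteq H_\rho\subseteq V $ continuously and — by Lemma~\ref{lem:span_dense} applied to $ H_\rho $ together with the density of $ H_\rho $ in $ V $ — densely, the inclusion $ H_\gamma\times H_\gamma\subseteq V\times V $ is a continuous and dense inclusion of separable $ \R $-Banach spaces, so Lemma~\ref{lem:measurable} (with $ W = V\times V $, $ V = H_\gamma\times H_\gamma $, $ (S,\mathcal{S}) = ([0,\infty),\B([0,\infty))) $, $ s = 0 $, and the continuous, hence Borel measurable, map $ \psi(v,w) = (\|v\|_{H_\gamma}+\|w\|_{H_\gamma})^{\nicefrac{1}{\chi}} $) supplies a $ \B(V\times V)/\B([0,\infty)) $-measurable function $ \Vm\colon V\times V\to[0,\infty) $ which agrees with $ \psi $ on $ H_\gamma\times H_\gamma $. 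For $ v,w\in H_\gamma $ one then has $ \{\Vm(v,w)\leq M/T\} = \{\|v\|_{H_\gamma}+\|w\|_{H_\gamma}\leq(M/T)^\chi\} $, so — extending $ (P_I)_{I\in\Pow_0(\H)} $ to $ (P_I)_{I\in\Pow(\H)} $ by the same formula and setting $ \XXMN{t} = \int_0^t P_I e^{(t-s)A}F(\YMN{\fl{s}})\,ds + P_I O_t $ — the scheme of Section~\ref{sec:main_result_setting} is the scheme of Section~\ref{sec:lp_setting}; the monotonicity and local Lipschitz hypotheses there (with both constants taken equal to $ c $) come from Lemma~\ref{lem:P_I_dense}~\eqref{it:P_I_dense_3}--\eqref{it:P_I_dense_4}, and $ \sup_{s\in(0,T]}s^\rho\|e^{sA}\|_{L(H,V)}<\infty $ follows from $ \|(-sA)^\rho e^{sA}\|_{L(H)}\leq1 $ (as $ \rho<\nicefrac12 $) and $ H_\rho\hookrightarrow V $.

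I would then apply Proposition~\ref{prop:X_Y_diff_lp} with $ \alpha = \vartheta\chi $, with $ \rho,\varrho $ as given, with $ \kappa = 3 $, and with $ p = p $. The identity $ |\Vm(\YMN{s},\OMN{s})|^{4p\alpha} = (\|\YMN{s}\|_{H_\gamma}+\|\OMN{s}\|_{H_\gamma})^{4p\vartheta} $, the growth bound $ \|F(v)\|_H\leq\sqrt{c}\,(\|v\|_V+\|v\|_V^{(1+\nicefrac{\varphi}{2})})+\|F(0)\|_H $, the embedding $ H_\gamma\hookrightarrow V $, item~\eqref{it:thm_main_1}, and~\eqref{eq:main_assumption} together show that all the $ \L^q(\P;\cdot) $-moments appearing in the multiplicative factors of the bound of Proposition~\ref{prop:X_Y_diff_lp} are finite — the exponents in~\eqref{eq:main_assumption} and in~\eqref{it:thm_main_1} are tuned precisely for this — while the additive discretization quantities may a priori be infinite, in which case~\eqref{eq:main_assert_2} is trivial, so I may assume them finite; moreover the term $ M^{-\min\{\alpha,\varrho\}} $ produced by the proposition equals $ M^{-\min\{\vartheta\chi,\varrho\}} $. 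It remains to replace the quantity $ \sup_{t\in[0,T]}\|P_{\H\setminus I}X_t\|_{\L^{2p}(\P;V)} $ carried by the proposition's bound by the quantities displayed in~\eqref{eq:main_assert_2}. For this I would use $ P_{\H\setminus I}X_t = \int_0^t P_{\H\setminus I}e^{(t-s)A}F(X_s)\,ds + (\Id_H-P_I)O_t $, the commuting factorization $ (-A)^\rho P_{\H\setminus I}e^{(t-s)A} = (-A)^{(\rho+\varrho)}e^{(t-s)A}(-A)^{-\varrho}(\Id_H-P_I) $, the bounds $ \|(-sA)^{(\rho+\varrho)}e^{sA}\|_{L(H)}\leq1 $ (valid since $ \rho+\varrho\in[0,1) $) and $ H_\rho\hookrightarrow V $, the finiteness of $ \sup_{s\in[0,T]}\|F(X_s)\|_{\L^{2p}(\P;H)} $ (which follows from the growth of $ F $ and~\eqref{eq:main_assumption}), and $ \int_0^t(t-s)^{-(\rho+\varrho)}\,ds\leq\nicefrac{T^{(1-\rho-\varrho)}}{(1-\rho-\varrho)} $ to obtain a finite constant $ C' $, depending only on the fixed data, with $ \sup_{t\in[0,T]}\|P_{\H\setminus I}X_t\|_{\L^{2p}(\P;V)}\leq C'\|(-A)^{-\varrho}(\Id_H-P_I)\|_{L(H)}+\sup_{t\in[0,T]}\|(\Id_H-P_I)O_t\|_{\L^{2p}(\P;V)} $. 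Substituting this into the estimate of Proposition~\ref{prop:X_Y_diff_lp} and absorbing all finite constants into a single $ C\in(0,\infty) $ yields~\eqref{eq:main_assert_2}.

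The main obstacle I anticipate is bookkeeping rather than analysis: one has to verify that each of the many $ \L^q(\P;\cdot) $-moments required by Lemma~\ref{lem:Y_a_priori_mom} and by Proposition~\ref{prop:X_Y_diff_lp} is genuinely dominated by the particular exponents fixed in~\eqref{eq:main_assumption} and in~\eqref{it:thm_main_1} (invoking $ H_\gamma\hookrightarrow V $ and the fact that $ P_I $ acts as a contraction on each $ H_r $), and that the auxiliary estimate for $ \sup_{t\in[0,T]}\|P_{\H\setminus I}X_t\|_{\L^{2p}(\P;V)} $ collapses exactly onto the two quantities $ \|(\Id_H-P_I)O_t\|_V $ and $ \|(-A)^{-\varrho}(\Id_H-P_I)\|_{L(H)} $ that occur in~\eqref{eq:main_assert_2}; no new idea beyond the cited lemmas and proposition is needed.
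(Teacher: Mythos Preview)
Your proposal is correct and follows essentially the same route as the paper: item~\eqref{it:thm_main_1} via Lemma~\ref{lem:Y_a_priori_mom}, item~\eqref{it:thm_main_2} via Proposition~\ref{prop:X_Y_diff_lp} with $\alpha=\vartheta\chi$, and the reduction of $\sup_t\|P_{\H\setminus I}X_t\|_{\L^{2p}(\P;V)}$ to $\|(-A)^{-\varrho}(\Id_H-P_I)\|_{L(H)}$ and $\|(\Id_H-P_I)O_t\|_{\L^{2p}(\P;V)}$ by splitting $X_t$ into $X_t-O_t\in H_{\rho+\varrho}$ and $O_t$. The one technical point you pass over but the paper treats explicitly is that the setting of Section~\ref{sec:lp_setting} requires the integral equations to hold \emph{pointwise} in $\omega$, whereas Section~\ref{sec:main_result_setting} only assumes them $\P$-a.s.\ for each fixed $t$; the paper bridges this by constructing modifications $\tilde X,\tilde O$ (via the continuity of sample paths and a full-measure set $\tilde\Omega$) and $\YMNp{}$ (defined recursively as in~\eqref{eq:main_proof_0}), applying Proposition~\ref{prop:X_Y_diff_lp} to these, and then transferring the estimates back---a step that is indeed bookkeeping, but not entirely trivial.
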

\begin{proof}[Proof of Theorem~\ref{thm:main}]
Throughout this proof let $ \kappa \in (2,\infty) $ be a real number, 
let $ (\mathfrak{P}_I )_{ I \in \Pow(\H) } \subseteq L(H) $
be the linear operators which satisfy for all $ I \in \Pow(\H) $, $ v \in H $
that
\begin{equation}
  \mathfrak{P}_I(v) 
  =
  \sum_{ h \in I }
  \langle h, v \rangle_H h ,
\end{equation}
let $ \Vm \colon V \times V \rightarrow [0,\infty) $ be the function
which satisfies for all $ v,w \in V $ that
\begin{align}
\label{eq:main_thm_V_1}
\begin{split}
 &\Vm(v,w)
  =
  \begin{cases}
    ( \| v \|_{ H_{\gamma} } + \| w \|_{ H_{\gamma} } )^{\nicefrac{1}{\chi}}
    & \colon (v,w) \in H_{\gamma} \times H_{\gamma} \\
    0 & \colon (v,w) \in (V \times V) \setminus (H_{\gamma} \times H_{\gamma}) ,
  \end{cases}
\end{split}
\end{align}
let $ \YMNp{} \colon [0,T] \times \Omega \rightarrow H_{\gamma} $,
$ M \in \N $, $ I \in \D $,
be the functions which satisfy
for all $ M \in \N $, $ I \in \D $, $ t \in [0,T] $ that
\begin{align}
\label{eq:main_proof_0}
 &\YMNp{t}
  =
  \int_0^t
  P_I \,
  e^{(t-s)A} \,
  \one_{ 
    \{
      \| \YMNp{\fl{s}} \|_{ H_{\gamma} }
      +
      \| \OMN{\fl{s}} \|_{ H_{\gamma} }
      \leq
      (M/T)^{\chi}
    \}
  }^{\Omega} \,
  F( \YMNp{\fl{s}} ) \, ds
  +
  \OMN{t},
\end{align}
let $ \tilde{\Omega} \subseteq \Omega $
be the set given by
\begin{align}
\label{eq:main_thm_Omega_1}
\begin{split}
  \tilde{\Omega}
 &=
  \left\{
    \forall \, 
    t \in [0,T]
    \colon
    X_t
    =
    \smallint\nolimits_0^t
    e^{(t-s)A}
    F(X_s) \, ds
    +
    O_t
  \right\}
\\&=
  \left\{
    \forall \, 
    t \in [0,T] \cap \mathbb{Q}
    \colon
    X_t
    =
    \smallint\nolimits_0^t
    e^{(t-s)A}
    F(X_s) \, ds
    +
    O_t
  \right\},
\end{split}
\end{align}
and let
$
  \tilde{X}
  \colon 
  [0,T] \times \Omega
  \rightarrow
  V
$
and
$
  \tilde{O}
  \colon 
  [0,T] \times \Omega
  \rightarrow
  V
$
be the functions which satisfy
for all $ t \in [0,T] $ that
$ 
  \tilde{X}_t 
  = 
  X_t
  \one_{\tilde{\Omega}}^{\Omega}
$
and
\begin{align}
  \tilde{O}_t
  = 
  O_t
  \one_{\tilde{\Omega}}^{\Omega}
  -
  \left[ 
    \int_0^t
    e^{(t-s)A}
    F(0) \, ds
  \right] 
  \one_{\Omega\backslash\tilde{\Omega}}^{\Omega}
  =
  O_t
  \one_{\tilde{\Omega}}^{\Omega}
  +
  A^{-1}
  ( \Id_H - e^{tA} ) F(0)
  \one_{\Omega\backslash\tilde{\Omega}}^{\Omega} .
\end{align}
Observe that for all 
$ I \in \Pow_0(\H) \subseteq \Pow(\H) $
we have that
\begin{align}
 &\mathfrak{P}_I
  =
  P_I 
  \qquad 
  \text{and}
  \qquad
  \mathfrak{P}_{\H\setminus I}
  =
  \Id_H
  -
  P_I .
\end{align}
Next note that the fact that
$ H_1 \subseteq H_{\gamma} $
continuously and, e.g.,
Andersson et al.~\cite[Lemma~2.2]{AnderssonJentzenKurniawan2015}
(with $ V_0 = H_{\gamma} $,
$ \left\|\cdot\right\|\!_{ V_0 } = \left\|\cdot\right\|\!_{ H_{\gamma} } $,
$ V_1 = H_1 $,
$ \left\|\cdot\right\|\!_{ V_1 } = \left\|\cdot\right\|\!_{ H_{1} } $
in the notation of 
Andersson et al.~\cite[Lemma~2.2]{AnderssonJentzenKurniawan2015})
ensure that
\begin{align} 
 &\B(H_1)
  =
  \left\{
    S \in \Pow(H_1) 
    \colon 
    \big( 
      \exists \,
      B \in \B( H_{\gamma} )
      \colon 
      S
      =
      B \cap H_1
    \big)
  \right\}
  \subseteq 
  \B( H_{\gamma} ) .
\end{align}
The hypothesis that
$ \OMN{} \colon [0,T] \times \Omega \rightarrow H_1 $,
$ M \in \N $, $ I \in \D $,
are stochastic processes therefore demonstrates that
for every $ M \in \N $, $ I \in \D $
we have that
$ \YMNp{} \colon [0,T] \times \Omega \rightarrow H_{\gamma} $
is a stochastic process. Hence,
we obtain for all $ t \in [0,T] $, $ M \in \N $, $ I \in \D $
that
\begin{align}
 \big\{ 
   \omega \in \Omega
   \colon 
   \YMN{t}(\omega)
   =
   \YMNp{t}(\omega)
 \big\}
 \in \F .
\end{align}
The assumption that for 
every $ t \in [0,T] $,
$ M \in \N $, $ I \in \D $
we have that
\begin{align}
  \P\!\left(
    \YMN{t}
    =
    \int_0^t
    P_I \,
    e^{(t-s)A} \,
    \one_{ 
      \{
	\| \YMN{\fl{s}} \|_{ H_{\gamma} }
	+
	\| \OMN{\fl{s}} \|_{ H_{\gamma} }
	\leq
	(M/T)^{\chi}
      \}
    }^{\Omega} \,
    F( \YMN{\fl{s}} ) \, ds
    +
    \OMN{t}
  \right)
  =
  1
\end{align}
therefore implies that for all 
$ t \in [0,T] $,
$ M \in \N $,
$ I \in \D $
we have that
\begin{equation}
  \P\big( 
    \YMN{t} = \YMNp{t} 
  \big)
  =
  1 .
\end{equation}
Combining this
and the assumption that
\begin{align}
  \sup_{ M \in \N }
  \sup_{ I \in \D }
  \sup_{ t \in [0,T] }
  \E\big[
    |
      \|
        \OMN{t}
      \|_{ H_{\gamma} }^2
      +
      \Phi( \OMN{} )
      +
      \phi( \OMN{} )
    |^{p\max\{2\vartheta, 2+\varphi, (1+\nicefrac{\varphi}{2})\nicefrac{\varphi}{2} \}}
  \big]
  <
  \infty
\end{align}
with Lemma~\ref{lem:Y_a_priori_mom}
demonstrates that
\begin{align}
\label{eq:main_proof_1}
\begin{split}
 &\sup_{ M \in \N }
  \sup_{ I \in \D }
  \sup_{ t \in [0,T] }
  \|
    \YMNp{t}
  \|_{ \L^{p\max\{4\vartheta,
                       4+2\varphi,
                       \varphi(1+\nicefrac{\varphi}{2})
                     \}}( \P; H_{\gamma} ) }
\\&=
  \sup_{ M \in \N }
  \sup_{ I \in \D }
  \sup_{ t \in [0,T] }
  \|
    \YMN{t}
  \|_{ \L^{p\max\{4\vartheta,
                       4+2\varphi,
                       \varphi(1+\nicefrac{\varphi}{2})
                     \}}( \P; H_{\gamma} ) }
  <
  \infty .
\end{split}
\end{align}
This establishes~\eqref{it:thm_main_1}.
Moreover, note that
the assumption that
$
  \forall \,
  t \in [0,T]
  \colon
  \P( 
    X_t
    =
    \int_0^t
    e^{(t-s)A}
    F( X_s ) \, ds
$
$
    +
    O_t
  )
  =
  1
$
yields that
$
  \P( \tilde{\Omega} ) = 1
$.
Hence, we obtain
for all $ t \in [0,T] $ that
$
  \P( \tilde{X}_t = X_t )
  \geq
  \P( \tilde{\Omega} ) = 1
$.
Combining this with the assumption that
$
  \sup_{ t \in [0,T] }
  \E\big[ 
    \| X_t \|_V^{p(1+\nicefrac{\varphi}{2})\max\{2,\varphi\}}
  \big]
  <
  \infty
$
ensures that
\begin{align}
\label{eq:main_proof_6}
  \sup_{ t \in [0,T] }
  \|
    \tilde{X}_t
  \|_{ \L^{p(1+\nicefrac{\varphi}{2})\max\{2,\varphi\}}( \P; V ) }
  =
  \sup_{ t \in [0,T] }
  \|
    X_t
  \|_{ \L^{p(1+\nicefrac{\varphi}{2})\max\{2,\varphi\}}( \P; V ) }
  <
  \infty .
\end{align}
Furthermore, observe that 
the fact that
\begin{equation}
  \forall \,
  t \in [0, \infty)
  ,
  r \in [0,1]
  \colon
  \| (-tA)^{r} \, e^{tA} \|_{ L(H) }
  \leq 1
  ,
\end{equation}
the triangle inequality,
and Lemma~\ref{lem:P_I_dense}~\eqref{it:P_I_dense_4}
show for all $ t \in (0,T] $ that
\begin{align}
\begin{split}
 &\int_0^t
  \| 
    (-A)^{(\rho+\varrho)}
    e^{(t-s)A}
    F( \tilde{X}_s )
  \|_{ \L^{p\max\{2,\varphi\}}( \P; H ) } \, ds
\\&\leq
  \int_0^t
  \| 
    (-A)^{(\rho+\varrho)}
    e^{(t-s)A}
  \|_{ L(H) } \,
  \|
    F( \tilde{X}_s )
  \|_{ \L^{p\max\{2,\varphi\}}( \P; H ) } \, ds
\\&\leq
  \int_0^t
  \left(t-s\right)^{-(\rho+\varrho)}
  \left[ 
    \|
      F( \tilde{X}_s ) - F(0)
    \|_{ \L^{p\max\{2,\varphi\}}( \P; H ) }
    +
    \| F(0) \|_H
  \right] ds
\\&\leq
  \int_0^t
  \left(t-s\right)^{-(\rho+\varrho)}
  \left[
    \Big\|
      \sqrt{c} \,
      \|
        \tilde{X}_s
      \|_V
      \big(
        1
        +
        \|
          \tilde{X}_s
        \|_V^{\nicefrac{\varphi}{2}}
      \big)
    \Big\|_{ \L^{p\max\{2,\varphi\}}( \P; \R ) }
    +
    \| F(0) \|_H
  \right] ds 
\\&\leq
  \int_0^t
  \left(t-s\right)^{-(\rho+\varrho)}
  \left[
    \sqrt{c} \,
    \Big(
      \|
	\tilde{X}_s
      \|_{ \L^{p\max\{2,\varphi\}}( \P; V ) }
      +
      \|
	\tilde{X}_s
      \|_{ \L^{p(1+\nicefrac{\varphi}{2})\max\{2,\varphi\}}( \P; V ) }^{\nicefrac{(1+\varphi}{2})}
    \Big)
    +
    \| F(0) \|_H
  \right] ds .
\end{split}
\end{align}
Inequality~\eqref{eq:main_proof_6} 
hence implies for all $ t \in (0,T] $ that
\begin{align}
\begin{split}
 &\int_0^t
  \| 
    (-A)^{(\rho+\varrho)}
    e^{(t-s)A}
    F( \tilde{X}_s )
  \|_{ \L^{p\max\{2,\varphi\}}( \P; H ) } \, ds
\\&\leq
  \int_0^t
  \left(t-s\right)^{-(\rho+\varrho)}
  \left[
    2 \sqrt{c}
    \max\!\left\{ 
      1,
      \|
        \tilde{X}_s
      \|_{ \L^{p(1+\nicefrac{\varphi}{2})\max\{2,\varphi\}}( \P; V ) }^{(1+\nicefrac{\varphi}{2})}
    \right\}
    +
    \| F(0) \|_H
  \right] ds
\\&\leq
  \left[
    2 \sqrt{c}
    \max\!\left\{ 
      1,
      \sup_{ s \in (0,t) }
      \|
        \tilde{X}_s
      \|_{ \L^{p(1+\nicefrac{\varphi}{2})\max\{2,\varphi\}}( \P; V ) }^{(1+\nicefrac{\varphi}{2})}
    \right\}
    +
    \| F(0) \|_H
  \right]
  \left[ 
    \int_0^t
    \left(t-s\right)^{-(\rho+\varrho)} ds
  \right]
\\&\leq
  \left[
    2 \sqrt{c}
    \max\!\left\{ 
      1,
      \sup_{ s \in (0,T) }
      \|
        \tilde{X}_s
      \|_{ \L^{p(1+\nicefrac{\varphi}{2})\max\{2,\varphi\}}( \P; V ) }^{(1+\nicefrac{\varphi}{2})}
    \right\}
    +
    \| F(0) \|_H
  \right]
  \frac{ T^{(1-\rho-\varrho)} }{ (1-\rho-\varrho) }
  <
  \infty .
\end{split}
\end{align} 
This and the fact that
\begin{equation}
  \forall \,
  t \in [0,T]
  \colon
  \tilde{X}_t
  =
  \int_0^t
  e^{(t-s)A}
  F( \tilde{X}_s ) \, ds
  +
  \tilde{O}_t
\end{equation}
prove that
\begin{align}
\label{eq:main_proof_xo}
\begin{split}
 &\sup_{ t \in [0,T] }
  \|
    \tilde{X}_t
    -
    \tilde{O}_t
  \|_{ \L^{p\max\{2,\varphi\}}( \P; H_{(\rho+\varrho)} ) }
\\ &
  \leq
  \sup_{ t \in [0,T] }
  \left(
    \int_0^t
    \| 
      e^{(t-s)A}
      F( \tilde{X}_s )
    \|_{ \L^{p\max\{2,\varphi\}}( \P; H_{(\rho+\varrho)} ) } \, ds
  \right)
  <
  \infty . 
\end{split}
\end{align}
In addition, observe that the triangle inequality
assures for all $ I \in \D $ that
\begin{align}
\label{eq:main_proof_10x}
\begin{split}
 &\sup_{ t \in [0,T] }
  \|
    \mathfrak{P}_{ \H \setminus I } \tilde{X}_t 
  \|_{ \L^{2p}(\P;V) }
  \leq
  \sup_{ t \in [0,T] }
  \left[ 
    \|
      \mathfrak{P}_{ \H \setminus I } (\tilde{X}_t - \tilde{O}_t)
    \|_{ \L^{2p}(\P;V) }
    +
    \|
      \mathfrak{P}_{ \H \setminus I } \tilde{O}_t 
    \|_{ \L^{2p}(\P;V) }
  \right]
\\&\leq
  \left[ 
    \sup_{ v \in H_{\rho} \setminus \{0\} }
    \frac{ \| v \|_V }{ \| v \|_{ H_{\rho} } }
  \right]
  \left[ 
    \sup_{ t \in [0,T] }  
    \|
      \mathfrak{P}_{ \H \setminus I } (\tilde{X}_t - \tilde{O}_t)
    \|_{ \L^{2p}(\P;H_{\rho}) }
  \right]
  +
  \sup_{ t \in [0,T] }  
  \|
    \mathfrak{P}_{ \H \setminus I } \tilde{O}_t 
  \|_{ \L^{2p}(\P;V) }
\\&=
  \left[ 
    \sup_{ v \in H_{\rho} \setminus \{0\} }
    \frac{ \| v \|_V }{ \| v \|_{ H_{\rho} } }
  \right]
  \left[ 
    \sup_{ t \in [0,T] }  
    \|
      (-A)^{-\varrho} \,
      \mathfrak{P}_{ \H \setminus I }
      (-A)^{(\rho + \varrho)}
      (\tilde{X}_t - \tilde{O}_t)
    \|_{ \L^{2p}(\P;H) } 
  \right]
\\&\quad+
  \sup_{ t \in [0,T] }  
  \|
    \mathfrak{P}_{ \H \setminus I } \tilde{O}_t 
  \|_{ \L^{2p}(\P;V) } .
\end{split}
\end{align}
The fact that 
\begin{equation}
  \forall \,
  I \in \Pow(\H)
  \colon
  \| \mathfrak{P}_{ \H \setminus I } \|_{ L(H) }
  \leq 
  1
\end{equation}
hence guarantees 
for all $ I \in \D $ that
\begin{align}
\label{eq:main_proof_10}
\begin{split}
 &\sup_{ t \in [0,T] }
  \|
    \mathfrak{P}_{ \H \setminus I } \tilde{X}_t 
  \|_{ \L^{2p}(\P;V) }
\\&\leq
  \left[ 
    \sup_{ v \in H_{\rho} \setminus \{0\} }
    \frac{ \| v \|_V }{ \| v \|_{ H_{\rho} } }
  \right]
  \left[ 
    \sup_{ t \in [0,T] }  
    \|
      \mathfrak{P}_{ \H \setminus I } (\tilde{X}_t - \tilde{O}_t)
    \|_{ \L^{2p}(\P;H_{(\rho+\varrho)}) } \,
  \right]
  \|
    (-A)^{-\varrho} \, \mathfrak{P}_{ \H \setminus I }
  \|_{ L(H) }
\\&\quad+
  \sup_{ t \in [0,T] }  
  \|
    \mathfrak{P}_{ \H \setminus I } \tilde{O}_t 
  \|_{ \L^{2p}(\P;V) } 
\\&\leq
  \left[ 
    \sup_{ v \in H_{\rho} \setminus \{0\} }
    \frac{ \| v \|_V }{ \| v \|_{ H_{\rho} } }
  \right]
  \left[ 
    \sup_{ t \in [0,T] }  
    \|
      \tilde{X}_t - \tilde{O}_t
    \|_{ \L^{2p}(\P;H_{(\rho+\varrho)}) } \,
  \right]
  \|
    (-A)^{-\varrho} \, \mathfrak{P}_{ \H \setminus I }
  \|_{ L(H) }
\\&\quad+
  \sup_{ t \in [0,T] }  
  \|
    \mathfrak{P}_{ \H \setminus I } \tilde{O}_t 
  \|_{ \L^{2p}(\P;V) } .
\end{split}
\end{align}
Furthermore, observe that
the triangle inequality,
the fact that 
\begin{equation}
  \forall \,
  I \in \Pow_0(\H)
  \colon
  \| P_I \|_{ L(H) }
  \leq 
  1
  ,
\end{equation}
the fact that
$
  H_{(\rho+\varrho)}
  \subseteq 
  V
$
continuously,
the fact that
\begin{equation}
  \forall \, t \in [0,T] 
  \colon
  \P( O_t = \tilde{O}_t )
  \geq 
  \P( \tilde{\Omega} )
  =
  1
  ,
\end{equation}
the assumption that
\begin{equation}
  \sup_{ t \in [0,T] }
  \sup_{ I \in \D }
  \E\big[
    \|
      P_I O_t
    \|_V^{p\varphi}
  \big]
  <
  \infty
  ,
\end{equation}
and~\eqref{eq:main_proof_xo}
imply that
\begin{align}
\label{eq:main_proof_xo_2}
\begin{split}
 &\sup_{ t \in [0,T] }
  \sup_{ I \in \D }
  \|
    P_I \tilde{X}_t
  \|_{ \L^{p\varphi}(\P; V) }
\\&\leq 
  \sup_{ t \in [0,T] }
  \sup_{ I \in \D }
  \|
    P_I \tilde{X}_t
    -
    P_I \tilde{O}_t
  \|_{ \L^{p\varphi}(\P; V) }
  +
  \sup_{ t \in [0,T] }
  \sup_{ I \in \D }
  \|
    P_I \tilde{O}_t
  \|_{ \L^{p\varphi}(\P; V) }
\\&\leq
  \left[ 
    \sup_{ v \in H_{(\rho+\varrho)} \setminus \{0\} }
    \frac{ \| v \|_V }{ \| v \|_{ H_{(\rho+\varrho)} } } 
  \right] 
  \left[ 
    \sup_{ t \in [0,T] }
    \sup_{ I \in \D }
    \|
      P_I 
      (
        \tilde{X}_t
        - 
        \tilde{O}_t
      )
    \|_{ \L^{p\varphi}(\P; H_{(\rho+\varrho)}) }
  \right]
  +
  \sup_{ t \in [0,T] }
  \sup_{ I \in \D }
  \|
    P_I \tilde{O}_t
  \|_{ \L^{p\varphi}(\P; V) }
\\&\leq
  \left[ 
    \sup_{ v \in H_{(\rho+\varrho)} \setminus \{0\} }
    \frac{ \| v \|_V }{ \| v \|_{ H_{(\rho+\varrho)} } } 
  \right] 
  \left[ 
    \sup_{ t \in [0,T] }
    \|
      \tilde{X}_t
      - 
      \tilde{O}_t
    \|_{ \L^{p\varphi}(\P; H_{(\rho+\varrho)}) }
  \right]
  +
  \sup_{ t \in [0,T] }
  \sup_{ I \in \D }
  \|
    P_I O_t
  \|_{ \L^{p\varphi}(\P; V) }
  <
  \infty .
\end{split}
\end{align}
In the next step we note that 
the hypothesis that $ H_{\rho} \subseteq V $ continuously
and the fact that
\begin{equation}
  \forall \,
  t \in [0,\infty) 
  ,
  r \in [0,1] 
  \colon
  \|
    (-tA)^r
    e^{tA}
  \|_{ L(H) }
  \leq 
  1
\end{equation}
ensure that
\begin{align}
\label{eq:main_proof_2}
\begin{split}
  \sup_{ t \in (0,T) } 
  t^{\rho}
  \| e^{tA} \|_{ L(H,V) }
 &=
  \sup_{ t \in (0,T) }
  \sup_{ v \in H\backslash\{0\} }
  \left[
    \frac{ 
      t^{\rho}
      \| e^{tA} v \|_V
    }{ \| v \|_H }
  \right] 
\\&\leq
  \left[ 
    \sup_{ v \in H_{\rho}\backslash\{0\} }
    \frac{ 
      \| v \|_V
    }{
      \| v \|_{ H_{\rho} }
    }
  \right]
  \left[
    \sup_{ t \in (0,T) }
    \sup_{ v \in H\backslash\{0\} }
    \frac{ 
      \| (-tA)^{\rho} e^{tA} v \|_H
    }{ \| v \|_H }
  \right]
\\&=
  \left[ 
    \sup_{ v \in H_{\rho}\backslash\{0\} }
    \frac{ 
      \| v \|_V
    }{
      \| v \|_{ H_{\rho} }
    }
  \right]
  \left[
    \sup_{ t \in (0,T) }
    \|
      (-tA)^{\rho}
      e^{tA}
    \|_{ L(H) }
  \right]
\\ & \leq
  \left[ 
    \sup_{ v \in H_{\rho}\backslash\{0\} }
    \frac{ 
      \| v \|_V
    }{
      \| v \|_{ H_{\rho} }
    }
  \right]
  <
  \infty .
\end{split}
\end{align}
Moreover, observe that, e.g., 
Lemma~\ref{lem:measurable}
(with $ V = H_{\gamma} \times H_{\gamma} $,
$ W = V \times V $, $ S = [0,\infty) $,
$ \mathcal{S} = \B( [0,\infty) ) $,
$ s = 0 $, 
$ 
  \psi 
  = 
  H_{\gamma} \times H_{\gamma}
  \ni (v,w)
  \mapsto
  ( 
    \| v \|_{ H_{\gamma} }
    +
    \| w \|_{ H_{\gamma} }
  )^{ \nicefrac{1}{\chi} }
  \in 
  [0,\infty)
$,
$ \Psi = \Vm $
in the notation of Lemma~\ref{lem:measurable})
establishes that
\begin{align}
 &\Vm \in \M\big( \B( V \times V ) , \B( [0,\infty) ) \big).
\end{align}
The fact that
$
  \forall \,
  t \in [0,T]
  \colon
  \tilde{X}_t
  =
  \int_0^t
  e^{(t-s)A}
  F( \tilde{X}_s ) \, ds
  +
  \tilde{O}_t
$,
\eqref{eq:main_proof_0},
\eqref{eq:main_proof_2},
Lemma~\ref{lem:P_I_dense}~\eqref{it:P_I_dense_3},
Lemma~\ref{lem:P_I_dense}~\eqref{it:P_I_dense_4},
and, e.g., Andersson et al.~\cite[Lemma~2.2]{AnderssonJentzenKurniawan2015}
hence allow us to
apply Proposition~\ref{prop:X_Y_diff_lp}
(with 
$ H = H $, $ \H = \H $, $ T = T $,
$ c = c $, $ \varphi = \varphi $,
$ C = c $, $ \D = \D $, $ \mu = \mu $,
$ A = A $, $ V = V $, $ \Vm = \Vm $,
$ F = F $, 
$ 
  (P_J)_{ J \in \Pow(\H) } \subseteq L(H) 
  = 
  (\mathfrak{P}_J)_{ J \in \Pow(\H) } \subseteq L(H) 
$, 
$ (\Omega, \F, \P ) = ( \Omega, \F, \P ) $,
$ X_t(\omega) = \tilde{X}_t(\omega) $, 
$ O_t(\omega) = \tilde{O}_t(\omega) $,
$
  \XXMN{t}(\omega) 
  =
  \int_0^t 
  P_I e^{(t-s)A} F( \YMNp{\fl{s}}(\omega) ) \, ds
  +
  P_I ( \tilde{O}_t(\omega) )
$,
$ \YMN{t}(\omega) = \YMNp{t}(\omega) $,
$ \OMN{t}(\omega) = \OMN{t}(\omega) $,
$ \alpha = \vartheta\chi $,
$ \rho = \rho $, $ \varrho = \varrho $, 
$ \kappa = \kappa $, 
$ p = p $, $ M = M $, $ I = I $
for $ M \in \N $, $ I \in \D $, $ t \in [0,T] $,
$ \omega \in \Omega $
in the notation of Proposition~\ref{prop:X_Y_diff_lp})
to obtain that for all $ M \in \N $, $ I \in \D $ we have that
\begin{align}
\label{eq:main_proof__6}
\begin{split}
 &\sup_{ t \in [0,T] }
  \|
    \tilde{X}_t - \YMNp{t}
  \|_{ \L^p( \P; H ) }
\\&\leq
  \frac{
    4^{(2+\varphi)}
    \max\{1, T^{(\nicefrac{3}{2}+\vartheta\chi-\rho + \nicefrac{\varphi}{2} - \nicefrac{\rho\varphi}{2}) } \}
    \max\{ 1, c^{(1+\nicefrac{\varphi}{4})} \}
    \sqrt{ e^{\kappa c T } }
  }{
    \min\{ 1, \sqrt{ c \left( \kappa - 2 \right) } \}
    \left( 1 - \rho - \varrho \right)^{(1+\nicefrac{\varphi}{2})}
  }
  \Bigg[
    \sup_{ t \in [0,T] }
    \|
      \mathfrak{P}_{ \H \setminus I } \tilde{X}_t 
    \|_{ \L^{2p}(\P;V) }
    +
    M^{-\min\{\vartheta\chi,\varrho\} }
\\&+
    \sup_{ t \in (0,T) }
    \|
      P_I ( \tilde{O}_{t} - \tilde{O}_{\fl{t}} ) 
    \|_{ \L^{2p}( \P; V ) }
    +
    \sup_{ t \in [0,T] }
    \|
      P_I \tilde{O}_t
      -
      \OMN{t}
    \|_{ \L^{2p}(\P; V) }
  \Bigg] 
  \max\!\left\{
    1,
    \sup_{ v \in V \setminus \{0\} }
    \frac{ \| v \|_H }{ \| v \|_V }
  \right\}
\\&\cdot 
  \Bigg[
    1
    +
    \sup_{ s \in (0,T) }
    \|
      \tilde{X}_s
    \|_{ \L^{p\varphi}(\P;V) }^{\nicefrac{\varphi}{2}}
    +
    \sup_{ s \in (0,T) }
    \|
      P_I \tilde{X}_s
    \|_{ \L^{p\varphi}(\P;V) }^{\nicefrac{\varphi}{2}}
    +
    \sup_{ s \in [0,T) }
    \|
      \YMNp{s}
    \|_{ \L^{p\varphi}(\P;V) }^{\nicefrac{\varphi}{2}}
    +
    \sup_{ s \in (0,T) }
    \|
      P_I \tilde{O}_s
    \|_{ \L^{p\varphi}( \P; V ) }^{\nicefrac{\varphi}{2}}
  \Bigg]
\\&\cdot
  \left[ 
    1
    +
    \sup_{ s \in [0,T) }
    \|
      \Vm( \YMNp{s}, \OMN{s} )
    \|_{ \L^{4p\vartheta\chi}( \P; \R) }^{\vartheta\chi}
  \right]
  \max\!\left\{ 
    1,
    \sup_{ s \in (0,T) }
    \left[ 
      s^{\rho}
      \|
	e^{sA}
      \|_{ L(H,V) }
    \right]^{(1+\nicefrac{\varphi}{2})}
  \right\}
\\&\cdot
  \left[ 
    \max\!\left\{ 
      1,
      \sup_{ s \in [0,T) }
      \|
	\YMNp{s}
      \|_{ \L^{p(1+\nicefrac{\varphi}{2})\max\{4,\varphi\}}( \P; V ) }^{[(1+\nicefrac{\varphi}{2})^2]}
    \right\} 
    +
    \| F(0) \|_H^{ (1+\nicefrac{\varphi}{2}) } 
  \right] .
\end{split}
\end{align}
The fact that 
$ 
  \forall \, t \in [0,T] 
  \colon
  \P( X_t = \tilde{X}_t )
  \geq 
  \P( \tilde{\Omega} )
  =
  1
$,
the fact that
$ 
  \forall \, t \in [0,T] 
  \colon
  \P( O_t = \tilde{O}_t )
  \geq 
  \P( \tilde{\Omega} )
  =
  1
$,
and~\eqref{eq:main_thm_V_1}
hence prove that for all 
$ M \in \N $, $ I \in \D $
we have that
\begin{align}
\label{eq:main_proof__6_B}
\begin{split}
 &\sup_{ t \in [0,T] }
  \|
    X_t - \YMNp{t}
  \|_{ \L^p( \P; H ) }
  =
  \sup_{ t \in [0,T] }
  \|
    \tilde{X}_t - \YMNp{t}
  \|_{ \L^p( \P; H ) }
\\&\leq
  \frac{
    4^{(2+\varphi)}
    \max\{1, T^{(\nicefrac{3}{2}+\vartheta\chi-\rho + \nicefrac{\varphi}{2} - \nicefrac{\rho\varphi}{2}) } \}
    \max\{ 1, c^{(1+\nicefrac{\varphi}{4})} \}
    \sqrt{ e^{\kappa c T } }
  }{
    \min\{ 1, \sqrt{ c \left( \kappa - 2 \right) } \}
    \left( 1 - \rho - \varrho \right)^{(1+\nicefrac{\varphi}{2})}
  }
  \Bigg[
    \sup_{ t \in [0,T] }
    \|
      \mathfrak{P}_{ \H \setminus I } \tilde{X}_t 
    \|_{ \L^{2p}(\P;V) }
    +
    M^{-\min\{\vartheta\chi,\varrho\} }
\\&+
    \sup_{ t \in (0,T) }
    \|
      P_I ( O_{t} - O_{\fl{t}} ) 
    \|_{ \L^{2p}( \P; V ) }
    +
    \sup_{ t \in [0,T] }
    \|
      P_I O_t
      -
      \OMN{t}
    \|_{ \L^{2p}(\P; V) }
  \Bigg] 
  \max\!\left\{
    1,
    \sup_{ v \in V \setminus \{0\} }
    \frac{ \| v \|_H }{ \| v \|_V }
  \right\}
\\&\cdot 
  \Bigg[
    1
    +
    \sup_{ s \in (0,T) }
    \|
      \tilde{X}_s
    \|_{ \L^{p\varphi}(\P;V) }^{\nicefrac{\varphi}{2}}
    +
    \sup_{ s \in (0,T) }
    \|
      P_I \tilde{X}_s
    \|_{ \L^{p\varphi}(\P;V) }^{\nicefrac{\varphi}{2}}
    +
    \sup_{ s \in [0,T) }
    \|
      \YMNp{s}
    \|_{ \L^{p\varphi}(\P;V) }^{\nicefrac{\varphi}{2}}
    +
    \sup_{ s \in (0,T) }
    \|
      P_I O_s
    \|_{ \L^{p\varphi}( \P; V ) }^{\nicefrac{\varphi}{2}}
  \Bigg]
\\&\cdot
  \left[ 
    1
    +
    \sup_{ s \in [0,T) }
    \left\{
      \|
        \YMNp{s}
      \|_{ \L^{4p\vartheta}(\P; H_{\gamma}) }
      +
      \|
        \OMN{s}
      \|_{ \L^{4p\vartheta}(\P; H_{\gamma}) }
    \right\}^{\vartheta}
  \right]
  \max\!\left\{ 
    1,
    \sup_{ s \in (0,T) }
    \left[ 
      s^{\rho}
      \|
	e^{sA}
      \|_{ L(H,V) }
    \right]^{(1+\nicefrac{\varphi}{2})}
  \right\}
\\&\cdot
  \left[ 
    \max\!\left\{ 
      1,
      \sup_{ s \in [0,T) }
      \|
	\YMNp{s}
      \|_{ \L^{p(1+\nicefrac{\varphi}{2})\max\{4,\varphi\}}( \P; V ) }^{[(1+\nicefrac{\varphi}{2})^2]}
    \right\} 
    +
    \| F(0) \|_H^{ (1+\nicefrac{\varphi}{2}) } 
  \right] .
\end{split}
\end{align}
The fact that 
\begin{equation}
  \forall \,
  M \in \N
  ,
  I \in \D
  ,
  t \in [0,T]
  \colon
  \P( \YMN{t} = \YMNp{t} )
  =
  1
  ,
\end{equation}
the fact that 
\begin{equation}
  \forall \,
  t \in [0,T]
  \colon
  \P( O_t = \tilde{O}_t )
  \geq 
  \P( \tilde{\Omega} )
  =
  1
  ,
\end{equation}
the fact that 
$ 
  \forall \,
  I \in \D
  \colon
  \| P_I \|_{ L(H) }
  \leq 1
$,
\eqref{eq:main_proof_10},
and~\eqref{eq:main_proof_2} therefore
assure that
for all $ M \in \N $, $ I \in \D $
we have that
\begin{align}
\label{eq:main_proof_7}
\begin{split}
 &\sup_{ t \in [0,T] }
  \|
    X_t - \YMN{t}
  \|_{ \L^p( \P; H ) }
  =
  \sup_{ t \in [0,T] }
  \|
    X_t - \YMNp{t}
  \|_{ \L^p( \P; H ) }
\\&\leq
  \frac{
    4^{(2+\varphi)}
    \max\{1, T^{(\nicefrac{3}{2}+\vartheta\chi-\rho + \nicefrac{\varphi}{2} - \nicefrac{\rho\varphi}{2}) } \}
    \max\{ 1, c^{(1+\nicefrac{\varphi}{4})} \}
    \sqrt{ e^{\kappa c T } }
  }{
    \min\{ 1, \sqrt{ c \left( \kappa - 2 \right) } \}
    \left( 1 - \rho - \varrho \right)^{(1+\nicefrac{\varphi}{2})}
  }
  \Bigg[
    \sup_{ t \in [0,T] }
    \|
       \mathfrak{P}_{ \H \setminus I } O_t
    \|_{ \L^{2p}(\P;V) }
\\&+
    \sup_{ t \in [0,T] }
    \|
      \tilde{X}_t - \tilde{O}_t
    \|_{ \L^{2p}(\P;H_{(\rho+\varrho)}) } \,
    \|
      (-A)^{-\varrho} \,
      \mathfrak{P}_{ \H \setminus I }
    \|_{ L(H) }
    +
    M^{-\min\{\vartheta\chi,\varrho\} }
\\&+
    \sup_{ t \in (0,T) }
    \|
      P_I ( O_{t} - O_{\fl{t}} ) 
    \|_{ \L^{2p}( \P; V ) }
    +
    \sup_{ t \in [0,T] }
    \|
      P_I O_t
      -
      \OMN{t}
    \|_{ \L^{2p}(\P; V) }
  \Bigg] 
  \max\!\left\{
    1,
    \sup_{ v \in V \setminus \{0\} }
    \frac{ \| v \|_H }{ \| v \|_V }
  \right\}
\\&\cdot
  \Bigg[
    1
    +
    \sup_{ s \in (0,T) }
    \|
      \tilde{X}_s
    \|_{ \L^{p\varphi}(\P;V) }^{\nicefrac{\varphi}{2}}
    +
    \sup_{ s \in (0,T) }
    \|
      P_I \tilde{X}_s
    \|_{ \L^{p\varphi}(\P;V) }^{\nicefrac{\varphi}{2}}
    +
    \sup_{ s \in [0,T) }
    \|
      \YMNp{s}
    \|_{ \L^{p\varphi}(\P;H_{\gamma}) }^{\nicefrac{\varphi}{2}}
    +
    \sup_{ s \in (0,T) }
    \|
      P_I O_s
    \|_{ \L^{p\varphi}( \P; V ) }^{\nicefrac{\varphi}{2}}
  \Bigg]
\\&\cdot
  \left[ 
    1
    +
    \sup_{ s \in [0,T) }
    \left\{
      \|
        \YMNp{s}
      \|_{ \L^{4p\vartheta}(\P; H_{\gamma}) }
      +
      \|
        \OMN{s}
      \|_{ \L^{4p\vartheta}(\P; H_{\gamma}) }
    \right\}^{\vartheta}
  \right]
  \max\!\left\{
    1,
    \sup_{ v \in H_{\gamma} \setminus \{0\} }
    \frac{ 
      \| v \|_V^{ [(1+\nicefrac{\varphi}{2})^2 + \nicefrac{\varphi}{2} ] } 
    }{ 
      \| v \|_{ H_{\gamma} }^{ [(1+\nicefrac{\varphi}{2})^2 + \nicefrac{\varphi}{2}] } 
    }
  \right\}
\\&\cdot
  \left[ 
    \max\!\left\{ 
      1,
      \sup_{ s \in [0,T) }
      \|
	\YMNp{s}
      \|_{ 
        \L^{p(1+\nicefrac{\varphi}{2})\max\{4,\varphi\}}( \P; H_{\gamma} ) 
      }^{[(1+\nicefrac{\varphi}{2})^2]}
    \right\} 
    +
    \| F(0) \|_H^{ (1+\nicefrac{\varphi}{2}) } 
  \right] 
  \max\!\left\{
    1,
    \sup_{ v \in H_{\rho} \setminus \{0\} }
    \frac{ 
      \| v \|_V^{ (2+\nicefrac{\varphi}{2}) } 
    }{ 
      \| v \|_{ H_{\rho} }^{ (2+\nicefrac{\varphi}{2}) } 
    }
  \right\} .
\end{split}
\end{align}
The fact that $ H_{\gamma} \subseteq H_{\rho} \subseteq V \subseteq H $
continuously,
\eqref{eq:main_proof_1},
\eqref{eq:main_proof_6},
\eqref{eq:main_proof_xo},
\eqref{eq:main_proof_xo_2},
and the fact that
\begin{equation}
  \sup_{ M \in \N }
  \sup_{ I \in \D }
  \sup_{ t \in [0,T] }
  \big[ 
    \|
      \OMN{t}
    \|_{ \L^{4p\vartheta}(\P; H_{\gamma}) }
    +
    \| P_I O_t \|_{ \L^{p\varphi}( \P; V ) }
  \big]  
  <
  \infty
\end{equation}
therefore establish~\eqref{it:thm_main_2}.
The proof
of Theorem~\ref{thm:main}
is thus completed. 
\end{proof}

\begin{corollary}
\label{cor:main}
Assume the setting in Section~\ref{sec:main_result_setting},
let $ \theta \in [0,\infty) $, $ \vartheta \in (0,\infty) $,
$ p \in [ \max\{ 2, \nicefrac{1}{\varphi} \}, \infty ) $,
$ \varrho \in [0,1-\rho) $,
and assume that
\begin{align}
\label{eq:cor_main_assumption}
\begin{split}
 &\sup_{ t \in [0,T] }
  \E\big[
    \|
      X_t
    \|_V^{p(1+\nicefrac{\varphi}{2})\max\{2,\varphi\}}
  \big]
\\&+
  \sup_{ M \in \N }
  \sup_{ I \in \D }
  \sup_{ t \in [0,T] }
  \left(
    M^{\theta} \!
    \left\{
      \left(
        \E\big[
          \|
          P_I( O_{t} - O_{\fl{t}} )
          \|_V^{2p}
        \big]
      \right)^{\!\frac{1}{2p}}
      +
      \left(
        \E\big[
          \|
            P_I O_{t} 
            -
            \OMN{t}
          \|_V^{p\max\{2,\varphi\}}  
        \big]
      \right)^{\!\frac{1}{p\max\{2,\varphi\}}}
    \right\}
  \right)
\\&+
  \sup_{ M \in \N }
  \sup_{ I \in \D }
  \sup_{ t \in [0,T] }
  \E\!\left[ 
    \big|
      \|
        \OMN{t}
      \|_{ H_{\gamma} }^2
      +
      \Phi( \OMN{} )
      +
      \phi( \OMN{} )
    \big|^{p\max\{2\vartheta, 2+\varphi, (1+\nicefrac{\varphi}{2})\nicefrac{\varphi}{2} \} }
  \right]
  <
  \infty .
\end{split}
\end{align}
Then we have
\begin{enumerate}[(i)]
 \item\label{it:cor_main_1} that
 $
  \sup_{ M \in \N }
  \sup_{ I \in \D }
  \sup_{ t \in [0,T] }
  \E\big[
    \|
      \YMN{t}
    \|_{H_{\gamma}}^{p\max\{4\vartheta, 4+2\varphi, \varphi(1+\nicefrac{\varphi}{2}) \}}
  \big]
  <
  \infty
$
and
\item\label{it:cor_main_2}
that there exists
a real number $ C \in (0,\infty) $ such that
for all $ M \in \N $, $ I \in \D $
it holds that
\begin{align}
\label{eq:cor_main_assert_2}
\begin{split}
 &\sup_{ t \in [0,T] }
  \left(
    \E\big[
      \|
        X_t - \YMN{t}
      \|_H^p
    \big]
  \right)^{\!\nicefrac{1}{p}}
\\&\leq
  C
  \left[
    M^{-\min\{\vartheta\chi,\varrho,\theta\}}
    +
    \|
      (-A)^{-\varrho}
      (\Id_H - P_I)
    \|_{ L(H) }
    +
    \sup_{ t \in [0,T] }
    \|
      (\Id_H - P_I)
      O_t
    \|_{ \L^{2p}( \P; V ) }
  \right] .
\end{split}
\end{align}
\end{enumerate}
\end{corollary}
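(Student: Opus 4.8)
The plan is to obtain Corollary~\ref{cor:main} as a direct consequence of Theorem~\ref{thm:main}. First I would verify that hypothesis~\eqref{eq:cor_main_assumption} implies hypothesis~\eqref{eq:main_assumption}, so that Theorem~\ref{thm:main} becomes applicable, and then I would simplify the right-hand side of the error estimate~\eqref{eq:main_assert_2} to the form claimed in~\eqref{eq:cor_main_assert_2}.

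For the first step, the moment bound on $\|X_t\|_V$ and the high-moment bound on $\|\OMN{t}\|_{H_\gamma}^2+\Phi(\OMN{})+\phi(\OMN{})$ required in~\eqref{eq:main_assumption} are contained verbatim in~\eqref{eq:cor_main_assumption}. The only remaining quantity in~\eqref{eq:main_assumption}, namely $\sup_{t\in[0,T]}\sup_{I\in\D}\E[\|P_I O_t\|_V^{p\varphi}]$, I would control by the triangle inequality $\|P_I O_t\|_V\le\|P_I O_t-\OMN{t}\|_V+\|\OMN{t}\|_V$, the continuity of the embedding $H_\gamma\subseteq V$ (which, together with the third summand of~\eqref{eq:cor_main_assumption}, bounds $\sup_{M}\sup_{I}\sup_{t}\|\OMN{t}\|_{\L^{p\varphi}(\P;V)}$), and the elementary facts $M^{\theta}\ge1$ and $p\max\{2,\varphi\}\ge p\varphi$ (so that the second summand of~\eqref{eq:cor_main_assumption} bounds $\sup_{M}\sup_{I}\sup_{t}\|P_I O_t-\OMN{t}\|_{\L^{p\varphi}(\P;V)}$). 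Consequently Theorem~\ref{thm:main} applies, and its assertion~\eqref{it:thm_main_1} coincides with~\eqref{it:cor_main_1}, which is thereby established.

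For the second step I would use $p\max\{2,\varphi\}\ge2p$ together with monotonicity of the $\L^q(\P;\cdot)$-norms in $q$ to extract from the second summand of~\eqref{eq:cor_main_assumption} a constant $\tilde C\in(0,\infty)$, independent of $M$ and $I$, such that $\sup_{t\in[0,T]}\|P_I O_t-\OMN{t}\|_{\L^{2p}(\P;V)}\le\tilde C\,M^{-\theta}$ and $\sup_{t\in(0,T)}\|P_I(O_t-O_{\fl{t}})\|_{\L^{2p}(\P;V)}\le\tilde C\,M^{-\theta}$ for all $M\in\N$, $I\in\D$. Inserting these two bounds into~\eqref{eq:main_assert_2}, estimating the $\L^{2p}(\P;V)$-norm of the sum of the three error terms appearing there by the sum of their individual $\L^{2p}(\P;V)$-norms (subadditivity of $x\mapsto x^{\nicefrac{1}{2p}}$), and using $M^{-\min\{\vartheta\chi,\varrho\}}+M^{-\theta}\le2\,M^{-\min\{\vartheta\chi,\varrho,\theta\}}$ together with the continuous embedding $V\subseteq H$, would then yield~\eqref{eq:cor_main_assert_2} with a constant that is again uniform in $M\in\N$ and $I\in\D$.

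I do not expect a genuine obstacle; the argument is essentially bookkeeping built on top of Theorem~\ref{thm:main}. The only points requiring a little care are the mismatch between the integrability exponent $2p$ appearing in~\eqref{eq:main_assert_2} and the exponent $p\max\{2,\varphi\}$ appearing in~\eqref{eq:cor_main_assumption} (resolved by $p\max\{2,\varphi\}\ge2p$), and ensuring that the final constant is uniform in $M$ and $I$, which is automatic because all suprema in~\eqref{eq:cor_main_assumption} range over $M\in\N$, $I\in\D$, and $t\in[0,T]$ simultaneously.
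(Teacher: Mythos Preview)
Your proposal is correct and follows essentially the same route as the paper: verify the hypothesis of Theorem~\ref{thm:main} by bounding $\|P_I O_t\|_{\L^{p\varphi}(\P;V)}$ via the triangle inequality and the embedding $H_\gamma\subseteq V$, then simplify the right-hand side of~\eqref{eq:main_assert_2} by splitting the combined $(\E[\cdot])^{1/2p}$-term via subadditivity, inserting the $M^{-\theta}$-bounds, and merging $M^{-\min\{\vartheta\chi,\varrho\}}+M^{-\theta}$ into a single $M^{-\min\{\vartheta\chi,\varrho,\theta\}}$. The only superfluous remark is the reference to the embedding $V\subseteq H$ in your second step; both~\eqref{eq:main_assert_2} and~\eqref{eq:cor_main_assert_2} are already stated in the $H$-norm, so no embedding is needed there.
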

\begin{proof}[Proof of Corollary~\ref{cor:main}]
Note that the triangle inequality, \eqref{eq:cor_main_assumption},
and the fact that $ H_{\gamma} \subseteq V $ continuously
yield that
\begin{align}
\label{eq:cor_main_proof_1}
\begin{split}
  \sup_{ I \in \D }
  \sup_{ s \in (0,T) }
  \| P_I O_s \|_{ \L^{p\varphi}(\P; V) }
 &\leq
  \sup_{ M \in \N }
  \sup_{ I \in \D }
  \sup_{ s \in (0,T) }
  \left[ 
    \| P_I O_s - \OMN{s} \|_{ \L^{p\varphi}(\P; V) }
    +
    \| \OMN{s} \|_{ \L^{p\varphi}(\P; V) }
  \right]
\\&\leq
  \sup_{ M \in \N }
  \sup_{ I \in \D }
  \sup_{ s \in (0,T) }
  \| P_I O_s - \OMN{s} \|_{ \L^{p\varphi}(\P; V) }
\\&\quad+
  \left[ 
    \sup_{ v \in H_{\gamma} \setminus \{0\} }
    \frac{ \| v \|_V }{ \| v \|_{H_{\gamma}} }
  \right] 
  \left[ 
    \sup_{ M \in \N }
    \sup_{ I \in \D }
    \sup_{ s \in (0,T) }
    \| \OMN{s} \|_{ \L^{p\varphi}(\P; H_{\gamma}) }
  \right]
  <
  \infty .
\end{split}
\end{align}
This together with~\eqref{eq:cor_main_assumption}
allows us to apply Theorem~\ref{thm:main}
to obtain that~\eqref{it:cor_main_1} holds
and that there exists
a real number $ K \in (0,\infty) $ such that
for all $ M \in \N $, $ I \in \D $
we have that
\begin{align}
\label{eq:cor_main_proof_3}
\begin{split}
 &\sup_{ t \in [0,T] }
  \|
    X_t
    -
    \YMN{t}
  \|_{ \L^p( \P; H) }
  \leq
  K
  \bigg(
    M^{-\min\{\vartheta\chi,\varrho\}}
    +
    \|
      (-A)^{-\varrho}
      (\Id_H - P_I)
    \|_{ L(H) }
\\&\quad+
    \sup_{ t \in [0,T] }
    \left[ 
      \|
	(\Id_H - P_I)
	O_t
      \|_{ \L^{2p}( \P; V ) }
      +
      \|
	P_I O_{t} 
	-
	\OMN{t}
      \|_{ \L^{2p}( \P; V ) }
      +
      \|
	P_I( 
	  O_{t}
	  -
	  O_{\fl{t}} 
	)
      \|_{ \L^{2p}( \P; V ) }
    \right]
  \bigg) .
\end{split}
\end{align}
Hence, we obtain that
for all $ M \in \N $, $ I \in \D $ 
we have that
\begin{align}
\label{eq:cor_main_proof_4}
\begin{split}
 &\sup_{ t \in [0,T] }
  \|
    X_t
    -
    \YMN{t}
  \|_{ \L^p( \P; H) } 
\\&\leq
  K
  \Bigg(
    M^{-\min\{\vartheta\chi,\varrho\}}
    +
    \|
      (-A)^{-\varrho}
      (\Id_H - P_I)
    \|_{ L(H) }
    +
    \sup_{ t \in [0,T] }
    \|
      (\Id_H - P_I)
      O_t
    \|_{ \L^{2p}( \P; V ) }
\\&\quad+
    M^{-\theta}
    \left\{ 
      \sup_{ t \in [0,T] }
      \left( 
        M^{\theta}
        \left[
          \|
            P_I O_{t} 
            -
            \OMN{t}
          \|_{ \L^{2p}( \P; V ) }
          +
          \|
            P_I( O_{t} - O_{\fl{t}} )
          \|_{ \L^{2p}( \P; V ) }
        \right] 
      \right)
    \right\}
  \Bigg) .
\end{split}
\end{align}
This implies that for all $ M \in \N $, $ I \in \D $
we have that
\begin{align}
\label{eq:cor_main_proof_5}
\begin{split}
 &\sup_{ t \in [0,T] }
  \|
    X_t
    -
    \YMN{t}
  \|_{ \L^p( \P; H) } 
\\&\leq
  K
  \Bigg(
    M^{-\min\{\vartheta\chi,\varrho, \theta\}}
    +
    \|
      (-A)^{-\varrho}
      (\Id_H - P_I)
    \|_{ L(H) }
    +
    \sup_{ t \in [0,T] }
    \|
      (\Id_H - P_I)
      O_t
    \|_{ \L^{2p}( \P; V ) }
\\&
+
    M^{-\min\{\vartheta\chi,\varrho, \theta\}}
    \left\{ 
      \sup_{ t \in [0,T] }
      \left( 
        M^{\theta}
        \left[
          \|
            P_I O_{t} 
            -
            \OMN{t}
          \|_{ \L^{2p}( \P; V ) }
          +
          \|
            P_I( O_{t} - O_{\fl{t}} )
          \|_{ \L^{2p}( \P; V ) }
        \right] 
      \right)
    \right\}
  \Bigg)
\\&=
  K
  \Bigg(
    \|
      (-A)^{-\varrho}
      (\Id_H - P_I)
    \|_{ L(H) }
    +
    \sup_{ t \in [0,T] }
    \|
      (\Id_H - P_I)
      O_t
    \|_{ \L^{2p}( \P; V ) }
    +
    M^{-\min\{\vartheta\chi,\varrho, \theta\}}
\\&
\cdot    
    \left\{ 
      1
      +
      \sup_{ t \in [0,T] }
      \left( 
        M^{\theta}
        \left[
          \|
            P_I O_{t} 
            -
            \OMN{t}
          \|_{ \L^{2p}( \P; V ) }
          +
          \|
            P_I( O_{t} - O_{\fl{t}} )
          \|_{ \L^{2p}( \P; V ) }
        \right] 
      \right)
    \right\}
  \Bigg) .
\end{split}
\end{align}
Therefore, we obtain that
for all $ M \in \N $, $ I \in \D $
we have that
\begin{align}
\label{eq:cor_main_proof_6}
\begin{split}
 &\sup_{ t \in [0,T] }
  \|
    X_t
    -
    \YMN{t}
  \|_{ \L^p( \P; H) } 
\\&\leq
  K
  \Bigg(
    \|
      (-A)^{-\varrho}
      (\Id_H - P_I)
    \|_{ L(H) }
    +
    \sup_{ t \in (0,T) }
    \|
      (\Id_H - P_I)
      O_t
    \|_{ \L^{2p}( \P; V ) }
    +
    M^{-\min\{\vartheta\chi,\varrho,\theta\}}
\\&\quad\cdot 
    \left[ 
      1
      +
      \sup_{ N \in \N }
      \sup_{ J \in \D }
      \sup_{ t \in [0,T] }
      \left(
        N^{\theta}
        \left[
          \|
            P_J( O_{t} - O_{\fl[T/N]{t}} )
          \|_{ \L^{2p}( \P; V ) }
          +
          \|
            P_J O_{t} - \mathcal{O}^{N,J}_t
          \|_{ \L^{2p}( \P; V ) }
        \right]
      \right)
    \right]
  \Bigg) 
\\&\leq
  K
  \left[ 
    1
    +
    \sup_{ N \in \N }
    \sup_{ J \in \D }
    \sup_{ t \in [0,T] }
    \left(
      N^{\theta}
      \left[
        \|
          P_J( O_{t} - O_{\fl[T/N]{t}} )
        \|_{ \L^{2p}( \P; V ) }
        +
        \|
          P_J O_{t} - \mathcal{O}^{N,J}_t
        \|_{ \L^{2p}( \P; V ) }
      \right]
    \right)
  \right]
\\&\quad\cdot
  \left[
    M^{-\min\{\vartheta\chi,\varrho,\theta\}}
    +
    \|
      (-A)^{-\varrho}
      (\Id_H - P_I)
    \|_{ L(H) }
    +
    \sup_{ t \in (0,T) }
    \|
      (\Id_H - P_I)
      O_t
    \|_{ \L^{2p}( \P; V ) }
  \right] .
\end{split}
\end{align}
Combining this with~\eqref{eq:cor_main_assumption}
establishes~\eqref{it:cor_main_2}.
The proof of Corollary~\ref{cor:main} is thus completed.
\end{proof}

\section{Stochastic Allen-Cahn equations}
\label{sec:examples}
\subsection{Setting}
\label{sec:examples_setting}
Consider the notation in Section~\ref{sec:notation},
let $ T, \nu \in (0,\infty) $,
$ a_0, a_1, a_2 \in \R $,
$ a_3 \in (-\infty,0] $,
$ 
  ( H, \langle \cdot, \cdot \rangle_H, \left\| \cdot \right\|\!_H ) 
$
$
  =
  ( 
    L^2(\lambda_{(0,1)}; \R), 
    \langle \cdot, \cdot \rangle_{ L^2(\lambda_{(0,1)}; \R) },
    \left\| \cdot \right\|\!_{ L^2(\lambda_{(0,1)}; \R) }
  )
$,
$ (e_n)_{ n \in \N } \subseteq H $,
$ F \colon L^{6}(\lambda_{(0,1)}; \R) \rightarrow H $,
$ (P_n)_{ n \in \N } \subseteq L(H) $
satisfy for all $ n \in \N $,
$ v \in L^{6}(\lambda_{(0,1)}; \R) $ that
$ a_2 \, \one_{\{0\}}^{\R}(a_3) = 0 $,
$ e_n = [ (\sqrt{2}\sin(n \pi x))_{ x \in (0,1) } ]_{ \lambda_{(0,1)}, \B(\R) } $,
$ 
  F(v) 
  = 
  \sum_{ k=0 }^{3} 
  a_k v^k  
$,
$ 
  P_n(v) 
  = 
  \sum_{ k=1 }^n 
  \langle e_k, v \rangle_H \, e_k 
$,
let $ A \colon D(A) \subseteq H \rightarrow H $
be the Laplacian with Dirichlet boundary conditions on $ H $
times the real number $ \nu $,
let $ ( H_r, \langle \cdot, \cdot \rangle_{H_r}, \left\| \cdot \right\|\!_{H_r} ) $,
$ r \in \R $, be a family of interpolation spaces associated to $ -A $
(cf., e.g., \cite[Section~3.7]{sy02}),
let $ ( \Omega, \F, \P ) $ be a probability space,
let $ (W_t)_{ t \in [0,T] } $ be an $ \Id_H $-cylindrical 
Wiener process, 
and let
$ 
  (\underline{\cdot})
  \colon
  \big\{
    [v]_{\lambda_{(0,1)},\B(\R)} \in H
    \colon
    ( v \in \C( (0,1), \R )
      \text{ is a uniformly continuous function}
    )
  \big\}
  \rightarrow
  \C( (0,1), \R)
$
be the function which satisfies
for all uniformly continuous functions
$ v \colon (0,1) \rightarrow \R $ 
that $ \underline{[v]_{\lambda_{(0,1)}, \B(\R)}} = v $.

\subsection{Properties of the nonlinearities
of stochastic Allen-Cahn equations}
\label{sec:AllenCahn}

\begin{lemma}
\label{lem:coercivity_mix}
Assume the setting in Section~\ref{sec:examples_setting} 
and let $ \epsilon \in (0,1) $,
$ c \in [\frac{32}{\epsilon}\max\{\frac{|a_2|^2}{|a_3|+\one_{\{0\}}^{\R}(a_3)},|a_3|\},\infty) $. 
Then there exists a real number $ C \in (0,\infty) $ such that
for all $ N \in \N $, $ v \in P_N(H) $,
$ w \in \C([0,T],H_1) $, $ t \in [0,T] $
we have that
\begin{align}
\begin{split}
 &\langle 
    v, 
    P_N F(v + w_t)
  \rangle_{H_{\nicefrac{1}{2}}}
  +
  c
  \big[
    \textstyle\sup_{ s \in [0,T] }
    \| w_s \|_{ L^{\infty}( \lambda_{(0,1)} ; \R) }^4
    +
    1
  \big]
  \langle 
    v, 
    F(v + w_t)
  \rangle_H
\\&\leq
  \epsilon
  \| v \|_{ H_1 }^2
  +
  \Big(
    | a_1 |
    +
    \tfrac{ 
      |a_2|^2 
    }{
      3|a_3| + \one_{\{0\}}^{\R}(a_3)
    }
  \Big)
  \| v \|_{ H_{\nicefrac{1}{2}} }^2
\\&\quad+
  c
  \big[
    \textstyle\sup_{ s \in [0,T] }
    \| w_s \|_{ L^{\infty}( \lambda_{(0,1)} ; \R) }^4
    +
    1
  \big]
  \Big(
    | a_0 |
    +
    \tfrac{3 |a_1|}{2}
  \Big)
  \| v \|_H^2
\\&\quad+
  C
  \big[
    \textstyle\sup_{ s \in [0,T] }
    \| w_s \|_{ L^{\infty}( \lambda_{(0,1)} ; \R) }^{8}
    +
    1
  \big] .
\end{split}
\end{align}
\end{lemma}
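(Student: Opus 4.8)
The plan is as follows. Set $\beta := |a_1| + \tfrac{|a_2|^2}{3|a_3| + \one_{\{0\}}^{\R}(a_3)}$, and for the fixed path $w$ write $\mathcal{W} := \sup_{s\in[0,T]}\|w_s\|_{L^\infty(\lambda_{(0,1)};\R)}$ and $\phi := c(\mathcal{W}^4+1)$. Fix $N \in \N$, $v \in P_N(H)$, $t \in [0,T]$. Since $v$ is a finite linear combination of the $e_k$ and $w_t \in H_1 = D(A)$, both $v$ and $w_t$, and hence all their powers of order $\ge 1$, vanish at the endpoints $0,1$. The first step removes the projection: $P_N$ is the orthogonal projection onto $\operatorname{span}(e_1,\dots,e_N)$, commutes with $A$, and $v \in P_N(H)$, so a spectral computation gives $\langle v, P_N F(v+w_t)\rangle_{H_{\nicefrac{1}{2}}} = \langle -Av, F(v+w_t)\rangle_H$. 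As $F$ is a cubic polynomial and $H^1((0,1);\R)$ is a Banach algebra, $F(v+w_t) \in H^1((0,1);\R)$, and integrating by parts once yields
\begin{equation*}
  \langle -Av, F(v+w_t)\rangle_H
  =
  \nu a_0\big(v'(0)-v'(1)\big)
  + \nu\!\int_0^1 F'(v+w_t)\,(v')^2\,dx
  + \nu\!\int_0^1 F'(v+w_t)\,v'\,w_t'\,dx .
\end{equation*}
The boundary term is bounded by $2\nu|a_0|\,\|v'\|_{L^\infty} \le \delta\|v\|_{H_1}^2 + C_\delta$ via the interpolation inequality $\|v'\|_{L^\infty} \le \delta'\|v''\|_{L^2} + C_{\delta'}\|v\|_{L^2}$ and Young's inequality. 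If $a_3 = 0$ then $a_2 = 0$ by hypothesis and $F$ is affine, so the estimate reduces to an elementary computation; thus assume from now on $a_3 < 0$. Completing the square in $u = v+w_t$ inside the quadratic $2a_2 u + 3a_3 u^2$ gives the exact identity
\begin{equation*}
  \nu\!\int_0^1 F'(v+w_t)(v')^2\,dx
  =
  \Big(a_1 + \tfrac{|a_2|^2}{3|a_3|}\Big)\|v\|_{H_{\nicefrac{1}{2}}}^2
  - 3|a_3|\,\nu\!\int_0^1 \big(v+w_t+\tfrac{a_2}{3a_3}\big)^2 (v')^2\,dx ,
\end{equation*}
whose first summand is $\le \beta\|v\|_{H_{\nicefrac{1}{2}}}^2$ and whose second summand is nonpositive and will serve as an absorption ``budget''.

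The heart of the matter — and the main obstacle — is the mixed term $\nu\!\int_0^1 F'(v+w_t)v'w_t'$: the asserted estimate must not contain any derivative of $w$, so the factor $w_t'$ has to be removed by integration by parts. Here I would use that $F$ is cubic (so $F'''$ is constant and $F^{(4)}\equiv 0$) together with the vanishing of $w_t,w_t^2,w_t^3$ (and of $v$) at $\{0,1\}$: each integration by parts in $w_t'$ produces a boundary term carrying a vanishing factor, and the process terminates after at most three steps, giving
\begin{equation*}
  \nu\!\int_0^1 F'(v+w_t)v'w_t'
  =
  -\nu\!\int w_t F''(v+w_t)(v')^2
  -\nu\!\int w_t F'(v+w_t)v''
  + 3\nu a_3\!\int w_t^2 (v')^2
  + \tfrac{\nu}{2}\!\int w_t^2 F''(v+w_t)v''
  - \nu a_3\!\int w_t^3 v'' .
\end{equation*}
The third term on the right is nonpositive. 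Each remaining term I would handle by Cauchy--Schwarz and Young's inequality: factors $v''$ are paired with $\|v''\|_{L^2} = \tfrac1\nu\|v\|_{H_1}$ and a small part absorbed into $\epsilon\|v\|_{H_1}^2$; the sub-integral $\int|v|(v')^2$ that appears after estimating $|w_t F''(v+w_t)|$ is rewritten, by one further integration by parts, as $-\tfrac12\int v|v|\,v''$, which pairs it with $\|v^2\|_{L^2} = \|v\|_{L^4}^2$ and $\|v\|_{H_1}$; and every $\|v\|_{H_{\nicefrac{1}{2}}}^2$ produced here carries at least one power of $\mathcal{W}$, hence can be sent to $\delta\|v\|_{H_1}^2 + C_\delta\mathcal{W}^{2k}\|v\|_H^2$ via $\|v\|_{H_{\nicefrac{1}{2}}}^2 \le \|v\|_H\|v\|_{H_1}$ and then, together with all resulting $\|v\|_{L^4}^4$- and $\|v\|_H^2$-factors, absorbed by Young's inequality into $-\phi|a_3|\int v^4 \le -\phi|a_3|\|v\|_H^4$ (see below). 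All residual pure powers of $\mathcal{W}$ are $\lesssim \mathcal{W}^8+1$, using $\phi\mathcal{W}^j = c(\mathcal{W}^4+1)\mathcal{W}^j \lesssim \mathcal{W}^8+1$ for $j \le 4$.

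Finally, for $\phi\langle v, F(v+w_t)\rangle_H = \phi\!\int_0^1 v\,F(v+w_t)\,dx$ I would expand $F(v+w_t)$ as a polynomial in $v,w_t$: the $a_0$- and $a_1(v+w_t)$-monomials give, after $|\int v| \le \|v\|_H$, $|\int vw_t| \le \mathcal{W}\|v\|_H$, and Young's inequality, at most $\phi|a_0|\|v\|_H^2 + \tfrac{3\phi|a_1|}{2}\|v\|_H^2$ plus a remainder $\lesssim \phi(\mathcal{W}^2+1) \lesssim \mathcal{W}^8+1$; the monomial $a_3 v^4$ furnishes the main budget $\phi a_3\int v^4 = -\phi|a_3|\int v^4$; and every remaining monomial $v^j w_t^k$ with $j \ge 1$ and $j+k \le 4$ is bounded by $C\,\mathcal{W}^k\int|v|^j$ and, by a pointwise Young's inequality with a small coefficient on $|v|^4$, absorbed into $-\phi|a_3|\int v^4$ up to a remainder $\lesssim \phi\mathcal{W}^4 \lesssim \mathcal{W}^8+1$ (the degree bound $j+k\le 4$ is exactly what makes the pointwise Young exponents admissible). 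Summing the three contributions yields the claimed inequality, with $C$ depending on $T,\nu,a_0,a_1,a_2,a_3,\epsilon,c$. The sole role of the hypothesis $c \ge \tfrac{32}{\epsilon}\max\{\tfrac{|a_2|^2}{|a_3|+\one_{\{0\}}^{\R}(a_3)},|a_3|\}$ is to make the single budget $-\phi|a_3|\int v^4$ large enough to absorb, simultaneously, all of the positive quantities collected above without spoiling the coefficients $\epsilon$, $\beta$, and $\phi(|a_0|+\tfrac{3|a_1|}{2})$ on the right-hand side; the genuinely delicate point is this bookkeeping, i.e.\ checking that no step ever produces a $\mathcal{W}$-free multiple of $\|v\|_{H_{\nicefrac{1}{2}}}^2$ or a power of $\|v\|_H$ exceeding $4$.
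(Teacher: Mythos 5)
Your proposal is correct in substance but takes a genuinely different route from the paper at the decisive step, namely the mixed $v$--$w_t$ interactions in $\langle v, P_N F(v+w_t)\rangle_{H_{\nicefrac{1}{2}}}$. The paper expands $[v+w_t]^k$ binomially \emph{before} integrating by parts, integrates by parts only the pure powers $\langle v'',v^k\rangle_H$ (yielding the $(|a_1|+\tfrac{|a_2|^2}{3|a_3|+\one_{\{0\}}^{\R}(a_3)})\|v\|_{H_{\nicefrac{1}{2}}}^2$ term, exactly as your completed square does), and bounds every mixed monomial $\langle v'',v^jw_t^{k-j}\rangle_H$, $j\le k-1$, by one Cauchy--Schwarz/Young step against $\epsilon\|v''\|_H^2$; no derivative of $w_t$, no boundary term and no interpolation inequality ever appears, and the resulting $\tfrac{63}{4\epsilon}\max_{k\in\{2,3\}}|a_k|^2\int_0^1|v|^4(1+|w_t|^4)\,dx$ is dominated by exactly half of the quartic budget $c[\sup_s\|w_s\|_{L^\infty}^4+1]\,|a_3|\,\|v\|_{L^4}^4$, which is where the constant $\tfrac{32}{\epsilon}$ comes from (the other half absorbs the low-order monomials of $\langle v,F(v+w_t)\rangle_H$ via the $\eta(k,j)$-Young step). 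You instead keep $F'(v+w_t)$ and strip $w_t'$ off the cross term by repeated integration by parts using that $F'''$ is constant; your five-term identity is correct (the boundary terms in these steps vanish because each carries a power of $w_t\in D(A)=H^2\cap H^1_0$), and the absorptions you invoke ($\int|v|(v')^2=-\tfrac12\int v|v|v''$, $\|v\|_{H_{\nicefrac{1}{2}}}^2\le\|v\|_H\|v\|_{H_1}$, $\|v\|_H^4\le\|v\|_{L^4}^4$) are legitimate, so the scheme does close under the stated hypothesis: the $\mathcal{W}^2\|v\|_{L^4}^4$-coefficients your route produces are of size a modest absolute constant times $|a_3|^2/\epsilon$, while at least $(1-\sigma)\cdot 32|a_3|^2/\epsilon$ of the budget can be reserved for them, since the $H$-part monomials can be absorbed into an arbitrarily small share $\sigma$. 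What your write-up leaves open is precisely this arithmetic, which you flag but do not perform, whereas the paper's single-Young arrangement makes the constant visible at once; two small repairs are also needed: the boundary term satisfies $2\nu|a_0|\|v'\|_{L^\infty}\le\delta\|v\|_{H_1}^2+\kappa\|v\|_H^2+C_{\delta,\kappa}$ rather than $\delta\|v\|_{H_1}^2+C_\delta$ (the leftover $\|v\|_H^2$ must itself go into the quartic budget or into part of the $c[\cdots](|a_0|+\tfrac{3|a_1|}{2})\|v\|_H^2$ allowance), and in the affine case $a_2=a_3=0$ you should estimate $\langle -Av,a_0+a_1(v+w_t)\rangle_H$ directly without integrating by parts, since no quartic budget is then available.
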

\begin{proof}[Proof of Lemma~\ref{lem:coercivity_mix}]
Throughout this proof
let $ \eta \colon \N_0 \times \N_0 \rightarrow (0,\infty) $
be a function which satisfies that
\begin{align}
\begin{split}
 &\one_{(-\infty,0)}^{\R}(a_3)
  \left(
    a_3
    +
    \frac{1}{2}
    \sum_{ k = 0 }^3
    \sum_{ j = 0 }^{ \min\{k,2\} }
    \binom{k}{j}
    (j+1) \, | a_k | \,
    | \eta(k,j) |^{\frac{4}{(j+1)}}
  \right)
  \leq 
  0 .
\end{split}
\end{align}
Observe that 
the fact that for every $ N \in \N $
we have that $ P_N $ is symmetric
implies that for all $ N \in \N $,
$ v \in P_N(H) $, $ w \in \C([0,T], H_1) $,
$ t \in [0,T] $ we have that
\begin{align}
\begin{split}
 &\langle 
    v, 
    \textstyle\sum_{ k = 0 }^3
    a_k P_N [v + w_t]^k
  \rangle_{H_{\nicefrac{1}{2}}}
\\&=
  \sum_{ k = 0 }^3
  a_k
  \langle 
    (-A)^{\nicefrac{1}{2}}
    v,
    (-A)^{\nicefrac{1}{2}}
    P_N [v + w_t]^k
  \rangle_H
  =
  \sum_{ k = 0 }^3
  a_k
  \langle 
    (-A)
    v,
    P_N [v + w_t]^k
  \rangle_H
\\&=
  -
  \sum_{ k = 0 }^3
  a_k
  \langle 
    A
    P_N
    v,
    [v + w_t]^k
  \rangle_H
  =
  -
  \sum_{ k = 0 }^3
  a_k
  \langle 
    A
    v,
    [v + w_t]^k
  \rangle_H .
\end{split}
\end{align}
This shows that
for all $ N \in \N $,
$ v \in P_N(H) $, $ w \in \C([0,T], H_1) $,
$ t \in [0,T] $ 
we have that
\begin{align}
\label{eq:coercivity_mix_1}
\begin{split}
 &\langle 
    v, 
    \textstyle\sum_{ k = 0 }^3
    a_k P_N [v + w_t]^k
  \rangle_{H_{\nicefrac{1}{2}}}
\\&=
  -\nu
  \sum_{ k = 0 }^3
  a_k
  \langle 
    v'',
    [v + w_t]^k
  \rangle_H
  =
  -\nu
  \sum_{ k = 0 }^3
  \sum_{ j = 0 }^k
  \binom{k}{j}
  a_k
  \langle 
    v'',    
    v^j (w_t)^{(k-j)}
  \rangle_H
\\&=
  -\nu
  \sum_{ k=1 }^3
  \binom{k}{k}
  a_k
  \langle 
    v'',
    v^k
  \rangle_H
  -
  \nu
  \sum_{ k = 0 }^3
  \sum_{ j = 0 }^{\max\{0, k-1\}}  
  \binom{k}{j}
  a_k
  \langle 
    v'',    
    v^j (w_t)^{(k-j)}
  \rangle_H .
\end{split}
\end{align}
Moreover, note that 
integration by parts
and the fact that
\begin{equation}
  \forall \, x,y \in \R,
  r \in (0,\infty)
  \colon 
  x \sqrt{2r}
  \cdot
  \tfrac{y}{\sqrt{2r}}
  \leq r x^2 + \tfrac{y^2}{4r}
\end{equation}
prove that for all 
$ N \in \N $,
$ v \in P_N(H) $
we have that
\begin{align}
\label{eq:coercivity_mix_2}
\begin{split}
 &-\nu
  \sum_{ k=1 }^3
  a_k
  \langle 
    v'',
    v^k
  \rangle_H
  =
  \nu
  \sum_{ k=1 }^3
  k
  a_k
  \langle 
    v',
    v^{(k-1)}
    v'
  \rangle_H
\\&\leq
  3\nu
  a_3
  \int_0^1
  [ \underline{v}'(x) ]^2 \,
  [ \underline{v}(x) ]^2 \, dx
  +
  2\nu
  |a_2|
  \int_0^1
  [ \underline{v}'(x) ]^2 \,
  | \underline{v}(x) | \, dx
  +
  \nu
  |a_1| \,
  \| v' \|_H^2
\\&\leq
  3\nu
  a_3
  \int_0^1
  [ \underline{v}'(x) ]^2 \,
  [ \underline{v}(x) ]^2 \, dx
  +
  \nu
  \int_0^1
  [ \underline{v}'(x) ]^2
  \left(
    3 |a_3|
    [ \underline{v}(x) ]^2
    +
    \frac{ 
      4 |a_2|^2 
    }{
      4\big( 3 |a_3| + \one_{\{0\}}^{\R}(a_3) \big)
    } 
  \right) dx
\\&\quad+
  \nu
  |a_1|
  \| v' \|_H^2
\\&=
  \nu  
  \left(
    |a_1|
    +
    \frac{ 
      |a_2|^2 
    }{
      3 |a_3| + \one_{\{0\}}^{\R}(a_3)
    }
  \right)
  \| v' \|_H^2
  =
  \left(
    |a_1|
    +
    \frac{ 
      |a_2|^2 
    }{
      3 |a_3| + \one_{\{0\}}^{\R}(a_3)
    }
  \right)
  \| v \|_{ H_{\nicefrac{1}{2}} }^2 .
\end{split}
\end{align}
Furthermore,
observe that the fact that
\begin{equation}
  \forall \, x,y \in \R
  \colon 
  xy \leq \epsilon x^2 + \tfrac{y^2}{4\epsilon}
\end{equation}
shows that for all $ N \in \N $,
$ v \in P_N(H) $, $ w \in \C([0,T], H_1) $,
$ t \in [0,T] $ we have that
\begin{align}
\begin{split}
 &-\nu
  \sum_{ k = 0 }^3
  \sum_{ j = 0 }^{\max\{0, k-1\}}   
  \binom{k}{j}
  a_k
  \langle 
    v'',    
    v^j (w_t)^{(k-j)}
  \rangle_H
\\&\leq
  \nu
  \sum_{ k = 0 }^3
  \sum_{ j = 0 }^{\max\{0, k-1\}}    
  \binom{k}{j}
  | a_k |
  \int_0^1
  | \underline{v}''(x) | \,
  | \underline{v}(x) |^j \,
  | \underline{w_t}(x) |^{ (k-j) } \, dx
\\&=
  \int_0^1
  \nu \,
  | \underline{v}''(x) | 
  \left[ 
    \sum_{ k = 0 }^3
    \sum_{ j = 0 }^{\max\{0, k-1\}}    
    \binom{k}{j}
    | a_k | \,
    | \underline{v}(x) |^j \,
    | \underline{w_t}(x) |^{ (k-j) }
  \right] dx
\\&\leq
  \epsilon
  \nu^2
  \| v'' \|_H^2
  +
  \frac{1}{4\epsilon}
  \int_0^1
  \left[ 
    \sum_{ k = 0 }^3
    \sum_{ j = 0 }^{\max\{0, k-1\}}     
    \binom{k}{j}
    | a_k | \,
    | \underline{v}(x) |^j \,
    | \underline{w_t}(x) |^{ (k-j) }
  \right]^{\!2} dx .
\end{split}
\end{align}
The fact that
\begin{equation}
  \forall  \,
  x_1, x_2, \ldots, x_7 \in \R
  \colon
  [x_1 + x_2 + \ldots + x_7]^2
  \leq
  7([x_1]^2 + [x_2]^2 + \ldots + [x_7]^2)
\end{equation}
hence assures that
for all $ N \in \N $,
$ v \in P_N(H) $, $ w \in \C([0,T], H_1) $,
$ t \in [0,T] $ 
we have that
\begin{align}
\begin{split}
 &-\nu
  \sum_{ k = 0 }^3
  \sum_{ j = 0 }^{\max\{0, k-1\}}    
  \binom{k}{j}
  a_k
  \langle 
    v'',    
    v^j (w_t)^{(k-j)}
  \rangle_H
\\&\leq
  \epsilon
  \nu^2
  \| v'' \|_{H}^2
  +
  \frac{7}{4\epsilon}
  \sum_{ k = 0 }^3
  \sum_{ j = 0 }^{\max\{0, k-1\}}   
  \left[ 
    \textstyle\binom{k}{j}
    |a_k|
  \right]^2
  \int_0^1
  | \underline{v}(x) |^{2j} \,
  | \underline{w_t}(x) |^{ 2(k-j) } \, dx
\\&=
  \epsilon
  \| v \|_{H_1}^2
  +
  \frac{7}{4\epsilon}
  \sum_{ k = 0 }^3 
  |a_k|^2
  \int_0^1
  | \underline{w_t}(x) |^{ 2k } \, dx
\\&\quad+
  \frac{7}{4\epsilon}
  \sum_{ k = 2 }^3
  \sum_{ j = 1 }^{k-1} 
  \left[ 
    \textstyle\binom{k}{j}
    |a_k|
  \right]^2
  \int_0^1
  | \underline{v}(x) |^{2j} \,
  | \underline{w_t}(x) |^{ 2(k-j) } \, dx .
\end{split}
\end{align}
This and the fact that
\begin{equation}
  \forall \, x, y \in \R
  \colon
  xy \leq \tfrac{x^2}{2} + \tfrac{y^2}{2}
\end{equation}
imply that for all $ N \in \N $,
$ v \in P_N(H) $, $ w \in \C([0,T], H_1) $,
$ t \in [0,T] $ we have that
\begin{align}
\begin{split}
 &-\nu
  \sum_{ k = 0 }^3
  \sum_{ j = 0 }^{\max\{0, k-1\}}
  \binom{k}{j}
  a_k
  \langle 
    v'',    
    v^j (w_t)^{(k-j)}
  \rangle_H
\\&\leq
  \epsilon
  \| v \|_{H_1}^2
  +
  \frac{7}{\epsilon}
  \left[ 
    \max_{ k \in \{0,1,2,3\} }
    | a_k |^2
  \right]
  \int_0^1
  \max\{ | \underline{w_t}(x) |^{ 6 },1 \} \, dx
\\&+
  \frac{7}{4\epsilon}
  \left[ 
    \max_{ k \in \{2,3\},j \in \{1,2\} }
    \textstyle\binom{k}{j}
    | a_k |
  \right]^{\!2}
  \int_0^1
  \Big[ 
    | \underline{v}(x) |^{2} \,
    | \underline{w_t}(x) |^{ 2 }
    +
    | \underline{v}(x) |^{2} \,
    | \underline{w_t}(x) |^{ 4 }
    +
    | \underline{v}(x) |^{4} \,
    | \underline{w_t}(x) |^{ 2 }
  \Big] \, dx
\\&\leq
  \epsilon
  \| v \|_{H_1}^2
  +
  \frac{7}{\epsilon}
  \left[ 
    \max_{ k \in \{0,1,2,3\} }
    | a_k |^2
  \right]
  \left(
    \| w_t \|_{ L^6( \lambda_{(0,1)} ; \R) }^6
    +
    1
  \right)
\\&+
  \frac{7}{4\epsilon}
  \left[ 
    \max_{ k \in \{2,3\},j \in \{1,2\} }
    \textstyle\binom{k}{j}
    | a_k |
  \right]^{\!2}
  \int_0^1
  \Big[ 
    | \underline{v}(x) |^{4} \,
    +
    | \underline{w_t}(x) |^{ 4 }
    +
    | \underline{v}(x) |^{4}
    | \underline{w_t}(x) |^{ 4 }
  \Big] \, dx .
\end{split}
\end{align}
H{\"o}lder's inequality therefore 
ensures that for all $ N \in \N $,
$ v \in P_N(H) $, $ w \in \C([0,T], H_1) $,
$ t \in [0,T] $ we have that
\begin{align}
\label{eq:coercivity_mix_3}
\begin{split}
 &-\nu
  \sum_{ k = 0 }^3
  \sum_{ j = 0 }^{\max\{0, k-1\}}
  \binom{k}{j}
  a_k
  \langle 
    v'',    
    v^j (w_t)^{(k-j)}
  \rangle_H
\\&\leq
  \epsilon
  \| v \|_{H_1}^2
  +
  \frac{7}{\epsilon}
  \left[ 
    \max_{ k \in \{0,1,2,3\} }
    | a_k |^2
  \right]
  \left(
    \| w_t \|_{ L^6( \lambda_{(0,1)} ; \R) }^6
    +
    1
  \right)
\\&\quad+
  \frac{63}{4\epsilon}
  \left[ 
    \max_{ k \in \{2,3\} }
    | a_k |^2
  \right]
  \int_0^1
  | \underline{w_t}(x) |^{ 4 } \, dx
\\&\quad+
  \frac{63}{4\epsilon}
  \left[ 
    \max_{ k \in \{2,3\} }
    | a_k |^2
  \right]
  \int_0^1
  | \underline{v}(x) |^{4}
  \left(
      1
      +
      | \underline{w_t}(x) |^{ 4 }
  \right) dx
\\&\leq
  \epsilon
  \| v \|_{H_1}^2
  +
  \frac{23}{\epsilon}
  \left[ 
    \max_{ k \in \{0,1,2,3\} }
    | a_k |^2
  \right]
  \left(
    \| w_t \|_{ L^6( \lambda_{(0,1)} ; \R) }^6
    +
    1
  \right)
\\&\quad+
  \frac{16}{\epsilon}
  \left[ 
    \max_{ k \in \{2,3\} }
    | a_k |^2
  \right]
  \| v \|_{ L^4( \lambda_{(0,1)} ; \R) }^4
  \left(
    \| w_t \|_{ L^{\infty}( \lambda_{(0,1)} ; \R) }^4
    +
    1
  \right) .
\end{split}
\end{align}
In the next step we combine~\eqref{eq:coercivity_mix_1}
with~\eqref{eq:coercivity_mix_2}
and~\eqref{eq:coercivity_mix_3}
to obtain that for all $ N \in \N $,
$ v \in P_N(H) $, $ w \in \C([0,T], H_1) $,
$ t \in [0,T] $ we have that
\begin{align}
\label{eq:coercivity_mix_4}
\begin{split}
 &\langle 
    v, 
    \textstyle\sum_{ k = 0 }^3
    a_k P_N [v + w_t]^k
  \rangle_{H_{\nicefrac{1}{2}}}
\\&\leq
  \epsilon
  \| v \|_{H_1}^2
  +
  \left(
    |a_1|
    +
    \frac{ 
      |a_2|^2 
    }{
      3 |a_3| + \one_{\{0\}}^{\R}(a_3)
    }
  \right)
  \| v \|_{ H_{\nicefrac{1}{2}} }^2
  +
  \left[ 
    \max_{ k \in \{0,1,2,3\} }
    \tfrac{ 5 | a_k | }{ \sqrt{\epsilon} }
  \right]^{\!2}
  \left[
    \| w_t \|_{ L^6( \lambda_{(0,1)} ; \R) }^6
    +
    1
  \right]
\\&\quad+
  \frac{c |a_3|}{2}
  \left[
    \| w_t \|_{ L^{\infty}( \lambda_{(0,1)} ; \R) }^4
    +
    1
  \right]
  \| v \|_{ L^4( \lambda_{(0,1)} ; \R) }^4 .
\end{split}
\end{align}
In addition, note that for all 
$ N \in \N $,
$ v \in P_N(H) $, $ w \in \C([0,T], H_1) $,
$ t \in [0,T] $ 
we have that
\begin{align}
\begin{split}
 &\langle 
    v, 
    \textstyle\sum_{ k = 0 }^3
    a_k [v + w_t]^k
  \rangle_H
\\&=
  \sum_{ k = 0 }^3
  a_k
  \langle 
    v, 
    [v + w_t]^k
  \rangle_H
  =
  \sum_{ k = 0 }^3
  \sum_{ j = 0 }^k
  \binom{k}{j}
  a_k
  \langle 
    v, 
    v^j (w_t)^{(k-j)}
  \rangle_H
\\&=
  a_3
  \| v \|_{ L^4( \lambda_{(0,1)} ; \R ) }^4
  +
  \sum_{ k = 0 }^3
  \sum_{ j = 0 }^{ \min\{k,2\} }
  \binom{k}{j}
  a_k
  \langle 
    v, 
    v^j (w_t)^{(k-j)}
  \rangle_H
\\&\leq
  a_3
  \| v \|_{ L^4( \lambda_{(0,1)} ; \R ) }^4
  +
  \sum_{ k = 0 }^3
  \sum_{ j = 0 }^{ \min\{k,2\} }
  \binom{k}{j}
  | a_k |
  \int_0^1
  | \underline{v}(x) |^{(j+1)} \,
  | \underline{w_t}(x) |^{(k-j)} \, dx .
\end{split}
\end{align}
Young's inequality hence demonstrates that 
for all 
$ N \in \N $,
$ v \in P_N(H) $, $ w \in \C([0,T], H_1) $,
$ t \in [0,T] $, $ r \in [0,\infty) $ 
we have that
\begin{align}
\begin{split}
 &r
  \langle 
    v, 
    \textstyle\sum_{ k = 0 }^3
    a_k [v + w_t]^k
  \rangle_H
\\&\leq
  r
  a_3
  \| v \|_{ L^4( \lambda_{(0,1)} ; \R ) }^4
  +
  r
  \sum_{ k = 0 }^3
  \sum_{ j = 0 }^{ \min\{k,2\} }
  \binom{k}{j}
  | a_k |
  \int_0^1
  | \eta(k,j) | \,
  | \underline{v}(x) |^{(j+1)} \,
  \frac{ 
    | \underline{w_t}(x) |^{(k-j)} 
  }{ | \eta(k,j) | } \, dx
\\&\leq
  r
  a_3
  \| v \|_{ L^4( \lambda_{(0,1)} ; \R ) }^4
\\&\quad+
  r
  \sum_{ k = 0 }^3
  \sum_{ j = 0 }^{ \min\{k,2\} }
  \binom{k}{j}
  | a_k |
  \int_0^1
  \left[ 
    \tfrac{ (j+1) }{ 4 }
    | \eta(k,j) |^{\frac{4}{(j+1)}}
    | \underline{v}(x) |^{4}
    +
    \frac{ 
      (3-j) | \underline{w_t}(x) |^{\frac{ 4(k-j) }{ (3-j) } } 
    }{ 
      4 | \eta(k,j) |^{\frac{4}{(3-j)} } 
    } 
  \right] dx .
\end{split}
\end{align}
Therefore, we obtain
that for all 
$ N \in \N $,
$ v \in P_N(H) $, $ w \in \C([0,T], H_1) $,
$ t \in [0,T] $, $ r \in [0,\infty) $ 
we have that
\begin{align}
\label{eq:coercivity_mix_5}
\begin{split}
 &\one_{ (-\infty,0) }^{ \R }(a_3) \,
  r
  \langle 
    v, 
    \textstyle\sum_{ k = 0 }^3
    a_k [v + w_t]^k
  \rangle_H
\\&\leq
  r
  \| v \|_{ L^4( \lambda_{(0,1)} ; \R ) }^4
  \one_{ (-\infty,0) }^{ \R }(a_3) \!
  \left[ 
    a_3
    +
    \frac{1}{4}
    \sum_{ k = 0 }^3
    \sum_{ j = 0 }^{ \min\{k,2\} }
    \binom{k}{j}
    (j+1) \, | a_k | \,
    | \eta(k,j) |^{\frac{4}{(j+1)}}
  \right]
\\&\quad+
  r
  \sum_{ k = 0 }^3
  \sum_{ j = 0 }^{ \min\{k,2\} }
  \binom{k}{j}
  \frac{ 
    (3-j) | a_k | 
  }{ 
    4 | \eta(k,j) |^{\frac{4}{(3-j)} } 
  }
  \int_0^1
  \max\{ 
    1,
    | \underline{w_t}(x) |^4
  \} \, dx
\\&\leq
  \frac{ r a_3 }{ 2 }
  \| v \|_{ L^4( \lambda_{(0,1)} ; \R ) }^4
  +
  r
  \sum_{ k = 0 }^3
  \sum_{ j = 0 }^{ \min\{k,2\} }
  \binom{k}{j}
  \frac{ 
    (3-j) | a_k | 
  }{ 
    4 | \eta(k,j) |^{\frac{4}{(3-j)} } 
  }
  \left[ 
    \| w_t \|_{ L^4( \lambda_{(0,1)} ; \R ) }^4
    +
    1
  \right] . 
\end{split}
\end{align}
Moreover, note that
the fact that
\begin{equation}
  \forall \,
  x, y \in [0,\infty)
  \colon
  xy
  \leq 
  \tfrac{x^2}{2}
  +
  \tfrac{y^2}{2}
\end{equation}
ensures that for all 
$ N \in \N $,
$ v \in P_N(H) $, $ w \in \C([0,T], H_1) $,
$ t \in [0,T] $, $ r \in [0,\infty) $ 
we have that
\begin{align}
\begin{split}
 &r
  \langle 
    v, 
    \textstyle\sum_{ k = 0 }^1
    a_k [v + w_t]^k
  \rangle_H
\\&\leq
  r
  \int_0^1
  | a_0 | \,
  | \underline{v}(x) |
  +
  | a_1 | \,
  | \underline{v}(x) |^2
  +
  | a_1 | \,
  | \underline{v}(x) | \,
  | \underline{w_t}(x) | \, dx
\\&\leq
  r
  \int_0^1
  | a_0 | 
  \left( 1 + | \underline{v}(x) |^2 \right)
  +
  | a_1 | \,
  | \underline{v}(x) |^2
  +
  \frac{| a_1 |}{2}
  \left(
    | \underline{v}(x) |^2
    +
    | \underline{w_t}(x) |^2
  \right) dx
\\&=
  r
  \left(
    | a_0 |
    +
    \tfrac{3 | a_1 |}{2} 
  \right)
  \| v \|_H^2
  +
  r
  \left(
    | a_0 |
    +
    \tfrac{| a_1 |}{2}
    \| w_t \|_H^2
  \right)
\\&\leq
  r
  \left(
    | a_0 |
    +
    \tfrac{3 | a_1 |}{2} 
  \right)
  \| v \|_H^2
  +
  r
  \left(
    | a_0 |
    +
    \tfrac{| a_1 |}{2}
  \right)
  \left[ 
    \| w_t \|_{ L^4( \lambda_{(0,1)} ; \R ) }^4
    +
    1
  \right] .
\end{split}
\end{align}
Hence, we obtain that
for all 
$ N \in \N $,
$ v \in P_N(H) $, $ w \in \C([0,T], H_1) $,
$ t \in [0,T] $,
$ 
  r 
  \in 
  [ 
    \| w_t \|_{ L^{\infty}( \lambda_{(0,1)} ; \R ) }^4
    +
    1,
    \infty
  )
$
we have that
\begin{align}
\begin{split}
 &\langle 
    v, 
    {\textstyle\sum_{ k = 0 }^3}
    a_k P_N [v + w_t]^k
  \rangle_{H_{\nicefrac{1}{2}}}
  +
  c
  r
  \langle 
    v, 
    \textstyle\sum_{ k = 0 }^3
    a_k [v + w_t]^k
  \rangle_H
\\&=
  \langle 
    v, 
    {\textstyle\sum_{ k = 0 }^3}
    a_k P_N [v + w_t]^k
  \rangle_{H_{\nicefrac{1}{2}}}
  +
  c
  r
  \langle 
    v, 
    \textstyle\sum_{ k = 0 }^3
    a_k [v + w_t]^k
  \rangle_H
  \one_{ (-\infty,0) }^{ \R }(a_3)
\\&\quad+
  c
  r
  \langle 
    v, 
    \textstyle\sum_{ k = 0 }^3
    a_k [v + w_t]^k
  \rangle_H
  \one_{ \{0\} }^{ \R }(a_3)
\\&=
  \langle 
    v, 
    {\textstyle\sum_{ k = 0 }^3}
    a_k P_N [v + w_t]^k
  \rangle_{H_{\nicefrac{1}{2}}}
  +
  c
  r
  \langle 
    v, 
    \textstyle\sum_{ k = 0 }^3
    a_k [v + w_t]^k
  \rangle_H
  \one_{ (-\infty,0) }^{ \R }(a_3)
\\&\quad+
  c
  r
  \langle 
    v, 
    \textstyle\sum_{ k = 0 }^1
    a_k [v + w_t]^k
  \rangle_H
  \one_{ \{0\} }^{ \R }(a_3)
\\&\leq
  \langle 
    v, 
    {\textstyle\sum_{ k = 0 }^3}
    a_k P_N [v + w_t]^k
  \rangle_{H_{\nicefrac{1}{2}}}
  +
  c
  r
  \langle 
    v, 
    \textstyle\sum_{ k = 0 }^3
    a_k [v + w_t]^k
  \rangle_H
  \one_{ (-\infty,0) }^{ \R }(a_3)
\\&\quad+ 
  cr
  \left(
    | a_0 |
    +
    \tfrac{3 | a_1 |}{2} 
  \right)
  \| v \|_H^2
  +
  cr
  \left(
    | a_0 |
    +
    \tfrac{| a_1 |}{2}
  \right)
  \left[ 
    \| w_t \|_{ L^4( \lambda_{(0,1)} ; \R ) }^4
    +
    1
  \right] .
\end{split}
\end{align}
Combining this with
\eqref{eq:coercivity_mix_4}
and~\eqref{eq:coercivity_mix_5}
assures that for all 
$ N \in \N $,
$ v \in P_N(H) $, $ w \in \C([0,T], H_1) $,
$ t \in [0,T] $,
$ 
  r 
  \in 
  [ 
    \| w_t \|_{ L^{\infty}( \lambda_{(0,1)} ; \R ) }^4
    +
    1,
    \infty
  )
$
we have that
\begin{align}
\begin{split}
 &\langle 
    v, 
    {\textstyle\sum_{ k = 0 }^3}
    a_k P_N [v + w_t]^k
  \rangle_{H_{\nicefrac{1}{2}}}
  +
  c
  r
  \langle 
    v, 
    \textstyle\sum_{ k = 0 }^3
    a_k [v + w_t]^k
  \rangle_H
\\&\leq
  \epsilon
  \| v \|_{H_1}^2
  +
  \left(
    |a_1|
    +
    \frac{ 
      |a_2|^2 
    }{
      3 |a_3| + \one_{\{0\}}^{\R}(a_3)
    }
  \right)
  \| v \|_{ H_{\nicefrac{1}{2}} }^2
  +
  c r
  \left(
    | a_0 |
    +
    \tfrac{3 | a_1 |}{2} 
  \right)
  \| v \|_H^2
\\&\quad+ 
  \frac{cr}{2}
  \| v \|_{ L^4( \lambda_{(0,1)} ; \R) }^4
  \left( 
    a_3
    +
    |a_3|
  \right)
  +
  \left[ 
    \max_{ k \in \{0,1,2,3\} }
    \tfrac{ 5 | a_k | }{ \sqrt{\epsilon} }
  \right]^{\!2}
  \left[
    \| w_t \|_{ L^6( \lambda_{(0,1)} ; \R) }^6
    +
    1
  \right]
\\&\quad+
  c
  r
  \left[ 
    | a_0 |
    +
    \frac{| a_1 |}{2}
    +
    \sum_{ k = 0 }^3
    \sum_{ j = 0 }^{ \min\{k,2\} }
    \binom{k}{j}
    \frac{ 
      (3-j) | a_k | 
    }{ 
      4 | \eta(k,j) |^{\frac{4}{(3-j)} } 
    }
  \right]
  \left[ 
    \| w_t \|_{ L^4( \lambda_{(0,1)} ; \R ) }^4
    +
    1
  \right] .
\end{split}
\end{align}
H{\"o}lder's inequality
and the fact that
\begin{equation}
  a_3 + |a_3| = a_3 - a_3 = 0 
\end{equation}  
therefore prove that
for all 
$ N \in \N $,
$ v \in P_N(H) $, $ w \in \C([0,T], H_1) $,
$ t \in [0,T] $,
$ 
  r 
  \in 
  [ 
    \| w_t \|_{ L^{\infty}( \lambda_{(0,1)} ; \R ) }^4
    +
    1,
    \infty
  )
$
we have that
\begin{align}
\begin{split}
 &\langle 
    v, 
    {\textstyle\sum_{ k = 0 }^3}
    a_k P_N [v + w_t]^k
  \rangle_{H_{\nicefrac{1}{2}}}
  +
  c
  r
  \langle 
    v, 
    \textstyle\sum_{ k = 0 }^3
    a_k [v + w_t]^k
  \rangle_H
\\&\leq
  \epsilon
  \| v \|_{H_1}^2
  +
  \left(
    |a_1|
    +
    \frac{ 
      |a_2|^2 
    }{
      3 |a_3| + \one_{\{0\}}^{\R}(a_3)
    }
  \right)
  \| v \|_{ H_{\nicefrac{1}{2}} }^2
  +
  c r
  \left(
    | a_0 |
    +
    \tfrac{3 | a_1 |}{2} 
  \right)
  \| v \|_H^2
\\&+ 
  \left[ 
    \max_{ k \in \{0,1,2,3\} }
    \tfrac{ 5 | a_k | }{ \sqrt{\epsilon} }
  \right]^{\!2} 
  \left[
    \| w_t \|_{ L^{\infty}( \lambda_{(0,1)} ; \R) }^6
    +
    1
  \right]
\\ &
  +
  c
  r^2 \!
  \left[ 
    | a_0 |
    +
    \frac{| a_1 |}{2}
    +
    \sum_{ k = 0 }^3
    \sum_{ j = 0 }^{ \min\{k,2\} } \!
    \binom{k}{j} 
    \frac{ 
      (3-j) | a_k | 
    }{ 
      4 | \eta(k,j) |^{\frac{4}{(3-j)} } 
    }
  \right] .
\end{split}
\end{align}
The fact that
\begin{equation}
  \forall \, x, y \in (0,\infty)
  \colon
  (x+y)^2
  \leq
  2(x^2+y^2)
\end{equation}
hence
implies that
for all 
$ N \in \N $,
$ v \in P_N(H) $, $ w \in \C([0,T], H_1) $,
$ t \in [0,T] $
we have that
\begin{align}
\begin{split}
 &\langle 
    v, 
    {\textstyle\sum_{ k = 0 }^3}
    a_k P_N [v + w_t]^k
  \rangle_{H_{\nicefrac{1}{2}}}
  +
  c
  \left[
    \sup_{ s \in [0,T] }
    \| w_s \|_{ L^{\infty}( \lambda_{(0,1)} ; \R) }^4
    +
    1
  \right]
  \langle 
    v, 
    \textstyle\sum_{ k = 0 }^3
    a_k [v + w_t]^k
  \rangle_H
\\&\leq
  \epsilon
  \| v \|_{H_1}^2
  +
  \left(
    |a_1|
    +
    \frac{ 
      |a_2|^2 
    }{
      3 |a_3| + \one_{\{0\}}^{\R}(a_3)
    }
  \right)
  \| v \|_{ H_{\nicefrac{1}{2}} }^2
\\&+
  c
  \left[
    \sup_{ s \in [0,T] }
    \| w_s \|_{ L^{\infty}( \lambda_{(0,1)} ; \R) }^4
    +
    1
  \right]
  \left(
    | a_0 |
    +
    \tfrac{3 | a_1 |}{2} 
  \right)
  \| v \|_H^2
\\ &
  +
  \left[ 
    \max_{ k \in \{0,1,2,3\} }
    \tfrac{ 5 | a_k | }{ \sqrt{\epsilon} }
  \right]^{\!2}
  \left[
    \| w_t \|_{ L^{\infty}( \lambda_{(0,1)} ; \R) }^8
    +
    2
  \right]
\\&+
  c
  \left[ 
    2 | a_0 |
    +
    | a_1 |
    +
    \sum_{ k = 0 }^3
    \sum_{ j = 0 }^{ \min\{k,2\} }
    \binom{k}{j}
    \frac{ 
      (3-j) | a_k | 
    }{ 
      2 | \eta(k,j) |^{\frac{4}{(3-j)} } 
    }
  \right]
  \left[
    \sup_{ s \in [0,T] }
    \| w_s \|_{ L^{\infty}( \lambda_{(0,1)} ; \R) }^8
    +
    1
  \right] .
\end{split}
\end{align}
Hence, we obtain that
for all $ N \in \N $,
$ v \in P_N(H) $, $ w \in \C([0,T], H_1) $,
$ t \in [0,T] $ we have that
\begin{align}
\begin{split}
 &\langle 
    v, 
    {\textstyle\sum_{ k = 0 }^3}
    a_k P_N [v + w_t]^k
  \rangle_{H_{\nicefrac{1}{2}}}
  +
  c
  \left[
    \sup_{ s \in [0,T] }
    \| w_s \|_{ L^{\infty}( \lambda_{(0,1)} ; \R) }^4
    +
    1
  \right]
  \langle 
    v, 
    \textstyle\sum_{ k = 0 }^3
    a_k [v + w_t]^k
  \rangle_H
\\&\leq
  \epsilon
  \| v \|_{H_1}^2
  +
  \left(
    | a_1 |
    +
    \frac{ 
      |a_2|^2 
    }{
      3 |a_3| + \one_{\{0\}}^{\R}(a_3)
    }
  \right)
  \| v \|_{ H_{\nicefrac{1}{2}} }^2
\\&\quad+
  c
  \left[
    \sup_{ s \in [0,T] }
    \| w_s \|_{ L^{\infty}( \lambda_{(0,1)} ; \R) }^4
    +
    1
  \right]
  \left(
    | a_0 |
    +
    \frac{3 |a_1|}{2}
  \right)
  \| v \|_H^2
\\&\quad+
  \left[ 
    \left[ 
      \max_{ k \in \{0,1,2,3\} }
      \tfrac{ 8 | a_k | }{ \sqrt{\epsilon} }
    \right]^{\!2} 
    +
    2c | a_0 |
    +
    c| a_1 |
    +
    c
    \sum_{ k = 0 }^3 \!
    \sum_{ j = 0 }^{ \min\{k,2\} } 
    \binom{k}{j}
    \frac{ 
      (3-j) | a_k | 
    }{ 
      2 | \eta(k,j) |^{\frac{4}{(3-j)} } 
    }
  \right]
\\&\quad\cdot
  \left[
    \sup_{ s \in [0,T] }
    \| w_s \|_{ L^{\infty}( \lambda_{(0,1)} ; \R) }^8
    +
    1
  \right] .
\end{split}
\end{align}
The proof of Lemma~\ref{lem:coercivity_mix} is thus completed.
\end{proof}

The next elementary lemma, Lemma~\ref{lem:F_lipschitz} below,
establishes a local Lipschitz estimate for the nonlinearity
$ F $ in Section~\ref{sec:examples_setting}.
Lemma~\ref{lem:F_lipschitz} is a slightly
modified version of Lemma~6.8 in~\cite{BeckerJentzen2016}.

\begin{lemma}
\label{lem:F_lipschitz}
Assume the setting in Section~\ref{sec:examples_setting}
and let $ q \in [6, \infty) $,
$ v, w \in L^q( \lambda_{ (0,1) }; \R ) $.
Then 
\begin{align}
\begin{split}
 &\|
    F(v) - F(w)
  \|_{ H }^2
  \leq
  36
  \left[ 
    \max_{j\in\{1,2,3\}}
    \left| a_j \right|
  \right]^2
  \|
    v-w
  \|_{ L^q( \lambda_{ (0,1) }; \R ) }^2
  \left(
    1
    +
    \| v \|_{ L^q( \lambda_{ (0,1) }; \R ) }^{4}
    +
    \| w \|_{ L^q( \lambda_{ (0,1) }; \R ) }^{4}
  \right).
\end{split}
\end{align}
\end{lemma}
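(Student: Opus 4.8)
The plan is to reduce the assertion to a pointwise inequality on $(0,1)$ and then conclude with a single application of H\"older's inequality. First I would use the elementary telescoping identity $x^k-y^k=(x-y)\sum_{j=0}^{k-1}x^j y^{k-1-j}$, valid for all $x,y\in\R$ and $k\in\N$, together with the definition $F(u)=\sum_{k=0}^{3}a_k u^k$, to obtain (for $\lambda_{(0,1)}$-a.e.\ argument) the factorization $F(v)-F(w)=\sum_{k=1}^{3}a_k(v^k-w^k)=(v-w)\,G$, where $G=a_1+a_2(v+w)+a_3(v^2+vw+w^2)$. I would observe that $G$ is a sum of exactly $6$ terms, each of the form $a_j$ (with $j\in\{1,2,3\}$) times a monomial in $v,w$ of degree at most $2$. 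I would also record at this stage that the hypotheses $v,w\in L^q(\lambda_{(0,1)};\R)$ and $q\geq 6$ guarantee $v^k,w^k\in L^2(\lambda_{(0,1)};\R)$ for $k\in\{0,1,2,3\}$, so that $F(v),F(w)\in H$ and all subsequent pointwise manipulations are legitimate.

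Next I would bound $G$ pointwise. Using $|v|\leq\max\{1,|v|^2\}$, $|w|\leq\max\{1,|w|^2\}$, and $|vw|\leq\tfrac12(|v|^2+|w|^2)\leq\max\{|v|^2,|w|^2\}$, each of the $6$ monomials constituting $G$ is bounded in absolute value by $[\max_{j\in\{1,2,3\}}|a_j|]\max\{1,|v|^2,|w|^2\}$, whence $|G|\leq 6\,[\max_{j\in\{1,2,3\}}|a_j|]\max\{1,|v|^2,|w|^2\}$ $\lambda_{(0,1)}$-a.e. Squaring, multiplying by $|v-w|^2$, using $(\max\{1,|v|^2,|w|^2\})^2=\max\{1,|v|^4,|w|^4\}$, and integrating over $(0,1)$ then gives
\[
  \|F(v)-F(w)\|_H^2
  \leq
  36\,\big[\textstyle\max_{j\in\{1,2,3\}}|a_j|\big]^2
  \int_0^1 |v-w|^2\,\max\{1,|v|^4,|w|^4\}\,d\lambda_{(0,1)} .
\]

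Finally I would apply H\"older's inequality to this integral with the conjugate exponents $\tfrac{q}{2}$ and $\tfrac{q}{q-2}$, which bounds the integral by $\|v-w\|_{L^q(\lambda_{(0,1)};\R)}^2\big(\int_0^1\max\{1,|v|^{4q/(q-2)},|w|^{4q/(q-2)}\}\,d\lambda_{(0,1)}\big)^{(q-2)/q}$. Since $\lambda_{(0,1)}$ is a probability measure, the subadditivity of $[0,\infty)\ni t\mapsto t^{(q-2)/q}$ and the inclusion $L^q(\lambda_{(0,1)};\R)\subseteq L^{4q/(q-2)}(\lambda_{(0,1)};\R)$ — which holds precisely because $q\geq 6$ forces $\tfrac{4q}{q-2}\leq q$ — bound this remaining factor by $1+\|v\|_{L^q(\lambda_{(0,1)};\R)}^4+\|w\|_{L^q(\lambda_{(0,1)};\R)}^4$. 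Combining the last two displays yields the claim.

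I expect the only genuinely delicate point to be keeping the constant equal to exactly $36$: this forces every pointwise estimate to be routed through $\max\{1,\cdot\}$ rather than through $1+(\cdot)$, since the latter route would introduce spurious multiplicative factors (e.g.\ from $(a+b+c)^2\leq 3(a^2+b^2+c^2)$). Everything else is routine bookkeeping, and the hypothesis $q\geq 6$ enters exactly once, to guarantee $\tfrac{4q}{q-2}\leq q$ in the final H\"older step.
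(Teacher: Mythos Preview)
Your proof is correct. The overall strategy---factor out $v-w$, control the cofactor, then separate the two via H\"older---is the same as the paper's, but you organize it differently. The paper uses the integral representation $x^k-y^k=k(x-y)\int_0^1(y+r(x-y))^{k-1}\,dr$ together with Jensen's inequality, applies H\"older \emph{term by term} to each $\|v^k-w^k\|_H$ (splitting $L^2$ into $L^q$ and $L^{2q/(q-2)}$), and only afterwards sums and passes to $\max\{1,\|v\|_{L^q}^2,\|w\|_{L^q}^2\}$ at the level of norms. You instead use the algebraic telescoping factorization, bound the full cofactor $G$ \emph{pointwise} by $6[\max_j|a_j|]\max\{1,|v|^2,|w|^2\}$, and apply H\"older only once at the end. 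Your route is a bit more direct and makes it transparent where the factor $6$ comes from (six monomials in $G$); the paper's route recovers the same $6$ as $\tfrac12\sum_{k=1}^3 k\cdot 2$. Both approaches use the hypothesis $q\geq 6$ in exactly the same place, namely to ensure $\tfrac{4q}{q-2}\leq q$ (equivalently $\tfrac{2q(k-1)}{q-2}\leq q$ for $k\leq 3$) so that the relevant $L^r$-norm on the probability space $(0,1)$ is dominated by the $L^q$-norm.
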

\begin{proof}[Proof of Lemma~\ref{lem:F_lipschitz}]
Observe that the 
fundamental theorem of
calculus and Jensen's inequality ensure for all 
$ k \in \N $, $ x,y \in \R $
that
\begin{align}
\begin{split}
  |
    x^k - y^k 
  | 
 &=
  \left|
    \int_0^1
    k
    \left( y + r\left( x-y \right) \right)^{(k-1)}
    \left( x-y \right) dr
  \right|
\\ &
  \leq
  k
  \left| x-y \right|
  \int_0^1
  \left|
    rx
    +
    \left(1-r\right)y
  \right|^{(k-1)} dr
\\&\leq
  k
  \left| x-y \right|
  \int_0^1
  \left(
    r \,
    | x |^{(k-1)}
    +
    \left(1-r\right)
    | y |^{(k-1)}
  \right) dr .
\end{split}
\end{align}
Combining this and H{\"o}lder's inequality
implies that
\begin{align}
\begin{split}
 &\left\|
    F(v) - F(w)
  \right\|_H
  =
  \left\|
    \textstyle\sum_{ k=0 }^3
    a_k
    \left( v^k - w^k \right)
  \right\|_H
  \leq 
  \sum_{ k=1 }^{ 3 }
  \left| a_k \right|
  \| v^k - w^k \|_H
\\&\leq
  \sum_{ k=1 }^3
  k \left| a_k \right|
  \int_0^1
  \left\|
    | v-w |
    \left(
      r \, | v |^{(k-1)}
      +
      \left(1-r\right)
      | w |^{(k-1)}
    \right)
  \right\|_H dr
\\&\leq
  \sum_{ k=1 }^3
  k \left| a_k \right|
  \int_0^1
  \|
    v-w
  \|_{ L^q( \lambda_{ (0,1) }; \R ) }
  \left\|
    r \, | v |^{(k-1)}
    +
    \left(1-r\right)
    | w |^{(k-1)}
  \right\|_{ L^{\nicefrac{2q}{(q-2)}}(\lambda_{ (0,1) }; \R ) } dr
\\&\leq
  \|
    v-w
  \|_{ L^q( \lambda_{ (0,1) }; \R ) }
  \Bigg[
    \left| a_1 \right|
\\&\quad+
    \sum_{ k=2 }^3
    k \left| a_k \right|
    \int_0^1
    \left(
      r \,
      \| v \|_{ L^{\nicefrac{2q(k-1)}{(q-2)}}(\lambda_{ (0,1) }; \R ) }^{(k-1)}
      +
      \left(1-r\right)
      \| w \|_{ L^{\nicefrac{2q(k-1)}{(q-2)}}(\lambda_{ (0,1) }; \R ) }^{(k-1)}
    \right) dr
  \Bigg] .
\end{split}
\end{align}
Again H{\"o}lder's inequality therefore demonstrates that
\begin{align}
\begin{split}
 &\|
    F(v) - F(w)
  \|_H
\\&\leq
  \|
    v-w
  \|_{ L^q( \lambda_{ (0,1) }; \R ) }
  \Bigg[
    \left| a_1 \right|
    +
    \frac{1}{2}
    \sum_{ k=2 }^3
    k \left| a_k \right|
    \left(
      \| v \|_{ L^{\nicefrac{2q(k-1)}{(q-2)}}(\lambda_{ (0,1) }; \R ) }^{(k-1)}
      +
      \| w \|_{ L^{\nicefrac{2q(k-1)}{(q-2)}}(\lambda_{ (0,1) }; \R ) }^{(k-1)}
    \right)
  \Bigg]
\\&\leq
  \|
    v-w
  \|_{ L^q( \lambda_{ (0,1) }; \R ) }
  \Bigg[
    \left| a_1 \right|
    +
    \frac{1}{2}
    \sum_{ k=2 }^3
    k \left| a_k \right|
    \left(
      \| v \|_{ L^q( \lambda_{ (0,1) }; \R ) }^{(k-1)}
      +
      \| w \|_{ L^q( \lambda_{ (0,1) }; \R ) }^{(k-1)}
    \right)
  \Bigg]
\\&\leq
  \frac{1}{2}
  \|
    v-w
  \|_{ L^q( \lambda_{ (0,1) }; \R ) }
  \left[
    \max_{ j \in \{1,2,3\} }
    \left| a_j \right|
  \right]
  \sum_{ k=1 }^3
  k
  \left(
    \| v \|_{ L^q( \lambda_{ (0,1) }; \R ) }^{(k-1)}
    +
    \| w \|_{ L^q( \lambda_{ (0,1) }; \R ) }^{(k-1)}
  \right)
\\&\leq
  3
  \|
    v-w
  \|_{ L^q( \lambda_{ (0,1) }; \R ) }
  \left[
    \max_{ j \in \{1,2,3\} }
    \left| a_j \right|
  \right]
  \left(
    \max\!\left\{ 
      1, \| v \|_{ L^q( \lambda_{ (0,1) }; \R ) }^{2}
    \right\}
    +
    \max\!\left\{ 
      1, \| w \|_{ L^q( \lambda_{ (0,1) }; \R ) }^{2}
    \right\}
  \right)
\\&\leq
  6
  \|
    v-w
  \|_{ L^q( \lambda_{ (0,1) }; \R ) }
  \left[
    \max_{ j \in \{1,2,3\} }
    \left| a_j \right|
  \right]
  \max\!\left\{ 
    1, 
    \| v \|_{ L^q( \lambda_{ (0,1) }; \R ) }^{2}, 
    \| w \|_{ L^q( \lambda_{ (0,1) }; \R ) }^{2}
  \right\} .
\end{split}
\end{align}
This completes the proof of Lemma~\ref{lem:F_lipschitz}.
\end{proof}

\subsection{Properties of 
linear stochastic heat equations}
In this subsection we present 
a few elementary regularity
and approximation results for 
linear stochastic heat equations;
see Lemmas~\ref{lem:O_existence}--\ref{lem:O_prop_3}
and Corollary~\ref{cor:O_properties} below.
Similar regularity
and approximation results for 
linear stochastic heat equations
can, e.g., be found in 
Hutzenthaler et al.~\cite[Lemma~5.6, Corollary~5.8, and Lemma~5.9]{HutzenthalerJentzenSalimova2016}.
The next lemma, Lemma~\ref{lem:normal_variables} below,
presents a well-known fact on centered and normally
distributed random variables. Lemma~\ref{lem:normal_variables}
is used in the proofs of 
Lemma~\ref{lem:O_prop_1}, Lemma~\ref{lem:O_sobo},
and Lemma~\ref{lem:O_prop_3} below.

\begin{lemma}
\label{lem:normal_variables}
Let $ p \in [0,\infty) $,
let $ (\Omega, \F, \P) $ be a probability space,
let $ Y \colon \Omega \rightarrow \R $ be a standard
normal random variable, and let
$ Z \colon \Omega \rightarrow \R $ be a centered and 
normally distributed
random variable. Then
\begin{align}
  \E\big[ 
    | Z |^p
  \big]
  =
  \E\big[ 
    | Y |^p
  \big]
  \big(
    \E\big[
      | Z |^2
    \big]
  \big)^{\nicefrac{p}{2}} .
\end{align}
\end{lemma}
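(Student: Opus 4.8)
The plan is to reduce the claim to the elementary homogeneity of $\L^p$-type moments under rescaling of a centered Gaussian. First I would record the distributional fact that an affine image of a normally distributed random variable is again normally distributed: if $Z$ is centered and normally distributed with $\E[|Z|^2] = \sigma^2 \in (0,\infty)$, then $\sigma^{-1} Z$ is centered and normally distributed with $\E\big[|\sigma^{-1} Z|^2\big] = \sigma^{-2} \E[|Z|^2] = 1$, i.e.\ $\sigma^{-1} Z$ is a standard normal random variable. Hence $|\sigma^{-1} Z|^p$ and $|Y|^p$ have the same distribution and therefore the same expectation, $\E\big[|\sigma^{-1} Z|^p\big] = \E\big[|Y|^p\big]$.

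Next I would distinguish two cases. In the degenerate case $\E[|Z|^2] = 0$ one has $Z = 0$ $\P$-a.s., so for $p \in (0,\infty)$ both sides of the asserted identity equal $0$, while for $p = 0$ both sides equal $1$ (with the usual convention $0^0 = 1$), so the identity holds. In the non-degenerate case $\sigma^2 := \E[|Z|^2] \in (0,\infty)$, I would multiply the equality $\E\big[|\sigma^{-1} Z|^p\big] = \E\big[|Y|^p\big]$ from the first step by $\sigma^p$ and use $|Z|^p = \sigma^p\, |\sigma^{-1} Z|^p$ together with $\sigma^p = \big(\E[|Z|^2]\big)^{\nicefrac{p}{2}}$ to obtain $\E\big[|Z|^p\big] = \big(\E[|Z|^2]\big)^{\nicefrac{p}{2}} \E\big[|Y|^p\big]$, which is the claim.

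There is essentially no obstacle in this argument; the only points requiring (entirely routine) care are the degenerate case $\E[|Z|^2] = 0$ and the boundary exponent $p = 0$, both settled by direct inspection. The substance of the proof is the one-line observation that dividing a centered Gaussian by its standard deviation yields a standard Gaussian, after which the transformation of the $p$-th absolute moment follows from homogeneity.
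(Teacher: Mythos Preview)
Your proof is correct and follows essentially the same approach as the paper: both reduce to the non-degenerate case $\E[|Z|^2]>0$ and use that $Z/\sqrt{\E[|Z|^2]}$ is standard normal together with the homogeneity $|Z|^p = \big(\E[|Z|^2]\big)^{p/2}\,|\sigma^{-1}Z|^p$. Your treatment of the degenerate case and of $p=0$ is slightly more explicit than the paper's ``otherwise the proof is clear,'' but the argument is the same.
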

\begin{proof}[Proof of Lemma~\ref{lem:normal_variables}]
Throughout this proof assume w.l.o.g.\
that $ \E[ |Z|^2 ] > 0 $ 
(otherwise the proof is clear).
Note that
\begin{align}
\begin{split}
 &\E\big[ 
    | Z |^p
  \big]
\\&=
  \E\Bigg[ 
    \left|
      \frac{ 
        Z
      }{
        \big( 
          \E\big[
            | Z |^2
          \big]
        \big)^{\!\nicefrac{1}{2}}
      } \,
      \big( 
        \E\big[
          | Z |^2
        \big]
      \big)^{\!\nicefrac{1}{2}}
    \right|^p
  \Bigg] 
  =
  \E\Bigg[ 
    \left|
      \frac{ 
        Z
      }{
        \big( 
          \E\big[
            | Z |^2
          \big]
        \big)^{\!\nicefrac{1}{2}}
      }
    \right|^p
  \Bigg]
  \big( 
    \E\big[
      | Z |^2
    \big]
  \big)^{\nicefrac{p}{2}}
\\&=
  \E\big[ 
    | Y |^p
  \big]
  \big(
    \E\big[
      | Z |^2
    \big]
  \big)^{\nicefrac{p}{2}} .
\end{split}
\end{align}
This completes the proof of Lemma~\ref{lem:normal_variables}.
\end{proof}

\begin{lemma}
\label{lem:O_existence}
Assume the setting in Section~\ref{sec:examples_setting},
let $ \gamma \in [0, \nicefrac{1}{4}) $, $ \beta \in (\nicefrac{1}{4},\nicefrac{1}{2}-\gamma) $,
$ B \in HS( H, H_{-\beta} ) $,
and let 
$ 
  \varphi 
  \colon 
  [0,T] \rightarrow [0,T]
$
be a $ \B([0,T]) / \B([0,T]) $-measurable 
function which 
satisfies for all $ t \in [0,T] $
that $ \varphi(t) \leq t $. Then there exists an
up to indistinguishability unique
stochastic process $ O \colon [0,T] \times \Omega \rightarrow H_{\gamma} $
with continuous sample paths which satisfies
for all $ t \in [0,T] $ that
\begin{equation}
  [ O_t ]_{ \P, \B(H) } 
  =
  \int_0^t
  e^{(t-\varphi(s))A}
  B \, dW_s .
\end{equation}
\end{lemma}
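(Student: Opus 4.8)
The plan is to follow the classical route for stochastic convolutions, adapted to the time change $\varphi$. First I would record the smoothing properties of the analytic semigroup generated by $A$: since the eigenvalues $(\mu_h)_{h\in\H}$ of $A$ satisfy $\sup_{h\in\H}\mu_h<0$, there exists for every $r\in[0,\infty)$ a real number $\mathfrak{c}_r\in(0,\infty)$ with $\|(-A)^r e^{tA}\|_{L(H)}\le\mathfrak{c}_r\max\{1,t^{-r}\}$ for all $t\in(0,\infty)$. Combining this with $\|(-A)^{\gamma}e^{uA}B\|_{HS(H,H)}=\|(-A)^{\gamma+\beta}e^{uA}(-A)^{-\beta}B\|_{HS(H,H)}\le\|(-A)^{\gamma+\beta}e^{uA}\|_{L(H)}\,\|B\|_{HS(H,H_{-\beta})}$ and the hypothesis $\varphi(s)\le s$ (so that $t-\varphi(s)\ge t-s$ whenever $s\le t$) shows for all $0\le s<t\le T$ that $\|e^{(t-\varphi(s))A}B\|_{HS(H,H_{\gamma})}\le\mathfrak{c}_{\gamma+\beta}\max\{1,T\}^{\gamma+\beta}(t-s)^{-(\gamma+\beta)}\|B\|_{HS(H,H_{-\beta})}$. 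Since $\gamma+\beta<\nicefrac{1}{2}$, the function $[0,t]\ni s\mapsto\|e^{(t-\varphi(s))A}B\|_{HS(H,H_{\gamma})}^2$ is integrable, and the same computation with $\gamma$ replaced by $\gamma+\delta$ for a fixed $\delta\in(0,\nicefrac{1}{2}-\gamma-\beta)$ is integrable as well; this legitimizes, for every $t\in[0,T]$, the $H_{\gamma+\delta}$-valued It\^o integral $O_t:=\int_0^te^{(t-\varphi(s))A}B\,dW_s$ (the integrand is deterministic, hence trivially progressively measurable, and by measurability of $\varphi$ and continuity of $(0,\infty)\ni u\mapsto e^{uA}B\in HS(H,H_{\gamma+\delta})$ it is strongly measurable in $s$).

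Next I would establish mean-square H\"older regularity. Using $t-\varphi(r)=(t-s)+(s-\varphi(r))$ with both summands nonnegative for $r\le s\le t$, the identity $\int_0^se^{(t-\varphi(r))A}B\,dW_r=e^{(t-s)A}\int_0^se^{(s-\varphi(r))A}B\,dW_r=e^{(t-s)A}O_s$ (the semigroup being a bounded operator, it commutes with the stochastic integral) yields the decomposition
\[
  O_t-O_s=(e^{(t-s)A}-\Id_H)O_s+\int_s^te^{(t-\varphi(r))A}B\,dW_r .
\]
The It\^o isometry and the Hilbert--Schmidt bound above give $\E[\|\int_s^te^{(t-\varphi(r))A}B\,dW_r\|_{H_\gamma}^2]\le\frac{\mathfrak{c}_{\gamma+\beta}^2\max\{1,T\}^{2(\gamma+\beta)}\|B\|_{HS(H,H_{-\beta})}^2}{1-2(\gamma+\beta)}(t-s)^{1-2(\gamma+\beta)}$, while $\|(-A)^{-\delta}(e^{(t-s)A}-\Id_H)\|_{L(H)}\le(t-s)^{\delta}$ together with $\sup_{r\in[0,T]}\E[\|O_r\|_{H_{\gamma+\delta}}^2]<\infty$ (a further application of the isometry) gives $\E[\|(e^{(t-s)A}-\Id_H)O_s\|_{H_\gamma}^2]\le(t-s)^{2\delta}\sup_{r\in[0,T]}\E[\|O_r\|_{H_{\gamma+\delta}}^2]$. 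Hence there are $\theta\in(0,\nicefrac{1}{2}]$ and $K\in(0,\infty)$ with $\E[\|O_t-O_s\|_{H_\gamma}^2]\le K|t-s|^{2\theta}$ for all $s,t\in[0,T]$. Since, for fixed $s$ and $t$, $O_t-O_s$ is a centered Gaussian random variable in the $\R$-Hilbert space $H_\gamma$, Lemma~\ref{lem:normal_variables} (applied to the one-dimensional projections, together with the equivalence of all Gaussian moments) upgrades this to $\E[\|O_t-O_s\|_{H_\gamma}^p]\le K_p|t-s|^{\theta p}$ for every $p\in[2,\infty)$ and suitable $K_p\in(0,\infty)$.

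Finally I would invoke the Kolmogorov--Chentsov continuity theorem: choosing $p$ so large that $\theta p>1$ produces a modification of $([O_t]_{\P,\B(H)})_{t\in[0,T]}$ with H\"older continuous sample paths taking values in $H_\gamma$; this modification is the asserted process $O$. Uniqueness up to indistinguishability then follows because any two such continuous modifications agree almost surely on the countable dense set $[0,T]\cap\mathbb{Q}$ and hence, by path continuity, everywhere on $[0,T]$. I expect the only genuinely delicate point to be the book-keeping around the time change $\varphi$: the hypothesis $\varphi(s)\le s$ must be used in two places, once to bound $t-\varphi(s)$ below by $t-s$ in the Hilbert--Schmidt estimate and once to ensure that $t-\varphi(r)=(t-s)+(s-\varphi(r))$ is a sum of nonnegative reals so that the semigroup splitting, and therefore the increment decomposition displayed above, is valid; beyond that the argument is the standard factorization/Kolmogorov machinery (cf., e.g., \cite[Lemma~5.6, Corollary~5.8, and Lemma~5.9]{HutzenthalerJentzenSalimova2016}).
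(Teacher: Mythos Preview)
Your argument is correct and follows essentially the same route as the paper: establish $L^p$-H\"older estimates on the increments of the stochastic convolution and then invoke the Kolmogorov--Chentsov theorem. The decomposition you write, $O_t-O_s=(e^{(t-s)A}-\Id_H)O_s+\int_s^t e^{(t-\varphi(r))A}B\,dW_r$, is exactly the splitting the paper performs at the level of the Hilbert--Schmidt integrand (their two integrals $\int_s^t\|\cdot\|^2\,du$ and $\int_0^s\|e^{(s-\varphi(u))A}(e^{(t-s)A}-\Id_H)B\|^2\,du$ are the It\^o-isometry images of your two terms). The only substantive difference is how the $L^2$ bound is upgraded to $L^p$: the paper applies the Burkholder--Davis--Gundy inequality (Da~Prato--Zabczyk, Lemma~7.7) directly to the single stochastic integral representing $O_t-O_s$, whereas you appeal to Gaussianity. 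Your route is fine here since the integrand is deterministic, but note that Lemma~\ref{lem:normal_variables} is a one-dimensional statement, so the passage to $\E[\|O_t-O_s\|_{H_\gamma}^p]\le K_p(\E[\|O_t-O_s\|_{H_\gamma}^2])^{p/2}$ really requires the Hilbert-space Gaussian moment equivalence (Fernique/Kahane--Khintchine); the BDG argument in the paper sidesteps this and would also cover non-deterministic integrands.
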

\begin{proof}[Proof of Lemma~\ref{lem:O_existence}]
Throughout this proof let 
$ \varepsilon \in (0, \nicefrac{1}{2}-\gamma-\beta) $,
$ p \in (\nicefrac{1}{\varepsilon}, \infty) $ be real numbers.
Note for all $ s \in [0,T) $, $ t \in (s, T] $ that 
\begin{align}
\begin{split}
 &\int_0^t
  \left\|
    e^{(t-\varphi(u))A} B 
    -
    \one_{ (-\infty,s) }^{\R}(u) \,
    e^{(s-\varphi(u))A} B
  \right\|_{ HS( H, H_{\gamma} ) }^2 du  
\\&=
  \int_s^t
  \left\|
    e^{(t-\varphi(u))A} B
  \right\|_{ HS( H, H_{\gamma} ) }^2 du
  +
  \int_0^s
  \left\|
    \left(
      e^{(t-\varphi(u))A}
      -
      e^{(s-\varphi(u))A}
    \right) \! B
  \right\|_{ HS(H,H_{\gamma} ) }^2 du
\\&\leq
  \left\|
    B
  \right\|_{ HS(H, H_{-\beta}) }^2
  \bigg[
    \int_s^t
    \left\|
      e^{(t-\varphi(u))A}
    \right\|_{ L( H_{-\beta}, H_{\gamma} ) }^2 du
\\&+
    \int_0^s
    \left\|
      e^{(s-\varphi(u))A}
      \left(
        e^{(t-s)A}
        -
        \Id_H
      \right)
    \right\|_{ L( H_{-\beta}, H_{\gamma} ) }^2 du
  \bigg]
\\&\leq
  \left\|
    B
  \right\|_{ HS(H, H_{-\beta}) }^2
  \bigg[
    \int_s^t
    \left\|
      (-A)^{(\gamma+\beta)}
      e^{(t-u)A}
    \right\|_{ L(H) }^2
    \left\|
      e^{(u-\varphi(u))A}
    \right\|_{ L(H) }^2 du
\\&+
    \int_0^s
    \left\|
      (-A)^{(\gamma+\beta+\varepsilon)}
      e^{ (s-u)A }
    \right\|_{ L(H) }^2
    \left\|
      e^{ (u-\varphi(u))A }
    \right\|_{ L(H) }^2
    \left\|
      (-A)^{-\varepsilon}
      (
	e^{(t-s)A}
	-
	\Id_H
      )
    \right\|_{ L(H) }^2 du
  \bigg] .
\end{split}
\end{align}
The fact that 
$
  \forall \,
  t \in [0,\infty)
$,
$ 
  r \in [0,1]
  \colon
  \|
    (-tA)^r
    e^{tA}
  \|_{ L(H) }
  \leq
  1
$
and the fact that
$
  \forall \,
  t \in (0,\infty)
$,
$ 
  r \in [0,1]
  \colon
  \|
    (-tA)^{-r}
    (
    e^{tA} - \Id_H )
  \|_{ L(H) }
  \leq
  1
$
hence prove for all $ s \in [0,T) $, $ t \in (s,T] $ that
\begin{align}
\label{eq:O_existence_1}
\begin{split}
 &\int_0^t
  \left\|
    e^{(t-\varphi(u))A} B 
    -
    \one_{ (-\infty,s) }^{\R}(u) \,
    e^{(s-\varphi(u))A} B
  \right\|_{ HS( H, H_{\gamma} ) }^2 du  
\\&\leq
  \left\|
    B
  \right\|_{ HS(H, H_{-\beta}) }^2
  \bigg[
    \int_s^t
    \left(t-u\right)^{-2(\gamma+\beta)} du
    +
    \int_0^s
    \left( s-u \right)^{-2(\gamma+\beta+\varepsilon)}
    \left( t-s \right)^{2\varepsilon} du
  \bigg]
\\&=
  \left\|
    B
  \right\|_{ HS(H, H_{-\beta}) }^2  
  \bigg[
    \frac{(t-s)^{(1-2\gamma-2\beta)} }{ (1-2\gamma-2\beta) }
    +
    \frac{ s^{(1-2\gamma-2\beta-2\varepsilon)} (t-s)^{2\varepsilon}}{ (1-2\gamma-2\beta-2\varepsilon) }
  \bigg]
\\&\leq
  \frac{ 
    2
    \left(t-s\right)^{2\varepsilon}
    \max\{ 1, T \}
  }{
    (1-2\gamma-2\beta-2\varepsilon) 
  }
  \left\|
    B
  \right\|_{ HS(H, H_{-\beta}) }^2  
  <
  \infty .
\end{split}
\end{align}
This implies that for all $ t \in [0,T] $
we have that
$
  \int_0^t
  \|
    e^{(t-\varphi(u))A} B 
  \|_{ HS( H, H_{\gamma} ) }^2 \, du
  <
  \infty 
$. 
Hence, there exits
a stochastic process
$ \tilde{O} \colon [0,T] \times \Omega \rightarrow H_{\gamma} $
which satisfies for all $ t \in [0,T] $ that
\begin{align}
  [
    \tilde{O}_t
  ]_{ \P, \B(H) }
  =
  \int_0^t
  e^{(t-\varphi(s))A} B \, dW_s .
\end{align}
Moreover, note that
the Burkholder-Davis-Gundy type inequality
in Lemma~7.7 in Da Prato \& Zabcyk~\cite{dz92}
and~\eqref{eq:O_existence_1} demonstrate that 
for all $ s \in [0,T) $, $ t \in (s,T] $ 
we have that
\begin{align}
\begin{split}
 &\|
    \tilde{O}_t
    -
    \tilde{O}_s
  \|_{ \L^p(\P; H_{\gamma} ) }^2
\\&=
  \left\|
    \int_0^t
    \Big[ 
      e^{(t-\varphi(u))A} 
      -
      \one_{ (-\infty,s) }^{\R}(u) \,
      e^{(s-\varphi(u))A} 
    \Big] B \, dW_u
  \right\|_{ L^p(\P; H_{\gamma} ) }^2
\\&\leq
  \frac{p(p-1)}{2}
  \int_0^t
  \left\|
    e^{(t-\varphi(u))A} B
    -
    \one_{ (-\infty,s) }^{\R}(u) \,
    e^{(s-\varphi(u))A} B
  \right\|_{ HS( H, H_{\gamma} ) }^2 du  
\\&\leq
  \frac{ 
    p(p-1)
    \left(t-s\right)^{2\varepsilon}
    \max\{ 1, T \}
  }{
    (1-2\gamma-2\beta-2\varepsilon) 
  }
  \left\|
    B
  \right\|_{ HS(H, H_{-\beta}) }^2  
  <
  \infty .
\end{split}
\end{align}
The Kolmogorov-Chentsov theorem and the 
fact that $ p\varepsilon > 1 $
therefore imply that there exists an 
up to indistinguishability unique stochastic
process $ O \colon [0,T] \times \Omega \rightarrow H_{\gamma} $
with continuous sample paths which satisfies
for all $ t \in [0,T] $ that
\begin{equation}
  [
    O_t
  ]_{ \P, \B(H) }
  =
  \int_0^t
  e^{(t-\varphi(s))A} B \, dW_s .
\end{equation}
The proof of Lemma~\ref{lem:O_existence} is thus completed. 
\end{proof}

\begin{lemma}
\label{lem:O_prop_1}
Assume the setting in Section~\ref{sec:examples_setting}
and let $ p,q \in [2,\infty) $, 
$ \theta \in [\nicefrac{1}{4}-\nicefrac{1}{2q},\nicefrac{1}{4}) $, 
$ \xi \in \L^p( \P; H_{2\theta} ) $.
Then there exists a stochastic process
$ O \colon [0,T] \times \Omega \rightarrow L^q(\lambda_{(0,1)};\R) $
with continuous sample paths which satisfies 
\begin{enumerate}[(i)]
 \item\label{it:O_prop_1_1} that for all $ t \in [0,T] $
 we have that
 $
  [ 
    O_t - e^{tA} \xi
  ]_{ \P, \B(H) }
  =
  \int_0^t
  e^{(t-s)A} \, dW_s
 $ and
 \item\label{it:O_prop_1_2} that
 \begin{align} 
 \begin{split}
  &\adjustlimits
   \sup_{ N \in \N }
   \sup_{ 0 \leq s < t \leq T }
   \left(
     \frac{ 
       \| 
         P_{N} (O_t - O_{s} )      
       \|_{ \L^p( \P; L^{q}( \lambda_{(0,1)} ; \R ) ) }
     }{ (t-s)^{\theta} }
   \right)
\\&+
   \adjustlimits
   \sup_{ N \in \N }
   \sup_{ t \in [0,T] }
   \big(
     N^{2\theta} \,
     \| 
       O_t
       -
       P_{N}
       O_t
     \|_{ \L^p( \P; L^{q}( \lambda_{(0,1)} ; \R ) ) }
   \big)
   <
   \infty .
 \end{split}
 \end{align}
\end{enumerate}
\end{lemma}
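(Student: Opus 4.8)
The plan is to construct $ O $ as the sum $ O_t = e^{tA}\xi + \mathcal{O}_t $ of the deterministic term and the stochastic convolution $ \mathcal{O}_t = \int_0^t e^{(t-s)A} \, dW_s $, and to bound the two contributions to the quantity in~\eqref{it:O_prop_1_2} separately. To set things up, first fix real numbers $ \gamma \in ( \theta, \nicefrac{1}{4} ) $ and $ \beta \in ( \nicefrac{1}{4}, \nicefrac{1}{2} - \gamma ) $. Since the eigenvalues $ \mu_k = - \nu \pi^2 k^2 $, $ k \in \N $, of $ A $ satisfy $ \| \Id_H \|_{ HS( H, H_{-\beta} ) }^2 = \sum_{ k \in \N } | \mu_k |^{-2\beta} < \infty $, Lemma~\ref{lem:O_existence} (applied with $ B = \Id_H $ and with $ \varphi = ( [0,T] \ni t \mapsto t \in [0,T] ) $) yields an up to indistinguishability unique stochastic process $ \mathcal{O} \colon [0,T] \times \Omega \rightarrow H_{\gamma} $ with continuous sample paths and $ [ \mathcal{O}_t ]_{ \P, \B(H) } = \int_0^t e^{(t-s)A} \, dW_s $ for all $ t \in [0,T] $. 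Using the one-dimensional Sobolev-type embedding $ H_r \subseteq L^q( \lambda_{(0,1)}; \R ) $ continuously for every $ r \in [ \nicefrac{1}{4} - \nicefrac{1}{2q}, \infty ) $ — which applies to $ r \in \{ \theta, \gamma \} $ by the hypothesis $ \theta \geq \nicefrac{1}{4} - \nicefrac{1}{2q} $ — I obtain that $ \mathcal{O} $ has continuous sample paths in $ L^q( \lambda_{(0,1)}; \R ) $ and that $ [0,T] \ni t \mapsto e^{tA}\xi \in H_{2\theta} \subseteq L^q( \lambda_{(0,1)}; \R ) $ is continuous (because $ \xi \in H_{2\theta} $ and $ ( e^{tA} )_{ t \geq 0 } $ restricts to a strongly continuous semigroup of contractions on each interpolation space). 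Hence $ O := ( [0,T] \times \Omega \ni (t,\omega) \mapsto e^{tA}\xi(\omega) + \mathcal{O}_t(\omega) \in L^q( \lambda_{(0,1)}; \R ) ) $ is a stochastic process with continuous sample paths satisfying~\eqref{it:O_prop_1_1}.

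For the deterministic term I would combine the embedding $ H_{\theta} \subseteq L^q( \lambda_{(0,1)}; \R ) $ with the elementary smoothing bounds $ \| (-A)^{-r} ( e^{hA} - \Id_H ) \|_{ L(H) } \leq h^r $ and $ \| (-A)^r e^{hA} \|_{ L(H) } \leq 1 $ for $ r \in [0,1] $, $ h \in [0,\infty) $, with $ \| ( \Id_H - P_N ) v \|_{ H_{\theta} } \leq | \mu_{N+1} |^{-\theta} \| v \|_{ H_{2\theta} } $, and with $ | \mu_{N+1} | \geq \nu \pi^2 N^2 $; since $ \theta \leq 1 $ this produces, for all $ 0 \leq s < t \leq T $ and $ N \in \N $, bounds of the form $ \| ( e^{tA} - e^{sA} ) \xi \|_{ L^q( \lambda_{(0,1)}; \R ) } \leq C ( t - s )^{\theta} \| \xi \|_{ H_{2\theta} } $ and $ \| ( \Id_H - P_N ) e^{tA}\xi \|_{ L^q( \lambda_{(0,1)}; \R ) } \leq C N^{-2\theta} \| \xi \|_{ H_{2\theta} } $. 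Taking $ \L^p( \P; \R ) $-norms shows that the deterministic term contributes at most a constant multiple of $ \| \xi \|_{ \L^p( \P; H_{2\theta} ) } $ to both suprema in~\eqref{it:O_prop_1_2}.

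For the stochastic term I would exploit Gaussianity together with the explicit sine basis. Writing $ \beta_k \colon [0,T] \times \Omega \rightarrow \R $, $ k \in \N $, for the independent standard Brownian motions associated to $ W $, one has $ \mathcal{O}_t = \sum_{ k \in \N } \big( \int_0^t e^{\mu_k ( t - s )} \, d\beta_k(s) \big) e_k $ $ \P $-a.s.\ in $ H $, and for $ \lambda_{(0,1)} $-almost every $ x \in (0,1) $ the real-valued random variable obtained by evaluating (a representative of) $ ( \Id_H - P_N ) \mathcal{O}_t $ at $ x $ is centered and normally distributed with variance at most $ 2 \sum_{ k > N } \int_0^t e^{ 2 \mu_k ( t - s ) } \, ds $, and similarly for $ P_N ( \mathcal{O}_t - \mathcal{O}_s ) $ at $ x $ with variance at most $ 2 \sum_{ k \leq N } \big( \int_s^t e^{ 2 \mu_k ( t - u ) } \, du + \int_0^s e^{ 2 \mu_k ( s - u ) } ( e^{ \mu_k ( t - s ) } - 1 )^2 \, du \big) $ (here I used $ | e_k(x) |^2 = 2 \sin^2( k \pi x ) \leq 2 $). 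Lemma~\ref{lem:normal_variables} reduces the $ \L^p( \P; \R ) $-norms of these variables to $ ( p, \theta ) $-dependent (but $ x, N, s, t $-independent) constants times their $ \L^2( \P; \R ) $-norms, so it remains to estimate the above variances. Using $ \int_0^t e^{ 2 \mu_k ( t - s ) } \, ds \leq \tfrac{ 1 }{ 2 | \mu_k | } $, the elementary inequalities $ 1 - e^{-y} \leq \min\{ 1, y \} \leq y^{2\theta} $ for $ y \in [0,\infty) $ (valid because $ 2\theta \leq 1 $), and the summability $ \sum_{ k \in \N } | \mu_k |^{2\theta - 1} = ( \nu \pi^2 )^{2\theta-1} \sum_{ k \in \N } k^{4\theta - 2} < \infty $ (valid because $ \theta < \nicefrac{1}{4} $), one gets that the first variance is $ \leq C N^{-1} $ and the second is $ \leq C ( t - s )^{2\theta} $, uniformly in $ N \in \N $, $ 0 \leq s \leq t \leq T $, and $ x \in (0,1) $. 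Since $ \lambda_{(0,1)} $ is a probability measure, Minkowski's integral inequality (in the case $ q \geq p $) respectively the inequality $ \| \cdot \|_{ L^q( \lambda_{(0,1)}; \R ) } \leq \| \cdot \|_{ L^p( \lambda_{(0,1)}; \R ) } $ together with Tonelli's theorem (in the case $ q < p $) converts these uniform pointwise-in-$ x $ bounds into $ \| ( \Id_H - P_N ) \mathcal{O}_t \|_{ \L^p( \P; L^q( \lambda_{(0,1)}; \R ) ) } \leq C N^{-\nicefrac{1}{2}} \leq C N^{-2\theta} $ and $ \| P_N ( \mathcal{O}_t - \mathcal{O}_s ) \|_{ \L^p( \P; L^q( \lambda_{(0,1)}; \R ) ) } \leq C ( t - s )^{\theta} $. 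Combining this with the deterministic estimate, dividing by $ ( t - s )^{\theta} $ respectively multiplying by $ N^{2\theta} $, and using $ \sup_{ N \in \N } N^{2\theta - \nicefrac{1}{2}} < \infty $ (again $ \theta < \nicefrac{1}{4} $) then establishes~\eqref{it:O_prop_1_2}.

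The step I expect to be the main obstacle is the passage from the Hilbert-space / pointwise-in-$ x $ estimates to the $ \L^p( \P; L^q( \lambda_{(0,1)}; \R ) ) $-estimates for the stochastic convolution: one cannot simply dominate the $ L^q $-norm of a spectral tail of $ \mathcal{O}_t $ by an $ H_r $-norm with $ r $ large enough to beat $ N^{-2\theta} $, because for $ q > 4 $ such an $ r $ would have to exceed $ \nicefrac{1}{4} $, which is precisely where the series $ \sum_{ k \in \N } | \mu_k |^{2 r - 1} $ governing the spatial and temporal regularity of $ \mathcal{O} $ diverges. The resolution is to integrate the sharp \emph{pointwise} Gaussian variance bound (which only uses $ | e_k |^2 \leq 2 $) and to handle the order of integration of $ \L^p( \P ) $ and $ L^q( \lambda_{(0,1)} ) $ via Minkowski's integral inequality. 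A secondary point is the bookkeeping of the two competing constraints on $ \theta $: the lower bound $ \theta \geq \nicefrac{1}{4} - \nicefrac{1}{2q} $ is forced by the Sobolev embedding $ H_{\theta} \subseteq L^q( \lambda_{(0,1)}; \R ) $ and by the $ N^{-2\theta} $-rate coming from $ \xi \in H_{2\theta} $, whereas the upper bound $ \theta < \nicefrac{1}{4} $ is forced by the summability of $ \sum_{ k \in \N } | \mu_k |^{2\theta - 1} $; in fact the stochastic convolution alone is smoother than the statement requires, and the exponents in~\eqref{it:O_prop_1_2} are dictated by the deterministic initial datum $ e^{tA}\xi $.
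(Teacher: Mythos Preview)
Your proposal is correct and follows essentially the same route as the paper: decompose $O_t = e^{tA}\xi + \mathcal{O}_t$, handle the deterministic part via the semigroup smoothing estimates and the Sobolev embedding $H_\theta \subseteq L^q$, and handle the stochastic convolution by reducing the $\L^p(\P;L^q)$-norm to pointwise-in-$x$ Gaussian variance bounds (using $|e_k(x)|^2\le 2$) combined with Lemma~\ref{lem:normal_variables} and the summability $\sum_k |\mu_k|^{2\theta-1}<\infty$. The only stylistic difference is that the paper unifies your two cases $q\ge p$ and $q<p$ by passing at the outset to $\tilde p=\max\{p,q\}$ and using Jensen's inequality $(\int_0^1|f|^q)^{\tilde p/q}\le\int_0^1|f|^{\tilde p}$ on the probability space $((0,1),\lambda_{(0,1)})$, whereas you invoke Minkowski's integral inequality in one case and the monotonicity $\|\cdot\|_{L^q}\le\|\cdot\|_{L^p}$ in the other; both arguments are equivalent.
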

\begin{proof}[Proof of Lemma~\ref{lem:O_prop_1}]
Throughout this proof let $ Y \colon \Omega \rightarrow \R $ be a standard 
normal random variable and let $ \tilde{p} = \max\{ p, q \} $,
$ \beta \in (\nicefrac{1}{4},\nicefrac{1}{2}-\theta) $,
$ (\mu_k)_{k\in\N} \subseteq \R $ satisfy
for all $ k \in \N $ that
$
  \mu_k = \nu k^2 \pi^2
$.
Note that the fact that 
\begin{equation}
  \forall \,
  t \in [0,\infty),
  r \in [0,1]
  \colon
  \|
    (-tA)^r
    e^{tA}
  \|_{ L(H) }
  \leq
  1 ,
\end{equation}
the fact that
\begin{equation}
  \forall \,
  t \in (0,\infty),
  r \in [0,1]
  \colon
  \|
    (-tA)^{-r}
    (
    e^{tA} - \Id_H )
  \|_{ L(H) }
  \leq
  1 ,
\end{equation}
and the assumption that
$ \xi \in \L^p( \P; H_{2\theta} ) $
assure that for all $ s \in [0,T) $, $ t \in (s,T] $ 
we have that
\begin{align}
\label{eq:O_prop_1_xi}
\begin{split}
  \|
    e^{tA} \xi
    -
    e^{sA} \xi
  \|_{ \L^p( \P; H_{\theta} ) }
 &\leq
  \|
    e^{sA}
  \|_{ L(H) } \,
  \|
    (
      e^{(t-s)A}
      -
      \Id_H
    ) \xi
  \|_{ \L^p( \P; H_{\theta} ) }
\\&\leq
  \|
    (-A)^{-\theta}
    (
      e^{(t-s)A}
      -
      \Id_H
    )
  \|_{ L(H) } \,
  \|
    \xi
  \|_{ \L^p( \P; H_{2\theta} ) }
\\&\leq
  (t-s)^{\theta} \,
  \|
    \xi
  \|_{ \L^p( \P; H_{2\theta } ) }
  <
  \infty .
\end{split}
\end{align}
In addition, observe that the fact that
$ A \colon D(A) \subseteq H \rightarrow H $
is the generator of a strongly continuous semigroup
and the fact that
\begin{equation}
  \forall \, t \in [0,\infty)
  \colon
  \| e^{tA} \|_{L(H)} \leq 1
\end{equation}
prove that for all $ \omega \in \Omega $, $ t \in [0,T] $ 
we have that
\begin{align}
\label{eq:O_prop_1_xi_2}
\begin{split}
 &\limsup_{
    \substack{ (t_1,t_2)\rightarrow(t,t) , \\ (t_1,t_2) \in [0,t]\times[t,T] } 
  }
  \|
    e^{t_2 A} \xi(\omega)
    -
    e^{t_1 A} \xi(\omega)
  \|_{ H_{\theta} }
\\&\leq
  \limsup_{
    \substack{ (t_1,t_2)\rightarrow(t,t) , \\ (t_1,t_2) \in [0,t]\times[t,T] } 
  }
  \left[ 
  \| e^{t_1 A} \|_{L(H)} \,
  \|
    ( e^{(t_2-t_1)A} - \Id_H) \xi(\omega)
  \|_{ H_{\theta} }
  \right]
\\&\leq
  \limsup_{
    \substack{ (t_1,t_2)\rightarrow(t,t) , \\ (t_1,t_2) \in [0,t]\times[t,T] } 
  }
  \|
    ( e^{(t_2-t_1)A} - \Id_H) (-A)^{\theta} \xi(\omega)
  \|_{ H }
  =
  0 .
\end{split}
\end{align}
Moreover, observe that the fact that $ 4\beta > 1 $ shows that
\begin{align}
\label{eq:O_prop_1_Id}
\begin{split}
  \sum_{ k=1 }^{ \infty }
  \|
    e_k
  \|_{ H_{-\beta} }^2
 &=
  \sum_{ k=1 }^{ \infty }
  \|
    (-A)^{-\beta}
    e_k
  \|_{ H }^2
  =
  \sum_{ k=1 }^{ \infty }
  |
    (\nu \pi^2 k^2)^{-\beta}
  |^2
  =
  \sum_{ k=1 }^{ \infty }
  \frac{ 1 }{ \left( \sqrt{\nu} k \pi\right)^{4\beta} }
\\&=
  \frac{ 1 }{ \left( \pi \sqrt{\nu} \right)^{4\beta} }
  \left[ 
    \sum_{ k=1 }^{ \infty }
    \frac{1}{k^{4\beta}}
  \right] 
  <
  \infty .
\end{split}
\end{align}
This allows us to apply Lemma~\ref{lem:O_existence}
(with $ \gamma = \theta $, $ \beta = \beta $,
$ B = ( H \ni v \mapsto v \in H_{-\beta} ) $,
$ \varphi = [0,T] \ni t \mapsto t \in [0,T] $
in the notation of Lemma~\ref{lem:O_existence})
to obtain that there exists an up to indistinguishability unique 
stochastic process $ \tilde{O} \colon [0,T] \times \Omega \rightarrow H_{\theta} $
with continuous sample paths which satisfies for all $ t \in [0,T] $ that 
\begin{align}
\label{eq:O_helper_1}
  [ \tilde{O}_t ]_{ \P, \B(H) } = \int_0^t e^{(t-s)A} \, dW_s .
\end{align}
Next note that~\eqref{eq:O_prop_1_xi_2}
and the fact that
\begin{equation}
\label{eq:O_prop_1_sobolev}
  H_{\theta}
  \subseteq
  W^{2\theta,2}( (0,1), \R )
  \subseteq 
  W^{0,q}( (0,1), \R )
  =
  L^q( \lambda_{ (0,1) }; \R )
\end{equation}
continuously
(cf., e.g., Da Prato \& Zabcyk~\cite[(A.46) in Section~A.5.2]{dz92}
and Lunardi~\cite{l09})
ensure that there exists a stochastic process
$ O \colon [0,T] \times \Omega \rightarrow L^q( \lambda_{ (0,1) }; \R ) $
with continuous sample paths which satisfies 
for all $ t \in [0,T] $, $ \omega \in \Omega $
that
\begin{align}
\label{eq:O_helper_2}
  O_t(\omega) = e^{tA} \xi(\omega) + \tilde{O}_t(\omega) .
\end{align}
Combining~\eqref{eq:O_helper_2} with~\eqref{eq:O_helper_1}
demonstrates that for all $ t \in [0,T] $ we have that
\begin{equation}
\label{eq:O_helper_3}
  [ O_t - e^{tA} \xi ]_{ \P, \B(H) } = \int_0^t e^{(t-s)A} \, dW_s .
\end{equation}
In addition, note that 
H{\"o}lder's inequality,
Fubini's theorem,
and, e.g., Lemma~\ref{lem:normal_variables}
show for all
$ s \in [0,T) $, $ t \in (s,T] $,
$ N \in \N $ that
\begin{align}
\begin{split}
  \E\!\left[
    \| P_N (\tilde{O}_t-\tilde{O}_s) \|_{ L^q( \lambda_{(0,1)}; \R ) }^{\tilde{p}}
  \right]
&=
  \E\!\left[
    \left|
      \int_0^1
      \big|
	\underline{P_N \tilde{O}_t}(x)
	-
	\underline{P_N \tilde{O}_s}(x)
      \big|^q \, dx
    \right|^{ \nicefrac{\tilde{p}}{q} }
  \right]
\\ &
  \leq
  \E\!\left[
    \int_0^1
    \big|
      \underline{P_N \tilde{O}_t}(x)
      -
      \underline{P_N \tilde{O}_s}(x)
    \big|^{\tilde{p}} \, dx
  \right]
\\&=
  \int_0^1
  \E\!\left[
    \left|
      \underline{P_N \tilde{O}_t}(x)
      -
      \underline{P_N \tilde{O}_s}(x)
    \right|^{\tilde{p}}
  \right] dx
\\ &
  =
  \E\!\left[
    \left| Y \right|^{\tilde{p}}
  \right]
  \int_0^1
  \left(
    \E\!\left[
      \left|
	\underline{P_N \tilde{O}_t}(x)
	-
	\underline{P_N \tilde{O}_s}(x)
      \right|^2    
    \right]
  \right)^{ \!\nicefrac{{\tilde{p}}}{2} } dx .
\end{split}
\end{align}
This implies for all
$ s \in [0,T) $, $ t \in (s,T] $,
$ N \in \N $ that
\begin{align}
\label{eq:PIO_diff_1}
\begin{split}
 &\E\!\left[
    \| P_N (\tilde{O}_t-\tilde{O}_s) \|_{ L^q( \lambda_{(0,1)}; \R ) }^{\tilde{p}}
  \right]
\\&\leq 
  \E\!\left[
    \left| Y \right|^{\tilde{p}}
  \right]
\\&\cdot
  \int\limits_0^1
  \left(
    \E\!\left[
      \left|
        \sum_{ k=1 }^N
	\underline{e_k}(x)
	\left( 
	  \int_0^t
	  e^{-\mu_k(t-u)} \left<e_k,dW_u\right>_H
	  -
	  \int_0^s
	  e^{-\mu_k(s-u)} \left<e_k,dW_u\right>_H
	\right)
      \right|^2    
    \right]
  \right)^{ \!\!\nicefrac{\tilde{p}}{2} } dx
\\&=
  \E\!\left[
    \left| Y \right|^{\tilde{p}}
  \right]
\\&\cdot
  \int\limits_0^1
  \left(
    \sum_{ k=1 }^N
    \left| \underline{e_k}(x) \right|^2
    \E\!\left[
      \left| 
	\int_0^t
	e^{-\mu_k(t-u)} \left<e_k,dW_u\right>_H
	-
	\int_0^s
	e^{-\mu_k(s-u)} \left<e_k,dW_u\right>_H
      \right|^2    
    \right]
  \right)^{ \!\!\nicefrac{\tilde{p}}{2} } dx .
\end{split}
\end{align}
Moreover, note that
It{\^o}'s isometry yields 
for all
$ s \in [0,T) $, $ t \in (s,T] $,
$ k \in \N $ that
\begin{align}
\begin{split}
 &\E\!\left[
    \left| 
      \int_0^t
      e^{-\mu_k(t-u)} \left<e_k,dW_u\right>_H
      -
      \int_0^s
      e^{-\mu_k(s-u)} \left<e_k,dW_u\right>_H
    \right|^2    
  \right]
\\&=
  \E\!\left[
    \left| 
      \int_s^t
      e^{-\mu_k(t-u)} \left<e_k,dW_u\right>_H
      +
      \left(
	e^{-\mu_k(t-s)}
	-
	1
      \right)
      \int_0^s
      e^{-\mu_k(s-u)} \left<e_k,dW_u\right>_H
    \right|^2    
  \right]
\\&=
  \E\!\left[
    \left| 
      \int_s^t
      e^{-\mu_k(t-u)} \left<e_k,dW_u\right>_H
    \right|^2    
  \right]
\\ &
\quad
  +
  \left(
    e^{-\mu_k(t-s)}
    -
    1
  \right)^2
  \E\!\left[
    \left| 
      \int_0^s
      e^{-\mu_k(s-u)} \left<e_k,dW_u\right>_H
    \right|^2    
  \right]
\\&=
  \int_s^t
  e^{-2\mu_k(t-u)} \, du
  +
  \left(
    e^{-\mu_k(t-s)}
    -
    1
  \right)^2
  \int_0^s
  e^{-2\mu_k(s-u)} \, du .
\end{split}
\end{align}
The fact that
\begin{align}
\begin{split}
  \sup_{ x \in (0,\infty) }
  \left(
    x^{-1} 
    \left( 
      1 - e^{-x }
    \right)
  \right)
 &=
  \sup_{ x \in (0,\infty) }
  \left(
    x^{-1}
    \int_0^x
    e^{-s} \, ds
  \right)
\\&\leq
  \sup_{ x \in (0,\infty) }
  \left(
    x^{-1}
    \int_0^x \, ds
  \right)
  =
  1
\end{split}
\end{align}
hence implies for all 
$ s \in [0,T) $, $ t \in (s,T] $,
$ k \in \N $ that
\begin{align}
\begin{split}
 &\E\!\left[
    \left| 
      \int_0^t
      e^{-\mu_k(t-u)} \left<e_k,dW_u\right>_H
      -
      \int_0^s
      e^{-\mu_k(s-u)} \left<e_k,dW_u\right>_H
    \right|^2    
  \right]
\\&=
  \frac{ 
    \left( 
      1 - e^{-2\mu_k (t-s) }
    \right)
  }{ 2 \mu_k }
  +
  \left(
    1
    -
    e^{-\mu_k(t-s)}
  \right)^2
  \frac{ 
    \left( 
      1 - e^{-2\mu_k s }
    \right)
  }{ 2 \mu_k }
\\&\leq
  \frac{ 
    \left( 
      1 - e^{-2\mu_k (t-s) }
    \right)
  }{ 2\mu_k }
  +
  \frac{ 
    \left( 
      1 - e^{-\mu_k (t-s) }
    \right)
  }{ 2\mu_k }
  \leq
  \frac{ 
    \left( 
      1 - e^{-2\mu_k (t-s) }
    \right)
  }{ \mu_k }
\\&=
  2^{2\theta}
  \left(t-s\right)^{2\theta}
  \left[ 
    \frac{ 
      \left( 
	1 - e^{-2\mu_k (t-s) }
      \right)
    }{ 2 \mu_k (t-s) }
  \right]^{2\theta}
  \left[ 
    \frac{ 
      \left( 
	1 - e^{-2\mu_k (t-s) }
      \right)
    }{ \mu_k }
  \right]^{(1-2\theta)}
\\&\leq
  \sqrt{2}
  \left(t-s\right)^{2\theta}
  \left[
    \sup_{ x \in (0,\infty) }
    \frac{ 
      \left( 
	1 - e^{-x }
      \right)
    }{ x }
  \right]^{2\theta}
  (\mu_k)^{(2\theta-1)}
  \leq 
  \sqrt{2}
  \left(t-s\right)^{2\theta}
  (\mu_k)^{(2\theta-1)} .
\end{split}
\end{align}
Combining this with~\eqref{eq:PIO_diff_1}
proves that for all
$ s \in [0,T) $, $ t \in (s,T] $,
$ N \in \N $ we have that
\begin{align}
\label{eq:O_prop_1_t_s}
\begin{split}
 &\E\!\left[
    \| P_N (\tilde{O}_t-\tilde{O}_s) \|_{ L^q( \lambda_{(0,1)}; \R ) }^{\tilde{p}}
  \right]
\\&\leq 
  \E\!\left[
    \left| Y \right|^{\tilde{p}}
  \right]
  \int_0^1
  \left[
    \sum_{ k=1 }^N
    \left| \underline{e_k}(x) \right|^2
    \sqrt{2}
    \left(t-s\right)^{2\theta}
    \mu_k^{(2\theta-1)}
  \right]^{ \!\nicefrac{\tilde{p}}{2} } dx
\\&\leq
  \left(t-s\right)^{\tilde{p}\theta}
  \E\!\left[
    \left| Y \right|^{\tilde{p}}
  \right]
  \left[
    \sqrt{8}
    \sum_{ k=1 }^N
    \left(\sqrt{\nu} k \pi\right)^{-(2-4\theta)}
  \right]^{ \!\nicefrac{\tilde{p}}{2} } .
\end{split}
\end{align}
Furthermore, observe that the triangle inequality,
H{\"o}lder's inequality, and~\eqref{eq:O_prop_1_sobolev}
show that
\begin{align}
\begin{split}
 &\sup_{ N \in \N }
  \sup_{ 0 \leq s < t \leq T }
  \left( 
    \frac{ 
      \| 
        P_N (O_t - O_{s} ) 
      \|_{ \L^p( \P; L^q( \lambda_{(0,1)}; \R ) ) } 
    }{ (t-s)^{\theta} }
  \right)
\\&\leq
  \sup_{ N \in \N }
  \sup_{ 0 \leq s < t \leq T }
  \left[ 
    \frac{ 
      \| P_N (e^{tA} - e^{sA} ) \xi  \|_{ \L^p( \P; L^q( \lambda_{(0,1)}; \R ) ) } 
    }{ (t-s)^{\theta} }  
    +
    \frac{ 
      \| P_N (\tilde{O}_t - \tilde{O}_{s} ) \|_{ \L^p( \P; L^q( \lambda_{(0,1)}; \R ) ) } 
    }{ (t-s)^{\theta} }
  \right] 
\\&\leq
  \left[ 
    \sup_{ v \in H_{\theta} \setminus \{0\} }
    \frac{ \| v \|_{ L^q( \lambda_{(0,1)} ; \R ) } }{ \| v \|_{ H_{\theta} } }
  \right] 
  \left[ 
    \sup_{ N \in \N }
    \sup_{ 0 \leq s < t \leq T }
    \frac{ 
      \| P_N \|_{ L(H) } \,
      \| (e^{tA} - e^{sA} ) \xi  \|_{ \L^p( \P; H_{\theta} ) }
    }{ (t-s)^{\theta} }
  \right]
\\&\quad+
  \sup_{ N \in \N }
  \sup_{ 0 \leq s < t \leq T }
  \frac{ 
    \| 
      P_N (\tilde{O}_t - \tilde{O}_{s} ) 
    \|_{ \L^{\tilde{p}}( \P; L^q( \lambda_{(0,1)}; \R ) ) }
  }{ (t-s)^{\theta} } . 
\end{split}
\end{align}
Combining this with~\eqref{eq:O_prop_1_xi},
\eqref{eq:O_prop_1_sobolev}, \eqref{eq:O_prop_1_t_s}, 
the fact that
$
  \forall \, N \in \N
  \colon 
  \| P_N \|_{ L(H) }
  \leq 
  1
$,
the assumption that
$ \xi \in \L^p( \P; H_{2\theta} ) $,
and the fact that $ 2 - 4\theta > 1 $
implies that
\begin{align}
\label{eq:O_prop_1_conti}
\begin{split}
 &\sup_{ N \in \N }
  \sup_{ 0 \leq s < t \leq T }
  \left(
    \frac{ 
      \| 
        P_N (O_t - O_{s} ) 
      \|_{ \L^p( \P; L^q( \lambda_{(0,1)}; \R ) ) } 
    }{ (t-s)^{\theta} }
  \right)
\\&\leq
  \left[ 
    \sup_{ v \in H_{\theta} \setminus \{0\} }
    \frac{ \| v \|_{ L^q( \lambda_{(0,1)} ; \R ) } }{ \| v \|_{ H_{\theta} } }
  \right] 
  \| \xi \|_{ \L^p(\P; H_{2\theta}) }
\\&\quad+
  \| Y \|_{ \L^{\tilde{p}}( \P; \R ) }
  \left[ 
    \sqrt{8}
    \sum_{ k=1 }^{\infty}
    \left(\sqrt{\nu} k \pi\right)^{-(2-4\theta)}
  \right]^{ \!\nicefrac{1}{2} }
  <
  \infty .
\end{split}
\end{align}
In the next step observe that
the fact that
$
  \forall \, t \in [0,\infty)
  \colon
  \| e^{tA} \|_{ L(H) }
  \leq 
  1
$
and the assumption that
$ \xi \in \L^p( \P; H_{2\theta} ) $
yield that
\begin{align}
\label{eq:O_prop_1_xi_spatial}
\begin{split}
 &\sup_{ N \in \N }
  \sup_{ t \in [0,T] }
  \left(
    N^{2\theta} \,
    \| 
      e^{tA}
      \xi
      -
      P_{N}
      e^{tA}
      \xi
    \|_{ \L^p( \P; H_{\theta} ) }
  \right)
\\&\leq
  \sup_{ N \in \N }
  \sup_{ t \in [0,T] }
  \left(
    N^{2\theta} \,
    \|
      e^{tA}
    \|_{ L(H ) } \,
    \| 
      (-A)^{\theta}      
      (
        \Id_H
        -
        P_N
      )
      \xi
    \|_{ \L^p( \P; H ) }
  \right)
\\&\leq
  \sup_{ N \in \N }
  \left(
    N^{2\theta} \,
    \| 
      (-A)^{-\theta}      
      (
        \Id_H
        -
        P_N
      )
    \|_{ L(H) } \,
    \|
      \xi
    \|_{ \L^p( \P; H_{2\theta} ) }
  \right)
\\&=
  \left[ 
    \sup_{ N \in \N }
    \frac{ N^{2\theta} }{ \nu^{\theta} \pi^{2\theta} (N+1)^{2\theta} }    
  \right]
  \|
    \xi
  \|_{ \L^p( \P; H_{2\theta} ) }
\\&\leq 
  \nu^{-\theta}
  \|
    \xi
  \|_{ \L^p( \P; H_{2\theta} ) }
  <
  \infty .
\end{split}
\end{align}
In addition, note that
H{\"o}lder's inequality,
Fubini's theorem,
and, e.g., Lemma~\ref{lem:normal_variables}
guarantee that for all
$ t \in [0,T] $,
$ M, N \in \N $
with $ M \geq N $
we have that
\begin{align}
\begin{split}
 &\E\!\left[
    \| P_M \tilde{O}_t-P_N \tilde{O}_t \|_{ L^q( \lambda_{(0,1)}; \R ) }^{\tilde{p}}
  \right]
\\&=
  \E\!\left[
    \left|
      \int_0^1
      \big|
	\underline{P_M \tilde{O}_t}(x)
	-
	\underline{P_N \tilde{O}_t}(x)
      \big|^q \, dx
    \right|^{ \nicefrac{\tilde{p}}{q} }
  \right]
\\ &
  \leq
  \E\!\left[
    \int_0^1
    \big|
      \underline{P_M \tilde{O}_t}(x)
      -
      \underline{P_N \tilde{O}_t}(x)
    \big|^{\tilde{p}} \, dx
  \right]
\\&=
  \int_0^1
  \E\!\left[
    \left|
      \underline{P_M \tilde{O}_t}(x)
      -
      \underline{P_N \tilde{O}_t}(x)
    \right|^{\tilde{p}}
  \right] dx
\\&=
  \E\!\left[
    \left| Y \right|^{\tilde{p}}
  \right]
  \int_0^1
  \left(
    \E\!\left[
      \left|
	\underline{P_M \tilde{O}_t}(x)
	-
	\underline{P_N \tilde{O}_t}(x)
      \right|^2    
    \right]
  \right)^{ \!\nicefrac{{\tilde{p}}}{2} } dx
\\&=
  \E\!\left[
    \left| Y \right|^{\tilde{p}}
  \right]
  \int_0^1
  \left(
    \E\!\left[
      \left|
        \sum_{ k=N+1 }^{M}
	\underline{e_k}(x)
	\int_0^t
	e^{-\mu_k(t-s)} \left<e_k,dW_s\right>_H
      \right|^2    
    \right]
  \right)^{ \!\!\nicefrac{\tilde{p}}{2} } dx .
\end{split}
\end{align}
It{\^o}'s isometry hence ensures for all 
$ t \in [0,T] $, $ M, N \in \N $ with $ M \geq N $ that
\begin{align}
\label{eq:O_prop_1_spatial}
\begin{split}
 &\E\!\left[
    \| P_M \tilde{O}_t-P_N \tilde{O}_t \|_{ L^q( \lambda_{(0,1)}; \R ) }^{\tilde{p}}
  \right]
\\&\leq
  \E\!\left[
    \left| Y \right|^{\tilde{p}}
  \right]
  \int_0^1
  \left(
    \sum_{ k=N+1 }^{M}
    \left| \underline{e_k}(x) \right|^2
    \E\!\left[
      \left|
	\int_0^t
	e^{-\mu_k(t-s)} \left<e_k,dW_s\right>_H
      \right|^2    
    \right]
  \right)^{ \!\nicefrac{\tilde{p}}{2} } dx
\\&=
  \E\!\left[
    \left| Y \right|^{\tilde{p}}
  \right]
  \int_0^1
  \left(
    \sum_{ k=N+1 }^{M}
    \left| \underline{e_k}(x) \right|^2
    \int_0^t
    e^{-2\mu_k(t-s)} \, ds
  \right)^{ \!\nicefrac{\tilde{p}}{2} } dx
\\&=
  \E\!\left[
    \left| Y \right|^{\tilde{p}}
  \right]
  \int_0^1
  \left(
    \sum_{ k=N+1 }^{M}
    \left| \underline{e_k}(x) \right|^2
    \frac{ (1-e^{-2\mu_k t} ) }{ 2\mu_k }
  \right)^{ \!\nicefrac{\tilde{p}}{2} } dx . 
\end{split}
\end{align}
Therefore, we obtain
for all $ t \in [0,t] $, $ M, N \in \N $
with $ M \geq N $ that
\begin{align}
\label{eq:O_prop_1_spatial_2}
\begin{split}
 &\E\!\left[
    \| P_M \tilde{O}_t-P_N \tilde{O}_t \|_{ L^q( \lambda_{(0,1)}; \R ) }^{\tilde{p}}
  \right]
\\&\leq
  \E\!\left[
    \left| Y \right|^{\tilde{p}}
  \right]
  \left(
    \sum_{ k=N+1 }^{M}
    \frac{ (1-e^{-2\mu_k t} ) }{ \mu_k }
  \right)^{ \!\nicefrac{\tilde{p}}{2} }
\\&\leq
  \E\!\left[
    \left| Y \right|^{\tilde{p}}
  \right]
  \left(
    \sum_{ k=N+1 }^{\infty}
    \frac{ 1 }{ \nu \pi^2 k^2 }
  \right)^{ \!\nicefrac{\tilde{p}}{2} } .
\end{split}
\end{align}
This implies that for all $ t \in [0,T] $,
$ N \in \N $ we have that
\begin{align}
\begin{split}
 &\E\!\left[
    \| \tilde{O}_t-P_N \tilde{O}_t \|_{ L^q( \lambda_{(0,1)}; \R ) }^{\tilde{p}}
  \right]
\\&=
  \limsup_{M\rightarrow\infty}
  \E\!\left[
    \| P_M \tilde{O}_t-P_N \tilde{O}_t \|_{ L^q( \lambda_{(0,1)}; \R ) }^{\tilde{p}}
  \right]
\\&\leq
  \E\!\left[
    \left| Y \right|^{\tilde{p}}
  \right]
  \left(
    \sum_{ k=N+1 }^{\infty}
    \frac{ 1 }{ \nu \pi^2 k^2 }
  \right)^{ \!\nicefrac{\tilde{p}}{2} } .
\end{split}
\end{align}
Hence, we obtain for all $ t \in [0,T] $,
$ N \in \N $ that
\begin{align}
\begin{split}
 &\| \tilde{O}_t-P_N \tilde{O}_t \|_{ \L^{\tilde{p}}( \P; L^q( \lambda_{ (0,1) }; \R ) ) }
\\&\leq 
  \| Y \|_{ \L^{\tilde{p}}( \P; \R ) }
  \left[ 
    \sum_{ k=N+1 }^{\infty}
    \frac{ 1 }{ \nu \pi^2 k^2 }
  \right]^{\!\nicefrac{1}{2}}
  =
  \frac{ \| Y \|_{ \L^{\tilde{p}}( \P; \R ) } }{ \pi \sqrt{\nu} }
  \left[ 
    \sum_{ k=N+1 }^{\infty}
    \frac{ 1 }{ k^{4\theta} k^{(2-4\theta)} }
  \right]^{\!\nicefrac{1}{2}}
\\&\leq
  \frac{ \| Y \|_{ \L^{\tilde{p}}( \P; \R ) } }{ N^{2\theta} \pi \sqrt{\nu} }
  \left[ 
    \sum_{ k=N+1 }^{\infty}
    \frac{ 1 }{ k^{(2-4\theta)} }
  \right]^{\!\nicefrac{1}{2}}
  \leq
  \frac{ \| Y \|_{ \L^{\tilde{p}}( \P; \R ) } }{ N^{2\theta} \pi \sqrt{\nu} }
  \left[ 
    \sum_{ k=1 }^{\infty}
    \frac{ 1 }{ k^{(2-4\theta)} }
  \right]^{\!\nicefrac{1}{2}} . 
\end{split}
\end{align}
Combining this with the fact that $ 2 - 4\theta > 1 $
demonstrates that
\begin{align}
\label{eq:O_prop_1_Z1}
\begin{split}
 &\sup_{ N \in \N }
  \sup_{ t \in [0,T] }
  \big(
    N^{2\theta} \,
    \| \tilde{O}_t-P_N \tilde{O}_t \|_{ \L^{\tilde{p}}( \P; L^q( \lambda_{ (0,1) }; \R ) ) }
  \big)
\\&\leq
  \frac{ \| Y \|_{ \L^{\tilde{p}}( \P; \R ) } }{ \pi \sqrt{\nu} }
  \left[ 
    \sum_{ k=1 }^{\infty}
    \frac{ 1 }{ k^{(2-4\theta)} }
  \right]^{\!\nicefrac{1}{2}} 
  < \infty .
\end{split}
\end{align}
The hypothesis that $ \xi \in \L^p( \P; H_{2\theta} ) $,
\eqref{eq:O_prop_1_sobolev}, \eqref{eq:O_prop_1_xi_spatial}
the triangle inequality, and the H{\"o}lder inequality
hence assure that
\begin{align}
\begin{split}
 &\sup_{ N \in \N }
  \sup_{ t \in [0,T] }
  \big(
    N^{2\theta} \,
    \| O_t - P_N O_t \|_{ \L^p( \P; L^q( \lambda_{ (0,1) }; \R ) ) }
  \big)
\\&\leq
  \sup_{ N \in \N }
  \sup_{ t \in [0,T] }
  \Big(
    N^{2\theta}
    \left[ 
      \| e^{tA} \xi - P_N e^{tA} \xi \|_{ \L^p( \P; L^q( \lambda_{ (0,1) }; \R ) ) }
      +
      \| \tilde{O}_t - P_N \tilde{O}_t \|_{ \L^p( \P; L^q( \lambda_{ (0,1) }; \R ) ) }
    \right]
  \Big)
\\&\leq
  \left[ 
    \sup_{ v \in H_{\theta} \setminus \{0\} }
    \frac{ \| v \|_{ L^q( \lambda_{(0,1)} ; \R ) } }{ \| v \|_{ H_{\theta} } }
  \right] 
  \left[ 
    \sup_{ N \in \N }
    \sup_{ t \in [0,T] }
    \big(
      N^{2\theta} \,
      \| e^{tA} \xi - P_N e^{tA} \xi \|_{ \L^p( \P; H_{\theta} ) }
    \big)
  \right] 
\\&\quad+ 
  \sup_{ N \in \N }
  \sup_{ t \in [0,T] }
  \big(
    N^{2\theta} \,
    \| \tilde{O}_t - P_N \tilde{O}_t \|_{ \L^{\tilde{p}}( \P; L^q( \lambda_{ (0,1) }; \R ) ) } 
  \big)
\\&\leq
  \left[ 
    \sup_{ v \in H_{\theta} \setminus \{0\} }
    \frac{ \| v \|_{ L^q( \lambda_{(0,1)} ; \R ) } }{ \| v \|_{ H_{\theta} } }
  \right] 
  \frac{ \| \xi \|_{ \L^p( \P; H_{2\theta} ) } }{ \nu^{\theta} }
\\&\quad+ 
  \sup_{ N \in \N }
  \sup_{ t \in [0,T] }
  \big(
    N^{2\theta} \,
    \| \tilde{O}_t-P_N \tilde{O}_t \|_{ \L^{\tilde{p}}( \P; L^q( \lambda_{ (0,1) }; \R ) ) }
  \big)
\\&< \infty .
\end{split}
\end{align}
Combining this, \eqref{eq:O_prop_1_conti}, and~\eqref{eq:O_helper_3}
with the fact that 
$ O \colon [0,T] \times \Omega \rightarrow L^q( \lambda_{(0,1)} ; \R ) $
is a stochastic process with continuous sample paths completes the proof 
of Lemma~\ref{lem:O_prop_1}.
\end{proof}

\begin{lemma}
\label{lem:O_prop_2}
Assume the setting in Section~\ref{sec:examples_setting}
and let $ p \in [2,\infty) $, $ \theta \in [0,\nicefrac{1}{4}) $,
$ \xi \in \L^p( \P; H_{\theta} ) $.
Then there exist stochastic processes
$ \OMNN{} \colon [0,T] \times \Omega \rightarrow P_N(H) $,
$ M, N \in \N $,
with continuous sample paths which satisfy 
\begin{enumerate}[(i)]
 \item\label{it:O_prop_2_1} that for all $ t \in [0,T] $,
 $ M, N \in \N $
 we have that
 $
    [ 
      \OMNN{t} - P_{N} e^{tA} \xi
    ]_{ \P, \B(H) }
    =
    \int_0^t
    P_{N}
    e^{(t-\fl{s})A} \, dW_s
 $ and
 \item\label{it:O_prop_2_2} that
$  
  \sup_{ \gamma \in [0,\theta] }
  \sup_{ M,N \in \N }
  \sup_{ t \in [0,T] }
  \E\big[
    \| 
      \OMNN{t}
    \|_{ H_{\gamma} }^p
   \big]
   <
   \infty
 $.
\end{enumerate}

\end{lemma}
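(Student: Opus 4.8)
The plan is to build $\OMNN{}$ from the explicit deterministic contribution $P_N e^{(\cdot)A}\xi$ plus a stochastic convolution provided by Lemma~\ref{lem:O_existence}, and then to derive the moment bound in item~\eqref{it:O_prop_2_2} by using the elementary inequality $\fl{s}\leq s$ to dominate the discretized exponential kernel by the non-discretized one and applying a Hilbert-space Burkholder--Davis--Gundy inequality. First I would fix a real number $\beta\in(\nicefrac{1}{4},\nicefrac{1}{2}-\theta)$ (the interval is nonempty since $\theta<\nicefrac{1}{4}$) and, for every $M,N\in\N$, apply Lemma~\ref{lem:O_existence} with $\gamma=\theta$, this $\beta$, $B=(H\ni v\mapsto P_N v\in H_{-\beta})$ (which belongs to $HS(H,H_{-\beta})$ because it is of finite rank), and $\varphi=\fl{\cdot}\colon[0,T]\to[0,T]$ (which is $\B([0,T])/\B([0,T])$-measurable and fulfils $\varphi(t)\leq t$) to obtain an up to indistinguishability unique stochastic process $\OMNNp{}\colon[0,T]\times\Omega\to H_\theta$ with continuous sample paths satisfying $[\OMNNp{t}]_{\P,\B(H)}=\int_0^t e^{(t-\fl{s})A}P_N\,dW_s$ for all $t\in[0,T]$. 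I would then define $\OMNN{t}:=P_N e^{tA}\xi+P_N\OMNNp{t}$. Since $P_N$ is a bounded projection onto the finite-dimensional subspace $P_N(H)\subseteq H_\theta$ and $t\mapsto P_N e^{tA}$ is strongly continuous, $\OMNN{}$ is a stochastic process with continuous sample paths taking values in $P_N(H)$; moreover, using that $P_N$ commutes with $e^{rA}$ and with the It\^o integral, one gets $[\OMNN{t}-P_N e^{tA}\xi]_{\P,\B(H)}=[P_N\OMNNp{t}]_{\P,\B(H)}=\int_0^t P_N e^{(t-\fl{s})A}\,dW_s$, which is item~\eqref{it:O_prop_2_1}.

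To prove item~\eqref{it:O_prop_2_2} I would bound the two summands of $\OMNN{t}$ separately, uniformly in $\gamma\in[0,\theta]$, $t\in[0,T]$ and $M,N\in\N$. For the deterministic summand, the commutation of fractional powers of $-A$ with $e^{tA}$, the bounds $\|e^{tA}\|_{L(H)}\leq1$, $\|P_N\|_{L(H)}\leq1$, and $\|(-A)^{-r}\|_{L(H)}\leq(\nu\pi^2)^{-r}$ for $r\geq0$ yield, for $\gamma\in[0,\theta]$, the pointwise estimate $\|P_N e^{tA}\xi\|_{H_\gamma}\leq(\nu\pi^2)^{\gamma-\theta}\|\xi\|_{H_\theta}\leq\max\{1,(\nu\pi^2)^{-\nicefrac{1}{4}}\}\,\|\xi\|_{H_\theta}$, so that its $\L^p$-norm is at most $\max\{1,(\nu\pi^2)^{-\nicefrac{1}{4}}\}\,\|\xi\|_{\L^p(\P;H_\theta)}<\infty$ by hypothesis. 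For the stochastic summand $P_N\OMNNp{t}=\int_0^t P_N e^{(t-\fl{s})A}\,dW_s$, I would apply the Burkholder--Davis--Gundy type inequality in the Hilbert space $H_\gamma$ (e.g.\ Lemma~7.7 in Da Prato \& Zabczyk~\cite{dz92}, as in the proof of Lemma~\ref{lem:O_existence}) to get $\|P_N\OMNNp{t}\|_{\L^p(\P;H_\gamma)}^2\leq\tfrac{p(p-1)}{2}\int_0^t\|P_N e^{(t-\fl{s})A}\|_{HS(H,H_\gamma)}^2\,ds$; writing $\mu_k=\nu\pi^2k^2$ and using $\fl{s}\leq s$ one has $\|P_N e^{(t-\fl{s})A}\|_{HS(H,H_\gamma)}^2=\sum_{k=1}^N\mu_k^{2\gamma}e^{-2\mu_k(t-\fl{s})}\leq\sum_{k=1}^\infty\mu_k^{2\gamma}e^{-2\mu_k(t-s)}$, whence, by Tonelli's theorem, $\int_0^t\|P_N e^{(t-\fl{s})A}\|_{HS(H,H_\gamma)}^2\,ds\leq\sum_{k=1}^\infty\mu_k^{2\gamma-1}\tfrac{1-e^{-2\mu_k t}}{2}\leq\tfrac12(\nu\pi^2)^{2\gamma-1}\sum_{k=1}^\infty k^{4\gamma-2}\leq\tfrac12\max\{1,(\nu\pi^2)^{-1}\}\sum_{k=1}^\infty k^{4\theta-2}<\infty$, where finiteness is ensured by $2-4\theta>1$ (a consequence of $\theta<\nicefrac{1}{4}$). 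Adding the two bounds and taking suprema completes the proof.

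The only steps that are not entirely routine are, first, controlling the $\L^p$-norm ($p>2$) of a stochastic convolution in the fractional Sobolev space $H_\gamma$---which I would handle through the Hilbert-space Burkholder--Davis--Gundy inequality rather than the Fubini/Gaussian-moment computation used elsewhere in this section---and, second, making sure that every constant is uniform not merely in $M$ and $N$ but also across the whole range $\gamma\in[0,\theta]$; the bound $\fl{s}\leq s$ is exactly what removes the $M$-dependence, and the strict inequality $\theta<\nicefrac{1}{4}$ is exactly what makes $\sum_k k^{4\theta-2}$ finite and thereby gives the uniformity in $\gamma$.
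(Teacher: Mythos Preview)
Your proposal is correct and follows essentially the same route as the paper: both construct $\OMNN{}$ as $P_N e^{tA}\xi$ plus a stochastic convolution obtained from Lemma~\ref{lem:O_existence}, and both control the $\L^p(\P;H_\gamma)$-norm via the Burkholder--Davis--Gundy type inequality of \cite[Lemma~7.7]{dz92}. The only noteworthy difference is cosmetic: the paper bounds $\|P_N e^{(t-\fl{s})A}\|_{HS(H,H_\theta)}$ by the operator-norm factorization $\|P_N\|_{HS(H,H_{-\beta})}\,\|(-A)^{\theta+\beta}e^{(t-s)A}\|_{L(H)}\,\|e^{(s-\fl{s})A}\|_{L(H)}$, working at $\gamma=\theta$ and then transferring to smaller $\gamma$ through $\sup_{\gamma\in[0,\theta]}\|(-A)^{\gamma-\theta}\|_{L(H)}<\infty$, whereas you compute the Hilbert--Schmidt norm explicitly on the eigenbasis and carry the $\gamma$-uniformity through the series $\sum_k k^{4\gamma-2}\le\sum_k k^{4\theta-2}$ directly. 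Both variants yield the same bound; your eigenbasis computation is arguably slightly more elementary for this concrete operator.
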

\begin{proof}[Proof of Lemma~\ref{lem:O_prop_2}]
Throughout this proof let
$ \beta \in (\nicefrac{1}{4},\nicefrac{1}{2}-\theta) $.
Observe that the fact that
\begin{equation}
  [0,\infty)
  \ni
  t
  \mapsto
  (
    H_{\theta}
    \ni
    v
    \mapsto 
    e^{tA} v
    \in
    H_{\theta}
  )
  \in 
  L( H_{\theta} )
\end{equation}
is a strongly continuous semigroup,
the fact that
$
  \forall \, N \in \N
  \colon 
  \| P_N \|_{ L(H) }
  \leq 
  1
$,
and the fact that
$
  \forall \, t \in [0,\infty)
  \colon
  \| e^{tA} \|_{L(H)} \leq 1
$
prove that for all 
$ \omega \in \Omega $, $ t \in [0,T] $, $ N \in \N $ 
we have that
\begin{align}
\label{eq:O_prop_2_xi}
\begin{split}
 &\limsup_{
    \substack{ (t_1,t_2)\rightarrow(t,t) \\ (t_1,t_2) \in [0,t]\times[t,T] } 
  }
  \|
    P_N
    e^{t_2 A} \xi(\omega)
    -
    P_N
    e^{t_1 A} \xi(\omega)
  \|_{ H_{\theta} }
\\&\leq
  \limsup_{
    \substack{ (t_1,t_2)\rightarrow(t,t) \\ (t_1,t_2) \in [0,t]\times[t,T] } 
  }
  \left(
    \| P_N \|_{L(H)} \,
    \| e^{t_1 A} \|_{L(H)} \,
    \|
      ( e^{(t_2-t_1)A} - \Id_H) \xi(\omega)
    \|_{ H_{\theta} }
  \right)
\\&\leq
  \limsup_{
    \substack{ (t_1,t_2)\rightarrow(t,t) \\ (t_1,t_2) \in [0,t]\times[t,T] } 
  }
  \|
    ( e^{(t_2-t_1)A} - \Id_H) \xi(\omega)
  \|_{ H_{\theta} }
  =
  0 .
\end{split}
\end{align}
In addition, note that
the fact that
$
  4\beta > 1
$
shows that
\begin{align}
\label{eq:O_prop_2_Id}
\begin{split}
  \sup_{ N \in \N }
  \|
    P_N
  \|_{ HS(H, H_{-\beta}) }^2
 &=
  \sup_{ N \in \N }
  \left[
    \sum_{ k=1 }^{ N }
    \|
      e_k
    \|_{ H_{-\beta} }^2
  \right]
  =
  \sum_{ k=1 }^{ \infty }
  \|
    (-A)^{-\beta}
    e_k
  \|_{ H }^2
\\&=
  \sum_{ k=1 }^{ \infty }
  |
    (\nu \pi^2 k^2)^{-\beta}
  |^2
  =
  \sum_{ k=1 }^{ \infty }
  \frac{ 1 }{ ( \sqrt{\nu} \pi k )^{4\beta} }
  <
  \infty .
\end{split}
\end{align}
We can hence apply Lemma~\ref{lem:O_existence}
(with $ \gamma = \theta $, $ \beta = \beta $,
$ B = H \ni v \mapsto P_N(v) \in H_{-\beta} $,
$ \varphi = [0,T] \ni t \mapsto \fl{t} \in [0,T] $
for $ M,N \in \N $ in the notation of Lemma~\ref{lem:O_existence})
to obtain that there exist up to modifications unique
stochastic processes $ \OMNNp{} \colon [0,T] \times \Omega \rightarrow H_{\theta} $,
$ M,N \in \N $, with continuous sample paths which satisfy
for all $ t \in [0,T] $, $ M,N \in \N $ that
\begin{equation}
\label{eq:O_prop_2_nrA}
  [ \OMNNp{t} ]_{ \P, \B(H) } = \int_0^t P_N e^{(t-\fl{s})A} \, dW_s .
\end{equation}
Inequality~\eqref{eq:O_prop_2_xi} therefore
shows that there exists stochastic processes 
$ \OMNN{} \colon [0,T] \times \Omega \rightarrow P_N(H) $,
$ M,N \in \N $, with continuous sample paths which satisfy
for all $ t \in [0,T] $, $ N,M \in \N $ that
\begin{equation}
\label{eq:O_prop_2_nrB} 
  \OMNN{t} = P_N e^{tA} \xi + P_N \OMNNp{t} . 
\end{equation}
Next note that~\eqref{eq:O_prop_2_nrA}
ensures that for all $ t \in [0,T] $,
$ N, M \in \N $ we have that
\begin{equation}
  [ P_N \OMNNp{t} ]_{ \P, \B(H) } = \int_0^t P_N e^{(t-\fl{s})A} \, dW_s .
\end{equation}
Combining this with~\eqref{eq:O_prop_2_nrB}
demonstrates that for all 
$ t \in [0,T] $,
$ N, M \in \N $ we have that
\begin{equation}
\label{eq:O_prop_2_res_i}
    [ 
      \OMNN{t} - P_{N} e^{tA} \xi
    ]_{ \P, \B(H) }
    =
    \int_0^t
    P_{N}
    e^{(t-\fl{s})A} \, dW_s .
\end{equation}
In the next step observe that, e.g.,
the Burkholder-Davis-Gundy type inequality
in Lemma~7.7 in Da Prato \& Zabcyk~\cite{dz92}, 
the fact that 
\begin{equation}
  \forall \,
  t \in [0,\infty) ,
  r \in [0,1]
  \colon
  \|
    (-tA)^r
    e^{tA}
  \|_{ L(H) }
  \leq
  1,
\end{equation}
and~\eqref{eq:O_prop_2_Id} imply that for all 
$ t \in [0,T] $, $ N,M \in \N $ we have that
\begin{align}
\begin{split}
 &\left\|
    \int_0^t
    P_{N} 
    e^{(t-\fl{s})A} \, dW_s
  \right\|_{ L^{p}(\P; H_{\theta} ) }^2
  \leq 
  \frac{p(p-1)}{2}
  \int_0^t
  \| 
    P_{N} 
    e^{(t-\fl{s})A}
  \|_{ HS( H, H_{\theta} ) }^2 \, ds
\\&\leq
  \frac{p(p-1)}{2}
  \| (-A)^{-\beta} P_N \|_{ HS(H) }^2
  \int_0^t
  \| (-A)^{\beta} e^{(t-\fl{s})A} \|_{ L(H, H_{\theta}) }^2 \, ds
\\&\leq
  \frac{p(p-1)}{2}
  \| P_N \|_{ HS(H, H_{-\beta} ) }^2
  \int_0^t
  \| (-A)^{ (\theta+ \beta) } e^{(t-s)A} \|_{ L(H) }^2 \, 
  \| e^{(s-\fl{s})A} \|_{ L(H) }^2 \, ds
\\&\leq
  \frac{p(p-1)}{2}
  \| P_N \|_{ HS(H, H_{-\beta} ) }^2
  \int_0^t
  \left( t-s \right)^{-2(\theta+\beta)} \, ds
\\&\leq
  \frac{p(p-1) \, T^{(1-2\theta-2\beta)} }{2\,(1-2\theta-2\beta)}
  \| P_N \|_{ HS(H, H_{-\beta} ) }^2
  <
  \infty.
\end{split}
\end{align}
The triangle inequality, the fact that
$
  \forall \, 
  N \in \N
  \colon 
  \| P_N \|_{ L(H) } \leq 1
$,
the fact that
\begin{equation}
  \forall \,
  t \in [0,\infty)
  \colon
  \| e^{tA} \|_{ L(H) }
  \leq
  1 ,
\end{equation}
the assumption that
$ \xi \in \L^p( \P; H_{\theta} ) $,
and~\eqref{eq:O_prop_2_Id}
hence assure that
\begin{align}
\label{eq:O_prop_2_bound}
\begin{split}
 &\sup_{ M,N \in \N }
  \sup_{ t \in [0,T] }
  \big\| 
    \OMNN{t}
  \big\|_{ \L^p( \P; H_{\theta} ) }
\\&\leq
  \sup_{ M,N \in \N }
  \sup_{ t \in [0,T] }
  \left[ 
    \|
      P_N
      e^{tA} \xi
    \|_{ \L^p( \P; H_{ \theta } ) }
    +
    \| P_N \|_{L(H)} 
    \left\|
      \int_0^t
      P_{N}
      e^{(t-\fl{u})A} \, dW_u
    \right\|_{ L^{p}(\P; H_{\theta} ) }
  \right]
\\&\leq
  \left[ 
    \sup_{ N \in \N }
    \| P_N \|_{ L(H) }
  \right]
  \left[ 
    \sup_{ t \in [0,T] }
    \| e^{tA} \|_{ L(H) }
  \right] 
  \|
    \xi
  \|_{ \L^p( \P; H_{\theta} ) }
\\&\quad+ 
  \left[ 
    \sup_{ N \in \N }
    \| P_N \|_{ HS(H, H_{-\beta} ) }  
  \right] 
  \frac{\sqrt{p(p-1)} \, T^{(\nicefrac{1}{2}-\theta-\beta)} }{\sqrt{ 2\,(1-2\theta-2\beta) }}
  <
  \infty .
\end{split}
\end{align}
Combining this with~\eqref{eq:O_prop_2_res_i} and the fact that 
\begin{align}
\begin{split}
  \sup_{ \gamma \in [0,\theta] }
  \| (-A)^{(\gamma-\theta)} \|_{L(H)}
 &=
  \sup_{ \gamma \in [0,\theta] }
  \| (-A)^{-\gamma} \|_{L(H)}
\\&=
  \sup_{ \gamma \in [0,\theta] }
  \left[ 
    (\nu \pi^2)^{-\gamma}
  \right]
  =
  \max\!\left\{ 
    \frac{ 1 }{ ( \nu \pi^2 )^{\theta} }, 1
  \right\}
  <
  \infty
\end{split}
\end{align}
completes the proof of Lemma~\ref{lem:O_prop_2}.
\end{proof}

\begin{lemma}
\label{lem:O_sobo}
Assume the setting in
Section~\ref{sec:examples_setting},
let $ p \in [1,\infty) $,
$ \xi \in \cup_{ r \in (\nicefrac{1}{4}, \infty) } \L^p( \P; H_r ) $,
and let 
$
  \OMNN{}
  \colon 
  [0,T] \times \Omega
  \rightarrow P_N(H)
$,
$
  M,N \in \N
$,
be stochastic processes
with continuous sample paths
which satisfy for all
$ M, N \in \N $,
$ t \in [0,T] $
that
\begin{equation}
  [ \OMNN{t} - P_N e^{tA} \xi ]_{ \P, \B(H) }
  =
  \int_0^t
  P_N e^{(t-\fl{s})A} \, dW_s .
\end{equation}
Then
\begin{equation}
  \sup_{ M,N \in \N }
  \E\big[
     \textstyle\sup_{ t \in [0,T] }
     \| \OMNN{t} \|_{ L^{\infty}( \lambda_{ (0,1) } ; \R ) }^p
  \big]
  <
  \infty .
\end{equation}
\end{lemma}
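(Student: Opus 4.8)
The plan is to split $\OMNN{}$ into its deterministic part $P_N e^{\cdot A}\xi$ and its stochastic convolution part, to dispatch the former by a Sobolev embedding and the latter by a two-parameter Kolmogorov--Chentsov argument that exploits the explicit sine basis and the Gaussianity of the noise. First I would fix $r\in(\nicefrac14,\nicefrac12)$ with $\xi\in\L^p(\P;H_r)$ (possible since $\xi\in\L^p(\P;H_{r'})$ for some $r'>\nicefrac14$ and $H_{r'}\subseteq H_{r''}$ continuously for $r'\geq r''$), note that $H_r\subseteq W^{2r,2}((0,1),\R)\subseteq\C([0,1],\R)\subseteq L^{\infty}(\lambda_{(0,1)};\R)$ continuously (because $2r>\nicefrac12$; cf.\ the references cited after~\eqref{eq:O_prop_1_sobolev}), and define $\OMNNp{t}=\OMNN{t}-P_N e^{tA}\xi$ for $t\in[0,T]$, $M,N\in\N$. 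Then $\OMNNp{}$ has continuous sample paths (both $\OMNN{}$ and $[0,T]\ni t\mapsto P_N e^{tA}\xi$ do, the latter by the above embedding and strong continuity of $e^{tA}$ on $H_r$) and satisfies $[\OMNNp{t}]_{\P,\B(H)}=\int_0^t P_N e^{(t-\fl{s})A}\,dW_s$. Using $\|P_N\|_{L(H)}\leq1$, $\|e^{tA}\|_{L(H)}\leq1$, and $\|(-A)^{-r}\|_{L(H)}\leq(\nu\pi^2)^{-r}$ one gets $\sup_{t\in[0,T]}\|P_N e^{tA}\xi\|_{L^{\infty}(\lambda_{(0,1)};\R)}\leq C\|\xi\|_{H_r}$ $\P$-a.s.\ with $C$ independent of $M,N$, so $\sup_{M,N\in\N}\E[\sup_{t\in[0,T]}\|P_N e^{tA}\xi\|_{L^{\infty}(\lambda_{(0,1)};\R)}^p]\leq C^p\E[\|\xi\|_{H_r}^p]<\infty$. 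It thus remains to bound $\sup_{M,N\in\N}\E[\sup_{t\in[0,T]}\|\OMNNp{t}\|_{L^{\infty}(\lambda_{(0,1)};\R)}^p]$.

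For this I would view $(t,x)\mapsto\underline{\OMNNp{t}}(x)=\sum_{k=1}^N\sqrt2\sin(k\pi x)\int_0^t e^{-\nu\pi^2k^2(t-\fl{s})}\langle e_k,dW_s\rangle_H$ as a centered Gaussian random field on $[0,T]\times[0,1]$ and establish uniform (in $M,N$) increment bounds. Fixing suitable $\theta\in(0,\nicefrac14)$ and $\delta\in(0,\nicefrac12)$ and writing $\mu_k=\nu\pi^2k^2$, I would combine It\^o's isometry, the pointwise domination $e^{-2\mu_k(t-\fl{s})}\leq e^{-2\mu_k(t-s)}$ (valid since $\fl{s}\leq s$), the estimates already carried out in the proof of Lemma~\ref{lem:O_prop_1} (which give $\|\int_0^t e^{-\mu_k(t-\fl{s})}\langle e_k,dW_s\rangle_H\|_{\L^2(\P)}^2\leq\frac{1}{2\mu_k}$ and a time-increment bound $\lesssim|t-s|^{2\theta}\mu_k^{2\theta-1}$), the elementary inequality $|\sin(k\pi x)-\sin(k\pi y)|^2\leq C_\delta(k|x-y|)^{2\delta}$, and the convergence of $\sum_k k^{4\theta-2}$ and $\sum_k k^{2\delta-2}$ to obtain a constant $C\in(0,\infty)$, independent of $M,N$, with
\begin{equation}
  \big\|\underline{\OMNNp{t}}(x)-\underline{\OMNNp{s}}(y)\big\|_{\L^2(\P;\R)}^2
  \leq
  C\big(|t-s|^{2\theta}+|x-y|^{2\delta}\big)
\end{equation}
for all $M,N\in\N$, $s,t\in[0,T]$, $x,y\in[0,1]$. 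Since the random variable on the left is centered and normally distributed, Lemma~\ref{lem:normal_variables} upgrades this to $\|\underline{\OMNNp{t}}(x)-\underline{\OMNNp{s}}(y)\|_{\L^q(\P;\R)}\leq C_q(|t-s|+|x-y|)^{\min\{\theta,\delta\}}$ for every $q\in[2,\infty)$, with $C_q$ independent of $M,N$.

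Finally, choosing $q\in[\max\{2,p\},\infty)$ so large that $q\min\{\theta,\delta\}>2$ and invoking a quantitative form of the Kolmogorov--Chentsov theorem on the two-dimensional parameter set $[0,T]\times[0,1]$, I obtain a continuous modification $U^{M,N}$ of $(t,x)\mapsto\underline{\OMNNp{t}}(x)$ whose supremum satisfies $\E[\sup_{(t,x)\in[0,T]\times[0,1]}|U^{M,N}(t,x)|^q]\leq K$ with $K\in(0,\infty)$ depending only on $C_q$, $q$, $\min\{\theta,\delta\}$, $T$, and the base value $\underline{\OMNNp{0}}(\cdot)=0$ --- in particular not on $M,N$. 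Since $\OMNNp{t}\in P_N(H)$ has a (smooth) continuous representative for each $t$ and $\OMNNp{}$ has continuous sample paths in $P_N(H)$, the field $(t,x)\mapsto\underline{\OMNNp{t}}(x)$ is jointly continuous, hence indistinguishable from $U^{M,N}$, so $\sup_{t\in[0,T]}\|\OMNNp{t}\|_{L^{\infty}(\lambda_{(0,1)};\R)}=\sup_{(t,x)}|U^{M,N}(t,x)|$ $\P$-a.s.; therefore $\sup_{M,N\in\N}\E[\sup_t\|\OMNNp{t}\|_{L^{\infty}(\lambda_{(0,1)};\R)}^q]\leq K<\infty$, and by Jensen's inequality the same holds with $q$ replaced by $p$. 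Combining this with the bound on the deterministic part via $\|\OMNN{t}\|_{L^{\infty}(\lambda_{(0,1)};\R)}\leq\|P_N e^{tA}\xi\|_{L^{\infty}(\lambda_{(0,1)};\R)}+\|\OMNNp{t}\|_{L^{\infty}(\lambda_{(0,1)};\R)}$ and convexity of $x\mapsto x^p$ completes the proof.

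The main obstacle is that the naive route --- bounding $\|\OMNN{t}\|_{H_r}$ for some $r>\nicefrac14$ and using the embedding $H_r\hookrightarrow L^{\infty}$ --- is simply unavailable, because $\E[\|\OMNN{t}\|_{H_r}^2]=\infty$ for $r\geq\nicefrac14$ (the series $\sum_k\mu_k^{2r-1}$ diverges). One is therefore forced to exploit the sub-Gaussian increment structure of $\OMNNp{}$ together with the explicit sine basis and to run a genuinely two-parameter (space--time) Kolmogorov--Chentsov argument, whose quantitative form is what yields the uniformity in the Galerkin dimension $N$ and in the time mesh $M$; the time discretization $\fl{s}$ is harmless because it only shrinks the relevant exponential weights. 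The remaining computations are routine adaptations of the ones already performed in the proof of Lemma~\ref{lem:O_prop_1}.
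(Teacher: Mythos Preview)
Your proposal is correct and follows essentially the same strategy as the paper's proof: split off the deterministic part $P_N e^{tA}\xi$ via the embedding $H_r\hookrightarrow L^\infty$ for $r>\nicefrac14$, and control the centered Gaussian field $(t,x)\mapsto\underline{\OMNNp{t}}(x)$ through uniform (in $M,N$) second-moment space--time increment bounds, lifted to high moments by Lemma~\ref{lem:normal_variables}. The only difference is in the last step: where you invoke a quantitative two-parameter Kolmogorov--Chentsov theorem, the paper instead uses the closely related Sobolev--Slobodeckij embedding $W^{\alpha,q}((0,1)\times[0,T])\hookrightarrow L^\infty$ (valid for $\alpha q>2$) and bounds the expected $q$-th power of that fractional Sobolev norm directly by Fubini and the same increment estimate.
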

\begin{proof}[Proof of Lemma~\ref{lem:O_sobo}]
Throughout this proof let
$ Y \colon \Omega \rightarrow \R $ be a 
standard normal random variable,
let
$ \alpha \in (0,\nicefrac{1}{8}) $,
$ q \in (\nicefrac{2}{\alpha}, \infty) \cap [p,\infty) $,
$ (\mu_k)_{ k \in \N } \subseteq \R $
satisfy for all $ k \in \N $ that
$
  \mu_k
  =
  \nu k^2 \pi^2
$,
let
$
  \OMNNp{}
  \colon
$
$
  [0,T] \times \Omega
  \rightarrow
  P_N(H)
$,
$
  M,N \in \N
$,
be the stochastic processes which satisfy 
for all $ M,N \in \N $,
$ t \in [0,T] $
that 
\begin{equation}
  \OMNNp{t}
  =
  \OMNN{t} - P_N e^{tA} \xi ,
\end{equation}
let
$
  \left|\!\left|\!\left| \cdot \right|\!\right|\!\right|
  \colon 
  \C( (0,1) \times [0,T], \R )
  \rightarrow 
  [0,\infty]
$
be the function which
satisfies for all 
$ v \in \C( (0,1) \times [0,T], \R ) $
that
\begin{multline}
  \left|\!\left|\!\left| v \right|\!\right|\!\right|
  =
  \Bigg(
    \int_{ (0,1) \times [0,T] }
    | v(x,t) |^q \, \lambda_{ \R^2 }( dx, dt )
\\
    +
    \int_{ (0,1) \times [0,T] }
    \int_{ ( (0,1) \times [0,T] ) \setminus \{ (x_2,t_2) \} }
    \frac{ 
      | 
	v(x_1,t_1)
	-
	v(x_2,t_2)
      |^q 
    }{ 
      \| (x_1,t_1) - (x_2,t_2) \|_{\R^2}^{(2+\alpha q)}
    } \, \lambda_{ \R^2 }( dx_1, dt_1 ) \, \lambda_{ \R^2 }( dx_2, dt_2 )
  \Bigg)^{\!\!\nicefrac{1}{q}} ,
\end{multline}
and let $ C \in [0,\infty] $ be the extended 
real number given by
\begin{align}
 &C
  =
  \sup\!\left(
    \left\{
      \sup_{ x \in (0,1) }
      \sup_{ t \in [0,T] }
      | v(x,t) |
      \colon \!
      \Big[
	v \in \C( (0,1) \times [0,T], \R )
	\text{ and }
	\left|\!\left|\!\left| v \right|\!\right|\!\right|
	\leq 1
      \Big]
    \right\}
  \right) .
\end{align}
Observe that the Sobolev embedding theorem
and the fact that $ \alpha q > 2 $
imply that 
\begin{equation}
  C < \infty .
\end{equation}
Next note that for all $ M, N \in \N $ we have that
\begin{align}
\begin{split}
 &\E\!\left[
    {\textstyle \sup_{ t \in [0,T] } }
    \| \OMNNp{t} \|_{ L^{\infty}( \lambda_{ (0,1) } ; \R ) }^q
  \right]
  =
  \E\!\left[
    {\textstyle \sup_{ (x,t) \in (0,1) \times [0,T] } }
    | \underline{(\OMNNp{t})}(x) |^q
  \right]
\\&\leq
  C^q \,
  \E\!\left[
    \int_{ (0,1) \times [0,T] }
    | \underline{(\OMNNp{t})}(x) |^q \, \lambda_{ \R^2 }( dx, dt )
  \right]
\\&+
  C^q \,
  \E\!\left[
    \int_{ (0,1) \times [0,T] }
    \int_{ ( (0,1) \times [0,T] ) \setminus \{ (x_2,t_2) \} }
    \frac{ 
      | 
        \underline{(\OMNNp{t_1})}(x_1)
        -
        \underline{(\OMNNp{t_2})}(x_2)
      |^q 
    }{ 
      \| (x_1,t_1) - (x_2,t_2) \|_{\R^2}^{(2+\alpha q)}
    } \, \lambda_{ \R^2 }( dx_1, dt_1 ) \, \lambda_{ \R^2 }( dx_2, dt_2 )
  \right] .
\end{split}
\end{align}
This and, e.g., Lemma~\ref{lem:normal_variables} show 
that for all $ M, N \in \N $ we have that
\begin{align}
\label{eq:O_sobo_0}
\begin{split}
 &\E\!\left[
    {\textstyle \sup_{ t \in [0,T] } }
    \| \OMNNp{t} \|_{ L^{\infty}( \lambda_{ (0,1) } ; \R ) }^q
  \right] 
\\&\leq
  C^q \,
  \E\big[ 
    |Y|^q
  \big] 
  \int_{ (0,1) \times [0,T] }
  \left( 
    \E\Big[
      | \underline{(\OMNNp{t})}(x) |^2
    \Big]
  \right)^{\!\nicefrac{q}{2}} \lambda_{ \R^2 }( dx, dt )
\\&+
  C^q \,
  \E\big[ 
    |Y|^q
  \big]
\\&\cdot
  \int_{ (0,1) \times [0,T] }
  \int_{ ( (0,1) \times [0,T] ) \setminus \{ (x_2,t_2) \} }
  \frac{
    \left( 
      \E\Big[
	| 
	  \underline{(\OMNNp{t_1})}(x_1)
	  -
	  \underline{(\OMNNp{t_2})}(x_2)
	|^2  
      \Big]
    \right)^{\!\nicefrac{q}{2}}
  }{ 
    \| (x_1,t_1) - (x_2,t_2) \|_{\R^2}^{(2+\alpha q)}
  } \, \lambda_{ \R^2 }( dx_1, dt_1 ) \, \lambda_{ \R^2 }( dx_2, dt_2 ) .
\end{split}
\end{align}
Moreover, note that
It{\^o}'s isometry proves that
for all $ M,N \in \N $, $ t \in [0,T] $,
$ x \in (0,1) $ we have that
\begin{align}
\label{eq:O_sobo_1}
\begin{split}
 &\E\Big[
    | \underline{(\OMNNp{t})}(x) |^2
  \Big]
  =
  \E\!\left[ 
    \left|
      \sum_{ k=1 }^N
      \underline{e_k}(x)
      \int_0^t
      e^{-\mu_k(t-\fl{s})} \, \langle e_k, dW_s \rangle_H
    \right|^2
  \right] 
\\&=
  \sum_{ k=1 }^N
  | \underline{e_k}(x) |^2 \,
  \E\!\left[ 
    \left|
      \int_0^t
      e^{-\mu_k(t-\fl{s})} \, \langle e_k, dW_s \rangle_H
    \right|^2
  \right]
\\&=
  \sum_{ k=1 }^N
  | \underline{e_k}(x) |^2
  \int_0^t
  e^{-2\mu_k(t-\fl{s})} \, ds
\\&=
  \sum_{ k=1 }^N
  | \underline{e_k}(x) |^2
  \int_0^t
  e^{-2\mu_k(t-s)} \,
  e^{-2\mu_k(s-\fl{s})} \, ds
  \leq
  2
  \sum_{ k=1 }^N
  \int_0^t
  e^{-2\mu_k(t-s)} \, ds
\\&=
  2
  \sum_{ k=1 }^N
  \int_0^t
  e^{-2\mu_k s} \, ds
  =
  2
  \sum_{ k=1 }^N
  \frac{ 
    \left( 1 - e^{-2\mu_k t} \right)
  }{ 2 \mu_k }
  \leq
  \sum_{ k=1 }^N
  \frac{ 
    1
  }{ \mu_k }
  =
  \frac{ 1 }{ \nu \pi^2 }
  \left(
    \sum_{ k=1 }^N
    \frac{ 1 }{ k^2 }
  \right)
\\&=
  \frac{ 1 }{ \nu \pi^2 }
  \left(
    1
    +
    \sum_{ k=2 }^N
    \int_{ k - 1 }^k
    \frac{ 1 }{ k^2 } \, ds
  \right)
  \leq
  \frac{ 1 }{ \nu \pi^2 } 
  \left(
    1
    +
    \int_1^N 
    s^{-2} \, ds 
  \right)
\\&=
  \frac{ 1 }{ \nu \pi^2 } 
  \left( 
    1
    -
    \left[
      \nicefrac{1}{s}
    \right]_{s=1}^{s=N}
  \right)
  =
  \frac{ 1 }{ \nu \pi^2 } 
  \left[
    2
    -
    \frac{1}{N}
  \right] 
  \leq
  \frac{ 2 }{ \nu \pi^2 } .
\end{split}
\end{align}
In the next step note 
that for all $ M,N \in \N $,
$ s,t \in [0,T] $, $ x,y \in (0,1) $
we have that
\begin{align}
\begin{split}
 &\E\Big[
    | 
      \underline{(\OMNNp{t})}(x)
      -
      \underline{(\OMNNp{s})}(y)
    |^2  
  \Big]
\\&=
  \E\!\left[ 
    \left| 
      \sum_{ k=1 }^N
      \left(
        \underline{e_k}(x)
        \int_0^t
        e^{-\mu_k(t-\fl{u})}
        \langle e_k, dW_u \rangle_H
        -
        \underline{e_k}(y)
        \int_0^s
        e^{-\mu_k(s-\fl{u})}
        \langle e_k, dW_u \rangle_H
      \right)
    \right|^2
  \right] 
\\&=
  \sum_{ k=1 }^N
  \E\!\left[ 
    \left| 
      \underline{e_k}(x)
      \int_0^t
      e^{-\mu_k(t-\fl{u})}
      \langle e_k, dW_u \rangle_H
      -
      \underline{e_k}(y)
      \int_0^s
      e^{-\mu_k(s-\fl{u})}
      \langle e_k, dW_u \rangle_H
    \right|^2
  \right] .
\end{split}
\end{align}
The fact that
\begin{equation}
  \forall \,
  a,b \in \R
  \colon 
  |a + b|^2
  \leq
  2a^2 + 2b^2
\end{equation}
hence implies for 
all $ M,N \in \N $,
$ s,t \in [0,T] $, $ x,y \in (0,1) $
that
\begin{align}
\label{eq:O_sobo_2}
\begin{split}
 &\E\Big[
    | 
      \underline{(\OMNNp{t})}(x)
      -
      \underline{(\OMNNp{s})}(y)
    |^2  
  \Big]
\\&=
  \sum_{ k=1 }^N
  \E\!\Bigg[ 
    \bigg| \!
      \left(
        \underline{e_k}(x)
        -
        \underline{e_k}(y)
      \right)
      \int_0^t
      e^{-\mu_k(t-\fl{u})}
      \langle e_k, dW_u \rangle_H
\\&\quad+
      \underline{e_k}(y)
      \left(
	\int_0^t
	e^{-\mu_k(t-\fl{u})}
	\langle e_k, dW_u \rangle_H
	-
	\int_0^s
	e^{-\mu_k(s-\fl{u})}
	\langle e_k, dW_u \rangle_H
      \right) \!
    \bigg|^2
  \Bigg]
\\&\leq
  2
  \sum_{ k=1 }^N
  \left|
    \underline{e_k}(x)
    -
    \underline{e_k}(y)
  \right|^2
  \E\!\left[ 
    \left|
      \int_0^t
      e^{-\mu_k(t-\fl{u})}
      \langle e_k, dW_u \rangle_H
    \right|^2
  \right]
\\&\quad+
  2
  \sum_{ k=1 }^N
  \left|
    \underline{e_k}(y)
  \right|^2
  \E\!\left[ 
    \left|
      \int_0^t
      e^{-\mu_k(t-\fl{u})}
      \langle e_k, dW_u \rangle_H
      -
      \int_0^s
      e^{-\mu_k(s-\fl{u})}
      \langle e_k, dW_u \rangle_H
    \right|^2
  \right] .
\end{split}
\end{align}
In addition, observe that
It{\^o}'s isometry,
the fact that 
\begin{equation}
  \forall \, a,b \in \R
  \colon
  |\! \sin(a) - \sin(b) |
  \leq 
  2
  ,
\end{equation}
and the fact that
\begin{equation}
  \forall \, a,b \in \R
  \colon
  |\! \sin(a) - \sin(b) |
  \leq 
  | a - b |
\end{equation}
show for all 
$ M, N \in \N $,
$ t \in [0,T] $,
$ x,y \in (0,1) $,
$ r \in [0,2] $
that
\begin{align}
\begin{split}
 &\sum_{ k=1 }^N
  \left|
    \underline{e_k}(x)
    -
    \underline{e_k}(y)
  \right|^2
  \E\!\left[ 
    \left|
      \int_0^t
      e^{-\mu_k(t-\fl{u})}
      \langle e_k, dW_u \rangle_H
    \right|^2
  \right]
\\&=
  2
  \sum_{ k=1 }^N
  \left|
    \sin( k \pi x )
    -
    \sin( k \pi y )
  \right|^{(2-r)}
  \left|
    \sin( k \pi x )
    -
    \sin( k \pi y )
  \right|^r
  \int_0^t
  e^{-2\mu_k(t-u)} \,
  e^{-2\mu_k(u-\fl{u})} \, du
\\&\leq
  2^{(3-r)}
  | x - y |^r
  \left[
    \sum_{ k=1 }^N
    ( k \pi )^r
    \int_0^t
    e^{-2\mu_k(t-u)} \, du
  \right]
  =
  2^{(3-r)}
  | x - y |^r
  \left[
    \sum_{ k=1 }^N
    \frac{ 
      ( k \pi )^r 
      (1-e^{-2\mu_k t} )
    }{ 2 \mu_k }
  \right] .
\end{split}
\end{align}
This yields that
for all 
$ M, N \in \N $,
$ t \in [0,T] $,
$ x,y \in (0,1) $,
$ r \in [0,1) $
we have that
\begin{align}
\label{eq:O_sobo_3}
\begin{split}
 &\sum_{ k=1 }^N
  \left|
    \underline{e_k}(x)
    -
    \underline{e_k}(y)
  \right|^2
  \E\!\left[ 
    \left|
      \int_0^t
      e^{-\mu_k(t-\fl{u})}
      \langle e_k, dW_u \rangle_H
    \right|^2
  \right]
\\&\leq
  2^{(2-r)}
  | x - y |^r
  \left[
    \sum_{ k=1 }^N
    \frac{ 
      k^r \pi^r
    }{ \nu k^2 \pi^2 }
  \right]
  =
  \frac{ 
    2^{(2-r)}
    | x - y |^r
  }{ \nu \pi^{(2-r)} }
  \left[
    \sum_{ k=1 }^N
    k^{(r-2)}
  \right]
\\&\leq
  \frac{ 
    2^{(2-r)}
    | x - y |^r
  }{ \nu \pi^{(2-r)} }
  \left( 
    1
    +
    \int_1^N
    s^{(r-2)} \, ds
  \right)
  =
  \frac{ 
    2^{(2-r)}
    | x - y |^r
  }{ \nu \pi^{(2-r)} }
  \left( 
    1
    -
    \left[ 
      \frac{s^{(r-1)}}{(1-r)}
    \right]_{ s=1 }^{s=N}
  \right)
\\&=
  \frac{
    2^{(2-r)}
    | x - y |^r
  }{ \nu \pi^{(2-r)} }
  \left( 
    1
    -
    \frac{( N^{(r-1)} - 1 )}{(1-r)}
  \right)
  \leq 
  \frac{
    2^{(2-r)}
    | x - y |^r
  }{ \nu \pi^{(2-r)} }
  \left( 
    \frac{2}{(1-r)}
    -
    \frac{N^{(r-1)}}{(1-r)}
  \right)
\\&=
  \frac{
    2^{(2-r)}
    ( 2 - N^{(r-1)} )
    | x - y |^r
  }{ \nu \pi^{(2-r)} (1-r) }
  \leq
  \frac{ 
    2^{(3-r)}
    | x - y |^r
  }{ 
    \nu \pi^{(2-r)}
    (1-r)
  } .
\end{split}
\end{align}
Furthermore,
note that 
for all
$ M,N \in \N $,
$ s \in [0,T) $,
$ t \in (s,T] $
we have that
\begin{align}
\begin{split}
 &\sum_{ k=1 }^N
  \E\!\left[ 
    \left|
      \int_0^t
      e^{-\mu_k(t-\fl{u})}
      \langle e_k, dW_u \rangle_H
      -
      \int_0^s
      e^{-\mu_k(s-\fl{u})}
      \langle e_k, dW_u \rangle_H
    \right|^2
  \right]
\\&=
  \sum_{ k=1 }^N
  \E\!\left[ 
    \left|
      \int_s^t
      e^{-\mu_k(t-\fl{u})}
      \langle e_k, dW_u \rangle_H
      +
      \left(
	e^{-\mu_k(t-s)}
	-
	1
      \right)
      \int_0^s
      e^{-\mu_k(s-\fl{u})}
      \langle e_k, dW_u \rangle_H
    \right|^2
  \right]
\\&=
  \sum_{ k=1 }^N
  \E\!\left[ 
    \left|
      \int_s^t
      e^{-\mu_k(t-\fl{u})}
      \langle e_k, dW_u \rangle_H
    \right|^2
  \right]
\\&\quad+
  \sum_{ k=1 }^N
  \left(
    e^{-\mu_k(t-s)}
    -
    1
  \right)^2
  \E\!\left[ 
    \left|
      \int_0^s
      e^{-\mu_k(s-\fl{u})}
      \langle e_k, dW_u \rangle_H
    \right|^2
  \right] .
\end{split}
\end{align}
It{\^o}'s isometry
hence ensures
for all
$ M,N \in \N $,
$ s \in [0,T) $,
$ t \in (s,T] $ that
\begin{align}
\begin{split}
 &\sum_{ k=1 }^N
  \E\!\left[ 
    \left|
      \int_0^t
      e^{-\mu_k(t-\fl{u})}
      \langle e_k, dW_u \rangle_H
      -
      \int_0^s
      e^{-\mu_k(s-\fl{u})}
      \langle e_k, dW_u \rangle_H
    \right|^2
  \right]
\\&=
  \sum_{ k=1 }^N
  \int_s^t
  e^{-2\mu_k(t-u)} \,
  e^{-2\mu_k(u-\fl{u})} \, du
\\&\quad+
  \sum_{ k=1 }^N
  \left(
    e^{-\mu_k(t-s)}
    -
    1
  \right)^2
  \int_0^s
  e^{-2\mu_k(s-u)} \,
  e^{-2\mu_k(u-\fl{u})} \, du
\\&\leq
  \sum_{ k=1 }^N
  \int_s^t
  e^{-2\mu_k(t-u)} \, du
  +
  \sum_{ k=1 }^N
  \left(
    e^{-\mu_k(t-s)}
    -
    1
  \right)^2
  \int_0^s
  e^{-2\mu_k(s-u)} \, du
\\&=
  \sum_{ k=1 }^N
  \int_0^{(t-s)}
  e^{-2\mu_k u} \, du
  +
  \sum_{ k=1 }^N
  \left(
    e^{-\mu_k(t-s)}
    -
    1
  \right)^2
  \int_0^s
  e^{-2\mu_k u} \, du
\\&=
  \sum_{ k=1 }^N
  \frac{ (1-e^{-2\mu_k (t-s) } ) }{ 2 \mu_k }
  +
  \sum_{ k=1 }^N
  \left(
    e^{-\mu_k(t-s)}
    -
    1
  \right)^2
  \frac{ (1-e^{-2\mu_k s } ) }{ 2 \mu_k } .
\end{split}
\end{align}
The fact that
\begin{equation}
\begin{split}
  \sup_{ x \in (0,\infty) }
  \left(
    x^{-1}
    (1 - e^{-x} )
  \right)
& =
  \sup_{ x \in (0,\infty) }
  \left(
    x^{-1}
    \left(
      \int_0^x
      e^{-s} \, ds
    \right)
  \right)
\\ & \leq 
  \sup_{ x \in (0,\infty) }
  \left(
    x^{-1}
    \left(
      \int_0^x \, ds
    \right)
  \right)
  =
  1
\end{split}
\end{equation}
therefore implies
that for all
$ M,N \in \N $,
$ s \in [0,T) $,
$ t \in (s,T] $,
$ r \in [0,1] $ 
we have that
\begin{align}
\begin{split}
 &\sum_{ k=1 }^N
  \E\!\left[ 
    \left|
      \int_0^t
      e^{-\mu_k(t-\fl{u})}
      \langle e_k, dW_u \rangle_H
      -
      \int_0^s
      e^{-\mu_k(s-\fl{u})}
      \langle e_k, dW_u \rangle_H
    \right|^2
  \right]
\\&\leq
  \sum_{ k=1 }^N
  \frac{ (1-e^{-2\mu_k (t-s) } ) }{ 2 \mu_k }
  +
  \sum_{ k=1 }^N
  \frac{ (1-e^{-\mu_k (t-s)} ) }{ 2 \mu_k }
  \leq
  \sum_{ k=1 }^N
  \frac{ (1-e^{-2\mu_k (t-s) } ) }{ \mu_k }
\\&=
  2^r
  (t-s)^r
  \sum_{ k=1 }^N
  \left[ 
    \frac{ 
      (1-e^{-2\mu_k (t-s) } ) 
    }{ 
      2 \mu_k 
      (t-s)
    }
  \right]^r
  \left[ 
    \frac{ (1-e^{-2\mu_k (t-s) } ) }{ \mu_k }
  \right]^{(1-r)}
\\&\leq
  2^r
  (t-s)^r
  \left[
    \sup_{ x \in (0,\infty) }
    \frac{ 
      (1-e^{-x} ) 
    }{ 
      x
    }
  \right]^r
  \left[
    \sum_{ k=1 }^N
    (\mu_k)^{(r-1)}
  \right]
\\&\leq
  2^r
  (t-s)^r
  \left[
    \sum_{ k=1 }^N
    \frac{1}{(\mu_k)^{(1-r)}}
  \right] 
  =
  \frac{ 
    2^r
    (t-s)^r
  }{ \nu^{(1-r)} \pi^{2(1-r)} }
  \left[ 
    \sum_{ k=1 }^N
    k^{2(r-1)}
  \right] .
\end{split}
\end{align}
This yields that 
for all
$ M,N \in \N $,
$ s \in [0,T) $,
$ t \in (s,T] $,
$ r \in [0,\nicefrac{1}{2}) $
we have that
\begin{align}
\label{eq:O_sobo_4}
\begin{split}
 &\sum_{ k=1 }^N
  \E\!\left[ 
    \left|
      \int_0^t
      e^{-\mu_k(t-\fl{u})}
      \langle e_k, dW_u \rangle_H
      -
      \int_0^s
      e^{-\mu_k(s-\fl{u})}
      \langle e_k, dW_u \rangle_H
    \right|^2
  \right]
\\&\leq
  \frac{ 
    2^r
    (t-s)^r
  }{ \nu^{(1-r)} \pi^{2(1-r)} }
  \left(
    1
    +
    \int_1^N
    s^{2(r-1)} \, ds
  \right)
  =
  \frac{ 
    2^r
    (t-s)^r
  }{ \nu^{(1-r)} \pi^{2(1-r)} }
  \left(
    1
    -
    \left[ 
      \frac{ s^{(2r-1)} }{ (1-2r) }
    \right]_{ s=1 }^{ s = N }
  \right)
\\&=
  \frac{ 
    2^r
    (t-s)^r
  }{ \nu^{(1-r)} \pi^{2(1-r)} }
  \left(
    1
    - 
    \frac{ ( N^{(2r-1)} - 1 ) }{ (1-2r) }
  \right) 
  \leq
  \frac{ 
    2^{(1+r)}
    (t-s)^r
  }{ 
    \nu^{(1-r)} \pi^{2(1-r)}
    (1-2r)
  } .
\end{split}
\end{align}
Moreover, observe that
Jensen's inequality
proves that for all 
$ s,t,x,y \in [0,\infty) $,
$ r \in (0,2] $ 
we have that
\begin{align}
\label{eq:O_sobo_5}
\begin{split}
  \|
    (x,t)
    -
    (y,s)
  \|_{\R^2}^r
 &=
  \Big[ 
    |x-y|^2
    +
    |t-s|^2
  \Big]^{\nicefrac{r}{2}}
  =
  2^{\nicefrac{r}{2}}
  \Big[ 
    \tfrac{1}{2}
    |x-y|^2
    +
    \tfrac{1}{2}
    |t-s|^2
  \Big]^{\nicefrac{r}{2}}
\\&=
  2^{\nicefrac{r}{2}}
  \Big[ 
    \tfrac{1}{2}
    \{ |x-y|^r \}^{\nicefrac{2}{r}}
    +
    \tfrac{1}{2}
    \{ |t-s|^r \}^{\nicefrac{2}{r}}
  \Big]^{\nicefrac{r}{2}}
\\&\geq
  2^{\nicefrac{r}{2}}
  \Big[ 
    \tfrac{1}{2}
    |x-y|^r
    +
    \tfrac{1}{2}
    |t-s|^r
  \Big]
  =
  2^{(\nicefrac{r}{2}-1)}
  \Big[ 
    |x-y|^r
    +
    |t-s|^r
  \Big] .
\end{split}
\end{align}
Combining~\eqref{eq:O_sobo_2}
with~\eqref{eq:O_sobo_3}, \eqref{eq:O_sobo_4},
and~\eqref{eq:O_sobo_5}
yields that for all 
$ M,N \in \N $,
$ s,t \in [0,T] $,
$ x,y \in (0,1) $
we have that
\begin{align}
\label{eq:O_sobo_6}
\begin{split}
 &\E\Big[
    | 
      \underline{(\OMNNp{t})}(x)
      -
      \underline{(\OMNNp{s})}(y)
    |^2  
  \Big]
\\&\leq
  \frac{ 
    2^{(4-4\alpha)}
    | x - y |^{4\alpha}
  }{ 
    \nu \pi^{(2-4\alpha)}
    (1-4\alpha)
  }
  +
  \frac{ 
    2^{(3+4\alpha)}
    |t-s|^{4\alpha}
  }{ 
    \nu^{(1-4\alpha)} \pi^{2(1-4\alpha)}
    (1-8\alpha)
  }
\\&\leq
  \frac{ 
    2^{(4-4\alpha)}
  }{ 
    \min\{1,\nu\} 
    \pi^{(2-8\alpha)}
    (1-8\alpha)
  }
  \Big[ 
    | x - y |^{4\alpha}
    +
    |t-s|^{4\alpha}
  \Big]
\\&\leq
  \frac{ 
    2^{(5-6\alpha)}
    \|
      (x,t)
      -
      (y,s)
    \|_{\R^2}^{4\alpha}
  }{ 
    \min\{1,\nu\}
    \pi^{(2-8\alpha)}
    (1-8\alpha)
  }
\\&\leq
  \frac{ 
    2^3
    \pi^{2\alpha}
    \|
      (x,t)
      -
      (y,s)
    \|_{\R^2}^{4\alpha}
  }{ 
    \min\{1,\nu\}
    (1-8\alpha)
  }
  =
  \frac{
    \pi^{2\alpha}
    \|
      (x,t)
      -
      (y,s)
    \|_{\R^2}^{4\alpha}
  }{ 
    \min\{1,\nu\}
    (\nicefrac{1}{8}-\alpha)
  } .
\end{split}
\end{align}
In the next step we note 
that~\eqref{eq:O_sobo_0}
together with~\eqref{eq:O_sobo_1},
\eqref{eq:O_sobo_6},
and the fact that
$ \alpha q > 2 $
assures that for all 
$ M,N \in \N $ we have that
\begin{align}
\label{eq:O_sobo_7}
\begin{split}
 &\E\!\left[
    {\textstyle \sup_{ t \in [0,T] } }
    \| \OMNNp{t} \|_{ L^{\infty}( \lambda_{ (0,1) } ; \R ) }^q
  \right] 
\\&\leq
  \frac{  
    2^{\nicefrac{q}{2}}
    C^q \,
    \E\big[ 
      |Y|^q
    \big] 
  }{ 
    \nu^{\nicefrac{q}{2}} \pi^q  
  }
  \int_{ (0,1) \times [0,T] }
  \lambda_{ \R^2 }( dx, dt )
\\&+
  \frac{ 
    \pi^{\alpha q}
    C^q \,
    \E\big[ 
      |Y|^q
    \big]
  }{
    \min\{1,\nu^{\nicefrac{q}{2}}\}
    (\nicefrac{1}{8}-\alpha)^{\nicefrac{q}{2}}
  }
\\&\cdot
  \int_{ (0,1) \times [0,T] }
  \int_{ (0,1) \times [0,T] }
  \| (x_1,t_1) - (x_2,t_2) \|_{\R^2}^{(\alpha q- 2)}
  \, \lambda_{ \R^2 }( dx_1, dt_1 ) \, \lambda_{ \R^2 }( dx_2, dt_2 )
\\&\leq
  \frac{  
    C^q T \,
    \E\big[ 
      |Y|^q
    \big] 
  }{ 
    \nu^{\nicefrac{q}{2}} 
  }
\\&+
  \frac{ 
    \pi^{\alpha q}
    C^q T^2 \,
    \E\big[ 
      |Y|^q
    \big]
  }{
    \min\{1,\nu^{\nicefrac{q}{2}}\}
    (\nicefrac{1}{8}-\alpha)^{\nicefrac{q}{2}}
  }
  \left[ 
    \sup_{ x_1,x_2 \in (0,1) }
    \sup_{ t_1,t_2 \in [0,T] }
    \| (x_1,t_1) - (x_2,t_2) \|_{\R^2}^{(\alpha q- 2)}
  \right]
\\&=
  \frac{  
    C^q T \,
    \E\big[ 
      |Y|^q
    \big] 
  }{ 
    \nu^{\nicefrac{q}{2}} 
  }
\\&+
  \frac{ 
    \pi^{\alpha q}
    C^q T^2 \,
    \E\big[ 
      |Y|^q
    \big]
  }{
    \min\{1,\nu^{\nicefrac{q}{2}}\}
    (\nicefrac{1}{8}-\alpha)^{\nicefrac{q}{2}}
  }
  \left[ 
    \sup_{ x_1,x_2 \in (0,1) }
    \sup_{ t_1,t_2 \in [0,T] }
    \left( |x_1 - x_2|^2 + |t_1 - t_2|^2 \right)
  \right]^{(\frac{\alpha q}{2} - 1)}
\\&\leq
  \frac{  
    C^q T \,
    \E\big[ 
      |Y|^q
    \big] 
  }{ 
    \nu^{\nicefrac{q}{2}}  
  }
  +
  \frac{ 
    \pi^{\alpha q}
    C^q T^2 
    (
      1 + T^2
    )^{(\nicefrac{\alpha q}{2}-1)}
    \E\big[ 
      |Y|^q
    \big]
  }{
    \min\{1,\nu^{\nicefrac{q}{2}}\}
    (\nicefrac{1}{8}-\alpha)^{\nicefrac{q}{2}}
  }
\\&\leq
  \frac{ 
    \pi^{\alpha q}
    C^q \,
    \E\big[ 
      |Y|^q
    \big] \!
    \left[
      T
      +
      T^2
      (
	1 + T^2
      )^{(\nicefrac{\alpha q}{2}-1)}
    \right] 
  }{ 
    \min\{1,\nu^{\nicefrac{q}{2}}\}
    (\nicefrac{1}{8}-\alpha)^{\nicefrac{q}{2}}
  } .
\end{split}
\end{align}
In addition, observe that 
the assumption that
$
  \xi 
  \in
  \cup_{ r \in (\nicefrac{1}{4},\infty) }
  \L^p( \P; H_r )
$
implies that there exists a real number
$ \varepsilon \in (0,\infty) $
such that
\begin{equation}
\label{eq:O_sobo_xi}
  \xi
  \in
  \L^p( \P; H_{(\nicefrac{1}{4}+\varepsilon)} ) .
\end{equation}
The triangle inequality
and H{\"o}lder's inequality hence show that
\begin{align}
\begin{split}
 &\sup_{ N, M \in \N }
  \big\|
    {\textstyle \sup_{ t \in [0,T] } }
    \| \OMNN{t} \|_{ L^{\infty}(\lambda_{(0,1)};\R) }
  \big\|_{ \L^p( \P; \R ) }
\\&\leq 
  \sup_{ N, M \in \N }
  \big\|
    {\textstyle \sup_{ t \in [0,T] } }
    \| P_N e^{tA} \xi \|_{ L^{\infty}(\lambda_{(0,1)};\R) }
  \big\|_{ \L^p( \P; \R ) }
\\ &
\quad
  +
  \sup_{ N, M \in \N }
  \big\|
    {\textstyle \sup_{ t \in [0,T] } }
    \| \OMNNp{t} \|_{ L^{\infty}(\lambda_{(0,1)};\R) }
  \big\|_{ \L^p( \P; \R ) }
\\&\leq
  \left[ 
    \sup_{ v \in H_{(\nicefrac{1}{4}+\varepsilon)} \setminus \{0\} }
    \frac{ 
      \| v \|_{ L^{\infty}(\lambda_{(0,1)};\R) } 
    }{
      \| v \|_{ H_{(\nicefrac{1}{4}+\varepsilon)} }
    }
  \right] 
  \left[ 
    \sup_{ N \in \N }
    \big\|
      {\textstyle \sup_{ t \in [0,T] } }
      \| P_N e^{tA} \xi \|_{ H_{(\nicefrac{1}{4}+\varepsilon)} }
    \big\|_{ \L^p( \P; \R ) }
  \right]
\\&\quad+
  \sup_{ N, M \in \N }
  \big\|
    {\textstyle \sup_{ t \in [0,T] } }
    \| \OMNNp{t} \|_{ L^{\infty}(\lambda_{(0,1)};\R) }
  \big\|_{ \L^q( \P; \R ) } .
\end{split}
\end{align}
The fact that
$
  \forall \, N \in \N
  \colon 
  \| P_N \|_{ L(H) }
  \leq 
  1
$,
the fact that
\begin{equation}
  \forall \, t \in [0,\infty)
  \colon
  \| e^{tA} \|_{ L(H) }
  \leq 
  1
  ,
\end{equation}
the Sobolev embedding theorem,
\eqref{eq:O_sobo_7},
and~\eqref{eq:O_sobo_xi}
therefore prove that
\begin{align}
\begin{split}
 &\sup_{ N, M \in \N }
  \big\|
    {\textstyle \sup_{ t \in [0,T] } }
    \| \OMNN{t} \|_{ L^{\infty}(\lambda_{(0,1)};\R) }
  \big\|_{ \L^p( \P; \R ) }
\\&\leq 
  \left[ 
    \sup_{ v \in H_{(\nicefrac{1}{4}+\varepsilon)} \setminus \{0\} }
    \frac{ 
      \| v \|_{ L^{\infty}(\lambda_{(0,1)};\R) } 
    }{
      \| v \|_{ H_{(\nicefrac{1}{4}+\varepsilon)} }
    }
  \right] 
  \left[ 
    \sup_{ N \in \N }
    \big\|
      {\textstyle \sup_{ t \in [0,T] } }
      \| P_N \|_{ L(H) }
      \| e^{tA} \|_{ L(H) }
      \| \xi \|_{ H_{(\nicefrac{1}{4}+\varepsilon)} }
    \big\|_{ \L^p( \P; \R ) }
  \right]
\\&\quad+
  \sup_{ N, M \in \N }
  \left(
    \E\!\left[
      {\textstyle \sup_{ t \in [0,T] } }
      \| \OMNNp{t} \|_{ L^{\infty}( \lambda_{ (0,1) } ; \R ) }^q
    \right] 
  \right)^{\!\nicefrac{1}{q}}
\\&\leq
  \left[ 
    \sup_{ v \in H_{(\nicefrac{1}{4}+\varepsilon)} \setminus \{0\} }
    \frac{ 
      \| v \|_{ L^{\infty}(\lambda_{(0,1)};\R) } 
    }{
      \| v \|_{ H_{(\nicefrac{1}{4}+\varepsilon)} }
    }
  \right] 
  \|
    \xi
  \|_{ \L^p( \P; H_{(\nicefrac{1}{4}+\varepsilon)} ) }
\\&\quad+
  \frac{
    \pi^{\alpha} C \,
    \|Y\|_{ \L^q( \P; \R ) }
    \left[
      T
      +
      T^2
      (
	1 + T^2
      )^{(\nicefrac{\alpha q}{2}-1)}
    \right]^{\!\nicefrac{1}{q}}
  }{ 
    \min\{1,\sqrt{\nu}\}
    \sqrt{\nicefrac{1}{8}-\alpha}
  }
  <
  \infty .
\end{split}
\end{align}
This completes the proof
of Lemma~\ref{lem:O_sobo}.
\end{proof}

\begin{lemma}
\label{lem:O_prop_3}
Assume the setting in Section~\ref{sec:examples_setting},
let $ p,q \in [2,\infty) $, 
$ \xi \in \L^p( \P; L^q( \lambda_{ (0,1) }; \R ) ) $,
$ \theta \in [0,\nicefrac{1}{4}) $, 
and let $ O \colon [0,T] \times \Omega \rightarrow L^q( \lambda_{ (0,1) }; \R ) $ and
$ \OMNN{} \colon [0,T] \times \Omega \rightarrow P_N(H) $,
$ M, N \in \N $, be stochastic processes 
which satisfy for all $ t \in [0,T] $, $ M,N \in \N $ that
$
  [ 
    O_t - e^{tA} \xi
  ]_{ \P, \B(H) }
  =
  \int_0^t
  e^{(t-s)A} \, dW_s
$
and
$
  [ 
    \OMNN{t} - P_{N} e^{tA} \xi
  ]_{ \P, \B(H) }
  =
  \int_0^t
  P_{N}
  e^{(t-\fl{s})A} \, dW_s
$.
Then we have that 
\begin{equation}
  \sup_{ M,N \in \N }
  \sup_{ t \in [0,T] }
  \left[ 
    M^{\theta}
    \left(
      \E\Big[
        \| 
          P_N O_t - \OMNN{t} 
        \|_{ L^q( \lambda_{ (0,1) } ; \R ) }^p
      \Big]
    \right)^{\!\nicefrac{1}{p}}
  \right]
  <
  \infty .
\end{equation}
\end{lemma}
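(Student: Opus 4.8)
The plan is to decompose the error $P_N O_t - \OMNN{t}$ into two pieces using the stochastic integral representations, and then estimate each piece. Writing $[O_t - e^{tA}\xi]_{\P,\B(H)} = \int_0^t e^{(t-s)A}\,dW_s$ and $[\OMNN{t} - P_N e^{tA}\xi]_{\P,\B(H)} = \int_0^t P_N e^{(t-\fl{s})A}\,dW_s$, applying $P_N$ to the first identity, and subtracting, we obtain
\begin{equation}
  [P_N O_t - \OMNN{t}]_{\P,\B(H)}
  =
  \int_0^t P_N \big( e^{(t-s)A} - e^{(t-\fl{s})A} \big)\,dW_s .
\end{equation}
First I would apply the Burkholder--Davis--Gundy type inequality (e.g., Lemma~7.7 in Da Prato \& Zabcyk~\cite{dz92}) in the Hilbert space $H$ to bound $\|P_N O_t - \OMNN{t}\|_{\L^p(\P;H)}^2$ by a constant times $\int_0^t \|P_N ( e^{(t-s)A} - e^{(t-\fl{s})A} )\|_{HS(H)}^2\,ds$. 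Since $H_\theta \subseteq L^q(\lambda_{(0,1)};\R)$ continuously (by the Sobolev embedding, cf.\ the embedding $H_\theta \subseteq W^{2\theta,2} \subseteq L^q$ used already in the proof of Lemma~\ref{lem:O_prop_1}), it suffices to control the $HS(H,H_\theta)$-norm instead, i.e.\ to bound $\int_0^t \|P_N ( e^{(t-s)A} - e^{(t-\fl{s})A} )\|_{HS(H,H_\theta)}^2\,ds$ by $C M^{-2\theta}$ uniformly in $t$, $N$, $M$.

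The key computation is the estimate on $\|P_N ( e^{(t-s)A} - e^{(t-\fl{s})A} )\|_{HS(H,H_\theta)}$. I would write $e^{(t-s)A} - e^{(t-\fl{s})A} = e^{(t-s)A}(\Id_H - e^{(s-\fl{s})A})$, insert and extract appropriate fractional powers of $-A$, and use the standard smoothing bounds $\|(-tA)^r e^{tA}\|_{L(H)} \leq 1$ for $r\in[0,1]$ and $\|(-tA)^{-r}(e^{tA}-\Id_H)\|_{L(H)} \leq 1$ for $r\in[0,1]$, together with the diagonalization of $A$ in the orthonormal basis $(e_k)_{k\in\N}$ with eigenvalues $(-\nu\pi^2 k^2)_{k\in\N}$. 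Concretely, choosing a small $\beta \in (\nicefrac14, \nicefrac12 - \theta)$ so that $\sum_{k\in\N} k^{-4\beta} < \infty$, one factors out $\|(-A)^{-\beta}P_N\|_{HS(H)}^2 \leq \sum_{k\in\N} k^{-4\beta}(\nu\pi^2)^{-2\beta} < \infty$, leaving $\|(-A)^{\theta+\beta}e^{(t-s)A}\|_{L(H)}^2 \cdot \|(-A)^{\gamma'}(\Id_H - e^{(s-\fl{s})A})\|_{L(H)}^2 \cdot \|(-A)^{-\gamma'}e^{(t-s)A}\|$-type bounds; balancing exponents and using $|s-\fl{s}| \leq T/M$ yields a factor $(t-s)^{-2(\theta+\beta+\gamma')} (T/M)^{2\gamma'}$ (or a cleaner two-term split $\int_0^t (t-s)^{-2(\theta+\beta+\gamma')}\,ds$), and picking $\gamma' = \theta$ after a suitable rearrangement gives the exponent $M^{-2\theta}$ with an integrable singularity in $t-s$ since $2(\theta + \beta + \gamma') < 1$ can be arranged—actually one should be slightly more careful and split the integral into $\int_{\fl{t}}^t$ and $\int_0^{\fl{t}}$ as in Lemma~\ref{lem:XX_XX_diff_pw}, treating the near-diagonal part with a crude bound and the far part with the fractional-power smoothing, which is the cleanest way to obtain $(T/M)^{2\theta}$ up to a constant.

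I also need to account for the contribution of the initial datum $\xi$: strictly, the terms $P_N e^{tA}\xi$ cancel exactly between $P_N O_t$ and $\OMNN{t}$ (both identities have the same $e^{tA}\xi$ resp.\ $P_N e^{tA}\xi$ subtracted), so there is no $\xi$-dependent remainder in the difference $P_N O_t - \OMNN{t}$; the hypothesis $\xi \in \L^p(\P; L^q(\lambda_{(0,1)};\R))$ is needed only to guarantee that $O$ and $\OMNN{}$ are well-defined $L^q$-valued processes so that the $L^q$-norms in the assertion make sense. Finally I would assemble: $\|P_N O_t - \OMNN{t}\|_{\L^p(\P; L^q)} \lesssim \|P_N O_t - \OMNN{t}\|_{\L^p(\P; H_\theta)} \lesssim M^{-\theta}$ uniformly in $t \in [0,T]$, $M,N\in\N$, and multiply by $M^\theta$. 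The main obstacle I anticipate is getting the exponent bookkeeping in the $HS(H,H_\theta)$ estimate exactly right so that the singularity $(t-s)^{-\text{(something)}}$ stays integrable while still extracting the full $M^{-\theta}$ rate; the two-term split into near- and far-diagonal pieces, mirroring the temporal-regularity lemmas already proved, should resolve this cleanly, and everything else is a routine application of It\^o's isometry / BDG plus the spectral calculus for $A$.
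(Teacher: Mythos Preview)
Your approach has a genuine gap in the exponent bookkeeping that the two-term split will not repair. The route through the Sobolev embedding $H_\theta \subseteq L^q$ costs too many derivatives. First, that embedding only holds when $\theta \ge \tfrac14 - \tfrac{1}{2q}$, so for $q>2$ and small $\theta$ it is simply false. Second, and more seriously, even if you replace $H_\theta$ by some $H_r$ with $r \ge \tfrac14 - \tfrac{1}{2q}$, your $HS(H,H_r)$ bound produces $(t-s)^{-2(r+\beta+\gamma')}$, and integrability requires $r+\beta+\gamma'<\tfrac12$. With $\gamma'=\theta$ (needed for the rate $M^{-\theta}$) and $\beta>\tfrac14$ (needed for $\|(-A)^{-\beta}P_N\|_{HS(H)}<\infty$) this forces $\theta<\tfrac{1}{2q}$. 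For $q>2$ that is strictly smaller than $\tfrac14$, so you cannot cover the range $\theta\in[0,\tfrac14)$ asserted in the lemma. Splitting into near and far pieces does not change this budget: it is the weight $\mu_k^{2r}$ coming from the $H_r$-norm that blocks you, not the local behaviour near $s=t$.

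The paper avoids the Sobolev embedding altogether by exploiting Gaussianity \emph{pointwise in $x$}. Setting $\tilde p=\max\{p,q\}$ and using H{\"o}lder, Fubini, and Lemma~\ref{lem:normal_variables} gives
\[
  \E\big[\|P_N O_t - \OMNN{t}\|_{L^q}^{\tilde p}\big]
  \le \E[|Y|^{\tilde p}]
  \int_0^1 \Big(\E\big[\,|\underline{P_N O_t}(x)-\underline{\OMNN{t}}(x)|^{2}\big]\Big)^{\tilde p/2}\,dx,
\]
so only a second moment at each fixed $x$ is needed. It{\^o}'s isometry yields $\sum_{k\le N}|\underline{e_k}(x)|^2\int_0^t|e^{-\mu_k(t-s)}-e^{-\mu_k(t-\fl{s})}|^2\,ds$, and $|\underline{e_k}(x)|^2\le 2$ eliminates the $x$-variable with no derivative cost. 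The paper then uses the elementary inequality $(a-b)^2\le a-b$ for $0\le b\le a\le 1$ to drop one power before applying the fractional smoothing bounds, obtaining $\int_0^t e^{-\mu_k(t-s)}(1-e^{-\mu_k(s-\fl{s})})\,ds \le C\,\mu_k^{-(\varepsilon-2\theta)}M^{-2\theta}$ for any $\varepsilon\in(\tfrac12+2\theta,1)$; this interval is nonempty exactly when $\theta<\tfrac14$, and then $\sum_k \mu_k^{-(\varepsilon-2\theta)}<\infty$ closes the argument over the full range.
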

\begin{proof}[Proof of Lemma~\ref{lem:O_prop_3}]
Throughout this proof let $ Y \colon \Omega \rightarrow \R $
be a standard normal random variable and let
$ \tilde{p} = \max\{ p, q \} $, 
$ \varepsilon \in (\nicefrac{1}{2}+2\theta, 1) $,
$ (\mu_k)_{ k \in \N } \subseteq \R $
satisfy for all $ k \in \N $ that
$ \mu_k = \nu \pi^2 k^2 $.
Next note that 
H{\"o}lder's inequality,
Fubini's theorem,
and, e.g., Lemma~\ref{lem:normal_variables} 
imply that for all
$ t \in [0,T] $, $ M, N \in \N $ we have that
\begin{align}
\begin{split}
 &\E\!\left[
    \| 
      P_{N} 
      O_t
      -
      \OMNN{t}
    \|_{ L^{q}( \lambda_{(0,1)} ; \R ) }^{\tilde{p}}
  \right]
\\&=
  \E\!\left[
    \left|
      \int_0^1
      \big| 
	\underline{ 
	  P_{N} 
	  O_t
	}(x)
	-
	\underline{ \OMNN{t} }(x)
      \big|^{q} \, dx
    \right|^{\nicefrac{\tilde{p}}{q}}
  \right]
\\&\leq
  \E\!\left[
    \int_0^1
    \big| 
      \underline{ 
	P_{N} 
	O_t
      }(x)
      -
      \underline{ \OMNN{t} }(x)
    \big|^{\tilde{p}} \, dx
  \right]
\\&=
  \int_0^1
  \E\!\left[
    \big| 
      \underline{ 
	P_{N} 
	O_t
      }(x)
      -
      \underline{ \OMNN{t} }(x)
    \big|^{ \tilde{p} }
  \right] dx
\\&=
  \E\big[ 
    | Y |^{\tilde{p}}
  \big] 
  \int_0^1
  \left(
    \E\!\left[
      \big| 
	\underline{ 
	  P_{N} 
	  O_t
	}(x)
	-
	\underline{ \OMNN{t} }(x)
      \big|^2 
    \right] 
  \right)^{\!\nicefrac{\tilde{p}}{2}} dx .
\end{split}
\end{align}
This shows that for all 
$ t \in [0,T] $, $ M, N \in \N $
we have that
\begin{align}
\label{eq:OMNN_est_1}
\begin{split}
 &\E\!\left[
    \| 
      P_{N} 
      O_t
      -
      \OMNN{t}
    \|_{ L^{q}( \lambda_{(0,1)} ; \R ) }^{\tilde{p}}
  \right]
\\&=
  \E\big[ 
    | Y |^{\tilde{p}}
  \big] 
  \int_0^1
  \left(
    \E\!\left[
      \left|
        \sum_{ k=1 }^N
        \underline{ e_k }(x)
        \int_0^t
        \left( 
          e^{ -\mu_k(t-s) }
          -
          e^{ -\mu_k(t-\fl{s}) }
        \right) \left< e_k, dW_s \right>_H
      \right|^2 
    \right] 
  \right)^{\!\!\nicefrac{\tilde{p}}{2}} dx
\\&=
  \E\big[ 
    | Y |^{\tilde{p}}
  \big] 
  \int_0^1
  \left(
    \sum_{ k=1 }^N
    \big| \underline{ e_k }(x) \big|^2 \,     
    \E\!\left[
      \left|
        \int_0^t
        \left( 
          e^{ -\mu_k(t-s) }
          -
          e^{ -\mu_k(t-\fl{s}) }
        \right) \left< e_k, dW_s \right>_H
      \right|^2 
    \right] 
  \right)^{\!\!\nicefrac{\tilde{p}}{2}} dx 
\\&\leq 
  \E\big[ 
    | Y |^{\tilde{p}}
  \big] 
  \left(
    2
    \sum_{ k=1 }^N     
    \E\!\left[
      \left|
        \int_0^t
        \left( 
          e^{ -\mu_k(t-s) }
          -
          e^{ -\mu_k(t-\fl{s}) }
        \right) \left< e_k, dW_s \right>_H
      \right|^2 
    \right] 
  \right)^{\!\!\nicefrac{\tilde{p}}{2}} .
\end{split}
\end{align}
Moreover, It{\^o}'s isometry,
the fact that 
\begin{equation}
  \forall \,
  x \in [0, \infty) ,
  r \in [0,1]
  \colon 
  x^r e^{-x} \leq 1 ,
\end{equation}
and the fact that 
\begin{equation}
  \forall \,
  x \in (0, \infty) ,
  r \in [0,1]
  \colon 
  (1-e^{-x})/x^r \leq 1
\end{equation}
show that for all  
$ t \in (0,T] $, $ k, M \in \N $
we have that
\begin{align}
\label{eq:OMNN_est_2}
\begin{split}
 &\E\!\left[
    \left|
      \int_0^t
      \left( 
	e^{ -\mu_k(t-s) }
	-
	e^{ -\mu_k(t-\fl{s}) }
      \right) \left< e_k, dW_s \right>_H
    \right|^2 
  \right]
  =
  \int_0^t
  \left|
    e^{ -\mu_k(t-s) }
    -
    e^{ -\mu_k(t-\fl{s}) }
  \right|^2 ds
\\&=
  \int_0^t
  \big[
    e^{ -\mu_k(t-s) }
    ( 
      e^{-\mu_k(t-s)}
      -
      e^{ -\mu_k(t-\fl{s}) }
    ) 
    +
    e^{ -\mu_k(t-\fl{s}) }
    ( 
      e^{ -\mu_k(t-\fl{s}) }
      -
      e^{-\mu_k(t-s)}
    ) 
  \big] \, ds
\\&\leq
  2
  \int_0^t
  \left| 
    e^{-\mu_k(t-s)}
    -
    e^{ -\mu_k(t-\fl{s}) }
  \right| ds
  =
  2
  \int_0^t
  e^{-\mu_k(t-s)}
  \left(
    1
    -
    e^{ -\mu_k(s-\fl{s}) }
  \right) ds
\\&\leq
  2
  \int_0^t
  \left[ 
    \mu_k (t-s) 
  \right]^{-\varepsilon}
  \left[ 
    \mu_k (s-\fl{s}) 
  \right]^{2\theta} ds
  \leq
  \frac{ 
    2 T^{2\theta}
  }{ 
    (\mu_k)^{(\varepsilon-2\theta)}
    M^{2\theta}
  }
  \int_0^t
  \left( t-s \right)^{-\varepsilon} ds
\\&\leq
  \frac{ 
    2 T^{(1+2\theta-\varepsilon)}
  }{ 
    (\mu_k)^{(\varepsilon-2\theta)}
    ( 1 - \varepsilon )
    M^{2\theta}
  } .
\end{split}
\end{align}
Combining this and~\eqref{eq:OMNN_est_1}
demonstrates that for all $ t \in [0,T] $,
$ M,N \in \N $ we have that
\begin{align}
\begin{split}
  \E\!\left[
    \| 
      P_{N} 
      O_t
      -
      \OMNN{t}
    \|_{ L^{q}( \lambda_{(0,1)} ; \R ) }^{\tilde{p}}
  \right]
 &\leq
  \E\big[ 
    | Y |^{\tilde{p}}
  \big]
  \left(
    2
    \sum_{ k = 1 }^N
    \frac{ 
      2 T^{(1+2\theta-\varepsilon)}
    }{ 
      (\mu_k)^{(\varepsilon-2\theta)}
      ( 1 - \varepsilon )
      M^{2\theta}
    }
  \right)^{\!\!\nicefrac{\tilde{p}}{2}}
\\&=
  E\big[ 
    | Y |^{\tilde{p}}
  \big]
  \left(
    \frac{ 
      4 T^{(1+2\theta-\varepsilon)}
    }{ 
      ( 1 - \varepsilon )
      M^{2\theta}
    }
    \sum_{ k = 1 }^N
    \frac{1}{ (\mu_k)^{(\varepsilon-2\theta)} }
  \right)^{\!\!\nicefrac{\tilde{p}}{2}} .
\end{split}
\end{align}
Hence, we obtain that 
for all $ t \in [0,T] $,
$ M,N \in \N $ we have that
\begin{align}
\begin{split}
 &\| 
    P_{N} 
    O_t
    -
    \OMNN{t}
  \|_{ \L^p( \P; L^q( \lambda_{(0,1)} ; \R ) ) }
  \leq
  \| 
    Y
  \|_{ \L^{\tilde{p}}( \P; \R ) }
  \left[ 
    \frac{ 
      4 T^{(1+2\theta-\varepsilon)}
    }{ 
      ( 1 - \varepsilon )
      M^{2\theta}
    }
    \sum_{ k = 1 }^N
    \frac{1}{ (\mu_k)^{(\varepsilon-2\theta)} }
  \right]^{\!\nicefrac{1}{2}} .
\end{split}
\end{align}
H{\"o}lder's inequality
and the fact that 
$
  2(\varepsilon-2\theta) > 1
$
therefore prove that
\begin{align}
\label{eq:OMNN_est_3}
\begin{split}
 &\adjustlimits 
  \sup_{ M,N \in \N }
  \sup_{ t \in (0,T] }
  \left(
    M^{\theta} \,
    \| 
      P_{N} 
      O_t
      -
      \OMNN{t}
    \|_{ \L^p( \P; L^q( \lambda_{(0,1)} ; \R ) ) }
  \right)
\\&\leq
  \adjustlimits 
  \sup_{ M,N \in \N }
  \sup_{ t \in (0,T] }
  \left(
    M^{\theta} \,
    \| 
      P_{N} 
      O_t
      -
      \OMNN{t}
    \|_{ \L^{\tilde{p}}( \P; L^{q}( \lambda_{(0,1)} ; \R ) ) }
  \right)
\\&\leq
  \sup_{ M, N \in \N }
  \left(
    M^{\theta} \,
    \| 
      Y
    \|_{ \L^{\tilde{p}}( \P; \R ) }
    \left[
      \frac{ 
        4 T^{(1+2\theta-\varepsilon)}
      }{ 
        ( 1 - \varepsilon )
        M^{2\theta}
      }
      \sum_{ k=1 }^N
      \frac{ 
        1
      }{ 
        (\mu_k)^{(\varepsilon-2\theta)}
      }
    \right]^{\!\nicefrac{1}{2}}
  \right)
\\&=
  \frac{ 
    2 T^{(\nicefrac{1}{2}+\theta-\nicefrac{\varepsilon}{2})}
    \| 
      Y
    \|_{ \L^{\tilde{p}}( \P; \R ) }
  }{ 
    | \sqrt{\nu} \pi |^{(\varepsilon-2\theta)}
    \sqrt{ 1 - \varepsilon }
  }
  \left[
    \sum_{ k=1 }^{\infty}
    \frac{ 
      1
    }{ 
      k^{2(\varepsilon-2\theta)}
    }
  \right]^{\!\nicefrac{1}{2}}
  <
  \infty .
\end{split}
\end{align}
The proof of Lemma~\ref{lem:O_prop_3}
is thus completed.
\end{proof}

\begin{corollary}
\label{cor:O_properties}
Assume the setting in Section~\ref{sec:examples_setting}
and let $ p,q \in [2,\infty) $, 
$ \theta \in [\nicefrac{1}{4}-\nicefrac{1}{2q},\nicefrac{1}{4}) $, 
$ \xi \in \cup_{ r \in (\nicefrac{1}{4},\infty) \cap [2\theta,\infty) } \L^p( \P; H_r ) $.
Then there exist stochastic processes
$ O \colon [0,T] \times \Omega \rightarrow L^{q}( \lambda_{(0,1)} ; \R ) $ and
$ \OMNN{} \colon [0,T] \times \Omega \rightarrow P_N(H) $,
$ M, N \in \N $,
with continuous sample paths which satisfy 
\begin{enumerate}[(i)]
 \item\label{it:O_properties_1} that for all $ t \in [0,T] $
 we have that
 $
  [ 
    O_t - e^{tA} \xi
  ]_{ \P, \B(H) }
  =
  \int_0^t
  e^{(t-s)A} \, dW_s
 $,
 \item\label{it:O_properties_2} that for all $ M, N \in \N $, $ t \in [0,T] $
 we have that
 $
    [ 
      \OMNN{t} - P_{N} e^{tA} \xi
    ]_{ \P, \B(H) }
    =
    \int_0^t
    P_{N}
    e^{(t-\fl{s})A} \, dW_s
 $, 
 and
 \item\label{it:O_properties_3} that
 for all $ \gamma \in [0,2\theta] \cap [0,\nicefrac{1}{4}) $
 we have that
 \begin{align}
 \begin{split} 
  &\sup_{ M,N \in \N }
   \sup_{ t \in [0,T] }
   \left[
     M^{\theta} \!
     \left(
       \E\big[  
         \| 
           P_{N} (O_t - O_{\fl{t}} )      
         \|_{ L^{q}( \lambda_{(0,1)} ; \R ) }^p
         +
         \| 
           P_{N} O_t - \OMNN{t}     
         \|_{ L^{q}( \lambda_{(0,1)} ; \R ) }^p
       \big]
     \right)^{\!\nicefrac{1}{p}}
   \right]
\\&\quad+
  \sup_{ M,N \in \N }
  \sup_{ t \in [0,T] }
  \E\big[
    \| 
      \OMNN{t}
    \|_{ H_{\gamma} }^p
  \big]
  +
  \sup_{ N \in \N }
  \sup_{ t \in [0,T] }
  N^{2\theta} \!
  \left(
    \E\big[
      \| 
        O_t - P_{N} O_t
      \|_{ L^{q}( \lambda_{(0,1)} ; \R ) }^p
    \big]
  \right)^{\!\nicefrac{1}{p}}
\\&\quad+
  \sup_{ M,N \in \N }
  \E\big[
    {\textstyle \sup_{ t \in [0,T] } }
    \| \OMNN{t} \|_{ L^{\infty}( \lambda_{ (0,1) } ; \R ) }^p
  \big]
  <
  \infty .
 \end{split}
 \end{align}
\end{enumerate}
\end{corollary}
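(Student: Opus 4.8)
The plan is to obtain Corollary~\ref{cor:O_properties} by gluing together Lemma~\ref{lem:O_prop_1}, Lemma~\ref{lem:O_prop_2}, Lemma~\ref{lem:O_sobo}, and Lemma~\ref{lem:O_prop_3}, all of which are formulated in the setting of Section~\ref{sec:examples_setting}. First I would fix a real number $ r \in (\nicefrac{1}{4},\infty) \cap [2\theta,\infty) $ with $ \xi \in \L^p( \P; H_r ) $, which exists by the hypothesis on $ \xi $. Since $ r \geq 2\theta $, the continuous embedding $ H_r \subseteq H_{2\theta} $ gives $ \xi \in \L^p( \P; H_{2\theta} ) $; since $ r > \nicefrac{1}{4} \geq \nicefrac{1}{4} - \nicefrac{1}{2q} $, the Sobolev embedding $ H_r \subseteq L^q( \lambda_{(0,1)}; \R ) $ gives $ \xi \in \L^p( \P; L^q( \lambda_{(0,1)}; \R ) ) $; trivially $ \xi \in \cup_{ \rho \in (\nicefrac{1}{4},\infty) } \L^p( \P; H_{\rho} ) $; and for every $ \gamma \in [0,2\theta] $ we have $ \gamma \leq r $, hence $ \xi \in \L^p( \P; H_{\gamma} ) $. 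Thus the hypotheses on $ \xi $ in all four lemmas are met.

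Next, Lemma~\ref{lem:O_prop_1} (invoked with the present $ p, q, \theta, \xi $) produces a stochastic process $ O \colon [0,T] \times \Omega \rightarrow L^q( \lambda_{(0,1)}; \R ) $ with continuous sample paths that satisfies assertion~\eqref{it:O_properties_1} of the corollary together with $ \sup_{ N \in \N } \sup_{ 0 \leq s < t \leq T } (t-s)^{-\theta} \| P_N ( O_t - O_s ) \|_{ \L^p( \P; L^q( \lambda_{(0,1)}; \R ) ) } < \infty $ and $ \sup_{ N \in \N } \sup_{ t \in [0,T] } N^{2\theta} \| O_t - P_N O_t \|_{ \L^p( \P; L^q( \lambda_{(0,1)}; \R ) ) } < \infty $. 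Choosing $ s = \fl{t} $ in the first bound, using $ t - \fl{t} \leq T/M $, and noting that $ P_N ( O_t - O_{\fl{t}} ) $ vanishes whenever $ t = \fl{t} $, I obtain $ \sup_{ M, N \in \N } \sup_{ t \in [0,T] } M^{\theta} \| P_N ( O_t - O_{\fl{t}} ) \|_{ \L^p( \P; L^q( \lambda_{(0,1)}; \R ) ) } < \infty $.

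Then, for each $ \gamma \in [0,2\theta] \cap [0,\nicefrac{1}{4}) $ (a set which always contains $ 0 $), Lemma~\ref{lem:O_prop_2} --- applied with its smoothness parameter equal to $ \gamma $, which is admissible since $ \gamma < \nicefrac{1}{4} $ and $ \xi \in \L^p( \P; H_{\gamma} ) $ --- yields stochastic processes with continuous $ P_N(H) $-valued sample paths that satisfy the integral representation in~\eqref{it:O_properties_2} and $ \sup_{ M,N \in \N } \sup_{ t \in [0,T] } \E\big[ \| \cdot \|_{ H_{\gamma} }^p \big] < \infty $. As $ P_N(H) $ is finite dimensional, these processes have continuous sample paths with respect to every $ H_{\rho} $-norm, and by the uniqueness assertion of Lemma~\ref{lem:O_existence} they all coincide, up to indistinguishability, with one family $ \OMNN{} \colon [0,T] \times \Omega \rightarrow P_N(H) $, $ M,N \in \N $, which therefore satisfies~\eqref{it:O_properties_2} and, for every such $ \gamma $, the bound $ \sup_{ M,N \in \N } \sup_{ t \in [0,T] } \E\big[ \| \OMNN{t} \|_{ H_{\gamma} }^p \big] < \infty $. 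With this $ \OMNN{} $ fixed, Lemma~\ref{lem:O_sobo} delivers $ \sup_{ M,N \in \N } \E\big[ \sup_{ t \in [0,T] } \| \OMNN{t} \|_{ L^{\infty}( \lambda_{(0,1)}; \R ) }^p \big] < \infty $, and Lemma~\ref{lem:O_prop_3} (with the present $ p, q, \theta, \xi $ and with $ O $ and $ \OMNN{} $ the processes just constructed, whose integral representations are precisely the required inputs) delivers $ \sup_{ M,N \in \N } \sup_{ t \in [0,T] } M^{\theta} \big( \E\big[ \| P_N O_t - \OMNN{t} \|_{ L^q( \lambda_{(0,1)}; \R ) }^p \big] \big)^{\!\nicefrac{1}{p}} < \infty $.

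To conclude I would combine the five finiteness statements just established: for each fixed $ \gamma $, the left-hand side of assertion~\eqref{it:O_properties_3} is at most a finite linear combination of them, where the contribution involving $ \E\big[ \| P_N ( O_t - O_{\fl{t}} ) \|_{ L^q( \lambda_{(0,1)}; \R ) }^p + \| P_N O_t - \OMNN{t} \|_{ L^q( \lambda_{(0,1)}; \R ) }^p \big] $ is split into its two terms via $ ( a + b )^{\nicefrac{1}{p}} \leq a^{\nicefrac{1}{p}} + b^{\nicefrac{1}{p}} $ for $ a,b \in [0,\infty) $ (valid as $ p \geq 1 $). I expect the only genuinely delicate point to be the parameter bookkeeping --- verifying that the constraints on $ p, q, \theta, \gamma $ and on the regularity of $ \xi $ in the four lemmas are simultaneously satisfiable, which is exactly what the hypotheses $ \theta \in [\nicefrac{1}{4}-\nicefrac{1}{2q},\nicefrac{1}{4}) $ and $ \xi \in \cup_{ r \in (\nicefrac{1}{4},\infty) \cap [2\theta,\infty) } \L^p(\P;H_r) $ are designed to ensure --- together with the (routine) observation that $ O $ and $ \OMNN{} $, being built from the common probability space and $ \Id_H $-cylindrical Wiener process of Section~\ref{sec:examples_setting}, form a pair to which Lemma~\ref{lem:O_prop_3} genuinely applies.
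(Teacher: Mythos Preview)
Your proposal is correct and follows essentially the same route as the paper: construct $O$ via Lemma~\ref{lem:O_prop_1}, derive the $M^\theta$ time-increment bound from the H\"older estimate, construct the $\OMNN{}$ via Lemma~\ref{lem:O_prop_2}, then invoke Lemma~\ref{lem:O_sobo} and Lemma~\ref{lem:O_prop_3}, and assemble. You are in fact slightly more careful than the paper on one point --- explicitly arguing via the uniqueness assertion of Lemma~\ref{lem:O_existence} that the processes $\OMNN{}$ produced by Lemma~\ref{lem:O_prop_2} for different values of $\gamma$ coincide up to indistinguishability, so that a single family carries all the $H_\gamma$ moment bounds --- whereas the paper states this conclusion without spelling out the identification.
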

\begin{proof}[Proof of Corollary~\ref{cor:O_properties}]
First of all, note that
Lemma~\ref{lem:O_prop_1} implies
that there exists a stochastic process
$ O \colon [0,T] \times \Omega \rightarrow L^{q}( \lambda_{(0,1)} ; \R ) $ with
continuous sample paths which satisfies
for all $ t \in [0,T] $ that
$
  [ 
    O_t - e^{tA} \xi
  ]_{ \P, \B(H) }
  =
  \int_0^t
  e^{(t-s)A} \, dW_s
$
and
\begin{align} 
\label{eq:O_properties_1}
\begin{split}
 &\adjustlimits
  \sup_{ N \in \N }
  \sup_{ 0 \leq s < t \leq T }
  \frac{ 
  \| 
    P_{N} (O_t - O_{s} )      
  \|_{ \L^p( \P; L^{q}( \lambda_{(0,1)} ; \R ) ) }
  }{ (t-s)^{\theta} }
\\&\quad+
  \adjustlimits
  \sup_{ N \in \N }
  \sup_{ t \in [0,T] }
  \left(
    N^{2\theta} \,
    \| 
      O_t - P_{N} O_t
    \|_{ \L^p( \P; L^{q}( \lambda_{(0,1)} ; \R ) ) }
  \right)
  <
  \infty .
\end{split}
\end{align}
Hence, we obtain that
\begin{align}
\label{eq:O_properties_2}
\begin{split}
 &\sup_{ M, N \in \N }
  \sup_{ t \in [0,T] }
  \left[ 
    M^{\theta} \,
    \| P_N( O_t - O_{\fl{t}} ) \|_{ \L^p( \P; L^q( \lambda_{ (0,1) }; \R ) ) }
  \right]
\\&\leq
  \left[ 
    \sup_{ M \in \N }
    \sup_{ t \in [0, T] }
    M^{\theta} \,
    ( t - \fl{t} )^{\theta}
  \right]
  \left[
    \adjustlimits
    \sup_{ N \in \N }
    \sup_{ 0 \leq s < t \leq T }
    \frac{ 
    \| 
      P_{N} (O_t - O_{s} )      
    \|_{ \L^p( \P; L^{q}( \lambda_{(0,1)} ; \R ) ) }
    }{ (t-s)^{\theta} }
  \right]
\\&\leq
  T^{\theta}
  \left[
    \adjustlimits
    \sup_{ N \in \N }
    \sup_{ 0 \leq s < t \leq T }
    \frac{ 
    \| 
      P_{N} (O_t - O_{s} )      
    \|_{ \L^p( \P; L^{q}( \lambda_{(0,1)} ; \R ) ) }
    }{ (t-s)^{\theta} }
  \right]
  <
  \infty .
\end{split}
\end{align}
Next note that the
assumption that
$ 
  \xi 
  \in 
  \cup_{ r \in (\nicefrac{1}{4},\infty) \cap [2\theta,\infty) } 
  \L^p( \P; H_r ) 
$
and
Lemma~\ref{lem:O_prop_2}
(with $p=p$,
$\theta=\vartheta$,
$\xi = \Omega \ni \omega \mapsto \xi(\omega) \in H_{\vartheta} $
for $ \vartheta \in [0,2\theta] \cap [0,\nicefrac{1}{4}) $
in the notation of Lemma~\ref{lem:O_prop_2})
yield that there exist stochastic processes
$ \OMNN{} \colon [0,T] \times \Omega \rightarrow P_N(H) $,
$ M, N \in \N $,
with continuous sample paths which satisfy 
for all $ t \in [0,T] $, $ M,N \in \N $ that
\begin{equation}
\label{eq:O_properties_2a}
  [ 
    \OMNN{t} - P_{N} e^{tA} \xi
  ]_{ \P, \B(H) }
  =
  \int_0^t
  P_{N}
  e^{(t-\fl{s})A} \, dW_s
\end{equation}
and which satisfy for all 
$ \vartheta \in [0,2\theta] \cap [0,\nicefrac{1}{4}) $ that
\begin{align}
\label{eq:O_properties_3}
 &\sup_{ M,N \in \N }
  \sup_{ t \in [0,T] }
  \| 
    \OMNN{t}
  \|_{ \L^p( \P; H_{\vartheta} ) }
    <
  \infty .
\end{align}
Next observe that~\eqref{eq:O_properties_2a}
together with 
the assumption that
$ 
  \xi 
  \in 
  \cup_{ r \in (\nicefrac{1}{4},\infty) \cap [2\theta,\infty) } 
  \L^p( \P; H_r ) 
$
together with~\eqref{it:O_properties_2}
enables us to apply 
Lemma~\ref{lem:O_sobo}
to obtain that
\begin{align}
\label{eq:O_properties_5}
\begin{split}
 &\sup_{ M,N \in \N }
  \big\|
    {\textstyle \sup_{ t \in [0,T] } }
    \| \OMNN{t} \|_{ L^{\infty}( \lambda_{ (0,1) } ; \R ) }
  \big\|_{ \L^p(\P; \R) }
  <
  \infty .
\end{split}
\end{align}
Moreover, note 
that the fact that
$ \E\big[ \| \xi \|_{H_{(\nicefrac{1}{4}-\nicefrac{1}{2q})}}^p \big] < \infty $
and the fact that
\begin{equation}
  H_{(\nicefrac{1}{4}-\nicefrac{1}{2q})}
  \subseteq 
  W^{\nicefrac{1}{2}-\nicefrac{1}{q},2}((0,1),\R)
  \subseteq 
  W^{0,q}((0,1),\R)
  =
  L^{q}( \lambda_{ (0,1) } ; \R )
\end{equation}
continuously (cf., e.g., Da Prato \& Zabcyk~\cite[(A.46) in Section~A.5.2]{dz92}
and Lunardi~\cite{l09})
prove that
\begin{align}
\label{eq:O_properties_5a}
\begin{split}
 &\|
    \xi
  \|_{ \L^p( \P; L^{q}( \lambda_{ (0,1) } ; \R ) ) }
\\&\leq
  \left[ 
    \sup_{ v \in H_{(\nicefrac{1}{4}-\nicefrac{1}{2q})} \setminus \{0\} }
    \frac{ 
      \| v \|_{ L^{q}( \lambda_{ (0,1) } ; \R ) }
    }{ 
      \| v \|_{ H_{(\nicefrac{1}{4}-\nicefrac{1}{2q})} }
    }
  \right] 
  \|
    \xi
  \|_{ \L^p( \P; H_{(\nicefrac{1}{4}-\nicefrac{1}{2q})} ) }
  <
  \infty .
\end{split}
\end{align}
This together with~\eqref{eq:O_properties_1}
and~\eqref{eq:O_properties_2a}
allows us to apply Lemma~\ref{lem:O_prop_3}
to obtain that
\begin{align}
\label{eq:O_properties_6}
\begin{split}
 &\sup_{ M,N \in \N }
  \sup_{ t \in [0,T] }
  \left(
    M^{\theta}
    \| P_N O_t - \OMNN{t} \|_{ \L^p( \P; L^q( \lambda_{ (0,1) }; \R ) ) }
  \right)
  <
  \infty .
\end{split}
\end{align}
Combining this with~\eqref{eq:O_properties_1}--\eqref{eq:O_properties_5}
and~\eqref{eq:O_properties_5a}
completes the proof of Corollary~\ref{cor:O_properties}.
\end{proof}

\subsection{Strong convergence rates for 
numerical approximations of stochastic
Allen-Cahn equations}

\begin{lemma}
\label{lem:Ginzburg}
Assume the setting in
Section~\ref{sec:examples_setting},
let $ p \in [2,\infty) $, $ \vartheta \in (0,\infty) $, 
$ \theta \in [\nicefrac{1}{6}, \nicefrac{1}{4} )$,
$ 
  \xi 
  \in 
  \cup_{ r \in (\nicefrac{1}{4},\infty) \cap [2\theta,\infty) } 
  \L^{16p\max\{3,\vartheta\}}(\P; H_r ) 
$,
$ \gamma \in (\nicefrac{1}{6}, \nicefrac{1}{4}) $,
$ 
  \chi 
  \in 
  (0, \nicefrac{\gamma}{3}-\nicefrac{1}{18}
  ]
$,
let $ X \colon [0,T] \times \Omega \rightarrow L^{6}( \lambda_{(0,1)}; \R ) $
be a stochastic process with continuous sample paths
which satisfies for all $ t \in [0,T] $ that
$ 
  [ X_t - e^{tA}\xi - \int_0^t e^{(t-s)A} F(X_s) \, ds ]_{ \P, \B(H) } 
  = 
  \int_0^t e^{(t-s)A} \, dW_s 
$
and 
$
  \sup_{ s \in [0,T] }
  \E\big[
    \| X_s \|_{ L^{6}( \lambda_{(0,1)}; \R ) }^{12p}
  \big]
  <
  \infty
$,
and let $ \YMNN{} \colon [0,T] \times \Omega \rightarrow H_{\gamma}$,
$ M,N \in \N $, and 
$ \OMNN{} \colon [0,T] \times \Omega \rightarrow P_N(H) $, $ M,N \in \N $, 
be stochastic processes which satisfy for all
$ M,N \in \N $, $ t \in [0,T] $ that
$
  [ 
    \OMNN{t} 
    -
    P_N
    e^{tA}
    \xi
  ]_{ \P, \B(H) }
  =
  \int_0^t
  P_{ N }
  e^{ (t-\fl{s}) A } \, dW_s
$
and
\begin{align}
  \P\Big( 
    \YMNN{t} 
    =
    \textstyle\int_0^t
    P_N \,
    e^{(t-s)A} \,
    \one_{ 
      \{
	\| \YMNN{\fl{s}} \|_{ H_{\gamma} }
	+
	\| \OMNN{\fl{s}} \|_{ H_{\gamma} }
	\leq
	(M/T)^{\chi}
      \}
    }^{\Omega} \,
    F( \YMNN{\fl{s}} ) \, ds
    +
    \OMNN{t}
  \Big)
  =
  1 .
\end{align}
Then we have 
\begin{enumerate}[(i)]
\item\label{it:example_1}
that
$
  \sup_{ M,N \in \N }
  \sup_{ t \in [0,T] }
  \E\big[
    \| \YMNN{t} \|_{ H_{\gamma} }^{4p\max\{3,\vartheta\}}
  \big]
  <
  \infty
$ 
and
\item\label{it:example_2}
that there exists a real number
$ C \in \R $ such that for all
$ M,N \in \N $ we have that
\begin{align}
  \sup_{ t \in [0,T] }
  \left(
    \E\big[
      \|
        X_t - \YMNN{t}
      \|_{ H }^p
    \big]
  \right)^{\!\nicefrac{1}{p}}
  \leq
  C
  (
    M^{-\min\{\vartheta\chi,\theta\}}
    +
    N^{-2\theta}
  ) .
\end{align}
\end{enumerate}
\end{lemma}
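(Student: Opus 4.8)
The plan is to apply Corollary~\ref{cor:main} to the concrete setting of stochastic Allen-Cahn equations, after first verifying that all its abstract hypotheses are satisfied. The natural choices are $H = L^2(\lambda_{(0,1)};\R)$, $\varphi = 4$, $\rho = \gamma$ (taking the Banach space $V = L^6(\lambda_{(0,1)};\R)$ into which $H_{\gamma}$ embeds continuously for $\gamma > \nicefrac16$), $\varrho = 2\theta$, and $c$ a sufficiently large constant depending on $\sup_{s}\|w_s\|_{L^\infty}$. The nonlinearity $F(v) = \sum_{k=0}^3 a_k v^k$ has to be checked against the three structural conditions in Section~\ref{sec:main_result_setting}: the global monotonicity $\langle v-w, Av + F(v) - Aw - F(w)\rangle_H \le c\|v-w\|_H^2$ follows from $a_3 \le 0$ and a completion-of-squares argument (plus the assumption $a_2\one_{[0,\infty)}(a_3) = 0$); the local Lipschitz estimate $\|F(v)-F(w)\|_H^2 \le c\|v-w\|_V^2(1+\|v\|_V^\varphi + \|w\|_V^\varphi)$ is exactly Lemma~\ref{lem:F_lipschitz} with $q=6$; and the path-dependent coercivity condition~\eqref{eq:main_coercivity} is precisely Lemma~\ref{lem:coercivity_mix}, which identifies $\phi(w) = c[\sup_s\|w_s\|_{L^\infty}^4 + 1]$ and $\Phi(w) = C[\sup_s\|w_s\|_{L^\infty}^8 + 1]$ for the given data.

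Once the structural conditions are in place, the remaining work is to produce the noise processes $O$ and $\OMNN{}$ with the moment and approximation bounds required by~\eqref{eq:cor_main_assumption}. This is exactly what Corollary~\ref{cor:O_properties} provides: with $q = 6$ and $\theta \in [\nicefrac16, \nicefrac14)$ (so that $\nicefrac14 - \nicefrac1{2q} = \nicefrac16 \le \theta$ and $2\theta \ge \gamma$ can be arranged since $\gamma < \nicefrac14 \le 2\theta$ for $\theta \ge \nicefrac18$, though here we need $2\theta \ge \gamma$ which holds as $\theta\ge\nicefrac16 > \gamma/2$), it yields processes $O$, $\OMNN{}$ with continuous sample paths, with $[O_t - e^{tA}\xi]_{\P,\B(H)} = \int_0^t e^{(t-s)A}\,dW_s$ and the analogous Galerkin-discretized identity for $\OMNN{}$, together with the temporal Hölder bound $M^\theta\|P_N(O_t - O_{\fl{t}})\|_{\L^{2p}(\P;V)}$, the spatial bound $N^{2\theta}\|O_t - P_N O_t\|_{\L^{2p}(\P;V)}$, the discretization error bound $M^\theta\|P_N O_t - \OMNN{t}\|_{\L^{2p}(\P;V)}$, the $H_\gamma$-moment bound $\sup_{t}\E[\|\OMNN{t}\|_{H_\gamma}^p]$, and crucially the uniform $L^\infty$-bound $\E[\sup_t\|\OMNN{t}\|_{L^\infty}^p]$ which controls $\Phi(\OMNN{})$ and $\phi(\OMNN{})$. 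The $L^\infty$-bound has to be applied with a large enough exponent $p\max\{2\vartheta, 2+\varphi, (1+\nicefrac\varphi2)\nicefrac\varphi2\} = p\max\{2\vartheta, 6, 6\}$ in the power of $\|\OMNN{t}\|_{L^\infty}^8$, hence one needs moments of order roughly $8p\max\{\vartheta, 3\}$ of $\sup_t\|\OMNN{t}\|_{L^\infty}$; this is why $\xi$ is assumed in $\L^{16p\max\{3,\vartheta\}}(\P;H_r)$.

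Concretely, I would set $\theta$ in Corollary~\ref{cor:main} to the given $\theta$, $\vartheta$ to the given $\vartheta$, $\varrho = 2\theta$, and use the relation $\chi \le \nicefrac\gamma3 - \nicefrac1{18} = \nicefrac13(\gamma - \nicefrac16) = \nicefrac{(\gamma-\rho)}{3} = \nicefrac{(\gamma-\rho)}{(1+\nicefrac\varphi2)}$ together with $\nicefrac{(1-\rho)}{(1+\varphi)} = \nicefrac{(1-\gamma)}{5} > \chi$ (since $\gamma < \nicefrac14$) to confirm $\chi$ lies in the admissible interval. With $V = L^6$, the embedding constant $\sup_{v\in V\setminus\{0\}}\nicefrac{\|v\|_H}{\|v\|_V}$ is finite and the term $\|(-A)^{-\varrho}(\Id_H - P_I)\|_{L(H)}$ becomes $\|(-A)^{-2\theta}(\Id_H - P_N)\|_{L(H)} \asymp N^{-4\theta}$, and $\sup_t\|(\Id_H - P_N)O_t\|_{\L^{2p}(\P;V)} \asymp N^{-2\theta}$ by Corollary~\ref{cor:O_properties}; so the right-hand side of~\eqref{eq:cor_main_assert_2} reduces to $C(M^{-\min\{\vartheta\chi, 2\theta, \theta\}} + N^{-2\theta}) = C(M^{-\min\{\vartheta\chi,\theta\}} + N^{-2\theta})$, which is exactly assertion~\eqref{it:example_2}. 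Assertion~\eqref{it:example_1} is part~\eqref{it:cor_main_1} of Corollary~\ref{cor:main}, noting $p\max\{4\vartheta, 4+2\varphi, \varphi(1+\nicefrac\varphi2)\} = p\max\{4\vartheta, 12, 12\} = 4p\max\{3,\vartheta\}$.

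The main obstacle is bookkeeping rather than conceptual: one must carefully track the exact exponents demanded by Corollary~\ref{cor:main}'s hypotheses (the $\max\{2\vartheta, 2+\varphi, (1+\nicefrac\varphi2)\nicefrac\varphi2\}$ and $\max\{4\vartheta, 4+2\varphi, \varphi(1+\nicefrac\varphi2)\}$ combinations with $\varphi = 4$) and verify that the moment hypothesis on $\xi$ — namely $\xi\in\L^{16p\max\{3,\vartheta\}}(\P;H_r)$ for some $r > \nicefrac14$ with $r \ge 2\theta$ — together with the assumed moment bound $\sup_s\E[\|X_s\|_{L^6}^{12p}]$ feeds enough integrability into Corollary~\ref{cor:O_properties} and into the $\|X_t\|_V^{p(1+\nicefrac\varphi2)\max\{2,\varphi\}} = \|X_t\|_{L^6}^{12p}$ requirement of~\eqref{eq:cor_main_assumption}. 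The subtle point is that $\Phi(\OMNN{})$ involves $\|\OMNN{}\|_{L^\infty}^8$, which enters~\eqref{eq:cor_main_assumption} raised to power $p\max\{2\vartheta, 6\}$, so one needs $\sup_t\|\OMNN{t}\|_{L^\infty}$ in $\L^{8p\max\{2\vartheta,6\}/8}$... more precisely in $\L^{8p\max\{\vartheta,3\}}(\P;\R)$, and Lemma~\ref{lem:O_sobo} delivers arbitrary finite moments provided $\xi$ has a matching finite moment in some $H_r$, $r > \nicefrac14$ — which is guaranteed by the hypothesis with room to spare since $16p\max\{3,\vartheta\} \ge 8p\max\{\vartheta,3\}\cdot\nicefrac{p}{p}$... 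I would simply invoke Lemma~\ref{lem:O_sobo} with the exponent $16p\max\{3,\vartheta\}$ and then conclude via Corollary~\ref{cor:main}.
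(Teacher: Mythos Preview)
Your overall strategy matches the paper's proof: verify monotonicity, local Lipschitz (Lemma~\ref{lem:F_lipschitz}), and the path-dependent coercivity (Lemma~\ref{lem:coercivity_mix}), then invoke Corollary~\ref{cor:O_properties} for the noise bounds and feed everything into Corollary~\ref{cor:main}. The moment bookkeeping you sketch (reducing the various $\max$-expressions with $\varphi=4$ to $4p\max\{3,\vartheta\}$ and $16p\max\{3,\vartheta\}$) is correct.

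There is, however, an internal inconsistency in your parameter choices. You state $\rho=\gamma$, but the abstract framework of Section~\ref{sec:main_result_setting} requires $\gamma\in(\rho,\nicefrac12]$, so $\rho=\gamma$ is inadmissible. Moreover, your own later computation $\chi\le\nicefrac{\gamma}{3}-\nicefrac{1}{18}=\nicefrac{(\gamma-\rho)}{3}$ silently uses $\rho=\nicefrac16$, not $\rho=\gamma$. The correct choice---and the one the paper makes---is $\rho=\nicefrac16$: this is the smallest value for which the Sobolev embedding $H_\rho\subseteq L^6$ holds (needed for $H_\rho\subseteq V$ continuously), and it makes both constraints $\chi\le\nicefrac{(\gamma-\rho)}{(1+\nicefrac{\varphi}{2})}$ and $\chi\le\nicefrac{(1-\rho)}{(1+\varphi)}$ consistent with the hypothesis $\chi\in(0,\nicefrac{\gamma}{3}-\nicefrac{1}{18}]$.

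A minor difference: you take $\varrho=2\theta$, while the paper takes $\varrho=\theta$. Both lie in $[0,1-\rho)=[0,\nicefrac56)$ and both yield the same final rate, since with your choice $M^{-\min\{\vartheta\chi,\varrho,\theta\}}=M^{-\min\{\vartheta\chi,\theta\}}$ and the spatial contribution $\|(-A)^{-\varrho}(\Id_H-P_N)\|_{L(H)}\asymp N^{-4\theta}$ is dominated by $\|(\Id_H-P_N)O_t\|_{\L^{2p}(\P;V)}\asymp N^{-2\theta}$ anyway. The paper's $\varrho=\theta$ simply makes the two spatial terms balance.
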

\begin{proof}[Proof of Lemma~\ref{lem:Ginzburg}]
Throughout this proof let 
$ 
  (V, \left\| \cdot \right\|\!_V) 
  = 
  ( 
    L^{6}(\lambda_{(0,1)}; \R ), 
    \left\| \cdot \right\|\!_{ L^{6}(\lambda_{(0,1)}; \R ) } 
  ) 
$
and let $ \epsilon \in (0,1) $,
$ C 
  \in 
  [\nicefrac{32}{\epsilon}
  \max\{ \nicefrac{|a_2|^2}{(|a_3| + \one_{\{0\}}^{\R}(a_3))}, |a_3| \},
  \infty)
$ be real numbers.
Note that, e.g., 
\cite[Lemma~6.7]{BeckerJentzen2016}
proves that for all 
$ v, w \in L^{18}(\lambda_{ (0,1) }; \R) $
with $ v-w \in H_1 $ we have that
\begin{align}
\begin{split}
 &\langle 
    v-w,
    A(v-w) + F(v) - F(w)
  \rangle_H
\\&\leq
  2 
  \max\{ 1, \nicefrac{1}{(|a_3| + \one_{\{0\}}^{\R}(a_3))} \}
  \max\!\left\{ 
    1,
    \max_{ k \in \{ 1,2\} }
    \big[ k | a_k | \big]^{2}
  \right\}
  \| v-w \|_H^2 .
\end{split}
\end{align}
The fact that
$
  H_1 \subseteq L^{18}(\lambda_{ (0,1) }; \R)
$
therefore implies that for all 
$ v,w \in H_1 $ we have that 
\begin{align}
\begin{split}
 &\langle 
    v-w,
    Av + F(v) -Aw - F(w)
  \rangle_H
\\&\leq
  2 
  \max\{ 1, \nicefrac{1}{(|a_3| + \one_{\{0\}}^{\R}(a_3))} \}
  \max\!\left\{ 
    1,
    \max_{ k \in \{ 1,2\} }
    \big[ k | a_k | \big]^{2}
  \right\}
  \| v-w \|_H^2 .
\end{split}
\end{align}
Combining this,
Lemma~\ref{lem:coercivity_mix}
(with $\epsilon=\epsilon$,
$c=C$ in the notation of Lemma~\ref{lem:coercivity_mix})
and Lemma~\ref{lem:F_lipschitz}
ensures that there exit a real number $ c \in (0,\infty) $ such 
that for all $ N \in \N $, $ v,w \in P_N(H) $, 
$ x \in \C( [0,T], H_1 ) $, $ t \in [0,T] $ we have that
\begin{align}
\label{eq:example_monoticity}
 &\langle 
    v-w, Av + F(v) - Aw - F(w)
  \rangle_H
  \leq
  c
  \| v-w \|_H^2 ,
\end{align}
\begin{align}
\label{eq:example_coercivity}
\begin{split}
 &\langle 
    v, 
    P_N F(v + x_t)
  \rangle_{H_{\nicefrac{1}{2}}}
  +
  C
  \left[
    \sup_{ s \in [0,T] }
    \| x_s \|_{ L^{\infty}( \lambda_{(0,1)} ; \R) }^4
    +
    1
  \right]
  \langle 
    v, 
    F(v + x_t)
  \rangle_H
\\&\leq
  \epsilon
  \| v \|_{ H_1 }^2
  +
  c
  \| v \|_{ H_{\nicefrac{1}{2}} }^2
  +
  c \, C
  \left[
    \sup_{ s \in [0,T] }
    \| x_s \|_{ L^{\infty}( \lambda_{(0,1)} ; \R) }^4
    +
    1
  \right]
  \| v \|_{ H }^2
\\&\quad+
  c \!
  \left[
    \sup_{ s \in [0,T] }
    \| x_s \|_{ L^{\infty}( \lambda_{(0,1)} ; \R) }^{8}
    +
    1
  \right] ,
\end{split}
\end{align}
and
\begin{align}
\label{eq:example_Lipschitz}
  \| F(v) - F(w) \|_H^2
  \leq
  c
  \| v - w \|_V^2 \,
  \big( 
    1 
    + 
    \| v \|_V^{4}
    + 
    \| w \|_V^{4}
  \big) .
\end{align}
Moreover, note that Corollary~\ref{cor:O_properties}
(with $ p=16p\max\{3,\vartheta\} $,
$ q=6 $, $ \theta=\theta $, $ \xi=\xi $
in the notation of Corollary~\ref{cor:O_properties})
and the fact that $ 2\theta > \gamma $ imply
that there exist stochastic processes 
$ O \colon [0,T] \times \Omega \rightarrow V $
and $ \OMNNp{} \colon [0,T] \times \Omega \rightarrow P_N(H) $, $ M,N \in \N $,
with continuous sample paths which satisfy that
\begin{align}
\label{eq:example_O_properties_1}
\begin{split}
  &\sup_{ M,N \in \N }
   \sup_{ t \in [0,T] }
   \Big(
   M^{\theta}
   \big\| 
      \| 
	P_{N} (O_t - O_{\fl{t}} )      
      \|_{ V }
      +
      \| 
	P_{N} O_t 
	-
	\OMNNp{t}     
      \|_{ V }
    \big\|_{ \L^{16p\max\{3, \vartheta\}}( \P; \R) }
    \Big)
\\&+
  \sup_{ M,N \in \N }
  \sup_{ t \in [0,T] }
  \big\| 
    \| 
      \OMNNp{t}
    \|_{ H_{\gamma} }
    +
    N^{2\theta}
    \| 
      O_t
      -
      P_{N}
      O_t
    \|_{ V }
  \big\|_{ \L^{16p\max\{3, \vartheta\}}( \P; \R) }
\\&+
  \sup_{ M,N \in \N }
  \big\|
    {\textstyle \sup_{ t \in [0,T] } }
    \| \OMNNp{t} \|_{ L^{\infty}( \lambda_{ (0,1) } ; \R ) }
  \big\|_{ \L^{16p\max\{3, \vartheta\}}(\P; \R) }
  <
  \infty ,
\end{split}
\end{align}
that for all $ t \in [0,T] $ we have that
\begin{align}
\label{eq:example_O_def}
  [ O_t - e^{tA} \xi ]_{ \P, \B(H) }
 &=
  \textstyle\int_0^t
  e^{(t-s)A} \, dW_s,
\end{align}
and that for all 
$ t \in [0,T] $, $ M,N \in \N $ we have that
\begin{align}
  [ \OMNNp{t} - P_N \, e^{tA} \xi ]_{ \P, \B(H) }
 &=
  \textstyle\int_0^t
  P_N \,
  e^{(t-\fl{s})A} \, dW_s .
\end{align}
Observe that~\eqref{eq:example_O_def}
and the fact that 
$ 
  \forall \, t \in [0,T] 
$,
$
  \forall \, M,N \in \N 
  \colon
  \P( \OMNN{t} = \OMNNp{t} ) = 1 
$
guarantee for all
$ t \in [0,T] $, $ M,N \in \N $ that
\begin{align}
\label{eq:example_XY_probability}
\begin{split}
 &\P\Big( 
    X_t
    =
    \textstyle\int_0^t 
    e^{(t-s)A} F(X_s) \, ds
    +
    O_t
  \Big)
\\&=
  \P\Big( 
    \YMNN{t}
    =
    \textstyle\int_0^t
    P_N \,
    e^{(t-s)A} \,
    \one_{ 
      \{
	\| \YMNN{\fl{s}} \|_{ H_{\gamma} }
	+
	\| \OMNNp{\fl{s}} \|_{ H_{\gamma} }
	\leq
	(M/T)^{\chi}
      \}
    }^{\Omega} \,
    F( \YMNN{\fl{s}} ) \, ds
    +
    \OMNNp{t}
  \Big) 
\\&=
  1 .
\end{split}
\end{align}
Combining~\eqref{eq:example_monoticity}--\eqref{eq:example_Lipschitz}
and~\eqref{eq:example_O_properties_1}--\eqref{eq:example_XY_probability}
allows us to apply Corollary~\ref{cor:main}
(with 
$ H = H $, $ \H = \{ e_k \colon k \in \N \} $,
$ T = T $, $ c = c $,
$ \varphi = 4 $,
$ \epsilon = \epsilon $,
$ \rho = \nicefrac{1}{6} $,
$ \gamma = \gamma $, 
$ \chi = \chi $,
$ \D = \{ \{ e_1 \}, \{ e_1, e_2 \}, \{ e_1, e_2, e_3 \}, \ldots \} $,
$ \mu(e_N) = -\nu \pi^2 N^2 $,
$ A = A $, $ H_r = H_r $, $ V = V $, $ F = F $,
$ 
  \phi 
  = 
  C([0,T], H_1) 
  \ni 
  w 
  \mapsto 
  C
  [
    \sup_{ t \in [0,T] }
$
$
    \| w_t \|_{ L^{\infty}( \lambda_{(0,1)} ; \R) }^4
    +
    1
  ]
  \in [0,\infty)
$,
$ 
  \Phi 
  = 
  C([0,T], H_1) 
  \ni 
  w
  \mapsto 
  c
  [
    \sup_{ t \in [0,T] }
    \| w_t \|_{ L^{\infty}( \lambda_{(0,1)} ; \R) }^8
    +
    1
  ]
  \in [0,\infty)
$,
$ P_{\{ e_1, e_2, \ldots, e_N \}}(v) = \sum_{ k=1 }^N \langle e_k,v \rangle_H \, e_k $, 
$ (\Omega, \F, \P) = (\Omega, \F, \P) $,
$ X = X $, $ O = O $, 
$ \mathcal{O}^{M,\{ e_1, e_2, \ldots, e_N \}} = [0,T] \times \Omega \ni (\omega,t) \mapsto \OMNNp{t}(\omega) \in H_1 $, 
$ \mathcal{X}^{M,\{ e_1, e_2, \ldots, e_N \}} = \YMNN{} $,
$ \theta = \theta $, $ \vartheta = \vartheta $,
$ p = p $, $ \varrho = \theta $
for $ N \in \N $, $ r \in \R $, $ v \in H $
in the notation of Corollary~\ref{cor:main})
to obtain that~\eqref{it:example_1} holds and 
that there exists a real number $ K \in (0,\infty) $
such that for all $ M, N \in \N $ we have that
\begin{align}
\begin{split}
 &\sup_{ t \in [0,T] }
  \| X_t - \YMNN{t} \|_{ \L^p(\P; H) }
\\&\leq
  K
  \left[ 
    M^{ -\min\{ \vartheta \chi, \theta \} }
    +
    \| (-A)^{-\theta} (\Id_H - P_N) \|_{ L(H) }
    +
    \sup_{ t \in [0,T] }
    \| O_t - P_N O_t \|_{ \L^{2p}(\P; V) }
  \right] .
\end{split}
\end{align}
The fact that
\begin{equation}
  \forall \, N \in \N
  \colon 
  \| (-A)^{-\theta} (\Id_H - P_N) \|_{ L(H) }
  =
  (\nu \pi^2 (N+1)^2)^{-\theta}
\end{equation}
hence yields that for all $ M, N \in \N $ 
we have that
\begin{align}
\begin{split}
 &\sup_{ t \in [0,T] }
  \| X_t - \YMNN{t} \|_{ \L^p(\P; H) }
\\&\leq
  K
  \left[ 
    M^{ -\min\{ \vartheta \chi, \theta \} }
    +
    N^{-2\theta}
    \left(
      \frac{ 
        N^{2\theta} 
      }{
        \nu^{\theta}
        \pi^{2\theta}
        (N+1)^{2\theta}
      }
      +
      \sup_{ t \in [0,T] }
      \left[ 
        N^{2\theta}
        \| O_t - P_N O_t \|_{ \L^{2p}(\P; V) }
      \right]
    \right)
  \right] 
\\&\leq
  K
  \left[ 
    M^{ -\min\{ \vartheta \chi, \theta \} }
    +
    N^{-2\theta}
    \left(
      \frac{ 
        1
      }{
        \nu^{\theta}
        \pi^{2\theta}
      }
      +
      \sup_{ t \in [0,T] }
      \left[ 
        N^{2\theta}
        \| O_t - P_N O_t \|_{ \L^{2p}(\P; V) }
      \right]
    \right)
  \right] 
\\&\leq
  K
  \max\!\left\{ 
    1,
    \frac{1}{\nu^{\theta}}
    +
    \sup_{ t \in [0,T] }
    \left[ 
      N^{2\theta}
      \| O_t - P_N O_t \|_{ \L^{2p}(\P; V) }
    \right]
  \right\}
  \left[ 
    M^{ -\min\{ \vartheta \chi, \theta \} }
    +
    N^{-2\theta}
  \right] .  
\end{split}
\end{align}
Combining this with~\eqref{eq:example_O_properties_1} 
completes the proof of Lemma~\ref{lem:Ginzburg}.
\end{proof}

\begin{corollary}
\label{cor:Ginzburg3}
Assume the setting in
Section~\ref{sec:examples_setting},
let
$ \xi \in \cap_{ p\in[1,\infty) } \L^{p}(\P; H_{\nicefrac{1}{2}}) $,
$ \gamma \in (\nicefrac{1}{6}, \nicefrac{1}{4} )$,
$ 
  \chi 
  \in 
  (0,  
    \nicefrac{\gamma}{3} - \nicefrac{1}{18}
  ]
$,
and let $ \YMNN{} \colon [0,T] \times \Omega \rightarrow P_N(H) $,
$ M,N \in \N $, and 
$ \OMNN{} \colon [0,T] \times \Omega \rightarrow P_N(H) $,
$ M,N \in \N $,
be stochastic processes which satisfy for all
$ M,N \in \N $, $ t \in [0,T] $ that
$
  [ 
    \OMNN{t} 
    -
    P_N 
    e^{tA}
    \xi
  ]_{ \P, \B(H) }
  =
  \int_0^t
  P_{ N } 
  e^{ (t-\fl{s}) A } \, dW_s
$
and
\begin{align}
\label{eq:Ginzburg3_a}
  \P\Big( 
    \YMNN{t} 
    =
    \textstyle\int_0^t
    P_N \,
    e^{(t-s)A} \,
    \one_{ 
      \{
	\| \YMNN{\fl{s}} \|_{ H_{\gamma} }
	+
	\| \OMNN{\fl{s}} \|_{ H_{\gamma} }
	\leq
	(M/T)^{\chi}
      \}
    }^{\Omega} \,
    F( \YMNN{\fl{s}} ) \, ds
    +
    \OMNN{t}
  \Big)
  =
  1 .
\end{align}
Then 
\begin{enumerate}[(i)]
\item\label{it:Ginzburg3_1} we have that
there exists an up to indistinguishability
unique stochastic process
$
  X \colon [0,T] \times \Omega \rightarrow L^{6}( \lambda_{(0,1)}; \R )
$
with continuous sample paths
which satisfies for all $ t \in [0,T] $, $ p \in (0,\infty) $
that
$ 
  \sup_{ s \in [0,T] } 
  \E\big[
    \| X_s \|_{ L^{6}( \lambda_{(0,1)}; \R ) }^p
  \big]
  <
  \infty
$
and 
\begin{align} 
  \big[ 
    X_t - e^{tA} \xi - \textstyle\int_0^t e^{(t-s)A} F(X_s) \, ds 
  \big]_{ \P, \B(H) }
  =
  \textstyle\int_0^t
  e^{(t-s)A} \, dW_s ,
\end{align}
\item\label{it:Ginzburg3_2}
we have for all $ p \in (0,\infty) $ 
that
$
  \sup_{ r \in (-\infty,\gamma] }
  \sup_{ M, N \in \N }
  \sup_{ t \in [0,T] }
  \E\big[
    \| \YMNN{t} \|_{ H_{r} }^p
  \big]
  <
  \infty
$,
and
\item\label{it:Ginzburg3_3}
we have for all 
$ p \in (0,\infty) $,
$ r \in [0,\nicefrac{1}{4}) $
that there exists a real number
$ C \in \R $ such that for all
$ M,N \in \N $
it holds that
\begin{align}
  \sup_{ t \in [0,T] }
  \left(
    \E\big[
      \|
        X_t - \YMNN{t}
      \|_{ H }^p
    \big]
  \right)^{\!\nicefrac{1}{p}}
  \leq
  C
  (
    M^{-r}
    +
    N^{-2r}
  ) .
\end{align}
\end{enumerate} 
\end{corollary}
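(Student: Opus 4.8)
The plan is to derive Corollary~\ref{cor:Ginzburg3} from Lemma~\ref{lem:Ginzburg} by a routine specialization argument. The key observation is that the hypotheses of Corollary~\ref{cor:Ginzburg3} are slightly weaker than those of Lemma~\ref{lem:Ginzburg} in two respects: first, $\theta$ and $\vartheta$ in Lemma~\ref{lem:Ginzburg} have not yet been chosen; second, Lemma~\ref{lem:Ginzburg} presupposes the existence of the solution process $X$ with the stated moment bounds, whereas in Corollary~\ref{cor:Ginzburg3} this existence is part~\eqref{it:Ginzburg3_1} of the assertion. So I would first establish~\eqref{it:Ginzburg3_1} and then invoke Lemma~\ref{lem:Ginzburg} for a well-chosen pair $(\theta,\vartheta)$.

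\textbf{Step 1: Existence of $X$.} Since $\xi\in\cap_{p\in[1,\infty)}\L^p(\P;H_{\nicefrac12})$, in particular $\xi\in\L^p(\P;H_r)$ for every $r\in(\nicefrac14,\nicefrac12]$ and every $p$. The existence of a unique (up to indistinguishability) $L^6$-valued mild solution $X$ with continuous sample paths and $\sup_{s\in[0,T]}\E[\|X_s\|_{L^6(\lambda_{(0,1)};\R)}^p]<\infty$ for all $p\in(0,\infty)$ is a standard result for stochastic Allen-Cahn equations in one space dimension; I would cite the corresponding existence and regularity statement from the literature (e.g.\ the analogous results quoted in Hutzenthaler et al.~\cite{HutzenthalerJentzenSalimova2016}, or Becker \& Jentzen~\cite{BeckerJentzen2016}), which applies since $F(v)=\sum_{k=0}^3 a_k v^k$ with $a_3\le 0$ and $a_2\one_{[0,\infty)}(a_3)=0$ ensures the required one-sided growth/dissipativity. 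This yields~\eqref{it:Ginzburg3_1}.

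\textbf{Step 2: Applying Lemma~\ref{lem:Ginzburg}.} Fix $p\in(0,\infty)$ and $r\in[0,\nicefrac14)$; w.l.o.g.\ $p\ge 2$ and $r\in(0,\nicefrac14)$. Choose $\theta=\max\{r,\nicefrac16\}\in[\nicefrac16,\nicefrac14)$ and $\vartheta=\theta/\chi\in(0,\infty)$. With this choice, $\vartheta\chi=\theta\ge r$, so $\min\{\vartheta\chi,\theta\}=\theta\ge r$ and $2\theta\ge 2r$; also $2\theta>\nicefrac16<\gamma$ may fail, but in fact since $\gamma<\nicefrac14$ and $\theta$ can be taken arbitrarily close to $\nicefrac14$ I would instead set $\theta\in[\max\{r,\gamma/2,\nicefrac16\},\nicefrac14)$ so that $2\theta\ge\gamma$ as required implicitly in the proof of Lemma~\ref{lem:Ginzburg} (this is where the hypothesis $2\theta>\gamma$ there is used), while keeping $\min\{\vartheta\chi,\theta\}\ge r$. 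The hypothesis $\xi\in\cup_{s\in(\nicefrac14,\infty)\cap[2\theta,\infty)}\L^{16p\max\{3,\vartheta\}}(\P;H_s)$ is satisfied because $\xi\in\L^q(\P;H_{\nicefrac12})$ for every $q$ and $2\theta<\nicefrac12$, and the moment bound $\sup_{s\in[0,T]}\E[\|X_s\|_{L^6}^{12p}]<\infty$ follows from Step~1. Lemma~\ref{lem:Ginzburg}~\eqref{it:example_1} then gives the $H_\gamma$-moment bound in~\eqref{it:Ginzburg3_2} (for the exponent $4p\max\{3,\vartheta\}\ge p$; smaller exponents follow by Jensen/Hölder, and $r\le\gamma$ moments by continuity of the embedding $H_\gamma\subseteq H_r$), and Lemma~\ref{lem:Ginzburg}~\eqref{it:example_2} gives $\sup_{t\in[0,T]}(\E[\|X_t-\YMNN{t}\|_H^p])^{\nicefrac1p}\le C(M^{-\min\{\vartheta\chi,\theta\}}+N^{-2\theta})\le C(M^{-r}+N^{-2r})$, which is~\eqref{it:Ginzburg3_3}.

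\textbf{Main obstacle.} The only genuine subtlety is bookkeeping around the parameter constraints: one must verify that a single choice of $\theta$ (hence of $\vartheta=\theta/\chi$) simultaneously satisfies $\theta\in[\nicefrac16,\nicefrac14)$, $2\theta\ge\gamma$ (needed inside the proof of Lemma~\ref{lem:Ginzburg}), and $\min\{\vartheta\chi,\theta\}\ge r$, for every prescribed $r\in[0,\nicefrac14)$. Since $\gamma<\nicefrac14$ and $r<\nicefrac14$, the interval $[\max\{r,\gamma/2,\nicefrac16\},\nicefrac14)$ is nonempty, so such $\theta$ exists; this is the one place where $\gamma<\nicefrac14$ (rather than merely $\gamma\le\nicefrac14$) is essential. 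Everything else---promoting finitely many moment bounds to all $p$, reindexing the negative-order spaces, and passing from $L^6$-continuity of $X$ to the displayed mild-solution identity---is routine.
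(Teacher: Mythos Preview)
Your strategy is correct and matches the paper's overall plan: establish the solution process $X$ of item~\eqref{it:Ginzburg3_1}, then invoke Lemma~\ref{lem:Ginzburg} with suitably chosen $\theta$ and $\vartheta$. Two remarks.

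First, your worry about the constraint $2\theta\ge\gamma$ is unnecessary: since $\theta\ge\nicefrac16$ and $\gamma<\nicefrac14$, one automatically has $2\theta\ge\nicefrac13>\nicefrac14>\gamma$, so any $\theta\in[\max\{r,\nicefrac16\},\nicefrac14)$ works and the interval is nonempty.

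Second, and more interestingly, the paper handles your Step~1 differently. Rather than citing a single existence result that delivers an $L^6$-valued solution with \emph{all} $L^p$-moments, the paper cites literature (Manthey--Zausinger, etc.) that yields, for each $q\in\{6,7,8,\dots\}$, an $L^q$-valued solution $\tilde X_q$ with continuous sample paths and only $q$-th moments in $L^q$. It then applies Lemma~\ref{lem:Ginzburg} to each $\tilde X_p$ (viewed as $L^6$-valued), obtains that the \emph{same} numerical scheme $\YMNN{}$ converges to every $\tilde X_p$, and concludes via the triangle inequality that $\P(\tilde X_{p_1,t}=\tilde X_{p_2,t})=1$ for all $p_1,p_2$; intersecting over rational times and using path continuity yields a single process $X$ with moments of all orders. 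Your direct citation is fine if such a result is readily available, but the paper's bootstrapping-through-the-numerics is a neat self-contained alternative that extracts the full moment scale from weaker cited input.
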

\begin{proof}[Proof of Corollary~\ref{cor:Ginzburg3}]
Note that under the assumptions of Corollary~\ref{cor:Ginzburg3}
it is well known
(cf., e.g., \cite[Theorem~3.4.1~(ii) in Section~3.4, Lemma~2.4.2 in Section~2, and Definition~2.7 in Section~2]{MantheyZausinger99},
\cite[Lemma~28 in Section~3.2]{j09b}, Lemma~\ref{lem:F_lipschitz},
the hypothesis that
$ \xi \in \cap_{ p\in[1,\infty) } \L^{p}(\P; H_{\nicefrac{1}{2}}) $,
and \cite[(A.46) in Section~A.5.2]{dz92})
that there exist stochastic processes
$
  \tilde{X}_q \colon [0,T] \times \Omega \rightarrow L^{q}( \lambda_{ (0,1) }; \R )
$,
$ q \in \{6, 7, 8,\ldots\} $,
with continuous sample paths which satisfy
for all $ t \in [0,T] $, $ q \in \{6, 7, 8,\ldots\} $ that
$
  \sup_{ s \in [0,T] }
  \E\big[ \| \tilde{X}_{q,s} \|_{ L^{q}( \lambda_{ (0,1) }; \R ) }^q \big]
  <
  \infty
$
and
\begin{align} 
\label{eq:exact_def}
  \big[ 
    \tilde{X}_{q,t} 
    - 
    e^{tA} \xi 
    - 
    \textstyle\int_0^t 
    e^{(t-s)A}
    F(\tilde{X}_{q,s}) \, ds
  \big]_{ \P, \B(H) }
  =
  \textstyle\int_0^t
  e^{(t-s)A} \, dW_s .
\end{align}
Combining this with~\eqref{eq:Ginzburg3_a}, 
the assumption that 
$ \xi \in \cap_{ p\in[1,\infty) } \L^{p}(\P; H_{\nicefrac{1}{2}}) $,
and, e.g., Lemma~2.2 in Andersson et al.~\cite{AnderssonJentzenKurniawan2015}
allows us to apply Lemma~\ref{lem:Ginzburg} 
(with $ p = p $, 
$ \vartheta = \vartheta $,
$ \theta = r $,
$ 
  \xi 
  = 
  \xi
$,
$ \gamma = \gamma $,
$ \chi = \chi $,
$ X = [0,T] \times \Omega \ni (t,\omega) \mapsto \tilde{X}_{p,t}(\omega) \in L^{6}( \lambda_{ (0,1) }; \R ) $,
$ \YMNN{} = [0,T] \times \Omega \ni (t,\omega) \mapsto \YMNN{t}(\omega) \in H_{\gamma} $,
$ \OMNN{} = \OMNN{} $
for 
$ p \in \{6, 7, 8, \ldots \} $,
$ \vartheta \in [\nicefrac{r}{\chi}, \infty) $,
$ r \in [\nicefrac{1}{6}, \nicefrac{1}{4}) $,
$ M,N \in \N $
in the notation of Lemma~\ref{lem:Ginzburg})
to obtain that
there exists a function
$ C \colon [2,\infty) \times [\nicefrac{1}{6},\nicefrac{1}{4}) \rightarrow \R $ 
such that for all
$ p \in \{6, 7, 8, \ldots \} $,
$ r \in [\nicefrac{1}{6},\nicefrac{1}{4}) $,
$ M,N \in \N $
we have that
\begin{align}
\label{eq:specific convergence}
  \sup_{ t \in [0,T] }
  \|
    \tilde{X}_{p,t} - \YMNN{t}
  \|_{ \L^p(\P;H) }
  \leq
  C_{p,r}
  (
    M^{-r}
    +
    N^{-2r}
  ) .
\end{align}
Next observe that
the triangle inequality
ensures that for all 
$ p_1, p_2 \in \{6, 7, 8, \ldots \} $
with $ p_1 \leq p_2 $ we have that
\begin{align}
\begin{split}
 &\sup_{ t \in [0,T] }
  \|
    \tilde{X}_{p_1,t}
    -
    \tilde{X}_{p_2,t}
  \|_{ \L^{p_1}(\P;H) }
\\&=
  \limsup_{ M \rightarrow \infty }
  \limsup_{ N \rightarrow \infty }
  \sup_{ t \in [0,T] }
  \|
    \tilde{X}_{p_1,t}
    -
    \YMNN{t}
    +
    \YMNN{t}
    -
    \tilde{X}_{p_2,t}
  \|_{ \L^{p_1}(\P;H) }
\\&\leq
  \limsup_{ M \rightarrow \infty }
  \limsup_{ N \rightarrow \infty }
  \sup_{ t \in [0,T] }
  \left[ 
    \|
      \tilde{X}_{p_1,t}
      -
      \YMNN{t}
    \|_{ \L^{p_1}(\P;H) }
    +
    \|
      \tilde{X}_{p_2,t}
      -
      \YMNN{t}
    \|_{ \L^{p_1}(\P;H) }
  \right] .
\end{split}
\end{align}
H{\"o}lder's inequality and~\eqref{eq:specific convergence}
hence prove that for all 
$ p_1, p_2 \in \{6,7,8, \ldots \} $,
$ r \in (\nicefrac{1}{6},\nicefrac{1}{4}) $
with $ p_1 \leq p_2 $ we have that
\begin{align}
\begin{split}
 &\sup_{ t \in [0,T] }
  \|
    \tilde{X}_{p_1,t}
    -
    \tilde{X}_{p_2,t}
  \|_{ \L^{p_1}(\P;H) }
\\&\leq
  \limsup_{ M \rightarrow \infty }
  \limsup_{ N \rightarrow \infty }
  \sup_{ t \in [0,T] }
  \left[ 
    \|
      \tilde{X}_{p_1,t}
      -
      \YMNN{t}
    \|_{ \L^{p_1}(\P;H) }
    +
    \|
      \tilde{X}_{p_2,t}
      -
      \YMNN{t}
    \|_{ \L^{p_2}(\P;H) }
  \right]
\\&\leq
  C_{p_1,r}
  \limsup_{ M \rightarrow \infty }
  \limsup_{ N \rightarrow \infty }
  \left[ 
    M^{-r}
    +
    N^{-2r}  
  \right]
  +
  C_{p_2,r}
  \limsup_{ M \rightarrow \infty }
  \limsup_{ N \rightarrow \infty }
  \left[ 
    M^{-r}
    +
    N^{-2r}  
  \right]
  =
  0 .
\end{split}
\end{align}
This implies that for all 
$
  q_1, q_2
  \in
  \{ 6,7,8, \ldots \}
$,
$ t \in [0,T] $
we have that
\begin{align}
\label{eq:Ginzburg3_P}
  \P\big( 
    \tilde{X}_{q_1,t} = \tilde{X}_{q_2,t} 
  \big) = 1 . 
\end{align}
In the next step let 
$ \tilde{\Omega} \subseteq \Omega $
be the set given by
\begin{align}
\label{eq:Ginzburg3_b}
\begin{split}
  &\tilde{\Omega}
   =
   \big\{
     \omega \in \Omega
     \colon
     \big( \,
       \forall \, q_1, q_2 \in \{ 6,7,8, \ldots \},
       t \in [0,T] 
       \colon 
       \tilde{X}_{q_1,t}(\omega) = \tilde{X}_{q_2,t}(\omega)
     \big)
   \big\}
\end{split}
\end{align}
and let 
$ X \colon [0,T] \times \Omega \rightarrow L^{6}( \lambda_{ (0,1) }; \R ) $
be the function which satisfies for all $ t \in [0,T], \omega \in \Omega$
that 
\begin{align}
\label{eq:Ginzburg3_c}
  X_t(\omega) 
  =
  \begin{cases}
    \tilde{X}_{6,t}(\omega) & \colon \omega \in \tilde{\Omega} \\
    0 & \colon \omega \in \Omega \setminus \tilde{\Omega} .
  \end{cases}
\end{align}
Note that the fact that every 
$ q \in \{ 6,7,8, \ldots \} $
we have that $ \tilde{X}_q \colon [0,T] \times \Omega \rightarrow L^{q}( \lambda_{ (0,1) }; \R ) $ 
has continuous sample paths 
shows that
\begin{align}
\begin{split}
   \tilde{\Omega}
  &=
   \big\{
     \omega \in \Omega
     \colon
     \big( \,
       \forall \, q_1, q_2 \in \{ 6,7,8, \ldots \} ,
       t \in [0,T] \cap \mathbb{Q} 
       \colon 
       \tilde{X}_{q_1,t}(\omega) = \tilde{X}_{q_2,t}(\omega)
     \big)
   \big\}
\\&=
  \cap_{ q_1, q_2 \in \{ 6,7,8, \ldots \} }
  \cap_{ t \in [0,T] \cap \mathbb{Q} }
  \big\{ 
    \tilde{X}_{q_1,t}(\omega) = \tilde{X}_{q_2,t}(\omega)
  \big\} .
\end{split}
\end{align}
Combining this with~\eqref{eq:Ginzburg3_P} ensures that
that 
\begin{equation}
\label{eq:Ginzburg3_d}
 \tilde{\Omega}
 \in 
 \F
 \qquad 
 \text{and}
 \qquad 
 \P( \tilde{\Omega} ) = 1 .
\end{equation}
Next observe that the fact that
$ \tilde{X} \colon [0,T] \times \Omega \rightarrow L^{6}( \lambda_{ (0,1) }; \R ) $
has continuous sample paths demonstrates that $ X $ has continuous sample paths.
Moreover, note that~\eqref{eq:Ginzburg3_b}, \eqref{eq:Ginzburg3_c},
and~\eqref{eq:Ginzburg3_d} ensure that
for all $ q \in \{ 6, 7, 8, \ldots \} $ we have that
\begin{equation}
 \P( \forall \, t \in [0,T] \colon \tilde{X}_{q,t} = X_t ) = 1 .
\end{equation}
Combining this with~\eqref{eq:exact_def}
demonstrates that for all $ t \in [0,T] $, $ p \in (0,\infty) $ 
we have that
\begin{equation}
  \sup_{ s \in [0,T] } 
  \E\big[
    \| X_s \|_{ L^{6}( \lambda_{(0,1)}; \R ) }^p
  \big]
  <
  \infty
\end{equation}
and 
\begin{align} 
  \big[ 
    X_t - e^{tA} \xi - \textstyle\int_0^t e^{(t-s)A} F(X_s) \, ds 
  \big]_{ \P, \B(H) }
  =
  \textstyle\int_0^t
  e^{(t-s)A} \, dW_s .
\end{align}
This, the fact that $ X \colon [0,T] \times \Omega \rightarrow L^{6}( \lambda_{ (0,1) }; \R ) $
has continuous sample paths, and again Lemma~\ref{lem:Ginzburg}
(with $ p = p $, 
$ \vartheta = \vartheta $,
$ \theta = \theta $,
$ 
  \xi 
  = 
  \xi
$,
$ \gamma = \gamma $,
$ \chi = \chi $,
$ X = X $,
$ \YMNN{} = [0,T] \times \Omega \ni (t,\omega) \mapsto \YMNN{t}(\omega) \in H_{\gamma} $,
$ \OMNN{} = \OMNN{} $
for 
$ p \in [2, \infty) $,
$ \vartheta \in [\nicefrac{\theta}{\chi}, \infty) $,
$ \theta \in [\nicefrac{1}{6}, \nicefrac{1}{4}) $,
$ M,N \in \N $
in the notation of Lemma~\ref{lem:Ginzburg})
complete the proof of Corollary~\ref{cor:Ginzburg3}.
\end{proof}

\section{Lower and upper bounds for
strong approximation errors of numerical approximations
of linear stochastic heat equations}
\label{sec:lower_bounds_section}

\subsection{Setting}
\label{sec:lower_bounds_setting}
Consider the notation in Section~\ref{sec:notation},
let $ T, \nu \in (0,\infty) $,
$ 
  ( H, \langle \cdot, \cdot \rangle_H, \left\| \cdot \right\|\!_H ) 
  =
  ( 
    L^2(\lambda_{(0,1)}; \R), 
    \langle \cdot, \cdot \rangle_{ L^2(\lambda_{(0,1)}; \R) },
$
$
    \left\| \cdot \right\|\!_{ L^2(\lambda_{(0,1)}; \R) }
  )
$,
$ (e_n)_{ n \in \N } \subseteq H $,
$ (P_n)_{ n \in \N \cup \{ \infty \} } \subseteq L(H) $
satisfy for all $ m \in \N $, $ n \in \N \cup \{ \infty \} $, 
$ v \in H $ that
$ e_m = [ (\sqrt{2}\sin(m \pi x))_{ x \in (0,1) } ]_{ \lambda_{(0,1)}, \B(\R) } $
and 
$ P_n(v) = \sum_{ k=1 }^n \langle e_k, v \rangle_H \, e_k $,
let $ A \colon D(A) \subseteq H \rightarrow H $
be the Laplacian with Dirichlet boundary conditions on $ H $
times the real number $ \nu $,
let $ ( \Omega, \F, \P ) $ be a probability space,
let $ (W_t)_{ t \in [0,T] } $ be an $ \Id_H $-cylindrical 
Wiener process, 
and let $ O \colon [0,T] \times \Omega \rightarrow H $
and $ \OMNN{} \colon [0,T] \times \Omega \rightarrow H $,
$ M, N \in \N $, be stochastic processes which satisfy
for all $ t \in [0,T] $, $ M \in \N $, $N \in \N \cup \{ \infty \} $ that
$
  [ O_t ]_{ \P, \B(H) } 
  =
  \int_0^t
  e^{(t-s)A} \, dW_s
$
and 
$
  [ \OMNN{t} ]_{ \P, \B(H) } 
  =
  \int_0^t
  P_N
  e^{(t-\fl{s})A} \, dW_s
$.

\subsection{Lower and upper bounds for
Hilbert-Schmidt norms of Hilbert-Schmidt operators}

\begin{lemma}
\label{lem:eA_monoton}
Assume the setting in Section~\ref{sec:lower_bounds_setting}
and let
$ N \in \N \cup \{ \infty \} $,
$ s_1, s_2, t \in [0,\infty) $ with $ s_1 \leq s_2 $.
Then 
\begin{enumerate}[(i)]
 \item\label{it:eA_monoton_1} we have that
\begin{equation}
  \left( 
    \sum_{ n = 1 }^{\infty}
    \|
      P_N e^{s_1 A}
      ( \Id_H - e^{tA} ) \,
      e_n
    \|_{ H }^2
  \right)^{\!\nicefrac{1}{2}}
  \geq
  \left( 
    \sum_{ n = 1 }^{\infty}
    \|
      P_N e^{s_2 A}
      ( \Id_H - e^{tA} ) \,
      e_n
    \|_{ H }^2
  \right)^{\!\nicefrac{1}{2}}
\end{equation}
and 
\item\label{it:eA_monoton_2} we have that
\begin{equation}
  \left( 
    \sum_{ n = 1 }^{\infty}
    \|
      P_N e^{t A}
      ( \Id_H - e^{s_1 A} ) \,
      e_n
    \|_{ H }^2
  \right)^{\!\nicefrac{1}{2}}
  \leq 
  \left( 
    \sum_{ n = 1 }^{\infty}
    \|
      P_N e^{t A}
      ( \Id_H - e^{s_2 A} ) \,
      e_n
    \|_{ H }^2
  \right)^{\!\nicefrac{1}{2}} .
\end{equation}
\end{enumerate}

\end{lemma}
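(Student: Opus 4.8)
The plan is to exploit that the orthonormal basis $(e_n)_{n\in\N}$ of $H$ simultaneously diagonalizes the bounded linear operators $e^{sA}$, $s\in[0,\infty)$, and the projections $P_N$, $N\in\N\cup\{\infty\}$. Since $A$ is $\nu$ times the Laplacian with Dirichlet boundary conditions on $H$, one has $A e_n = -\nu\pi^2 n^2 e_n$, and hence $e^{sA} e_n = e^{-s\nu\pi^2 n^2} e_n$ for all $n\in\N$, $s\in[0,\infty)$, while $P_N e_n = \one_{\{k\in\N\colon k\le N\}}^{\N}(n)\,e_n$ for all $n\in\N$, $N\in\N\cup\{\infty\}$. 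First I would combine these facts to compute, for all $s,t\in[0,\infty)$, $N\in\N\cup\{\infty\}$, $n\in\N$, that
\begin{align*}
  \| P_N e^{sA}(\Id_H - e^{tA}) e_n \|_H^2
  =
  \one_{\{k\in\N\colon k\le N\}}^{\N}(n)\,
  \big(1-e^{-t\nu\pi^2 n^2}\big)^{\!2}\,
  e^{-2s\nu\pi^2 n^2}
\end{align*}
and, completely analogously, $\| P_N e^{tA}(\Id_H - e^{sA}) e_n \|_H^2 = \one_{\{k\in\N\colon k\le N\}}^{\N}(n)\,\big(1-e^{-s\nu\pi^2 n^2}\big)^{2}\,e^{-2t\nu\pi^2 n^2}$. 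Summing over $n\in\N$ thus reduces items~\eqref{it:eA_monoton_1} and~\eqref{it:eA_monoton_2} to a term-by-term comparison of two series of nonnegative real numbers (indexed by $n$).

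For item~\eqref{it:eA_monoton_1}, I would note that since $\nu\pi^2 n^2>0$ and $s_1\le s_2$, the monotonicity of the map $[0,\infty)\ni x\mapsto e^{-x}\in(0,1]$ yields $e^{-2s_1\nu\pi^2 n^2}\ge e^{-2s_2\nu\pi^2 n^2}$ for every $n\in\N$; multiplying by the nonnegative factor $(1-e^{-t\nu\pi^2 n^2})^2$, summing over $n\in\N$, and taking square roots then gives the claimed inequality. For item~\eqref{it:eA_monoton_2}, the same monotonicity gives $0\le 1-e^{-s_1\nu\pi^2 n^2}\le 1-e^{-s_2\nu\pi^2 n^2}$ for every $n\in\N$, hence $(1-e^{-s_1\nu\pi^2 n^2})^2\le(1-e^{-s_2\nu\pi^2 n^2})^2$; multiplying by the nonnegative factor $e^{-2t\nu\pi^2 n^2}$, summing over $n\in\N$, and taking square roots yields the assertion.

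I do not expect a genuine obstacle here; the argument is elementary once the diagonalization is in place. The only point deserving a word of care is that for some parameter choices (for instance $s_1=0$ and $t>0$) the relevant series diverge, so the inequalities are to be understood in $[0,\infty]$; this causes no difficulty, because the term-by-term comparison of nonnegative summands is valid in the extended reals and the square-root function is nondecreasing on $[0,\infty]$.
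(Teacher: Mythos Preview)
Your proposal is correct and follows essentially the same approach as the paper: both arguments diagonalize the operators via $A e_n = -\nu\pi^2 n^2 e_n$, reduce the Hilbert--Schmidt-type sums to $\sum_{n=1}^{N}\lvert e^{-\mu_n s}(1-e^{-\mu_n t})\rvert^2$ with $\mu_n=\nu\pi^2 n^2$, and then compare term by term using the monotonicity of $x\mapsto e^{-x}$. Your extra remark that the inequalities are to be read in $[0,\infty]$ (e.g.\ when $N=\infty$ and $s_1=0$) is a helpful clarification that the paper leaves implicit.
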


\begin{proof}[Proof of Lemma~\ref{lem:eA_monoton}]
Throughout this proof let 
$ ( \mu_n )_{ n \in \N } \subseteq \R $
satisfy for all $ n \in \N $ that
$ \mu_n = \nu \pi^2 n^2 $.
Next observe that
\begin{align}
\begin{split}
 &\sum_{ n = 1 }^{\infty}
  \|
    P_N e^{s_1 A}
    ( \Id_H - e^{tA} ) \, 
    e_n
  \|_{ H }^2
\\&=
  \sum_{ n = 1 }^N
  \| 
    e^{s_1 A}
    ( \Id_H - e^{tA} ) \,
    e_n
  \|_H^2
  =
  \sum_{ n = 1 }^N
  \| 
    e^{-\mu_n s_1}
    ( 1 - e^{-\mu_n t} ) \,
    e_n
  \|_H^2
\\&=
  \sum_{ n = 1 }^N
  | 
    e^{-\mu_n s_1}
    ( 1 - e^{-\mu_n t} )
  |^2
  \geq
  \sum_{ n = 1 }^N
  | 
    e^{-\mu_n s_2}
    ( 1 - e^{-\mu_n t} )
  |^2
\\&=
  \sum_{ n = 1 }^N
  \| 
    e^{s_2 A}
    ( \Id_H - e^{tA} ) \,
    e_n
  \|_H^2
  =
  \sum_{ n = 1 }^{\infty}
  \|
    P_N e^{s_2 A}
    ( \Id_H - e^{tA} ) \,
    e_n
  \|_{ H }^2 .
\end{split} 
\end{align}
This establishes~\eqref{it:eA_monoton_1}.
Moreover, note that
\begin{align}
\begin{split}
 &\sum_{ n = 1 }^{\infty}
  \|
    P_N e^{t A}
    ( \Id_H - e^{s_1 A} ) \,
    e_n
  \|_{ H }^2
\\&=
  \sum_{ n = 1 }^N
  \| 
    e^{t A}
    ( \Id_H - e^{s_1 A} ) \,
    e_n
  \|_H^2
  =
  \sum_{ n = 1 }^N
  \| 
    e^{-\mu_n t}
    ( 1 - e^{-\mu_n s_1} ) \,
    e_n
  \|_H^2
\\&=
  \sum_{ n = 1 }^N
  | 
    e^{-\mu_n t}
    ( 1 - e^{-\mu_n s_1} )
  |^2
  \leq
  \sum_{ n = 1 }^N
  | 
    e^{-\mu_n t}
    ( 1 - e^{-\mu_n s_2} )
  |^2
\\&=
  \sum_{ n = 1 }^N
  \| 
    e^{t A}
    ( \Id_H - e^{s_2 A} ) \,
    e_n
  \|_H^2
  =
  \sum_{ n = 1 }^{\infty}
  \|
    P_N e^{t A}
    ( \Id_H - e^{s_2 A} ) \,
    e_n
  \|_{ H }^2 . 
\end{split}
\end{align}
The proof of Lemma~\ref{lem:eA_monoton}
is thus completed.
\end{proof}

\begin{lemma}
\label{lem:eA_bounds}
Assume the setting in Section~\ref{sec:lower_bounds_setting}
and let $ N \in \N \cup \{ \infty \} $, 
$ t \in (0,T] $. Then
\begin{multline}
  \left[ 
    \int_0^{\max\{0, t(N+1)^2-(1+\sqrt{t})^2\}} 
    \frac{ 
      ( 1 - e^{-\nu\pi^2 \min\{1,tN^2\}} )^2
    }{
      2 \nu \pi^2 (x+[1+\sqrt{T}]^2)^{\nicefrac{3}{2}} 
    } \, dx
  \right]^{\nicefrac{1}{2}}
\\
  \leq
  \|
    P_N
    (-\sqrt{t}A)^{-\nicefrac{1}{2}}
    ( \Id_H - e^{tA} )
  \|_{ HS(H) }
  \leq
  \left[ 
    \tfrac{ 
      1
    }{
      \pi \sqrt{\nu}
    }
    +
    \tfrac{ 
      1 
    }{
      \nu \pi^2
    }
    +
    4
    \pi
    \sqrt{\nu}
  \right]^{\nicefrac{1}{2}} .
\end{multline}

\end{lemma}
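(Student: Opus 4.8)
The plan is to reduce the two inequalities to sharp two‑sided estimates of an explicit numerical series, prove the lower bound by an integral comparison together with a change of variables, and prove the (\,$T$‑independent\,) upper bound by splitting the series at its ``turning point''.

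\emph{Step 1: diagonalization.} Since $-A$ has the orthonormal eigenbasis $(e_n)_{n\in\N}$ with eigenvalues $\nu\pi^2 n^2$ and $P_N$ is the orthogonal projection onto $\operatorname{span}(e_1,\dots,e_N)$ (with $P_\infty=\Id_H$), the operator $P_N(-\sqrt t A)^{-\nicefrac12}(\Id_H-e^{tA})$ sends $e_n$ to $(\sqrt t\,\nu\pi^2 n^2)^{-\nicefrac12}(1-e^{-\nu\pi^2 n^2 t})\,e_n$ for $n\le N$ and to $0$ for $n>N$, so
\[
  \|P_N(-\sqrt t A)^{-\nicefrac12}(\Id_H-e^{tA})\|_{HS(H)}^2
  =
  \frac{1}{\sqrt t\,\nu\pi^2}\sum_{n=1}^N\frac{(1-e^{-\nu\pi^2 n^2 t})^2}{n^2}.
\]
Both claimed bounds are now statements about this series.

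\emph{Step 2: lower bound.} If $t(N+1)^2\le(1+\sqrt t)^2$ (equivalently $tN^2\le 1$, equivalently $N\sqrt t\le 1$) the claimed integral is $\int_0^0(\cdots)=0$ and nothing is to prove; otherwise $tN^2>1$, which forces $\min\{1,tN^2\}=1$ and, writing $a$ for the least integer with $a\ge t^{-\nicefrac12}$, also $a\le N$. Every integer $n$ with $a\le n\le N$ satisfies $n^2t\ge 1$, hence $1-e^{-\nu\pi^2 n^2 t}\ge 1-e^{-\nu\pi^2}$ by monotonicity of $y\mapsto 1-e^{-y}$; since $y\mapsto y^{-2}$ is nonincreasing and $a<t^{-\nicefrac12}+1$ one gets
\[
  \sum_{n=1}^N\frac{(1-e^{-\nu\pi^2 n^2 t})^2}{n^2}
  \ge (1-e^{-\nu\pi^2})^2\sum_{n=a}^N\frac{1}{n^2}
  \ge (1-e^{-\nu\pi^2})^2\int_{a}^{N+1}\frac{dy}{y^2}
  \ge (1-e^{-\nu\pi^2})^2\int_{1+t^{-\nicefrac12}}^{N+1}\frac{dy}{y^2}.
\]
The substitution $x=ty^2-(1+\sqrt t)^2$ (so $dy=\tfrac{dx}{2ty}$, $y^{-2}=\tfrac{t}{x+(1+\sqrt t)^2}$, with $y=1+t^{-\nicefrac12}\mapsto x=0$ and $y=N+1\mapsto x=t(N+1)^2-(1+\sqrt t)^2$) turns the last integral into $\int_0^{t(N+1)^2-(1+\sqrt t)^2}\tfrac{\sqrt t}{2(x+(1+\sqrt t)^2)^{\nicefrac32}}\,dx$; multiplying by $\tfrac1{\sqrt t\,\nu\pi^2}$, enlarging the denominator from $(x+(1+\sqrt t)^2)^{\nicefrac32}$ to $(x+(1+\sqrt T)^2)^{\nicefrac32}$ (legitimate since $t\le T$) and taking square roots gives exactly the claimed lower bound.

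\emph{Step 3: upper bound.} Using $0\le 1-e^{-y}\le\min\{1,y^{\nicefrac34}\}$ (check $y\le 1$ and $y\ge 1$ separately) one has $(1-e^{-\nu\pi^2 n^2 t})^2\le\min\{1,(\nu\pi^2 n^2 t)^{\nicefrac32}\}$, so the $n$‑th term of the series is at most $\min\{n^{-2},(\nu\pi^2 t)^{\nicefrac32}n\}$, whose crossover is $n_\ast:=(\nu\pi^2 t)^{-\nicefrac12}$. I would split the sum at $n_\ast$. On $\{n\le n_\ast\}$ bound the term by $(\nu\pi^2 t)^{\nicefrac32}n$ and use $\sum_{n\le\min\{N,n_\ast\}}n\le\tfrac12 n_\ast(n_\ast+1)\le n_\ast^2$ when $n_\ast\ge 1$ (and the sum is empty when $n_\ast<1$), so this block contributes at most $\tfrac1{\sqrt t\,\nu\pi^2}(\nu\pi^2 t)^{\nicefrac32}n_\ast^2=\tfrac1{\pi\sqrt\nu}$. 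On $\{n>n_\ast\}$ bound the term by $n^{-2}$ and compare with $\int y^{-2}\,dy$; when $n_\ast\ge 1$ this block contributes at most $\tfrac2{\pi\sqrt\nu}$ after using that $n_\ast\ge 1$ is equivalent to $\sqrt t\le(\pi\sqrt\nu)^{-1}$ (which absorbs the discretization boundary term), while when $n_\ast<1$ the whole series lies in this block and is $\le\tfrac{\pi^2/6}{\sqrt t\,\nu\pi^2}=\tfrac1{6\sqrt t\,\nu}\le\tfrac{\pi}{6\sqrt\nu}$, again using $\sqrt t\ge(\pi\sqrt\nu)^{-1}$. In either regime the total is at most $\tfrac3{\pi\sqrt\nu}$ (resp.\ $\tfrac\pi{6\sqrt\nu}$), and one checks by elementary calculus (e.g.\ $\min_{\nu>0}\bigl(\tfrac1{\pi\sqrt\nu}+4\pi^2\nu\bigr)=3$) that this is $\le\tfrac1{\pi\sqrt\nu}+\tfrac1{\nu\pi^2}+4\pi\sqrt\nu$; taking square roots finishes.

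\emph{Main obstacle.} The work is entirely in Step 3: the summand $\tfrac{(1-e^{-\nu\pi^2 n^2 t})^2}{n^2}$ is \emph{not} monotone in $n$ — it rises from $0$, peaks near $n_\ast$, then decays like $n^{-2}$ — so a single integral comparison is unavailable and one must split at $n_\ast$ and handle separately the regimes $n_\ast\ge 1$ (``small $t$'', dominated by the head $\sum n$) and $n_\ast<1$ (``large $\nu t$'', no head at all), keeping track of the few boundary terms; the generous constant $4\pi\sqrt\nu$ exists precisely so that these corrections can be absorbed without optimization. Step 2, by contrast, is routine once one spots the degenerate case $tN^2\le 1$ and the change of variables $x=ty^2-(1+\sqrt t)^2$ that reproduces the stated $(\,\cdot\,)^{\nicefrac32}$ integrand.
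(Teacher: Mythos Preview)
Your proof is correct. Steps~1--2 match the paper's argument (restrict to $n\gtrsim t^{-1/2}$, compare with an integral, substitute); the paper carries the factor $(1-e^{-\nu\pi^2\min\{1,tN^2\}})^2$ uniformly rather than disposing of the case $tN^2\le 1$ first, but this is cosmetic.

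Step~3 takes a different route. You split the sum at $n_*=(\nu\pi^2 t)^{-1/2}$ via $(1-e^{-y})^2\le\min\{1,y^{3/2}\}$ and are then left to verify that the resulting constants $\tfrac{3}{\pi\sqrt\nu}$ and $\tfrac{\pi}{6\sqrt\nu}$ fit under the claimed bound. The paper instead applies an integral comparison \emph{directly} to the non-monotone summand: on $[k-1,k]$ one has $\tfrac{(1-e^{-\nu\pi^2 k^2 t})^2}{k^2}\le\tfrac{(1-e^{-\nu\pi^2(x+1)^2 t})^2}{x^2}$ (numerator enlarged, denominator shrunk); then $(x+1)^2\le 4x^2$ and the same substitution as in the lower bound produce $\int_t^{tN^2}\tfrac{(1-e^{-4\nu\pi^2 x})^2}{2\nu\pi^2 x^{3/2}}\,dx$, which is split at $x=1$, and the three constants $\tfrac{1}{\pi\sqrt\nu}$, $4\pi\sqrt\nu$, $\tfrac{1}{\nu\pi^2}$ drop out with no further inequality to check. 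So your ``Main obstacle'' remark that a single integral comparison is unavailable is not correct --- the shift trick provides one and keeps the upper bound structurally parallel to the lower. Your method works too, but the closing hint $\min_{\nu>0}(\tfrac1{\pi\sqrt\nu}+4\pi^2\nu)=3$, while a true identity, does not yield the required $\tfrac{3}{\pi\sqrt\nu}\le\tfrac1{\pi\sqrt\nu}+\tfrac1{\nu\pi^2}+4\pi\sqrt\nu$; the clean reduction is to set $w=\pi\sqrt\nu$, giving $4w^3-2w+1\ge 0$, whose minimum over $w>0$ (at $w=6^{-1/2}$) equals $1-\tfrac{4}{3\sqrt6}>0$.
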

\begin{proof}[Proof of Lemma~\ref{lem:eA_bounds}]
Observe that
\begin{align}
\begin{split}
 &\tfrac{1}{\sqrt{t}}
  \|
    P_N
    (-A)^{-\nicefrac{1}{2}}
    ( \Id_H - e^{tA} )
  \|_{ HS(H) }^2
\\&=
  \tfrac{1}{\sqrt{t}}
  \sum_{ k=1 }^N
  \| 
    (-A)^{-\nicefrac{1}{2}}
    ( \Id_H - e^{tA} ) e_k
  \|_H^2
  =
  \tfrac{1}{\sqrt{t}}
  \sum_{ k=1 }^N
  \| 
    ( \nu \pi^2 k^2 )^{-\nicefrac{1}{2}}
    ( 1 - e^{-\nu\pi^2 k^2 t} ) e_k
  \|_H^2
\\&=
  \sum_{ k=1 }^N
  \frac{ 
    ( 1 - e^{-\nu\pi^2 k^2 t} )^2
  }{
    \nu \pi^2 k^2 \sqrt{t}
  }
  =
  \sum_{ k=1 }^N
  \int_k^{k+1} 
  \frac{ 
    ( 1 - e^{-\nu\pi^2 k^2 t} )^2
  }{
    \nu \pi^2 k^2 \sqrt{t}
  } \, dx
\\&\geq 
  \sum_{ k=1 }^N
  \int_k^{k+1} 
  \frac{ 
    ( 1 - e^{-\nu\pi^2 (x-1)^2 t} )^2
  }{
    \nu \pi^2 x^2 \sqrt{t}
  } \, dx 
\\&=
  \int_1^{N+1} 
  \frac{ 
    ( 1 - e^{-\nu\pi^2 (x-1)^2 t} )^2
  }{
    \nu \pi^2 x^2 \sqrt{t}
  } \, dx 
  \geq
  \int_{1+\min\{\nicefrac{1}{\sqrt{t}}, N\}}^{N+1} 
  \frac{ 
    ( 1 - e^{-\nu\pi^2 (x-1)^2 t} )^2
  }{
    \nu \pi^2 x^2 \sqrt{t}
  } \, dx .
\end{split}
\end{align}
This and the integral transformation theorem imply that
\begin{align}
\label{eq:eA_bounds_1}
\begin{split}
 &\tfrac{1}{\sqrt{t}}
  \|
    P_N
    (-A)^{-\nicefrac{1}{2}}
    ( \Id_H - e^{tA} )
  \|_{ HS(H) }^2
\\&\geq
  \int_{1+\min\{\nicefrac{1}{\sqrt{t}}, N\}}^{N+1} 
  \frac{ 
    ( 1 - e^{-\nu\pi^2 \min\{1,tN^2\}} )^2
  }{
    \nu \pi^2 x^2 \sqrt{t}
  } \, dx
\\&=
  \int_{(1+\min\{\nicefrac{1}{\sqrt{t}}, N\})^2}^{(N+1)^2} 
  \frac{ 
    ( 1 - e^{-\nu\pi^2\min\{1,tN^2\}} )^2
  }{
    2 \nu \pi^2 x \sqrt{xt} 
  } \, dx
\\&=
  \int_{t(1+\min\{\nicefrac{1}{\sqrt{t}}, N\})^2}^{t(N+1)^2}
  \frac{ 
    ( 1 - e^{-\nu\pi^2 \min\{1,tN^2\}} )^2
  }{
    2 \nu \pi^2 x \sqrt{x} 
  } \, dx
\\&=
  \int_{\min\{(1+\sqrt{t})^2,t(N+1)^2\}}^{t(N+1)^2}
  \frac{ 
    ( 1 - e^{-\nu\pi^2 \min\{1,tN^2\}} )^2
  }{
    2 \nu \pi^2 x \sqrt{x} 
  } \, dx
\\&=
  \int_0^{t(N+1)^2-\min\{(1+\sqrt{t})^2,t(N+1)^2\}} 
  \frac{ 
    ( 1 - e^{-\nu\pi^2 \min\{1,tN^2\}} )^2
  }{
    2 \nu \pi^2 (x+\min\{(1+\sqrt{t})^2,t(N+1)^2\})^{\nicefrac{3}{2}} 
  } \, dx 
\\&\geq
  \int_0^{\max\{0, t(N+1)^2-(1+\sqrt{t})^2\}} 
  \frac{ 
    ( 1 - e^{-\nu\pi^2 \min\{1,tN^2\}} )^2
  }{
    2 \nu \pi^2 (x+[1+\sqrt{T}]^2)^{\nicefrac{3}{2}} 
  } \, dx .
\end{split}
\end{align}
Moreover, note that
\begin{align}
\begin{split}
 &\tfrac{1}{\sqrt{t}}
  \|
    P_N
    (-A)^{-\nicefrac{1}{2}}
    ( \Id_H - e^{tA} )
  \|_{ HS(H) }^2
\\&=
  \tfrac{1}{\sqrt{t}}
  \sum_{ k=1 }^N
  \| 
    (-A)^{-\nicefrac{1}{2}}
    ( \Id_H - e^{tA} ) e_k
  \|_H^2
  =
  \tfrac{1}{\sqrt{t}}
  \sum_{ k=1 }^N
  \| 
    ( \nu \pi^2 k^2 )^{-\nicefrac{1}{2}}
    ( 1 - e^{-\nu\pi^2 k^2 t} ) e_k
  \|_H^2
\\&=
  \sum_{ k=1 }^N
  \frac{ 
    ( 1 - e^{-\nu\pi^2 k^2 t} )^2
  }{
    \nu \pi^2 k^2 \sqrt{t}
  } 
  =
  \frac{ 
    ( 1 - e^{-\nu\pi^2 t} )^2
  }{
    \nu \pi^2 \sqrt{t}
  }
  +
  \sum_{ k=2 }^N
  \int_{k-1}^k
  \frac{ 
    ( 1 - e^{-\nu\pi^2 k^2 t} )^2
  }{
    \nu \pi^2 k^2 \sqrt{t}
  } \, dx .
\end{split}
\end{align}
The fact that 
\begin{equation}
  \forall \, x \in (0,\infty)
  ,
  r \in [0,1]
  \colon
  x^{-r} 
  (1-e^{-x}) 
  \leq 
  1
  ,
\end{equation}
the fact that
\begin{equation}
  \forall \,
  x \in [1,\infty)
  \colon
  (x+1)^2 \leq 4 x^2
  ,
\end{equation}
and the integral transformation theorem
hence yield that
\begin{align}
\begin{split}
 &\tfrac{1}{\sqrt{t}}
  \|
    P_N
    (-A)^{-\nicefrac{1}{2}}
    ( \Id_H - e^{tA} )
  \|_{ HS(H) }^2
\\&\leq
  \frac{ 
    ( 1 - e^{-\nu\pi^2 t} )^{\nicefrac{3}{2}}
  }{
    \pi \sqrt{\nu} 
  }
  +
  \sum_{ k=2 }^N
  \int_{k-1}^k
  \frac{ 
    ( 1 - e^{-\nu\pi^2 (x+1)^2 t} )^2
  }{
    \nu \pi^2 x^2 \sqrt{t}
  } \, dx
\\&\leq 
  \frac{ 
    1
  }{
    \pi \sqrt{\nu}
  }
  +
  \int_{1}^{N}
  \frac{ 
    ( 1 - e^{-4\nu\pi^2 x^2 t} )^2
  }{
    \nu \pi^2 x^2 \sqrt{t}
  } \, dx
\\&=
  \frac{ 
    1
  }{
    \pi \sqrt{\nu}
  }
  +
  \int_{1}^{N^2}
  \frac{ 
    ( 1 - e^{-4\nu\pi^2 x t} )^2
  }{
    2 \nu \pi^2 x \sqrt{x t}
  } \, dx
  =
  \frac{ 
    1
  }{
    \pi \sqrt{\nu}
  }
  +
  \int_{t}^{tN^2}
  \frac{ 
    ( 1 - e^{-4\nu\pi^2 x} )^2
  }{
    2 \nu \pi^2 x \sqrt{x}
  } \, dx .
\end{split}
\end{align}
Again the fact that 
\begin{equation}
  \forall \, x \in (0,\infty)
  ,
  r \in [0,1]
  \colon
  x^{-r} 
  (1-e^{-x}) 
  \leq 
  1
\end{equation}
therefore ensures that
\begin{align}
\begin{split}
 &\tfrac{1}{\sqrt{t}}
  \|
    P_N
    (-A)^{-\nicefrac{1}{2}}
    ( \Id_H - e^{tA} )
  \|_{ HS(H) }^2
\\&\leq 
  \frac{ 
    1
  }{
    \pi \sqrt{\nu}
  }
  +
  \int_{0}^{\infty}
  \frac{ 
    ( 1 - e^{-4\nu\pi^2 x} )^2
  }{
    2 \nu \pi^2 x \sqrt{x}
  } \, dx
\\  & \leq
  \frac{ 
    1
  }{
    \pi \sqrt{\nu}
  }
  +
  2
  \int_{0}^{1}
  \frac{ 
    ( 1 - e^{-4\nu\pi^2 x} )
  }{
    \sqrt{x}
  } \, dx
  +
  \int_{1}^{\infty}
  \frac{ 
    1
  }{
    2 \nu \pi^2 x \sqrt{x}
  } \, dx
\\&\leq
  \frac{ 
    1
  }{
    \pi \sqrt{\nu}
  }
  +
  4
  \pi
  \sqrt{\nu}
  \int_{0}^{1}
  \sqrt{ 1 - e^{-4\nu\pi^2 x} } \, dx
  +
  \left[ 
    \frac{ 
      -1 
    }{
      \nu \pi^2 \sqrt{x}
    }
  \right]_{ x=1 }^{ x = \infty }
\\ &
\leq
  \frac{ 
    1
  }{
    \pi \sqrt{\nu}
  }
  +
  4
  \pi
  \sqrt{\nu}
  +
  \frac{ 
    1 
  }{
    \nu \pi^2
  } .
\end{split}
\end{align}
Combining this and~\eqref{eq:eA_bounds_1}
completes the proof of Lemma~\ref{lem:eA_bounds}.
\end{proof}

\subsection{Lower and upper bounds for 
strong approximation errors of temporal discretizations
of linear stochastic heat equations}

\begin{lemma}
\label{lem:lower_bound_1}
Assume the setting in Section~\ref{sec:lower_bounds_setting}
and let $ M \in \N $, $ N \in \N \cup \{ \infty \} $. Then
\begin{align}
\begin{split}
 &\frac{1}{M^{\nicefrac{1}{4}}}
  \left[ 
    \int_0^{\max\left\{0, \frac{T(N+1)^2}{2M}-\left[1+\frac{\sqrt{T}}{\sqrt{2M}}\right]^{2}\right\}} 
    \frac{ 
      \sqrt{T}
      \left[ 1 - e^{-\nu \pi^2 T} \right] \!
      \left[ 1 - \exp(-\nu\pi^2 \min\{1,\frac{TN^2}{2M}\}) \right]^2
    }{
      8 \nu \pi^2 \sqrt{2} 
      (x+[1+\sqrt{T}]^2)^{\nicefrac{3}{2}} 
    } \, dx
  \right]^{\nicefrac{1}{2}}
\\&\leq 
 \|
   P_N O_T - \OMNN{T} 
 \|_{ \L^2(\P; H ) }
 =
 \sup_{ t \in [0,T] }
 \| 
   P_N O_t - \OMNN{t} 
 \|_{ \L^2(\P; H ) }
\\&=
 \sup_{ t \in [0,T] }
 \left[
   \int_0^t
   \|
     P_N
     e^{(t-s)A}
     ( 
       \Id_H - e^{(s-\fl{s})A} 
     )
   \|_{ HS(H) }^2 \, ds
 \right]^{\nicefrac{1}{2}}
\\&\leq
 \frac{1}{M^{\nicefrac{1}{4}}}
 \left[
    \frac{\sqrt{T}}{ 2 }
    \left( 
      \frac{ 
	1
      }{
	\pi \sqrt{\nu}
      }
      +
      \frac{ 
	1 
      }{
	\nu \pi^2
      }
      +
      4
      \pi
      \sqrt{\nu}
    \right)
  \right]^{\nicefrac{1}{2}} .
\end{split}
\end{align}

\end{lemma}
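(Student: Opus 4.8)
The plan is to diagonalise everything in the sine eigenbasis $(e_n)_{n\in\N}$ of $A$, turning every Hilbert–Schmidt norm into an explicit scalar series, and then to reduce the two bounds and the two intermediate equalities to elementary one-variable estimates together with Lemma~\ref{lem:eA_bounds}. Throughout write $\mu_n=\nu\pi^2n^2$, so $Ae_n=-\mu_ne_n$, and $h=T/M$. First I would record the basic identity: combining $[O_t]_{\P,\B(H)}=\int_0^te^{(t-s)A}\,dW_s$, $[\OMNN{t}]_{\P,\B(H)}=\int_0^tP_Ne^{(t-\fl{s})A}\,dW_s$, the factorisation $e^{(t-\fl{s})A}=e^{(t-s)A}e^{(s-\fl{s})A}$, the commutativity of $P_N$ with $e^{rA}$, and It\^o's isometry shows that for every $t\in[0,T]$
\[
\|P_NO_t-\OMNN{t}\|_{\L^2(\P;H)}^2
=\int_0^t\|P_Ne^{(t-s)A}(\Id_H-e^{(s-\fl{s})A})\|_{HS(H)}^2\,ds
=\int_0^t\sum_{n=1}^Ne^{-2\mu_n(t-s)}\bigl(1-e^{-\mu_n(s-\fl{s})}\bigr)^2\,ds ,
\]
and taking $\sup_{t\in[0,T]}$ of the outer two members gives the second equality in the assertion.

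\emph{Upper bound.} Since $0\le s-\fl{s}\le h$ and $x\mapsto1-e^{-x}$ is nondecreasing — equivalently, by Lemma~\ref{lem:eA_monoton}\eqref{it:eA_monoton_2} applied under the integral — I would replace $1-e^{-\mu_n(s-\fl{s})}$ by $1-e^{-\mu_nh}$, carry out $\int_0^te^{-2\mu_n(t-s)}\,ds=\tfrac{1-e^{-2\mu_nt}}{2\mu_n}\le\tfrac{1}{2\mu_n}$, and recognise the outcome as $\sum_{n=1}^N\tfrac{(1-e^{-\mu_nh})^2}{2\mu_n}=\tfrac12\sqrt{h}\,\|P_N(-\sqrt{h}A)^{-\nicefrac{1}{2}}(\Id_H-e^{hA})\|_{HS(H)}^2$. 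Lemma~\ref{lem:eA_bounds} with $t=h=T/M$ bounds the last Hilbert–Schmidt norm by $\tfrac{1}{\pi\sqrt\nu}+\tfrac{1}{\nu\pi^2}+4\pi\sqrt\nu$, and $\sqrt h=\sqrt T\,M^{-\nicefrac{1}{2}}$ then yields, uniformly in $t\in[0,T]$, exactly $M^{-\nicefrac{1}{4}}\bigl[\tfrac{\sqrt T}{2}\bigl(\tfrac{1}{\pi\sqrt\nu}+\tfrac{1}{\nu\pi^2}+4\pi\sqrt\nu\bigr)\bigr]^{\nicefrac{1}{2}}$, the claimed upper bound for the supremum.

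\emph{Lower bound.} Taking $t=T=Mh$, I would split $[0,T]$ into the blocks $[kh,(k+1)h]$, substitute $u=s-kh$ on the $k$-th block, and sum the resulting geometric series in $k$ to get the exact identity $\|P_NO_T-\OMNN{T}\|_{\L^2(\P;H)}^2=\sum_{n=1}^N\tfrac{1-e^{-2\mu_nT}}{1-e^{-2\mu_nh}}\,Q_n$ with $Q_n=\int_0^he^{-2\mu_n(h-u)}(1-e^{-\mu_nu})^2\,du$. Restricting the $u$-integral to $[h/2,h]$ and using $1-e^{-\mu_nu}\ge1-e^{-\mu_nh/2}$ there gives $Q_n\ge(1-e^{-\mu_nh/2})^2\tfrac{1-e^{-\mu_nh}}{2\mu_n}$. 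Writing $\tau=h/2=T/(2M)$ (so $h=2\tau$), factoring $1-e^{-2\mu_nh}=(1-e^{-2\mu_n\tau})(1+e^{-2\mu_n\tau})$, and using $1+e^{-2\mu_n\tau}\le2$ and $1-e^{-2\mu_nT}\ge1-e^{-2\mu_1T}\ge1-e^{-\nu\pi^2T}$, each summand is bounded below by $\tfrac{1-e^{-\nu\pi^2T}}{4}\tfrac{(1-e^{-\mu_n\tau})^2}{\mu_n}$, so that
\[
\|P_NO_T-\OMNN{T}\|_{\L^2(\P;H)}^2
\ge\frac{1-e^{-\nu\pi^2T}}{4}\sum_{n=1}^N\frac{(1-e^{-\mu_n\tau})^2}{\mu_n}
=\frac{1-e^{-\nu\pi^2T}}{4}\,\sqrt\tau\,\|P_N(-\sqrt\tau A)^{-\nicefrac{1}{2}}(\Id_H-e^{\tau A})\|_{HS(H)}^2 .
\]
Applying the lower bound of Lemma~\ref{lem:eA_bounds} with $t=\tau=T/(2M)$ and substituting $\tau=T/(2M)$ into its integral (so $\tau(N+1)^2=\tfrac{T(N+1)^2}{2M}$, $(1+\sqrt\tau)^2=(1+\tfrac{\sqrt T}{\sqrt{2M}})^2$, $\tau N^2=\tfrac{TN^2}{2M}$, $\sqrt\tau=\tfrac{\sqrt T}{\sqrt{2M}}$) reproduces precisely the square of the claimed first member of the chain; taking square roots completes it.

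\emph{Supremum at the terminal time (main obstacle).} For a general $t=kh+\sigma\in[0,T]$ with $0\le\sigma\le h$, $0\le k\le M$, the same block decomposition yields $\|P_NO_t-\OMNN{t}\|_{\L^2(\P;H)}^2=\sum_{n=1}^N\bigl[e^{-2\mu_n\sigma}\tfrac{1-e^{-2\mu_nkh}}{1-e^{-2\mu_nh}}Q_n+q_n(\sigma)\bigr]$ with $q_n(\sigma)=\int_0^\sigma e^{-2\mu_n(\sigma-u)}(1-e^{-\mu_nu})^2\,du$ and $q_n(h)=Q_n$. Comparing termwise with the value at $T=Mh$: since $kh+\sigma=t\le Mh$ one has $e^{-2\mu_n(kh+\sigma)}\ge e^{-2\mu_nMh}$, hence $(1-e^{-2\mu_nMh})-e^{-2\mu_n\sigma}(1-e^{-2\mu_nkh})\ge1-e^{-2\mu_n\sigma}$, so it remains only to check $q_n(\sigma)\le\tfrac{Q_n(1-e^{-2\mu_n\sigma})}{1-e^{-2\mu_nh}}$, i.e.\ that $\sigma\mapsto q_n(\sigma)/(1-e^{-2\mu_n\sigma})$ is nondecreasing on $(0,h]$. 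Writing this ratio as $\bigl(\int_0^\sigma e^{2\mu_nu}(1-e^{-\mu_nu})^2\,du\bigr)/(e^{2\mu_n\sigma}-1)$ and differentiating, the sign of the derivative is that of $(1-e^{-\mu_n\sigma})^2(e^{2\mu_n\sigma}-1)-2\mu_n\int_0^\sigma e^{2\mu_nu}(1-e^{-\mu_nu})^2\,du$, which is $\ge0$ because $e^{2\mu_n\sigma}-1=\int_0^\sigma2\mu_ne^{2\mu_nu}\,du$ and $(1-e^{-\mu_nu})^2\le(1-e^{-\mu_n\sigma})^2$ for $u\le\sigma$. This gives $\|P_NO_t-\OMNN{t}\|_{\L^2(\P;H)}\le\|P_NO_T-\OMNN{T}\|_{\L^2(\P;H)}$ for every $t$, and with the trivial reverse inequality the first equality follows; together with the previous steps the whole chain is established. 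The delicate point is precisely this last step: $t\mapsto\|P_NO_t-\OMNN{t}\|_{\L^2(\P;H)}$ need not be monotone (it can decrease just after interior grid points), and this must be handled through the block decomposition and the quotient-monotonicity above — the remaining steps being routine bookkeeping around Lemma~\ref{lem:eA_bounds}.
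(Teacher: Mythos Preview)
Your proof is correct. It differs from the paper's in two places, and in one of them your argument is more complete than the paper's.

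\textbf{Lower bound.} You reach the same intermediate estimate $\|P_NO_T-\OMNN{T}\|_{\L^2(\P;H)}^2\ge\tfrac{1-e^{-\nu\pi^2T}}{4}\sum_{n=1}^N\tfrac{(1-e^{-\mu_n\tau})^2}{\mu_n}$ with $\tau=T/(2M)$ as the paper, but by a different route. The paper first changes variables $s\mapsto T-s$ (turning $s-\fl{s}$ into $\cl{s}-s$), restricts to the half-intervals $[\fl{s},\fl{s}+\tfrac{T}{2M}]$ by an indicator, invokes Lemma~\ref{lem:eA_monoton}\eqref{it:eA_monoton_2} to replace $\cl{s}-s$ by $\tfrac{T}{2M}$, and then uses Lemma~\ref{lem:eA_monoton}\eqref{it:eA_monoton_1} (the content of its display~\eqref{eq:lower_bound_1_pre}) to remove the indicator at the cost of a factor $\tfrac12$. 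Your block decomposition plus geometric-series summation avoids Lemma~\ref{lem:eA_monoton} altogether and is arguably more transparent; the paper's route is slightly shorter because the half-interval trick dispatches the restriction in one stroke.

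\textbf{Supremum attained at $t=T$.} The paper's proof establishes the lower bound at $t=T$ and the upper bound for $\sup_{t\in[0,T]}$, together with the It\^o identity, but does \emph{not} spell out the asserted equality $\|P_NO_T-\OMNN{T}\|_{\L^2(\P;H)}=\sup_{t\in[0,T]}\|P_NO_t-\OMNN{t}\|_{\L^2(\P;H)}$; it simply asserts that ``combining'' the displayed facts completes the proof. As you correctly observe, $t\mapsto\|P_NO_t-\OMNN{t}\|_{\L^2(\P;H)}^2$ is not monotone (each summand dips just after every grid point), so this equality requires an argument. Your termwise comparison via the block decomposition and the monotonicity of $\sigma\mapsto q_n(\sigma)/(1-e^{-2\mu_n\sigma})$ fills precisely this gap. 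Thus on this point your write-up is more thorough than the paper's.
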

\begin{proof}[Proof of Lemma~\ref{lem:lower_bound_1}]
Throughout this proof let
$ (\mu_n)_{ n \in \N } \subseteq \R $ satisfy
for all $ n \in \N $ that 
$ \mu_n = \nu \pi^2 n^2 $
and let $ \cl[h]{\cdot} \colon \R \rightarrow \R $, 
$ h \in (0,\infty) $,
be the functions which satisfy
for all $ h \in (0,\infty) $, $ t \in \R $ that 
$
  \cl[h]{t}
  =
  \min\!\left(
    \{ 0, h, -h, 2h, -2h, \ldots \} \cap [t,\infty)
  \right)
$.
Observe that Lemma~\ref{lem:eA_monoton}~\eqref{it:eA_monoton_1} 
ensures for all 
$ t \in [0,T) $ that
\begin{align}
\begin{split}
 &2
  \int_{\fl{t}}^{\fl{t}+\frac{T}{M}}
  \one_{[\fl{s}, \fl{s}+\frac{T}{2M}]}^{\R}(s) \,
  \|
    P_N
    e^{sA}
    ( 
      \Id_H - e^{\frac{T}{2M}A} 
    )
  \|_{ HS(H) }^2 \, ds
\\&=
  2
  \int_{\fl{t}}^{\fl{t}+\frac{T}{2M}}
  \|
    P_N
    e^{sA}
    ( 
      \Id_H - e^{\frac{T}{2M}A} 
    )
  \|_{ HS(H) }^2 \, ds
\\&\geq 
  \int_{\fl{t}}^{\fl{t}+\frac{T}{2M}}
  \|
    P_N
    e^{sA}
    ( 
      \Id_H - e^{\frac{T}{2M}A} 
    )
  \|_{ HS(H) }^2 \, ds
  +
  \int_{\fl{t}+\frac{T}{2M}}^{\fl{t}+\frac{T}{M}}
  \|
    P_N
    e^{sA}
    ( 
      \Id_H - e^{\frac{T}{2M}A} 
    )
  \|_{ HS(H) }^2 \, ds
\\&=
  \int_{\fl{t}}^{\fl{t}+\frac{T}{M}}
  \|
    P_N
    e^{sA}
    ( 
      \Id_H - e^{\frac{T}{2M}A} 
    )
  \|_{ HS(H) }^2 \, ds .
\end{split}
\end{align}
Therefore, we obtain that
\begin{align}
\label{eq:lower_bound_1_pre}
\begin{split}
 &2
  \int_{0}^{T}
  \one_{[\fl{s}, \fl{s}+\frac{T}{2M}]}^{\R}(s) \,
  \|
    P_N
    e^{sA}
    ( 
      \Id_H - e^{\frac{T}{2M}A} 
    )
  \|_{ HS(H) }^2 \, ds
\\&\geq 
  \int_{0}^{T}
  \|
    P_N
    e^{sA}
    ( 
      \Id_H - e^{\frac{T}{2M}A} 
    )
  \|_{ HS(H) }^2 \, ds .
\end{split}
\end{align}
Next note that It{\^o}'s isometry implies for all 
$ t \in [0,T] $ that
\begin{align}
\label{eq:lower_bound_1}
\begin{split}
 &\| 
    P_N O_t - \OMNN{t} 
  \|_{ \L^2(\P; H ) }^2
\\&=
  \E\!\left[ 
    \|
      P_N O_t - \OMNN{t}
    \|_H^2
  \right]
  =
  \E\!\left[ 
    \left\|
      \int_0^t
      P_N
      e^{(t-s)A}
      ( 
        \Id_H - e^{(s-\fl{s})A} 
      ) \, dW_s
    \right\|_H^2
  \right]
\\&=
  \int_0^t
  \|
    P_N
    e^{(t-s)A}
    ( 
      \Id_H - e^{(s-\fl{s})A} 
    )
  \|_{ HS(H) }^2 \, ds .
\end{split}
\end{align}
This, the fact that
$ 
  \forall \, s \in [0,T]
  \colon
  T - \fl{T-s} = \cl{s} 
$,
and Lemma~\ref{lem:eA_monoton}~\eqref{it:eA_monoton_2}
ensure that
\begin{align}
\begin{split}
 &\| 
    P_N O_T - \OMNN{T} 
  \|_{ \L^2(\P; H ) }^2
\\&=
  \int_0^T
  \|
    P_N
    e^{sA}
    ( 
      \Id_H - e^{(T-s-\fl{T-s})A} 
    )
  \|_{ HS(H) }^2 \, ds
\\&=
  \int_0^T
  \|
    P_N
    e^{sA}
    ( 
      \Id_H - e^{(\cl{s}-s)A} 
    )
  \|_{ HS(H) }^2 \, ds
\\&\geq
  \int_0^T
  \one_{[\fl{s}, \fl{s}+\frac{T}{2M}]}^{\R}(s) \,
  \|
    P_N
    e^{sA}
    ( 
      \Id_H - e^{(\cl{s}-s)A} 
    )
  \|_{ HS(H) }^2 \, ds
\\&\geq
  \int_0^T
  \one_{[\fl{s}, \fl{s}+\frac{T}{2M}]}^{\R}(s) \,
  \|
    P_N
    e^{sA}
    ( 
      \Id_H - e^{\frac{T}{2M}A} 
    )
  \|_{ HS(H) }^2 \, ds .
\end{split}
\end{align}
Inequality~\eqref{eq:lower_bound_1_pre} 
hence proves that
\begin{align}
\begin{split}
 &\| 
    P_N O_T - \OMNN{T} 
  \|_{ \L^2(\P; H ) }^2
\\&\geq
  \frac{1}{2}
  \left[ 
    2
    \int_0^T
    \one_{[\fl{s}, \fl{s}+\frac{T}{2M}]}^{\R}(s) \,
    \|
      P_N
      e^{sA}
      ( 
	\Id_H - e^{\frac{T}{2M}A} 
      )
    \|_{ HS(H) }^2 \, ds
  \right] 
\\&\geq
  \frac{1}{2}
  \int_0^T
  \|
    P_N
    e^{sA}
    ( 
      \Id_H - e^{\frac{T}{2M}A} 
    )
  \|_{ HS(H) }^2 \, ds 
  =
  \frac{1}{2}
  \int_0^T
  \sum_{ k=1 }^N
  \|
    e^{sA}
    ( 
      \Id_H - e^{\frac{T}{2M}A} 
    ) e_k
  \|_H^2 \, ds
\\&=
  \frac{1}{2}
  \int_0^T
  \sum_{ k=1 }^N
  |
    e^{-\mu_k s}
    ( 
      1 - e^{-\mu_k\frac{T}{2M}} 
    )
  |^2 \, ds
  =
  \frac{1}{2}
  \sum_{ k=1 }^N
  \frac{ 
    ( 1 - e^{-2\mu_k T} ) 
  }{ 2 \mu_k } \,
  | 1 - e^{-\mu_k\frac{T}{2M}} |^2 .
\end{split}
\end{align}
Lemma~\ref{lem:eA_bounds} 
therefore implies that
\begin{align}
\label{eq:lower_bound_2}
\begin{split}
 &\| 
    P_N O_T - \OMNN{T} 
  \|_{ \L^2(\P; H ) }^2
\\&\geq
  \frac{1}{4}
  ( 1 - e^{-\mu_1 T} ) 
  \sum_{ k=1 }^N
  \left| 
    \frac{ 
      (1 - e^{-\mu_k\frac{T}{2M}})
    }{ \sqrt{\mu_k} }
  \right|^2
  =
  \frac{1}{4}
  ( 1 - e^{-\mu_1 T} ) 
  \sum_{ k=1 }^N
  \| 
    (-A)^{-\nicefrac{1}{2}}
    ( 
      \Id_H - e^{\frac{T}{2M}A} 
    ) e_k
  \|_H^2
\\&=
  \frac{1}{4}
  ( 1 - e^{-\mu_1 T} ) 
  \| 
    P_N
    (-A)^{-\nicefrac{1}{2}}
    ( 
      \Id_H - e^{\frac{T}{2M}A} 
    ) 
  \|_{ HS(H) }^2
\\&\geq
  \frac{
    \sqrt{ T }
    ( 1 - e^{-\mu_1 T} ) 
  }{4 \sqrt{2M} }
  \left[ 
    \int_0^{\max\left\{0, \frac{T(N+1)^2}{2M}-\left[1+\frac{\sqrt{T}}{\sqrt{2M}}\right]^2\right\}} 
    \frac{ 
      \left[ 1 - \exp(-\nu\pi^2 \min\{1,\frac{TN^2}{2M}\}) \right]^2
    }{
      2 \nu \pi^2 (x+[1+\sqrt{T}]^2)^{\nicefrac{3}{2}} 
    } \, dx
  \right] .
\end{split}
\end{align}
In the next step observe
that~\eqref{eq:lower_bound_1} 
and Lemma~\ref{lem:eA_monoton}~\eqref{it:eA_monoton_2}
assure that
\begin{align}
\begin{split}
\sup_{ t \in [0,T] }
  \| 
    P_N O_t - \OMNN{t} 
  \|_{ \L^2(\P; H ) }^2
&\leq
  \sup_{ t \in [0,T] }
  \int_0^t
  \|
    P_N
    e^{(t-s)A}
    ( \Id_H - e^{\frac{T}{M}A} )
  \|_{ HS(H) }^2 \, ds
\\ & =
  \sup_{ t \in [0,T] }
  \int_0^t
  \|
    P_N
    e^{sA}
    ( \Id_H - e^{\frac{T}{M}A} )
  \|_{ HS(H) }^2 \, ds
\\&=
  \int_0^T
  \|
    P_N
    e^{sA}
    ( \Id_H - e^{\frac{T}{M}A} )
  \|_{ HS(H) }^2 \, ds
\\ & 
=
  \int_0^T
  \sum_{ k=1 }^N
  \|
    e^{sA}
    ( \Id_H - e^{\frac{T}{M}A} )
    e_k
  \|_H^2 \, ds .
\end{split}
\end{align}
Lemma~\ref{lem:eA_bounds}
hence yields that	
\begin{align}
\begin{split}
 &\sup_{ t \in [0,T] }
  \| 
    P_N O_t - \OMNN{t} 
  \|_{ \L^2(\P; H ) }^2
\\&\leq
  \int_0^T
  \sum_{ k=1 }^N
  | 
    e^{-\mu_k s}
    (
      1 - e^{-\mu_k \frac{T}{M} }
    )
  |^2 \, ds
  =
  \sum_{ k=1 }^N
  \frac{ (1-e^{-2\mu_k T} ) }{ 2 \mu_k }
  | 1 - e^{-\mu_k \frac{T}{M} } |^2
\\ &
\leq
  \frac{1}{2}
  \sum_{ k=1 }^N
  \left|
    \frac{ 
      ( 1 - e^{-\mu_k \frac{T}{M} } )
    }{ \sqrt{ \mu_k } }
  \right|^2
  =
  \frac{1}{2}
  \sum_{ k=1 }^N
  \|
    (-A)^{-\nicefrac{1}{2} } 
    ( \Id_H - e^{\frac{T}{M}A } ) e_k
  \|_H^2
\\&
  =
  \frac{1}{2}
  \|
    P_N
    (-A)^{-\nicefrac{1}{2} } 
    ( \Id_H - e^{\frac{T}{M}A } )
  \|_{ HS(H) }^2
\leq
  \frac{\sqrt{T}}{ 2 \sqrt{M} }
  \left[ 
    \frac{ 
      1
    }{
      \pi \sqrt{\nu}
    }
    +
    \frac{ 
      1 
    }{
      \nu \pi^2
    }
    +
    4
    \pi
    \sqrt{\nu}
  \right] .
\end{split}
\end{align}
Combining this with~\eqref{eq:lower_bound_1} and~\eqref{eq:lower_bound_2}
completes the proof of Lemma~\ref{lem:lower_bound_1}.
\end{proof}

In the next result, Corollary~\ref{cor:lower_bound_1},
we specialize Lemma~\ref{lem:lower_bound_1} to the case $ N = \infty $
where no spatial discretization is applied to the stochastic process 
$
  O \colon [0,T] \times \Omega \rightarrow H 
$.

\begin{corollary}
\label{cor:lower_bound_1}
Assume the setting in Section~\ref{sec:lower_bounds_setting}
and let $ M \in \N $. Then
\begin{align}
\begin{split}
 &\frac{1}{M^{\nicefrac{1}{4}}}
  \left[ 
    \int_0^{\infty} 
    \frac{ 
      \sqrt{T}
      ( 1 - e^{-\nu \pi^2 T} ) 
      ( 1 - e^{-\nu\pi^2} )^2
    }{
      8 \nu \pi^2 \sqrt{2} (x+[1+\sqrt{T}]^2)^{\nicefrac{3}{2}} 
    } \, dx
  \right]^{\nicefrac{1}{2}}
\\&\leq
  \liminf_{ N \rightarrow \infty }
  \| 
    P_N O_T - \mathcal{O}_T^{M,N} 
  \|_{ \L^2(\P; H ) }
  =
  \limsup_{ N \rightarrow \infty }
  \| 
    P_N O_T - \mathcal{O}_T^{M,N} 
  \|_{ \L^2(\P; H ) }
\\&=
  \| 
    O_T - \mathcal{O}_T^{M,\infty} 
  \|_{ \L^2(\P; H ) }
  =
  \adjustlimits\liminf_{ N \rightarrow \infty }
  \sup_{ t \in [0,T] }
  \| 
    P_N O_t - \mathcal{O}_t^{M,N} 
  \|_{ \L^2(\P; H ) }
\\&=
  \adjustlimits\limsup_{ N \rightarrow \infty }
  \sup_{ t \in [0,T] }
  \| 
    P_N O_t - \mathcal{O}_t^{M,N} 
  \|_{ \L^2(\P; H ) }
  =
  \sup_{ t \in [0,T] }
  \| 
    O_t - \mathcal{O}_t^{M,\infty} 
  \|_{ \L^2(\P; H ) }
\\&\leq
  \frac{1}{M^{\nicefrac{1}{4}}}
  \left[
    \frac{\sqrt{T}}{ 2 }
    \left( 
      \frac{ 
	1
      }{
	\pi \sqrt{\nu}
      }
      +
      \frac{ 
	1 
      }{
	\nu \pi^2
      }
      +
      4
      \pi
      \sqrt{\nu}
    \right)
  \right]^{\nicefrac{1}{2}} .
\end{split}
\end{align}

\end{corollary}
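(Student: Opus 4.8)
The plan is to obtain Corollary~\ref{cor:lower_bound_1} by combining Lemma~\ref{lem:lower_bound_1} in the case $ N = \infty $ with a monotone--convergence argument that controls the spatial truncation limit $ N \to \infty $. The first step is to record that, since $ (e_n)_{n\in\N} $ is an orthonormal basis of $ H $, the operator $ P_\infty \in L(H) $ from Section~\ref{sec:lower_bounds_setting} equals $ \Id_H $; hence $ P_\infty O_t = O_t $ and $ \mathcal{O}^{M,\infty} $ is the process satisfying $ [\mathcal{O}_t^{M,\infty}]_{\P,\B(H)} = \int_0^t e^{(t-\fl{s})A}\,dW_s $. Applying Lemma~\ref{lem:lower_bound_1} with $ N = \infty $ then directly produces the two outer bounds of the asserted chain --- for $ N = \infty $ one has $ \min\{1,\tfrac{TN^2}{2M}\} = 1 $ and $ \tfrac{T(N+1)^2}{2M} - \big[1+\tfrac{\sqrt{T}}{\sqrt{2M}}\big]^2 = \infty $, so the lower bound of Lemma~\ref{lem:lower_bound_1} specializes precisely to the integral displayed in the corollary --- together with the equalities $ \|O_T - \mathcal{O}_T^{M,\infty}\|_{\L^2(\P;H)} = \sup_{t\in[0,T]}\|O_t - \mathcal{O}_t^{M,\infty}\|_{\L^2(\P;H)} $.

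For the passage $ N \to \infty $ I would use that It{\^o}'s isometry together with the spectral representation $ A e_k = -\mu_k e_k $, $ \mu_k = \nu\pi^2 k^2 $, yields for all $ M,N \in \N $ and $ t \in [0,T] $ that
\begin{equation}
  \| P_N O_t - \mathcal{O}_t^{M,N} \|_{\L^2(\P;H)}^2
  =
  \int_0^t
  \| P_N e^{(t-s)A} ( \Id_H - e^{(s-\fl{s})A} ) \|_{ HS(H) }^2 \, ds
  =
  \int_0^t
  \sum_{k=1}^{N}
  \big| e^{-\mu_k(t-s)}\big(1 - e^{-\mu_k(s-\fl{s})}\big) \big|^2 \, ds .
\end{equation}
For each fixed $ s $ the summand is non-negative, so the partial sums are non-decreasing in $ N $ and converge to $ \| e^{(t-s)A} ( \Id_H - e^{(s-\fl{s})A} ) \|_{ HS(H) }^2 $; the monotone convergence theorem therefore gives $ \lim_{N\to\infty}\| P_N O_t - \mathcal{O}_t^{M,N} \|_{\L^2(\P;H)}^2 = \| O_t - \mathcal{O}_t^{M,\infty} \|_{\L^2(\P;H)}^2 $, the right-hand side being finite by Lemma~\ref{lem:lower_bound_1}. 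Choosing $ t = T $ shows that the $ \liminf $ and $ \limsup $ of $ \| P_N O_T - \mathcal{O}_T^{M,N} \|_{\L^2(\P;H)} $ coincide and equal $ \| O_T - \mathcal{O}_T^{M,\infty} \|_{\L^2(\P;H)} $.

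For the two iterated-limit quantities I would observe that, by the displayed identity, $ N \mapsto \| P_N O_t - \mathcal{O}_t^{M,N} \|_{\L^2(\P;H)} $ is non-decreasing for every $ t \in [0,T] $, hence so is $ N \mapsto \sup_{t\in[0,T]}\| P_N O_t - \mathcal{O}_t^{M,N} \|_{\L^2(\P;H)} $, and the latter is bounded above by $ \tfrac{1}{M^{1/4}}\big[\tfrac{\sqrt{T}}{2}\big(\tfrac{1}{\pi\sqrt{\nu}} + \tfrac{1}{\nu\pi^2} + 4\pi\sqrt{\nu}\big)\big]^{1/2} $ by Lemma~\ref{lem:lower_bound_1}. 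Therefore the limit over $ N $ exists, and interchanging the supremum over $ t $ with the non-decreasing limit over $ N $ (permissible since $ \sup_N\sup_t = \sup_t\sup_N $) together with the pointwise convergence of the previous step gives $ \lim_{N\to\infty}\sup_{t\in[0,T]}\| P_N O_t - \mathcal{O}_t^{M,N} \|_{\L^2(\P;H)} = \sup_{t\in[0,T]}\| O_t - \mathcal{O}_t^{M,\infty} \|_{\L^2(\P;H)} $, and likewise for the $ \liminf $ version. Chaining all these equalities with the two outer bounds from the first step completes the proof. The argument is essentially bookkeeping; the only points requiring a little care are the consistent reading of $ N = \infty $ (namely $ P_\infty = \Id_H $), the degeneration of the lower-bound integral of Lemma~\ref{lem:lower_bound_1} to the corollary's integral as $ N \to \infty $, and the justification --- via monotonicity in $ N $ and the uniform-in-$ N $ upper bound --- of interchanging $ \lim_{N\to\infty} $ with $ \sup_{t\in[0,T]} $.
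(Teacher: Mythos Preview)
Your proposal is correct and matches the paper's intent: the paper states Corollary~\ref{cor:lower_bound_1} without proof, noting only that it specializes Lemma~\ref{lem:lower_bound_1} to $N=\infty$, so your argument supplies precisely the routine details (the identification $P_\infty=\Id_H$, the degeneration of the lower-bound integral, and the monotone-convergence step for the $N\to\infty$ limits) that the paper leaves implicit. The monotonicity-in-$N$ observation and the interchange $\lim_N\sup_t=\sup_t\lim_N$ via $\sup_N\sup_t=\sup_t\sup_N$ are exactly the right tools here.
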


\subsection{Lower and upper bounds for 
strong approximation errors of spatial discretizations
of linear stochastic heat equations}

\begin{lemma}
\label{lem:PN_eA_convergence}
Assume the setting 
in Section~\ref{sec:lower_bounds_setting}.
Then
\begin{align}
 &\limsup_{ M \rightarrow \infty }
  \sup_{ N \in \N }
  \sup_{ t \in [0,T] }
  \left\|
    \int_0^t
    \left(
      P_N
      e^{(t-s)A}
      -
      P_N
      e^{(t-\fl{s})A}
    \right) dW_s
  \right\|_{ L^2(\P; H) }
  =
  0 .
\end{align}
\end{lemma}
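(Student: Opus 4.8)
The plan is to reduce the statement to a deterministic Hilbert--Schmidt estimate via It{\^o}'s isometry and then to exploit the smoothing property of the analytic semigroup generated by $A$. First I would observe that for all $M,N\in\N$, $t\in[0,T]$, $s\in[0,t]$ the inequality $\fl{s}\le s$ and the semigroup property yield $P_N e^{(t-s)A}-P_N e^{(t-\fl{s})A}=P_N e^{(t-s)A}(\Id_H-e^{(s-\fl{s})A})$, which is a finite-rank, hence Hilbert--Schmidt, operator on $H$, with $[0,t]\ni s\mapsto\|P_N e^{(t-s)A}(\Id_H-e^{(s-\fl{s})A})\|_{HS(H)}^2$ integrable (by the estimate below). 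Consequently It{\^o}'s isometry gives for all $M,N\in\N$, $t\in[0,T]$ that
\begin{equation}
  \left\|
    \int_0^t \big( P_N e^{(t-s)A} - P_N e^{(t-\fl{s})A} \big) \, dW_s
  \right\|_{ L^2(\P;H) }^2
  =
  \int_0^t
  \big\| P_N e^{(t-s)A} ( \Id_H - e^{(s-\fl{s})A} ) \big\|_{ HS(H) }^2 \, ds .
\end{equation}
Diagonalising $-A$ with respect to $(e_n)_{n\in\N}$, writing $\mu_n=\nu\pi^2 n^2$, and using $P_N e_n=e_n$ for $n\le N$ and $P_N e_n=0$ for $n>N$, the right-hand side equals $\sum_{n=1}^{N}\int_0^t e^{-2\mu_n(t-s)}(1-e^{-\mu_n(s-\fl{s})})^2\,ds$.

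Next I would use the elementary bound $1-e^{-x}\le x^{\alpha}$, valid for all $x\in[0,\infty)$ and $\alpha\in[0,1]$ (since $1-e^{-x}=(1-e^{-x})^{1-\alpha}(1-e^{-x})^{\alpha}\le 1\cdot x^{\alpha}$), together with $s-\fl{s}\le T/M$, to get $(1-e^{-\mu_n(s-\fl{s})})^2\le(\mu_n T/M)^{2\alpha}$. Fixing any $\alpha\in(0,\nicefrac{1}{4})$ (for instance $\alpha=\nicefrac{1}{8}$) and integrating $\int_0^t e^{-2\mu_n(t-s)}\,ds\le(2\mu_n)^{-1}$, one obtains
\begin{equation}
  \sup_{ N \in \N }
  \sup_{ t \in [0,T] }
  \left\|
    \int_0^t \big( P_N e^{(t-s)A} - P_N e^{(t-\fl{s})A} \big) \, dW_s
  \right\|_{ L^2(\P;H) }^2
  \le
  \frac{ (T/M)^{2\alpha} }{ 2 (\nu\pi^2)^{(1-2\alpha)} }
  \sum_{ n = 1 }^{ \infty }
  n^{ (4\alpha - 2) } ,
\end{equation}
where I have dominated $\sum_{n=1}^{N}$ by $\sum_{n=1}^{\infty}$; note $4\alpha-2<-1$, so the series converges.

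Denoting by $C\in(0,\infty)$ the constant on the right-hand side divided by $(T/M)^{2\alpha}$ (which is independent of $M$, $N$, and $t$), this yields $\sup_{N\in\N}\sup_{t\in[0,T]}\|\int_0^t(P_N e^{(t-s)A}-P_N e^{(t-\fl{s})A})\,dW_s\|_{L^2(\P;H)}\le\sqrt{C}\,(T/M)^{\alpha}$, and letting $M\to\infty$ completes the proof. There is no genuine obstacle here; the only point requiring a little attention is the choice of the H\"older exponent $\alpha$ strictly below $\nicefrac{1}{4}$, which is exactly what makes the spatial eigenvalue series $\sum_{n\in\N}n^{4\alpha-2}$ converge and therefore makes the bound uniform in $N$.
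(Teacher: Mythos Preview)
Your proof is correct and follows essentially the same strategy as the paper's: reduce via It\^o's isometry to a deterministic Hilbert--Schmidt integral, then use the H\"older-type bound $1-e^{-x}\le x^{\alpha}$ with $\alpha\in(0,\nicefrac14)$ to extract the factor $(T/M)^{2\alpha}$ while keeping the spatial eigenvalue series summable uniformly in $N$. The paper carries out the same idea slightly more abstractly, introducing an auxiliary parameter $\beta\in(\nicefrac14,\nicefrac12-\alpha)$, factoring through $\|P_N\|_{HS(H,H_{-\beta})}$, and integrating the pointwise semigroup bound $(t-s)^{-2(\alpha+\beta)}$ rather than your exact evaluation $\int_0^t e^{-2\mu_n(t-s)}\,ds\le(2\mu_n)^{-1}$; your eigenbasis computation is a touch more direct but otherwise equivalent.
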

\begin{proof}[Proof of Lemma~\ref{lem:PN_eA_convergence}]
Throughout this proof 
let $ \alpha \in (0, \nicefrac{1}{4}) $
and let
$ \beta \in (\nicefrac{1}{4}, \nicefrac{1}{2}-\alpha) $.
Note that
the fact that
$
  4\beta > 1
$
shows that
\begin{align}
\label{eq:PN_eA_convergence_1}
\begin{split}
&
  \sup_{ N \in \N }
  \|
    P_N
  \|_{ HS(H, H_{-\beta}) }^2
\\ &=
  \sup_{ N \in \N }
  \left[ 
    \sum_{ k=1 }^{ N }
    \|
      e_k
    \|_{ H_{-\beta} }^2
  \right]
  =
  \sum_{ k=1 }^{ \infty }
  \|
    (-A)^{-\beta}
    e_k
  \|_{ H }^2
\\&
  =
  \sum_{ k=1 }^{ \infty }
  |
    (\nu \pi^2 k^2)^{-\beta}
  |^2
=
  \sum_{ k=1 }^{ \infty }
  \frac{ 1 }{ ( \sqrt{\nu} \pi k )^{4\beta} }
  <
  \infty .
\end{split}
\end{align}
Next observe that
for all $ M, N \in \N $,
$ t \in [0,T] $
we have that
\begin{align}
\begin{split}
 &\int_0^t
  \|
    P_N
    e^{(t-s)A}
    (
      \Id_H
      -
      e^{(s-\fl{s})A}
    )
  \|_{ HS(H) }^2 \, ds
\\&\leq
  \|
    (-A)^{-\beta}
    P_N
  \|_{ HS(H) }^2
  \int_0^t
  \|
    (-A)^{\beta}
    e^{(t-s)A}
    (
      \Id_H
      -
      e^{(s-\fl{s})A}
    )
  \|_{ L(H) }^2 \, ds
\\&=
  \|
    P_N
  \|_{ HS(H, H_{-\beta}) }^2
  \int_0^t
  \|
    (-A)^{(\alpha+\beta)}
    e^{(t-s)A}
    (-A)^{-\alpha}
    (
      \Id_H
      -
      e^{(s-\fl{s})A}
    )
  \|_{ L( H ) }^2 \, ds
\\&\leq
  \|
    P_N
  \|_{ HS(H, H_{-\beta}) }^2
  \int_0^t
  \|
    (-A)^{(\alpha+\beta)}
    e^{(t-s)A}
  \|_{ L(H) }^2
  \|
    (-A)^{-\alpha}
    (
      \Id_H
      -
      e^{(s-\fl{s})A}
    )
  \|_{ L( H ) }^2 \, ds .
\end{split}
\end{align}
The fact that 
\begin{equation}
  \forall \,
  s \in [0,\infty) 
  ,
  r \in [0,1]
  \colon 
  \| (-sA)^r e^{sA} \|_{ L(H) }
  \leq 
  1
\end{equation}
and the fact that
\begin{equation}
  \forall \,
  s \in (0,\infty) 
  ,
  r \in [0,1]
  \colon 
  \| (-sA)^{-r} (\Id_H - e^{sA} ) \|_{ L(H) }
  \leq 
  1
\end{equation}
hence prove for all  
$ M, N \in \N $,
$ t \in [0,T] $ that
\begin{align}
\label{eq:PN_eA_convergence_2}
\begin{split}
 &\int_0^t
  \|
    P_N
    e^{(t-s)A}
    (
      \Id_H
      -
      e^{(s-\fl{s})A}
    )
  \|_{ HS(H) }^2 \, ds
\\&\leq
  \|
    P_N
  \|_{ HS(H, H_{-\beta}) }^2
  \int_0^t
  (t-s)^{-2(\alpha+\beta)}
  (s-\fl{s})^{2\alpha} \, ds
\\&\leq
  \frac{ T^{2\alpha} }{ M^{2\alpha} }
  \|
    P_N
  \|_{ HS(H, H_{-\beta}) }^2
  \int_0^t
  (t-s)^{-2(\alpha+\beta)} \, ds
\\ &
  =
  \frac{ t^{(1-2\alpha-2\beta)} T^{2\alpha}  }{ (1-2\alpha-2\beta) M^{2\alpha} }
  \|
    P_N
  \|_{ HS(H, H_{-\beta}) }^2 .
\end{split}
\end{align}
It{\^o}'s isometry
therefore
ensures for all $ M \in \N $
that
\begin{align}
\begin{split}
 &\sup_{ N \in \N }
  \sup_{ t \in [0,T] }
  \left\|
    \int_0^t
    \left(
      P_N
      e^{(t-s)A}
      -
      P_N
      e^{(t-\fl{s})A}
    \right) dW_s
  \right\|_{ L^2(\P; H) }^2
\\&=
  \sup_{ N \in \N }
  \sup_{ t \in [0,T] }
  \int_0^t
  \|
    P_N
    e^{(t-s)A}
    -
    P_N
    e^{(t-\fl{s})A}
  \|_{ HS(H) }^2 \, ds
\\&\leq
  \frac{ T^{(1-2\beta)} }{ (1-2\alpha-2\beta) M^{2\alpha} }
  \left[
    \sup_{ N \in \N }
    \|
      P_N
    \|_{ HS(H, H_{-\beta}) }^2
  \right] .
\end{split}
\end{align}
Combining this with~\eqref{eq:PN_eA_convergence_1}
completes the 
proof of Lemma~\ref{lem:PN_eA_convergence}.
\end{proof}

\begin{lemma}
\label{lem:lower_bound_2}
Assume the setting in Section~\ref{sec:lower_bounds_setting}
and let $ N \in \N $. Then
\begin{align}
\begin{split}
 &\left[
    \frac{ \sqrt{ 1 - e^{-\nu T} } }{ 2 \pi \sqrt{\nu} }  
  \right]
  \frac{ 1 }{ \sqrt{ N } }
  \leq
  \liminf_{ M \rightarrow \infty }
  \| O_T - \OMNN{T} \|_{ \L^2(\P; H) }
  =
  \limsup_{ M \rightarrow \infty }
  \| O_T - \OMNN{T} \|_{ \L^2(\P; H) }
\\&=
  \adjustlimits\liminf_{ M \rightarrow \infty }
  \sup_{ t \in [0,T] }
  \| O_t - \OMNN{t} \|_{ \L^2(\P; H) }
  =
  \adjustlimits\limsup_{ M \rightarrow \infty }
  \sup_{ t \in [0,T] }
  \| O_t - \OMNN{t} \|_{ \L^2(\P; H) }
\\&=
  \|
    O_T
    -
    P_N
    O_T
  \|_{ \L^2(\P; H) }
  =
  \sup_{ t \in [0,T] }
  \|
    O_t - P_N O_t
  \|_{ \L^2(\P; H) }
  \leq
  \left[
    \frac{ 1 }{ \pi \sqrt{ 2 \nu } }  
  \right]
  \frac{ 1 }{ \sqrt{ N } } .
\end{split}
\end{align}

\end{lemma}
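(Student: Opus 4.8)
\emph{Proof proposal for Lemma~\ref{lem:lower_bound_2}.}
The plan is to split the error $O_t-\OMNN{t}$ into its spatial and its temporal (``mixed'') part and to exploit that these two parts are orthogonal. First I would use the representations in Section~\ref{sec:lower_bounds_setting} to write, for every $t\in[0,T]$, $M,N\in\N$, that $\P$-a.s.
\begin{align}
\begin{split}
 &O_t-\OMNN{t}
  =
  \int_0^t
  \Big[
    (\Id_H-P_N)\,e^{(t-s)A}
    +
    P_N\big(e^{(t-s)A}-e^{(t-\fl{s})A}\big)
  \Big]\,dW_s .
\end{split}
\end{align}
For every $s$ the operator $(\Id_H-P_N)\,e^{(t-s)A}$ has range contained in $(\Id_H-P_N)(H)$, whereas $P_N\big(e^{(t-s)A}-e^{(t-\fl{s})A}\big)$ has range contained in $P_N(H)$; since these subspaces are orthogonal, the Hilbert--Schmidt cross term vanishes, and It{\^o}'s isometry together with the identity $\OMNN{t}=\int_0^t P_N e^{(t-\fl{s})A}\,dW_s$ (valid $\P$-a.s.) yields the Pythagorean identity
\begin{align}
\begin{split}
 &\|O_t-\OMNN{t}\|_{\L^2(\P;H)}^2
  =
  \|O_t-P_N O_t\|_{\L^2(\P;H)}^2
  +
  \|P_N O_t-\OMNN{t}\|_{\L^2(\P;H)}^2 .
\end{split}
\end{align}

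Next I would compute the spatial term explicitly. By It{\^o}'s isometry, Tonelli's theorem, and the diagonalisation $e^{uA}e_k=e^{-\nu\pi^2 k^2 u}e_k$ one obtains
\begin{align}
\begin{split}
 &\|O_t-P_N O_t\|_{\L^2(\P;H)}^2
  =
  \int_0^t
  \sum_{k=N+1}^\infty
  e^{-2\nu\pi^2 k^2 u}\,du
  =
  \sum_{k=N+1}^\infty
  \frac{1-e^{-2\nu\pi^2 k^2 t}}{2\nu\pi^2 k^2} ,
\end{split}
\end{align}
which is non-decreasing in $t$, so that $\sup_{t\in[0,T]}\|O_t-P_N O_t\|_{\L^2(\P;H)}=\|O_T-P_N O_T\|_{\L^2(\P;H)}$. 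The two-sided bound then follows from elementary estimates on the tail of $\sum_k k^{-2}$: for the upper bound I would drop the factor $1-e^{-2\nu\pi^2 k^2 t}\le1$ and use $\sum_{k>N}k^{-2}<\int_N^\infty x^{-2}\,dx=\tfrac1N$, giving $\|O_T-P_N O_T\|_{\L^2(\P;H)}\le\big[\tfrac{1}{\pi\sqrt{2\nu}}\big]\tfrac{1}{\sqrt N}$; for the lower bound I would use $1-e^{-2\nu\pi^2 k^2 T}\ge1-e^{-2\nu\pi^2 T}\ge1-e^{-\nu T}$ (since $2\pi^2\ge1$) together with $\sum_{k>N}k^{-2}\ge\int_{N+1}^\infty x^{-2}\,dx=\tfrac{1}{N+1}\ge\tfrac{1}{2N}$, which yields $\|O_T-P_N O_T\|_{\L^2(\P;H)}\ge\big[\tfrac{\sqrt{1-e^{-\nu T}}}{2\pi\sqrt{\nu}}\big]\tfrac{1}{\sqrt N}$.

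Finally I would pass to the limit $M\to\infty$. Lemma~\ref{lem:PN_eA_convergence} gives $\limsup_{M\to\infty}\sup_{N\in\N}\sup_{t\in[0,T]}\|P_N O_t-\OMNN{t}\|_{\L^2(\P;H)}=0$, so the mixed term is uniformly negligible; combined with the Pythagorean identity (and with the monotonicity in $t$ of the spatial term, which makes its supremum over $[0,T]$ equal to its value at $t=T$ and independent of $M$) this collapses all the $\liminf$/$\limsup$ over $M$ — both at the fixed time $T$ and for the supremum over $t$ — to the common value $\|O_T-P_N O_T\|_{\L^2(\P;H)}$. Assembling the three steps yields the full chain of equalities and inequalities. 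I do not anticipate a genuine obstacle here; the only places needing a little care are the vanishing of the Hilbert--Schmidt cross term (a pointwise-in-$s$ orthogonality argument) and the uniform passage of the $M$-limit through the supremum over $t$, both of which are immediate once the orthogonal decomposition is set up.
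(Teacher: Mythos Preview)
Your proposal is correct and follows essentially the same route as the paper: the explicit computation of the spatial term via It{\^o}'s isometry and the eigenbasis, the same integral-comparison bounds on $\sum_{k>N}k^{-2}$, and the appeal to Lemma~\ref{lem:PN_eA_convergence} to kill the temporal remainder as $M\to\infty$. The one genuine (but minor) difference is that where the paper sandwiches the $M$-limit using the triangle inequality in both directions, you instead invoke the Pythagorean identity $\|O_t-\OMNN{t}\|_{\L^2(\P;H)}^2=\|O_t-P_N O_t\|_{\L^2(\P;H)}^2+\|P_N O_t-\OMNN{t}\|_{\L^2(\P;H)}^2$ coming from the orthogonality of $P_N(H)$ and $(\Id_H-P_N)(H)$; this is slightly cleaner (it handles $\liminf$ and $\limsup$ simultaneously) and is in fact the device the paper reserves for the proof of Corollary~\ref{cor:lower_bounds}.
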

\begin{proof}[Proof of Lemma~\ref{lem:lower_bound_2}]
Throughout this proof 
let $ (\mu_n)_{ n\in\N } \subseteq \R $
satisfy for all $ n \in \N $ that
\begin{equation}
  \mu_n = \nu \pi^2 n^2 
  .
\end{equation}
Note that Parseval's identity
shows that
for all $ t \in [0,T] $ we have that
\begin{align}
\begin{split}
 &\|
    O_t - P_N O_t
  \|_{ \L^2(\P; H) }^2
\\&=
  \E\!\left[
    \| 
      O_t 
      -
      P_N O_t
    \|_H^2
  \right]
  =
  \E\!\left[
    \sum_{ k = N+1 }^{\infty}
    |
      \langle e_k, O_t \rangle_H
    |^2
  \right]
  =
  \sum_{ k = N+1 }^{\infty}
  \E\!\left[
    |
      \langle e_k, O_t \rangle_H
    |^2
  \right]
\\&=
  \sum_{ k = N+1 }^{\infty}
  \E\!\left[
    \left|
      \int_0^t
      \langle e_k, e^{(t-s)A} dW_s \rangle_H
    \right|^2
  \right]
  =
  \sum_{ k = N+1 }^{\infty}
  \E\!\left[
    \left|
      \int_0^t
      \langle e^{(t-s)A} e_k, dW_s \rangle_H
    \right|^2
  \right] .
\end{split}
\end{align}
It{\^o}'s isometry hence proves for all 
$ t \in [0,T] $ that
\begin{align}
\label{eq:lower_bound_3}
\begin{split}
&
  \|
    O_t - P_N O_t
  \|_{ \L^2(\P; H) }^2
\\
&=
  \sum_{ k = N+1 }^{\infty}
  \E\!\left[
    \left|
      \int_0^t
      e^{-\mu_k(t-s)}
      \, \langle e_k, dW_s \rangle_H
    \right|^2
  \right]
\\&=
  \sum_{ k = N+1 }^{\infty}
  \int_0^t
  e^{-2\mu_k(t-s)} \, ds
  =
  \sum_{ k = N+1 }^{\infty}
  \int_0^t
  e^{-2\mu_k s} \, ds
  =
  \sum_{ k = N+1 }^{\infty}
  \frac{ \left( 1 - e^{-2\mu_k t} \right) }{ 2 \mu_k } .
\end{split}
\end{align}
This shows that
\begin{align}
\label{eq:lower_bound_4}
\begin{split}
 &\sup_{ t \in [0,T] }
  \|
    O_t - P_N O_t
  \|_{ \L^2(\P; H) }^2
  =
  \|
    O_T - P_N O_T
  \|_{ \L^2(\P; H) }^2
\\&=
  \sum_{ k = N+1 }^{\infty}
  \frac{ \left( 1 - e^{-2\mu_k T} \right) }{ 2 \mu_k }
  =
  \sum_{ k = N+1 }^{\infty}
  \frac{ \big( 1 - e^{-2 \nu \pi^2 k^2 T} \big) }{ 2 \nu \pi^2 k^2 }
  \geq 
  \left[
    \frac{ 1 - e^{-\nu T} }{ 2 \nu \pi^2 }  
  \right]
  \left[ 
    \sum_{ k = N+1 }^{\infty}
    \frac{ 1 }{ k^2 }
  \right]
\\&\geq
  \left[
    \frac{ 1 - e^{-\nu T} }{ 2 \nu \pi^2 }  
  \right]
  \left[ 
    \sum_{ k = N+1 }^{\infty}
    \int_k^{k+1}
    \frac{1}{x^2} \, dx
  \right]
  =
  \left[
    \frac{ 1 - e^{-\nu T} }{ 2 \nu \pi^2 }  
  \right]
  \left[
    \int_{N+1}^{\infty}
    \frac{1}{x^2} \, dx
  \right]
  =
  \left[
    \frac{ 1 - e^{-\nu T} }{ 2 \nu \pi^2 }  
  \right]
  \left[
    - \frac{1}{x}
  \right]_{ x = N+1 }^{ x = \infty } 
\\&=
  \left[
    \frac{ 1 - e^{-\nu T} }{ 2 \nu \pi^2 }  
  \right]
  \frac{ 1 }{ ( N + 1 ) }
  \geq
  \left[
    \frac{ 1 - e^{-\nu T} }{ 2 \nu \pi^2 }  
  \right]
  \frac{ 1 }{ ( N + N ) }
  =
  \left[
    \frac{ 1 - e^{-\nu T} }{ 4 \nu \pi^2 }  
  \right]
  \frac{ 1 }{ N } .
\end{split}
\end{align}
This
implies that
\begin{align}
\label{eq:lower_bound_5}
\begin{split}
 &\sup_{ t \in [0,T] }
  \|
    O_t - P_N O_t
  \|_{ \L^2(\P; H) }^2
\\&=
  \sum_{ k = N+1 }^{\infty}
  \frac{ \big( 1 - e^{-2 \nu \pi^2 k^2 T} \big) }{ 2 \nu \pi^2 k^2 }
  \leq 
  \left[
    \frac{ 1 }{ 2 \nu \pi^2 }  
  \right]
  \left[ 
    \sum_{ k = N+1 }^{\infty}
    \frac{ 1 }{ k^2 }
  \right]
  \leq
  \left[
    \frac{ 1 }{ 2 \nu \pi^2 }  
  \right]
  \left[
    \sum_{ k = N+1 }^{\infty}
    \int_{ k-1 }^k
    \frac{ 1 }{ x^2 } \, dx
  \right]
\\&=
  \left[
    \frac{ 1 }{ 2 \nu \pi^2 }  
  \right]
  \left[ 
    \int_{ N }^{\infty}
    \frac{ 1 }{ x^2 } \, dx
  \right]
  =
  \left[
    \frac{ 1 }{ 2 \nu \pi^2 }  
  \right]
  \left[
    - \frac{1}{x}
  \right]_{ x = N }^{ x = \infty }
  =
  \left[
    \frac{ 1 }{ 2 \nu \pi^2 }  
  \right]
  \frac{ 1 }{ N } .
\end{split}
\end{align}
In addition, note that
the triangle inequality
and 
Lemma~\ref{lem:PN_eA_convergence}
prove that
\begin{align}
\label{eq:lower_bound_6}
\begin{split}
 &\adjustlimits\limsup_{ M \rightarrow \infty }
  \sup_{ t \in [0,T] }
  \| O_t - \OMNN{t} \|_{ \L^2(\P; H) }
\\&=
  \adjustlimits\limsup_{ M \rightarrow \infty }
  \sup_{ t \in [0,T] }
  \left\|
    \int_0^t
    ( e^{(t-s)A} - P_N e^{(t-\fl{s})A} ) \, dW_s
  \right\|_{ L^2(\P; H) }
\\&=
  \adjustlimits\limsup_{ M \rightarrow \infty }
  \sup_{ t \in [0,T] }
  \left\|
    \int_0^t
    ( e^{(t-s)A} - P_N e^{(t-s)A} ) \, dW_s
    +
    \int_0^t
    ( P_N e^{(t-s)A} - P_N e^{(t-\fl{s})A} ) \, dW_s
  \right\|_{ L^2(\P; H) }
\\&\leq
  \adjustlimits\limsup_{ M \rightarrow \infty }
  \sup_{ t \in [0,T] }
  \left\|
    \int_0^t
    ( e^{(t-s)A} - P_N e^{(t-s)A} ) \, dW_s
  \right\|_{ L^2(\P; H) }
\\&\quad+ 
  \adjustlimits\limsup_{ M \rightarrow \infty }
  \sup_{ t \in [0,T] }
  \left\|
    \int_0^t
    ( P_N e^{(t-s)A} - P_N e^{(t-\fl{s})A} ) \, dW_s
  \right\|_{ L^2(\P; H) }
\\&=
  \sup_{ t \in [0,T] }
  \left\|
    \int_0^t
    ( e^{(t-s)A} - P_N e^{(t-s)A} ) \, dW_s
  \right\|_{ L^2(\P; H) }
  =
  \sup_{ t \in [0,T] }
  \| O_t - P_N O_t \|_{ \L^2(\P; H) } .
\end{split}
\end{align}
Furthermore, observe that 
the triangle inequality,
Lemma~\ref{lem:PN_eA_convergence},
and~\eqref{eq:lower_bound_4}
ensure that 
\begin{align}
\begin{split}
 &\liminf_{ M \rightarrow \infty }
  \| O_T - \OMNN{T} \|_{ \L^2(\P; H) }
  =
  \liminf_{ M \rightarrow \infty }
  \left\|
    \int_0^T
    ( e^{(T-s)A} - P_N e^{(T-\fl{s})A} ) \, dW_s
  \right\|_{ L^2(\P; H) }
\\&=
  \liminf_{ M \rightarrow \infty }
  \left\|
    \int_0^T
    ( e^{(T-s)A} - P_N e^{(T-s)A} ) \, dW_s
    +
    \int_0^T
    ( P_N e^{(T-s)A} - P_N e^{(T-\fl{s})A} ) \, dW_s
  \right\|_{ L^2(\P; H) }
\\&\geq
  \liminf_{ M \rightarrow \infty }
  \left\|
    \int_0^T
    ( e^{(T-s)A} - P_N e^{(T-s)A} ) \, dW_s
  \right\|_{ L^2(\P; H) }
\\&\quad-
  \liminf_{ M \rightarrow \infty }
  \left\|
    \int_0^T
    ( P_N e^{(T-s)A} - P_N e^{(T-\fl{s})A} ) \, dW_s
  \right\|_{ L^2(\P; H) }
\\&=
  \left\|
    \int_0^T
    ( e^{(T-s)A} - P_N e^{(T-s)A} ) \, dW_s
  \right\|_{ L^2(\P; H) }
  =
  \| O_T - P_N O_T \|_{ \L^2(\P; H) }
\\&=
  \sup_{ t \in [0,T] }
  \| O_t - P_N O_t \|_{ \L^2(\P; H) } .
\end{split}
\end{align}
Combining this with~\eqref{eq:lower_bound_4}--\eqref{eq:lower_bound_6}
completes the proof of Lemma~\ref{lem:lower_bound_2}.
\end{proof}

\subsection{Lower and upper bounds for 
strong approximation errors of full discretizations
of linear stochastic heat equations}

\begin{corollary}
\label{cor:lower_bounds}
Assume the setting in Section~\ref{sec:lower_bounds_setting}
and let $ M, N \in \N $. Then
\begin{align}
\begin{split}
 &\frac{1}{M^{\nicefrac{1}{4}}}
  \left[ 
    \int_0^{\max\left\{0, \frac{T(N+1)^2}{2M}-\left[1+\frac{\sqrt{T}}{\sqrt{2M}}\right]^2\right\}} 
    \frac{
      \sqrt{T}
      \left[ 1 - e^{-\nu \pi^2 T} \right]
      \left[ 1 - \exp(-\nu\pi^2 \min\{1,\frac{TN^2}{2M}\}) \right]^2
    }{
      32 \nu \pi^2 \sqrt{2} 
      (x+[1+\sqrt{T}]^2)^{\nicefrac{3}{2}} 
    } \, dx
  \right]^{\nicefrac{1}{2}}
\\&\quad+
  \frac{ 1 }{ N^{\nicefrac{1}{2}} }
  \left[
    \frac{ \sqrt{ 1 - e^{-\nu T} } }{ 4 \pi \sqrt{\nu} }  
  \right]
\\&\leq
  \|
    O_T
    -
    \OMNN{T}
  \|_{ \L^2(\P; H) }
  \leq 
  \sup_{ t \in [0,T] }
  \|
    O_t
    -
    \OMNN{t}
  \|_{ \L^2(\P; H) }
\\&\leq  
  \frac{1}{M^{\nicefrac{1}{4}}}
  \left[
    \frac{\sqrt{ T }}{ 2 }
    \left( 
      \frac{ 
	1
      }{
	\pi \sqrt{\nu}
      }
      +
      \frac{ 
	1 
      }{
	\nu \pi^2
      }
      +
      4
      \pi
      \sqrt{\nu}
    \right)
  \right]^{\nicefrac{1}{2}}
  +
  \frac{ 1 }{ N^{\nicefrac{1}{2}} }
  \left[
    \frac{ 1 }{ \pi \sqrt{ 2 \nu } }  
  \right] .
\end{split}
\end{align}
\end{corollary}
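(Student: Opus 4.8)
\textbf{Proof strategy for Corollary~\ref{cor:lower_bounds}.} The plan is to split the error $O_t-\OMNN{t}$ into its spatial and its temporal discretization components and to exploit that these two components are orthogonal in $H$ in a pathwise sense. First I would use the identity $[\OMNN{t}]_{\P,\B(H)} = \int_0^t P_N e^{(t-\fl{s})A}\,dW_s$ together with the fact that the integrand takes values in the finite-dimensional (hence closed) subspace $P_N(H)$ to conclude that $\P$-a.s.\ we have $\OMNN{t}\in P_N(H)$. Since $P_N$ is the orthogonal projection of $H$ onto $\operatorname{span}\{e_1,\dots,e_N\}$, this yields $\P$-a.s.\ that $\langle O_t-P_N O_t,\,P_N O_t-\OMNN{t}\rangle_H=0$, and combining this with the decomposition $O_t-\OMNN{t}=(O_t-P_N O_t)+(P_N O_t-\OMNN{t})$ gives, for every $t\in[0,T]$,
\begin{equation*}
  \| O_t-\OMNN{t} \|_{\L^2(\P;H)}^2
  =
  \| O_t-P_N O_t \|_{\L^2(\P;H)}^2
  +
  \| P_N O_t-\OMNN{t} \|_{\L^2(\P;H)}^2 .
\end{equation*}

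For the upper bound I would take the supremum over $t\in[0,T]$, use that the supremum of a sum is at most the sum of the suprema, and then invoke Lemma~\ref{lem:lower_bound_2} to bound $\sup_{t\in[0,T]}\|O_t-P_N O_t\|_{\L^2(\P;H)}$ by $\tfrac{1}{\pi\sqrt{2\nu}}\,N^{-\nicefrac12}$ and Lemma~\ref{lem:lower_bound_1} to bound $\sup_{t\in[0,T]}\|P_N O_t-\OMNN{t}\|_{\L^2(\P;H)}$ by $M^{-\nicefrac14}\big[\tfrac{\sqrt T}{2}\big(\tfrac{1}{\pi\sqrt\nu}+\tfrac{1}{\nu\pi^2}+4\pi\sqrt\nu\big)\big]^{\nicefrac12}$. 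The elementary inequality $\sqrt{x^2+y^2}\le x+y$ for $x,y\in[0,\infty)$ then produces exactly the claimed upper bound for $\sup_{t\in[0,T]}\|O_t-\OMNN{t}\|_{\L^2(\P;H)}$, and the middle inequality $\|O_T-\OMNN{T}\|_{\L^2(\P;H)}\le\sup_{t\in[0,T]}\|O_t-\OMNN{t}\|_{\L^2(\P;H)}$ is trivial.

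For the lower bound I would evaluate the orthogonal identity at $t=T$ and combine it with the lower bounds $\|O_T-P_N O_T\|_{\L^2(\P;H)}\ge\tfrac{\sqrt{1-e^{-\nu T}}}{2\pi\sqrt\nu}\,N^{-\nicefrac12}$ from Lemma~\ref{lem:lower_bound_2} and the integral lower bound for $\|P_N O_T-\OMNN{T}\|_{\L^2(\P;H)}$ from Lemma~\ref{lem:lower_bound_1}. Since $\sqrt{x^2+y^2}\ge\max\{x,y\}\ge\tfrac12(x+y)$ for $x,y\in[0,\infty)$, this gives $\|O_T-\OMNN{T}\|_{\L^2(\P;H)}\ge\tfrac12\|O_T-P_N O_T\|_{\L^2(\P;H)}+\tfrac12\|P_N O_T-\OMNN{T}\|_{\L^2(\P;H)}$; inserting the two lower bounds and using $\tfrac12\sqrt{X}=\sqrt{X/4}$ — so that the constant $\tfrac{1}{8\nu\pi^2\sqrt2}$ inside the square root becomes $\tfrac{1}{32\nu\pi^2\sqrt2}$ and $\tfrac{\sqrt{1-e^{-\nu T}}}{2\pi\sqrt\nu}$ becomes $\tfrac{\sqrt{1-e^{-\nu T}}}{4\pi\sqrt\nu}$ — yields the asserted lower bound. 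The only points that need care are the pathwise orthogonality of the two error components (which rests on $\OMNN{t}$ taking values in $P_N(H)$ and on $P_N$ being an orthogonal projection) and the bookkeeping of the multiplicative constants in the Cauchy–Schwarz type estimates so that they reproduce the stated expressions verbatim; there is no substantial analytic obstacle, since all of the nontrivial work has already been carried out in Lemmas~\ref{lem:lower_bound_1} and~\ref{lem:lower_bound_2}.
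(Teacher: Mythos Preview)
Your proposal is correct and follows essentially the same approach as the paper: the paper likewise establishes the Pythagorean identity $\|O_t-\OMNN{t}\|_{\L^2(\P;H)}^2=\|O_t-P_N O_t\|_{\L^2(\P;H)}^2+\|P_N O_t-\OMNN{t}\|_{\L^2(\P;H)}^2$ via the orthogonal-projection property of $P_N$ (noting $\OMNN{t}\in P_N(H)$), and then combines Lemmas~\ref{lem:lower_bound_1} and~\ref{lem:lower_bound_2} with the elementary inequalities $\tfrac{\sqrt{x}}{2}+\tfrac{\sqrt{y}}{2}\le\max\{\sqrt{x},\sqrt{y}\}\le\sqrt{x+y}\le\sqrt{x}+\sqrt{y}$ to obtain both bounds with exactly the constants you describe.
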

\begin{proof}[Proof of Corollary~\ref{cor:lower_bounds}]
Observe that the fact that $ P_N $ is self-adjoint
ensures for all 
$ x \in H $, $ y \in P_N(H) $ that
\begin{align}
\begin{split}
&
  \langle
    x - P_N(x),
    P_N(x) - y
  \rangle_H
\\
&=
  \langle
    x - P_N(x),
    P_N(x) - P_N(y)
  \rangle_H
  =
  \langle
    x - P_N(x),
    P_N(x - y)
  \rangle_H
\\&=
  \langle
    P_N(x - P_N(x)),
    x - y
  \rangle_H
  =
  \langle
    P_N(x) - P_N(x),
    x - y
  \rangle_H
\\&=
  \langle
    0,
    x - y
  \rangle_H
  =
  0 .
\end{split}
\end{align}
This implies for all $ t \in [0,T] $ that
\begin{align}
\begin{split}
 &\|
    O_t
    -
    \OMNN{t}
  \|_{ \L^2(\P; H) }^2
\\&=
  \E\!\left[ 
    \| 
      O_t 
      - 
      \OMNN{t}
    \|_H^2
  \right] 
  =
  \E\!\left[
    \| 
      O_t 
      -
      P_N O_t
      +
      P_N O_t
      - 
      \OMNN{t}
    \|_H^2
  \right] 
\\&=
  \E\!\left[
    \| 
      O_t 
      -
      P_N O_t
    \|_H^2
  \right]
  +
  2 \,
  \E\!\left[
    \langle 
      O_t 
      -
      P_N O_t,
      P_N O_t
      - 
      \OMNN{t}
    \rangle_H
  \right]
  +
  \E\!\left[
    \| 
      P_N O_t
      - 
      \OMNN{t}
    \|_H^2
  \right]
\\&=
  \|
    O_t 
    -
    P_N O_t
  \|_{ \L^2(\P; H) }^2
  +
  \|
    P_N O_t
    -
    \OMNN{t}
  \|_{ \L^2(\P; H) }^2 .
\end{split}
\end{align}
Combining this with Lemma~\ref{lem:lower_bound_1}, Lemma~\ref{lem:lower_bound_2},
and the fact that 
\begin{equation}
  \forall \, x,y \in [0,\infty)
  \colon
  \nicefrac{\sqrt{x}}{2}
  +
  \nicefrac{\sqrt{y}}{2}
  \leq 
  \max\{\sqrt{x},\sqrt{y}\}
  \leq
  \sqrt{x+y}
  \leq 
  \sqrt{x} + \sqrt{y}
\end{equation}
completes the proof of Corollary~\ref{cor:lower_bounds}.
\end{proof}

\bibliographystyle{acm}
\bibliography{bibfile}

\end{document}